\newcommand{\papertitle}{A local Torelli theorem for log symplectic manifolds}
\numberwithin{equation}{section}
\definecolor{darkred}{rgb}{.8,0,0}
\definecolor{tocolor}{rgb}{.1,.1,.1}
\definecolor{urlcolor}{rgb}{.2,.2,.6}
\definecolor{linkcolor}{rgb}{.1,.1,.5}
\definecolor{citecolor}{rgb}{.4,.2,.1}
\definecolor{gray}{rgb}{.8,.8,.8}
\newcommand{\email}[1]{\href{mailto:#1}{#1}}
\newcommand{\thdef}[2]{
	\newaliascnt{#1}{theorem}  
	\newtheorem{#1}[#1]{#2}
	\aliascntresetthe{#1}  
	\newtheorem*{#1*}{#2}
	\expandafter\newcommand\expandafter{\csname #1autorefname\endcsname}{#2}
}
\newtheorem{theorem}{Theorem}[section]
\newtheorem*{theorem*}{Theorem}
\theoremstyle{definition}
\theoremstyle{remark}
\newenvironment{example}
  {\pushQED{\qed}\examplex}
  {\popQED\endexamplex}
\newenvironment{remark}
  {\pushQED{\qed}\remarkx}
  {\popQED\endremarkx}
\newcounter{runexcount}
\newcommand{\runex}{\stepcounter{runexcount}Running example, part \arabic{runexcount}}
\newcommand{\defn}[1]{\textbf{\emph{#1}}}
\newcommand{\sagepentagon}[2][0.4]{
\pgfdeclarelayer{background layer}
\pgfdeclarelayer{foreground layer}
\pgfsetlayers{background layer,main,foreground layer}
\begin{tikzpicture}[baseline=-1ex,rotate=-360/5,scale=#1,line join = round,decoration={
    markings,
    mark=at position 0.55 with {\arrow{>}}}
    ] 
\draw[white] (162:1.51) node {};
\draw[white] (342:1.1) node {};
\coordinate (v0) at (90:1.5);
\coordinate (v1) at (162:1.5);
\coordinate (v2) at (234:1.5);
\coordinate (v3) at (306:1.5);
\coordinate (v4) at (18:1.5);
\begin{pgfonlayer}{main}
\foreach \x in {v0,v1,v2,v3,v4}
{
	\foreach \y in {v0,v1,v2,v3,v4}
	\draw[black!50!white] (\x) -- (\y);
}
\end{pgfonlayer}
#2
\begin{pgfonlayer}{foreground layer}
\foreach \x in {v0,v1,v2,v3,v4}
{
	\draw[fill,gray] (\x) circle (0.03);
}
\end{pgfonlayer}
\end{tikzpicture}
}
\newcommand{\smoothedge}[5][blue]{
\begin{scope}
\def\Ea{#2}
\def\Eb{#3}
\def\Ra{#4}
\def\Rb{#5}
\draw[very thick,#1] (\Ea) -- (\Eb);
\foreach \x in {\Ra,\Rb} {
		\begin{pgfonlayer}{background layer}
		\begin{scope}
			\clip (\Ea) -- (\x) -- (\Eb);
			\draw[opacity=0.5,fill,#1] (\x) circle (0.7);
		\end{scope}
		\end{pgfonlayer}
%		\draw[thick] (\Ea) -- (\x) -- (\Eb);
}
\end{scope}
}
\newcommand{\zeroedge}[3][white]{\draw[dashed,#1!50!white] (#2) -- (#3);}
\newcommand{\abrac}[1]{\left\langle#1\right\rangle}
\newcommand{\rbrac}[1]{\left(#1\right)}
\newcommand{\set}[2]{\left\{#1 \,\middle|\, #2 \right\}}
\newcommand{\CC}{\mathbb{C}}
\newcommand{\PP}{\mathbb{P}}
\newcommand{\ZZ}{\mathbb{Z}}
\newcommand{\QQ}{\mathbb{Q}}
\newcommand{\iu}{\sqrt{-1}}
\newcommand{\ord}{m}
\newcommand{\orddel}{\ord(\edge,\Delta)}
\newcommand{\torloop}{\ell}
\newcommand{\torcyc}{\widehat{\torloop}}
\newcommand{\X}{\mathsf{X}}
\newcommand{\tX}{\widetilde{\X}}
\newcommand{\Xs}{\X^{\mathrm{s}}}
\newcommand{\bdX}[1][]{\partial^{#1}\mathsf{X}}
\newcommand{\Xdel}{{\X_\Delta}}
\newcommand{\Xodel}{{\Xo_\Delta}}
\newcommand{\bdXdel}[1][]{\partial^{#1}\mathsf{X}_\Delta}
\newcommand{\bdXdelr}{\partial\Xdelr}
\newcommand{\Xdelr}[1][]{\mathsf{X}^{\mathrm{res}}_\Delta}
\newcommand{\fX}[1]{\bdX[#1]}
\newcommand{\Xo}{\X^\circ}
\newcommand{\Xtwo}{\X^{\le 2}}
\newcommand{\edge}{e}
\newcommand{\Xe}[1][]{{\X_{\edge_{#1}}}}
\newcommand{\Xoe}[1][]{{\Xo_{\edge_{#1}}}}
\newcommand{\bdXe}[1][]{\partial^{#1}\mathsf{X}_\edge}
\newcommand{\bouqe}[1][]{{\Theta_{\edge_{#1}}}}
\newcommand{\ObsTri}[1]{\Theta_{#1}}
\newcommand{\STri}[1]{Obs(#1)}
\newcommand{\F}{\mathsf{F}}
\newcommand{\G}{\mathsf{G}}
\newcommand{\dc}[2][]{\Sigma_{#1}(#2)}
\newcommand{\dcX}[1][]{\dc[#1]{\X}} % dual Delta-complex
\newcommand{\dcP}[2][]{\dc[#1]{\PP^{#2}}} % dual Delta-complex
\newcommand{\ornt}{\mathrm{or}}
\newcommand{\ordel}{\ornt_\Delta}
\newcommand{\idel}{i_{\Delta}}
\newcommand{\jdel}{j_{\Delta}}
\newcommand{\jdelf}{j_{\Delta*}}
\newcommand{\idelf}{i_{\Delta*}}
\newcommand{\idelb}{i_{\Delta}^*}
\newcommand{\Bdel}{B_\Delta}
\newcommand{\Be}{B_\edge}
\newcommand{\U}{\mathsf{U}}
\newcommand{\Utwo}{\U^{\le 2}}
\newcommand{\Us}{\U^{\mathrm{s}}}
\newcommand{\Ur}{\U^{\mathrm{r}}}
\newcommand{\Uo}{\U^\circ}
\newcommand{\Udel}{\mathsf{U}_\Delta}
\newcommand{\Udeltwo}{\Udel^{\le 2}}
\newcommand{\V}{\mathsf{V}}
\newcommand{\tVe}{\widetilde{\mathsf{V}}_\edge}
\newcommand{\tV}{\widetilde{\mathsf{V}}}
\newcommand{\tVo}{\widetilde{\mathsf{V}}^\circ}
\newcommand{\Vo}{\mathsf{V}^\circ}
\newcommand{\Y}{\mathsf{Y}}
\newcommand{\Z}{\mathsf{Z}}
\newcommand{\bdZ}[1][]{\partial^{#1}\Z}
\newcommand{\bZ}[1][]{\overline{\Z}}
\newcommand{\Zo}{\mathsf{Z}^\circ}
\newcommand{\bL}{\mathsf{L}}
\newcommand{\W}{\mathsf{W}}
\newcommand{\We}{\mathsf{W}_\edge}
\newcommand{\bS}{\mathsf{S}}
\newcommand{\B}{\mathsf{B}}
\newcommand{\Se}{\mathsf{S}_\edge}
\newcommand{\Semu}{\mathsf{S}_{\edge,m}}
\newcommand{\Kleaf}[2][1]{\Sigma_{#1}(#2)}
\newcommand{\Smth}[1]{\Sigma^{\mathrm{sm}}(#1)}
\newcommand{\Def}[2][]{\mathsf{Def}_{#1}(#2)}
\newcommand{\tb}[2][]{\mathsf{T}_{#1}#2}
\newcommand{\ctb}[1]{\mathsf{T}^*#1}
\newcommand{\nb}[1]{\mathsf{N}#1}
\newcommand{\nbdel}{\mathsf{N}_{\Delta}}
\newcommand{\nbdelo}{\mathsf{N}_{\Delta}^\circ}
\newcommand{\bdnbdel}{\partial\nbdel}
\newcommand{\logctb}[1]{\ctb{#1}(\log\partial#1)}
\newcommand{\I}{\mathscr{I}}
\DeclareMathOperator{\coker}{coker}
\DeclareMathOperator{\rank}{rank}
\DeclareMathOperator{\img}{image}
\DeclareMathOperator{\res}{Res}
\DeclareMathOperator{\Sym}{Sym}
\DeclareMathOperator{\gr}{Gr}
\newcommand{\id}{\mathrm{id}}
\newcommand{\pis}{\pi^\sharp}
\newcommand{\pidel}{\pi_\Delta}
\newcommand{\pisdel}{\pis_\Delta}
\newcommand{\cT}[1]{\mathcal{T}_{#1}} % tangent sheaf
\newcommand{\cN}[1]{\mathcal{N}_{#1}} % normal sheaf
\newcommand{\cNdel}{\cN{\Delta}}
\newcommand{\coN}[1]{\mathcal{N}^\vee_{#1}} % conormal sheaf
\newcommand{\coNdel}{\coN{\Delta}}
\newcommand{\cTlog}[1]{\mathcal{T}_{#1}(-\log \partial #1)} % log tangent sheaf
\newcommand{\Tpol}[2][\bullet]{\wedge^{#1} \cT{#2}} % polyvectors
\newcommand{\der}[2][\bullet]{\mathscr{X}^{#1}_{#2}} % alias for polyvectors
\newcommand{\derlog}[2][\bullet]{\der[#1]{#2}(-\log\partial #2)} % log polyvectors
\newcommand{\forms}[2][\bullet]{\Omega^{#1}_{#2}} % differential forms
\newcommand{\logforms}[2][\bullet]{\Omega^{#1}_{#2}(\log \partial#2)} % differential forms
\newcommand{\can}[1]{\mathcal{K}_#1} % canonical
\newcommand{\acan}[1]{\mathcal{K}^\vee_#1} % anticanonical
\newcommand{\cO}[1]{\mathcal{O}_{#1}} % structure sheaf
\newcommand{\cI}{\mathcal{I}} % Ideal
\newcommand{\m}{\mathfrak{m}} % maximal ideal
\newcommand{\sD}[2][]{\mathcal{D}^{#1}_{#2}} % differential operators
\newcommand{\cMpi}[1][\bullet]{\mathcal{M}^{#1}_\pi} % Lichnerowicz complex of D-modules
\newcommand{\sQ}{\mathcal{Q}}
\newcommand{\cE}{\mathcal{E}}
\newcommand{\sL}{\mathcal{L}}
\newcommand{\sLdel}{\mathcal{L}_\Delta}
\newcommand{\tsLdel}{\widetilde{\mathcal{L}}_\Delta}
\newcommand{\sLdelnr}{\mathcal{L}_\Delta^{\mathrm{nr}}}
\newcommand{\sLe}[1][]{\mathcal{L}_{\edge_{#1}}}
\newcommand{\sLenr}{\mathcal{L}_\edge^{\mathrm{nr}}}
\newcommand{\grpol}[2][\bullet]{\mathcal{C}^{#1}_{\pi,#2}}
\newcommand{\grdel}{\grpol{\Delta}}
\newcommand{\coH}[2][\bullet]{\mathsf{H}^{#1}(#2)}
\newcommand{\Hlgy}[2][\bullet]{\mathsf{H}_{#1}(#2)}
\newcommand{\scoH}[2][\bullet]{\mathcal{H}^{#1}(#2)}
\newcommand{\Hpi}[2][\bullet]{\mathsf{H}^{#1}_\pi(#2)} % Poisson cohomology
\newcommand{\sect}[1]{\mathsf{\Gamma}\rbrac{#1}}
\newcommand{\HomZ}[1]{\mathsf{Hom}_{\ZZ}(#1)}
\newcommand{\dd}{\mathrm{d}}
\newcommand{\dlog}[1]{\frac{\mathrm{d}#1}{#1}}
\newcommand{\dpi}[1][]{\dd_{\pi_{#1}}}
\newcommand{\lie}[1]{\mathscr{L}_{#1}}
\newcommand{\hook}[1]{\iota_{#1}}
\newcommand{\cvf}[1]{\partial_{#1}}
\newcommand{\logcvf}[1]{#1\partial_{#1}}
\newcommand{\dcat}[2][]{\mathsf{D}_{#1}(#2)}
\newcommand{\fibprod}{\mathop{\times}}
\begin{document}

\title{\papertitle}
\author{Mykola Matviichuk\thanks{McGill University, \email{mykola.matviichuk@mcgill.ca}} \and Brent Pym\thanks{McGill University, \email{brent.pym@mcgill.ca}} \and Travis Schedler\thanks{Imperial College London, \email{t.schedler@imperial.ac.uk}}}
\maketitle
\vspace{-0.5cm}
\abstract{
  We establish a local model for the moduli space of holomorphic symplectic structures with logarithmic poles, near the locus of structures whose polar divisor is normal crossings.  In contrast to the case without poles, the moduli space is singular: when the cohomology class of a symplectic structure satisfies certain linear equations with integer coefficients, its polar divisor can be partially smoothed, yielding adjacent irreducible components of the moduli space that correspond to possibly non-normal crossings structures.  These components are indexed by combinatorial data we call smoothing diagrams, and amenable to algorithmic classification.     Applying the theory to four-dimensional projective space, we obtain a total of 40 irreducible components of the moduli space, most of which are new.  Our main technique is a detailed analysis of the relevant deformation complex (the Poisson cohomology) as an object of the constructible derived category.   }

{\scriptsize
\setcounter{tocdepth}{2} 
\tableofcontents 
}

\section{Introduction}
\vspace{-0.15cm}
\subsection{Overview}
\vspace{-0.15cm}
Log symplectic manifolds are generalizations of holomorphic symplectic manifolds $(\X,\omega)$, where the symplectic two-form $\omega$ is allowed to have logarithmic poles on an anticanonical divisor $\bdX$. The open complement $\Xo := \X\setminus \bdX$ is then symplectic, and the divisor $\bdX$ is foliated by symplectic leaves of smaller dimension. Natural examples include Hilbert schemes of log Calabi--Yau surfaces~\cite{Bottacin1998,Goto2002}, moduli spaces of monopoles~\cite{Atiyah1988,Goto2002}, certain moduli spaces of bundles over elliptic curves~\cite{Feigin1989,Feigin1998,Gualtieri2013}, irreducible components of good degenerations of compact hyperk\"ahler manifolds~\cite{Harder2020}, and even-dimensional toric varieties (together with any torus-invariant symplectic structure on the open torus).

In the case of compact hyperk\"ahler manifolds, where $\bdX$ is empty, celebrated results such as the Bogomolov unobstructedness theorem~\cite{Bogomolov1978} and the local~\cite{Beauville1983} and global~\cite{Huybrechts2012,Markman2011,Verbitsky2013,Verbitsky2020} Torelli theorems give a great deal of information about the classification.  In particular, the moduli space is smooth, and locally parameterized by the second cohomology of $\X$.  Similar results hold for varieties $\X$ with symplectic singularities~\cite{Namikawa2011}.

For log symplectic manifolds with nonempty polar divisor, analogous unobstructedness results were recently established in the holomorphic~\cite{Ran2017, Ran2020,Ran2020a}, $C^\infty$~\cite{Marcut2014} and generalized complex~\cite{Cavalcanti2018, Cavalcanti2020,Goto2015} contexts, under some assumptions on the smoothness of $\bdX$ and the local behaviour of $\omega$.  These results indicate  that in the moduli space of all compact log symplectic manifolds,  there exist certain irreducible components parameterizing log symplectic structures whose divisor $\bdX$ is normal crossings, and these components are locally isomorphic to a neighbourhood of the origin in the cohomology group $\coH[2]{\Xo;\CC}$ of the open symplectic part. 

In light of Hironaka's desingularization theorem, it is natural to try to understand log symplectic manifolds with more complicated singularities by desingularizing them to obtain a normal crossings divisor, and then appealing to the aforementioned results.  Unfortunately, the desingularization typically destroys the nondegeneracy of the two-form, so a new approach is needed.

In this paper, we instead use normal crossings structure to \emph{approximate} more general log symplectic manifolds. The key new idea is that if $\bdX$ has normal crossings and we tune the periods $\int_{\Hlgy[2]{\Xo;\ZZ}}\omega$ appropriately, then new deformations of $(\X,\omega)$ appear for which $\bdX$ has more complicated singularities; see e.g.~\autoref{tab:3cpt-smoothings}.  We show that these deformations give new irreducible components of the moduli space that intersect the normal crossings locus along explicit subspaces of $\coH[2]{\Xo;\CC}$, as in \autoref{fig:3cpt-line-arr}, and that they are enumerated by combinatorial data we call ``smoothing diagrams'', as in \autoref{fig:D5}, which can be classified via elementary algorithmic linear algebra.  This yields an explicit model for the formal neighbourhood of the normal crossings structures in the full moduli space, and as a byproduct, an optimal unobstructedness result that unifies and extends the previously known results. 
  We illustrate the effectiveness of our results with the concrete example of $\X = \PP^4$, producing an order-of-magnitude increase in the number of known irreducible components of the moduli space (enumerated diagrammatically in \autoref{tab:confs_P4}).

\subsection{Summary of the results}

In more detail, let $(\X,\omega)$ be a log symplectic manifold with normal crossings polar divisor $\bdX$.  We review and develop the basic structure theory of such manifolds in \autoref{sec:log-symp}, and in particular we establish the following local normal form in \autoref{prop:stable-form}: if $p \in \X$ is a point where exactly $k$ irreducible components of the degeneracy divisor meet, then up to stabilization (i.e., taking products with symplectic balls), the log symplectic form can be written in suitable coordinates $y_1,\ldots,y_k,z_1,\ldots,z_k$ as
\[
\omega = \sum_{1 \le i<j \le k} B_{ij} \dlog{y_i}\wedge\dlog{y_j} + \sum_{1 \le i \le k} \dlog{y_i} \wedge \dd z_i.
\]
Here $(B_{ij})_{ij} \in \CC^{k\times k}$ is an arbitrary skew-symmetric matrix of constants, called the \defn{biresidues} of $\omega$.  Up to an overall factor of $(2\pi \mathrm{i})^2$, these numbers are simply the periods of the symplectic form along small two-tori encircling components of the divisor. In particular, they depend only on the cohomology class $[\omega] \in \coH[2]{\Xo;\CC}$, and a choice of ordering of the irreducible components intersecting at $p$.  (In the body of the paper, we adopt a more natural system for tracking such orderings using orientations of simplices in the dual CW complex of $\bdX$.)

The deformations  of $(\X,\omega)$ can equivalently be viewed as deformations of the Poisson bivector $\pi = \omega^{-1}$, or more generally, as deformations of $(\X,\pi)$ as a generalized complex manifold in the sense of Gualtieri~\cite{Gualtieri2011} and Hitchin~\cite{Hitchin2003}.  (The two notions are equivalent when $h^1(\cO{\X}) = h^2(\cO{\X})=0$.)    We denote by $\Def{\X}$ the corresponding space of formal deformations.  This space is naturally a 2-stack, defined as the Maurer--Cartan functor for a suitable homotopy Gerstenhaber ($G_\infty$) algebra, and we treat it as such; see \autoref{sec:L-infinity} for the precise definition.  For simplicity,  we will consider only the underlying miniversal deformation space in this introduction.

The space of deformations of the holomorphic symplectic manifold $(\Xo,\omega|_{\Xo})$ is canonically identified with a formal neighbourhood of the cohomology class of $\omega$ in the vector space $\coH[2]{\Xo;\CC}$.  There is a natural restriction map $\Def{\X} \to \coH[2]{\Xo;\CC}$ induced by the corresponding restriction of polyvector fields, which has a canonical section corresponding to deformations of $(\X,\omega)$ for which the polar divisor deforms locally trivially.  We will describe  $\Def{\X}$ as a  ``constructible fibration'' over $\coH[2]{\Xo;\CC}$---the formal completion of a union of varieties that are fibre bundles over their images in $\coH[2]{\Xo;\CC}$.  The fibres parameterize deformations in which some strata of the normal crossings divisor are partially smoothed.

The simplest example of such a smoothing occurs for the manifold $\X=\CC^2$, equipped with log symplectic form
\[
\omega_0 := B \dlog{y_1}\wedge\dlog{y_2} 
\]
for some $B \in \CC^*$.  The polar divisor is the union of the coordinate axes $\{y_1y_2=0\}$, and the origin is its only singularity (a node). The miniversal deformation of $\omega$ is given by the family of 2-forms
\begin{align}
\omega(\tilde B,\epsilon)  := \frac{\tilde B \dd y_1 \wedge \dd y_2}{y_1y_2-\epsilon} \label{eq:C2-universal}
\end{align}
where $\tilde B, \epsilon \in \CC$, with $\tilde B \ne 0$.   When $\epsilon = 0$, the polar divisor is unchanged and the deformation is completely determined by the biresidue $\tilde B$, or equivalently the cohomology class of the form on $(\CC^*)^2$, but when $\epsilon$ is nonzero, the deformed polar divisor $y_1y_2=\epsilon$ is smooth.  Correspondingly $\Def{\CC^2} \to \coH[2]{(\CC^*)^2;\CC}$ is a line bundle having $\epsilon$ as a global coordinate on the fibres.  Note that while this line bundle is evidently trivial at the level of miniversal deformation spaces, to describe the quotient stack, we must account for the symmetries, which are given by the two-torus that acts by dilations of the coordinates $y_1,y_2$.  This torus acts  nontrivially on the fibres of $\Def{\CC^2}$, so that deformations with different values of $\epsilon \neq 0$ are equivalent.  Consequently, the bundle is nontrivial at the level of stacks.

This example motivates the following definition for an arbitrary normal crossings log symplectic manifold $(\X,\omega)$:
\begin{definition}[see \autoref{def:smoothable}]
  A codimension-two normal crossings stratum $\Y \subset \X$ is (infinitesimally) \defn{smoothable} if there exists a first-order deformation of $(\X,\omega)$ that, near any point of $\Y$, takes the form \eqref{eq:C2-universal} in the directions transverse to $\Y$.
\end{definition}
Our results imply that in most cases (including all cases where the normal crossings are simple), it is actually possible to smooth every smoothable stratum to all orders in the deformation parameter, leaving all other smoothable strata intact.  Note that it is important in this definition that the stratum be smoothed at first order: we will explain below that, under special conditions, some non-smoothable strata can be smoothed at higher order. 

We prove that the smoothability of a stratum is a purely topological property.  More precisely, in \autoref{sec:arrangement}, we define, for every codimension-two stratum $\Y$, a canonical  submanifold $\bS_\Y \subset \coH[2]{\Xo;\CC}$, such that $\Y$ is smoothable for $(\X,\omega)$ if and only if $[\omega] \in \bS_\Y$.  This submanifold $\bS_\Y$ is an open set in a countable union of affine linear subspaces of $\coH[2]{\Xo;\CC}$ defined over $\ZZ$, and it depends only on the topology of the pair $(\X,\bdX)$.  The condition $[\omega]\in \bS_\Y$ is tested  concretely by taking periods of $\omega$ around tori in $\Xo$ lifting loops in $\Y$, generalizing the biresidues $B_{ij}$ above; see \autoref{prop:smoothable-bires}.  

 The mutual intersections of the submanifolds $\bS_\Y$ give a canonical stratification of $\coH[2]{\Xo;\CC}$ by locally closed submanifolds corresponding to normal crossings log symplectic structures with a fixed collection of smoothable strata.  These strata are naturally indexed by combinatorial data we call \defn{smoothing diagrams}; see \autoref{sec:arrangement} for the definition and \autoref{fig:D5} below for some examples.  Our main result, which is established in stages throughout Sections \ref{sec:smoothable} and \ref{sec:L-infinity}, is the following Torelli-style description of the deformation space.  Since we are working with formal deformations, $\coH[2]{\Xo;\CC}$ and its stratification $\{\bS_\Y\}$ should be completed at $[\omega]$, which we suppress for simplicity of notation.

\begin{theorem}
\label{thm:torelli-bundle}
If the cohomology class $[\omega]\in\coH[2]{\Xo;\CC}$ satisfies an explicit open ``nonresonance'' condition, then $\Def{\X} \to \coH[2]{\Xo;\CC}$ forms a constructible vector bundle over the stratification $\{\bS_\Y\}$, given by a fibre product of line bundles $\mathsf{L}_\Y \to \bS_\Y$, one for each smoothable stratum $\Y$.
\end{theorem}
Note that the nonresonant condition is often vacuous, such as for $\X=\mathbb{P}^{2n}$, in which case $\Def{\X}$ forms a constructible vector bundle over $\coH[2]{\Xo;\CC}$ with no caveats.
In general, however, for finely tuned periods, the fibres can become singular:
\begin{theorem}\label{thm:torelli-fibre}
  Let $\F$ be the fibre of $\Def{\X}$ over $[\omega] \in \coH[2]{\Xo;\CC}$.  Then the Zariski tangent space $\tb{\F}$ splits canonically as a direct product of one-dimensional subspaces indexed by the smoothable strata of $(\X,\omega)$, and $\F$ embeds in $\tb{\F}$ as the solution set of a system of equations of the form $f=g$, where $f,g$ are monomial functions.  
\end{theorem}
Put differently, the relative tangent spaces of $\Def{\X} \to \coH[2]{\Xo;\CC}$ form the constructible bundle mentioned in \autoref{thm:torelli-bundle}, with $\Def{\X}$ itself embedding as the solutions of the monomial equations as in  \autoref{thm:torelli-fibre}, which are vacuous over the nonresonant locus.  Although the fibres can, in general, be singular and even reducible, it turns out that as long as $\X$ has only finitely many codimension two strata, $\Def{\X}$ always has a stratification  by principal torus bundles over subvarieties of $\coH[2]{\Xo;\CC}$, which are defined by polynomial relations between periods and their logarithms; see \autoref{sec:irred-decomp}.  In this way, even in the resonant case, we reduce the determination of the deformation space to explicit calculations involving periods of the log symplectic form, which are largely (but not completely) linear-algebraic in nature.

While this model for the deformation space is quite explicit and computable, we would like to emphasize that the corresponding Poisson deformations may be highly nontrivial, with interesting singularities in codimension three and higher.  The simplest nontrivial case is the deformations of a log symplectic structure near a point on a codimension-four symplectic leaf where three components of the degeneracy divisor meet with normal crossings; we use it as a ``running example'' throughout the paper, and it eventually plays a key role in the proof of Theorems \ref{thm:torelli-bundle} and \ref{thm:torelli-fibre}. In \autoref{sec:runex-def}, we give a complete description of its universal deformation, which is obtained by adding certain monomial terms to the bivector when the biresidues are suitably tuned.  This yields the model for the moduli space described in \autoref{fig:3cpt-line-arr}, and the list of possible singularities of the deformed divisor in \autoref{tab:3cpt-smoothings}, consisting of a transverse $A_1$ curve singularity (node), the series of stem singularities of type $T_{p,q,r}$ with $p+q+r=\infty$ from \cite[Section 4.5]{Arnold1993}, a bifurcation into a pair of Whitney umbrellas joined along their handles, and in the most exceptional case, the one-parameter families of simple elliptic singularities $\tilde E_6,\tilde E_7,\tilde E_8$ of \cite{Saito1974}, whose associated Poisson structures were studied by the second author in \cite{Pym2017}.

\begin{figure}
\begin{center}
\includegraphics[scale=1]{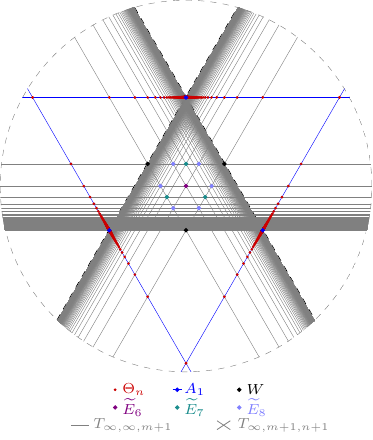}
\end{center}
\caption{The moduli space of log symplectic germs on $\mathbb{C}^4$ with degeneracy divisor $xyz=0$, considered up to isomorphism and rescaling, is the projective plane $\PP\coH[2]{(\CC^*)^3} \cong \PP^2$, modulo the natural action of $S_3$.  It is illustrated above using the biresidues $b_1,b_2,b_3$ as homogeneous coordinates, so that the central point is $[1:1:1]$, and the three dashed lines indicate the points where some $b_i=0$.  This copy of $\PP^2/S^3$ give an irreducible component of the full moduli space, and its formal neighbourhood is a countable union of vector bundles, one for each stratum of the arrangement of solid lines.  At a point where $k$ solid lines meet, the fibre  has dimension $k$, and its generic element is a log symplectic structure whose singularity type is indicated by the legend and described in \autoref{tab:3cpt-smoothings}; unless $k=3$ such structures are all isomorphic.  }
\label{fig:3cpt-line-arr}
\end{figure}

The series $\ObsTri{n}$  in \autoref{tab:3cpt-smoothings} plays a particularly important role in the deformation theory.  It exhibits the following ``resonance'' phenomenon: if both smoothable strata are smoothed simultaneously at first order in deformation theory, then the remaining  stratum is automatically smoothed at order $n+2$, resulting in a smooth divisor.  For a general log symplectic manifold $(\X,\omega)$, the presence of such local resonances along codimension-three strata enforce global consistency conditions relating the smoothings of different pairs of smoothable strata, which can prevent the simultaneous independent smoothing of all smoothable strata. This is the source of the equations defining the fibres in \autoref{thm:torelli-fibre}.

\begin{table}
\caption{Classification of log symplectic smoothings near a codimension-three stratum}
\label{tab:3cpt-smoothings}\def\arraystretch{1.4}
\centerline{
\begin{tabular}{c|c|c}
Name & Equation for divisor  & Singularity \\  \hline \hline
generic   & $xyz$ & 3d normal crossings \\
\hline 
$A_1$ &  $xyz+x$ & 2d normal crossings \\
\hline 
$T_{\infty,\infty,m+1}$  & $xyz+x^{m+1}$  & stem of degree two\\
\hline 
$\ObsTri{n}$   & $xyz+x + y^{n+1}$ & smooth  \\
\hline 
$W $   & $xyz+x^2 + y^2$ & two Whitney umbrellas \\
\hline 
 $T_{\infty,m+1,n+1}$    & $xyz+x^{m+1} + y^{n+1}$ & stem of degree one \\ \hline
 $\tilde E_{6}$    & $ xyz + \lambda(x^{3} + y^3 + z^{3}) $ & 
 \\
 $\tilde E_{7}$  & $ xyz + \lambda(x^{2} + y^4 + z^{4}) $  & elliptic with parameter $\lambda \in \CC$\\
$\tilde E_{8}$  & $ xyz + \lambda(x^{2} + y^{3} + z^6) $ & 
\end{tabular}
}
\end{table}
As a nontrivial example of how \autoref{thm:torelli-bundle} can be applied, we consider the case $\X = \PP^{2n}$ equipped with a torus-invariant log symplectic structure; it was shown in \cite{Lima2014} (see also \cite[Section 5]{Pym2018a}) that these are the only simple normal crossings log symplectic structures on smooth Fano varieties $\X$ with $\dim \X = 2n \ge 4$ and $b_2(\X) = 1$.  In this case, \autoref{thm:torelli-bundle} reduces the description of $\Def{\X}$ to some elementary linear algebra of skew-symmetric matrices,  giving a new combinatorial machine for producing many new examples of log symplectic structures. While the relevant equations for a given example can easily be written down and solved by hand, the classification of all possibilities is somewhat tedious and is best approached algorithmically.  We wrote a simple computer script to enumerate the possibilities, which yields the following result for $\PP^4$:
\begin{theorem}[see \autoref{sec:P2n-diagrams} and \autoref{ex:P2n-moduli}]\label{thm:P4}
The moduli space of Poisson structures on $\PP^4$ has 40 irreducible components that intersect the locus of simple normal crossings log symplectic structures.  These components are in bijection with the isomorphism classes of smoothing diagrams shown in \autoref{tab:confs_P4}.
\end{theorem}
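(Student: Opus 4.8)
The plan is to deduce this from the local Torelli theorem (\autoref{thm:decouple}) together with an explicit, computer-assisted enumeration of smoothing diagrams for the four-simplex, which is the content of \autoref{sec:Pn}. The starting point is that $\PP^4$ carries an essentially unique simple normal crossings log symplectic structure $(\PP^4,\bdX,\omega_0)$ by \cite{Lima2014}: its polar divisor $\bdX=\{x_0x_1x_2x_3x_4=0\}$ is the toric boundary and its symplectic part $\Xo$ is the open torus $(\CC^*)^4$. Fixing $\bdX$, such structures are classified by their biresidue matrix, a skew-symmetric $5\times 5$ matrix taken up to the natural gauge freedom, so the locus of SNC structures is an irreducible (rational) subvariety of the vector space $\coH[2]{\Xo;\CC}\cong\CC^6$; its dual CW complex is the boundary $\partial\Delta^4$ of the $4$-simplex, and the relevant combinatorial symmetry group is $S_5$ --- the remaining automorphisms of $(\PP^4,\bdX)$ form a torus, which acts trivially on all combinatorial data.

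Next I would verify that the hypotheses making \autoref{thm:decouple} and its stated strengthenings applicable hold at \emph{every} SNC point of $\PP^4$: the codimension-two strata are the subspaces $\PP^2\cong H_i\cap H_j$, whose associated first relative cohomology vanishes (as for any projective space), so that resonance there is harmless, and by the same theorem the strata of codimension $\ge 3$ then have no effect on the deformation space. Granting this, the germ of the moduli space of Poisson structures at any SNC point is the formal completion of a constructible vector bundle over the linear arrangement in $\coH[2]{\Xo;\CC}$ indexed by smoothing diagrams on $\partial\Delta^4$. Because the SNC locus is connected, gluing these germs identifies the irreducible components of the moduli space that meet the SNC locus with the closures of the strata of this arrangement, and hence with isomorphism classes of smoothing diagrams --- the bijection asserted by the theorem. (Only components meeting the SNC locus are being counted: what the local model controls is a neighbourhood of that locus, not the entire moduli space.)

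It then remains to enumerate the smoothing diagrams. By the toric translation of \autoref{sec:projective-space}, the $\ZZ$-linear smoothability condition attached to a codimension-two stratum $H_i\cap H_j$ becomes an explicit integrality condition on the single biresidue $B_{ij}$, while the requirement that a set of strata be simultaneously smoothable imposes the holonomicity/nonresonance criterion of \autoref{thm:HPhol}(3) at each odd-codimension stratum of the partially smoothed divisor. Running over all subsets of the ten codimension-two strata of $\partial\Delta^4$, imposing these compatibility conditions and counting $S_5$-orbits, is a finite check; carried out by computer it produces exactly $40$ diagrams --- the empty diagram, corresponding to the normal crossings component itself, together with $39$ diagrams whose components parameterize structures with strictly worse-than-normal-crossings divisor.

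The combinatorial count aside, the main obstacle is the middle step: one must check, uniformly over the whole SNC locus of this particular variety, that the resonance phenomena and $L_\infty$ obstructions are as tame as \autoref{thm:decouple} needs, and that ``smoothing diagram'' is a sufficiently fine invariant that non-isomorphic diagrams genuinely yield distinct components (so that the correspondence is a bijection rather than merely a surjection) --- which is itself verified combinatorially, e.g.\ by distinguishing the diagrams through the sets of smoothable strata and the generic singularity types they record.
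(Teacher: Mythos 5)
Your overall strategy matches the paper's: verify weak nonresonance for $\PP^4$ (via vanishing of the relevant relative cohomology on the codimension-two strata $\PP^2$), invoke \autoref{thm:decouple} and \autoref{thm:contructible-bundle} to get the local model, and enumerate smoothing diagrams by computer. But there is a genuine gap in your middle step. The local model plus connectedness of the SNC locus only yields an \emph{upper bound}: the number of irreducible components of $\MP$ meeting $\MLS{\PP^4}$ is at most the number of isomorphism classes of smoothing diagrams (this is exactly \autoref{prop:toric-smoothings}). The problem, which the paper flags explicitly in the remark following that proposition, is that a single irreducible component $Z$ of $\MP$ could a priori have \emph{reducible} intersection with $\MLS{\PP^4}$, containing the closures of two different strata of the arrangement and hence being counted twice by two non-isomorphic diagrams. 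Your ``gluing these germs identifies the components with the closures of the strata'' asserts injectivity without proving it, and your fallback --- that injectivity ``is verified combinatorially by distinguishing the diagrams'' --- is circular: non-isomorphism of the diagrams is the hypothesis, not an invariant of the global component. The paper closes this gap by actually solving the Maurer--Cartan equation in each candidate component, exhibiting the generic deformed Poisson structure, and checking that these 40 generic members are pairwise non-isomorphic (\autoref{thm:40comps_P4}); this is an additional computation beyond the enumeration of diagrams, and your proposal needs it (or a substitute argument) to turn the upper bound into the asserted bijection.

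A secondary inaccuracy: the smoothability condition for a codimension-two stratum $H_i\cap H_j$ is not ``an integrality condition on the single biresidue $B_{ij}$''. By \autoref{prop:smoothable-bires} it requires $B_{ik}+B_{jk}\in B_{ij}\ZZ_{\ge 0}$ for \emph{every} third hyperplane $H_k$, i.e.\ it couples $B_{ij}$ to the entire row data of the skew matrix; the arrangement whose strata you must enumerate is cut out by these coupled conditions together with the Chern-class constraint \eqref{eq:smooth-c1} (which for $\PP^4$ becomes $\sum_{\Delta'\supset\Delta}\orddel=2$, \autoref{lem:total-order-two}). This does not change the feasibility of the computer search, but the conditions you would actually have to implement are these, not per-edge integrality.
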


\begin{figure}
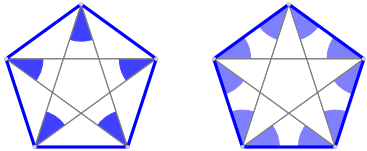
\begin{center}
\sagepentagon[0.7]{\smoothedge{v3}{v4}{v1}{v1} 
\smoothedge{v2}{v3}{v0}{v0} 
\smoothedge{v1}{v2}{v4}{v4} 
\smoothedge{v0}{v1}{v3}{v3} 
\smoothedge{v4}{v0}{v2}{v2} 
}
\sagepentagon[0.7]{\smoothedge{v3}{v4}{v0}{v2} 
\smoothedge{v2}{v3}{v1}{v4} 
\smoothedge{v1}{v2}{v3}{v0} 
\smoothedge{v0}{v1}{v2}{v4} 
\smoothedge{v4}{v0}{v1}{v3} 
}
\end{center}
\caption{There are two nontrivial smoothing diagrams for $\X = \PP^4$ whose automorphism group is the dihedral group $D_5$.  They correspond to irreducible components of the moduli space of Poisson structures whose generic members are Feigin--Odesskii's elliptic Poisson structures~\cite{Feigin1989,Feigin1998} of type $q_{5,1},q_{5,2}$.}\label{fig:D5}
\end{figure}

We remark that Poisson structures on $\PP^n$ for $n\le 3$ have been classified: it is straightforward to see that the moduli space has one irreducible component for $n\le2$ and much harder to see that there are exactly six irreducible components when $n=3$, as shown in \cite{Cerveau1996,Loray2013}.  For $n \ge 4$, only a handful of components were previously known, and our classification includes all examples of log symplectic structures on $\PP^4$ of which we are currently aware.  For instance, the two most symmetric smoothing diagrams (shown in \autoref{fig:D5}) correspond to Poisson structures on moduli spaces of bundles over elliptic curves discovered by Feigin--Odesskii~\cite{Feigin1989,Feigin1998}.  Some other components correspond to structures discovered by Okumura~\cite{Okumura2020} by blowing up linear subspaces. However, the vast majority of the components correspond to structures that appear to be new.  It would be interesting to know if every log symplectic structure on $\PP^{4}$ lies in one of these 40 components of the moduli space, i.e.~admits a toric degeneration, and also to apply this technique to other toric Fano varieties.

\subsection{Method of proof}

Our main technical tool in the paper is the deformation complex of the Poisson manifold $(\X,\pi)$, given by the sheaf $\der{\X}$ of holomorphic polyvector fields, equipped with the Schouten bracket $[-,-]$ and the Poisson differential $\dpi = [\pi,-]$.  For general Poisson manifolds, the calculation of the resulting ``Poisson cohomology'' is difficult.  For instance, even in the relatively simple case of normal crossings log symplectic manifolds considered here, the stalk cohomology is infinite-dimensional for certain non-generic values of the biresidues $B_{ij}$, which complicates the calculation of the global hypercohomology.  

For this reason, we employ at various stages a natural refinement of the log symplectic condition, called ``holonomicity'', which was introduced by the second- and third-named authors in~\cite{Pym2018}.  This condition is phrased using D-modules and is essentially equivalent to the finite-dimensionality of the stalk cohomology; see \autoref{sec:holonomic}.  In the normal crossings case at hand, we show that holonomicity, and in turn, the Poisson cohomology, is controlled by a certain collection of symplectic leaves of the Poisson manifold, which we call ``characteristic'' (those to which Weinstein's modular vector field~\cite{Weinstein1997} is tangent).  Moreover, we reduce this abstract condition to a simple integer-linear condition on the biresidues.  (We expect that this picture extends beyond the normal crossings case; see \autoref{conj:char-holonomic}.)

More precisely, there is a canonical increasing filtration $\W_\bullet\der{\X}$ dual to Deligne's familiar weight filtration on logarithmic differential forms in mixed Hodge theory~\cite{Deligne1971a} and related to a filtration on meromorphic forms used for similar purposes in \cite{Ran2020}.  We prove the following result which characterizes the holonomic structures, and gives in that case a complete description of the associated graded $\gr \der{\X}$ as an object in the constructible derived category $\dcat{\X}$.  This can be viewed as information about the full ``derived'' moduli space of holonomic Poisson manifolds, extending the classical deformation functor $\Def{\X}$.
\begin{theorem}[see \autoref{thm:hol-complex}]\label{thm:HPhol}
The following are equivalent:
\begin{enumerate}
\item $(\X,\pi)$ is holonomic in the sense of \cite{Pym2018}.
\item The stalk cohomology of $(\der{\X},\dpi)$ is finite-dimensional
\item Every point in $\X$ has a neighbourhood intersecting only finitely many characteristic symplectic leaves.
\item For every stratum of odd codimension, the vector $(1,\ldots,1)$ is linearly independent from the rows of the corresponding biresidue matrix.
\end{enumerate}
In this case, the associated graded $\gr \der{\X}$ is a direct sum of extensions of rank-one local systems supported on characteristic symplectic leaves, shifted in degree by their codimensions.  Furthermore, $\der{\X}\cong\gr\der{\X}$ in  $\dcat{\X}$ when certain nonresonance conditions on the biresidues are satisfied.
\end{theorem}

The local systems constituting $\gr{\der{\X}}$ have explicit descriptions; in particular their monodromies are given explicitly by periods of the log symplectic form (\autoref{cor:no-monodromy}).    The hypercohomology of $\gr{\der{\X}}$ is then readily computed in terms cohomology groups of strata relative to certain ``resonant'' boundary components, with coefficients in the corresponding local systems (\autoref{prop:res-cohlgy}).  

In particular, every normal crossings log symplectic manifold is holonomic away from a codimension three subset, and $\gr_2\der{\X}$ consists of  local systems on codimension-two strata that parameterize their local smoothings. A stratum is then globally smoothable if and only if its corresponding zeroth relative cohomology group is nontrivial.  The proof of Theorems \ref{thm:torelli-bundle} and \ref{thm:torelli-fibre} is achieved by a local-to-global argument, in which we use normal forms established in \autoref{sec:log-symp} to establish the local formality of $L_\infty$ structure on the deformation complex, giving an explicit description of the local deformation functors in terms of the corresponding Maurer--Cartan 2-groupoids.  We then glue the local Maurer--Cartan stacks over a judiciously chosen open cover to obtain the desired description of $\Def{\X}$.

\paragraph{Acknowledgements:} We thank Marco Gualtieri, Julien Keller, Ziv Ran and Chris Rogers for conversations and correspondence.  B.P.~gratefully acknowledges the support of Seggie Brown bequest at the University of Edinburgh; a faculty startup grant at McGill University; and the Natural Sciences and Engineering Research Council of Canada (NSERC), through Discovery Grant RGPIN-2020-05191.  B.P.~and T.S.~also thank the Hausdorff Institute for Mathematics for their support and hospitality during the Junior Trimester Program on ``Symplectic Geometry and Representation Theory'', where preliminary discussions on the present research took place. T.S.~would also like to thank the Max Planck Institute for Mathematics for its generous support during work on this project.

\section{Calculus on normal crossings divisors}
\label{sec:nc-divisors}
Throughout this paper, $\X$ will denote a connected complex manifold. We denote by $\cO{\X}$, $\cT{\X}$, $\forms{\X}$ and $\der{\X} = \wedge^\bullet \cT{\X}$ the sheaves of  holomorphic functions, vector fields, differential forms and polyvector fields, respectively.

We shall fix once and for all a normal crossings divisor on $\X$, which we denote by $\bdX \subset \X$.  Note that $\bdX$ is not a topological boundary of $\X$ in any sense; rather it is the boundary of the open dense set
\[
\Xo := \X \setminus \bdX
\]
in $\X$.  However, the notation will be convenient in what follows. 

For the sake of exposition, we shall assume throughout that $\bdX$ is simple, meaning that all its irreducible components are smooth, and the intersection of any collection of distinct irreducible components is transverse.  

\begin{remark} Any normal crossings divisor is locally simple.  Since our main results (including Theorems \ref{thm:hol-complex}, \ref{thm:Xtwo} and \ref{lem:codim3-balls-suffice}) are ultimately deduced from local properties, they hold essentially without modification in the non-simple case; the only slightly subtle points necessary for this generalization are addressed in Remarks \ref{rmk:non-simple-or} and \ref{rmk:non-simple-symplectic} below. 
\end{remark}

If $p \in \bdX$ is a point where exactly $k$ components meet, there exist an open neighbourhood $\U$ of $p$ and a system of analytic coordinates $(y_1,\ldots,y_k,z_1,\ldots,z_j)$ on $\U$ such that
\[
\bdX \cap \U = \{y_1\cdots y_k = 0\}.
\]
We refer to such coordinates as \defn{normal crossings coordinates}.  

In this section we recall some basic constructions associated to the pair $(\X,\bdX)$; with the possible exception of \autoref{sec:nc-polyvect}, this material is standard and is mainly for the purposes of establishing notations and examples that will be used throughout the paper.

\subsection{Strata and the dual complex}
\label{sec:dual-complex}

The normal crossings divisor $\bdX\subset \X$ gives rise to a filtration of $\X$ by closed subvarieties
\[
\X \supset \bdX \supset \fX{2} \supset \cdots
\]
where for $j \ge 0$, the subvariety $\fX{j} \subset \X$ is the locus of points where $\bdX$ has multiplicity $\ge j$, or equivalently, $\fX{j}$ is the union of all $j$-fold intersections of irreducible components of $\bdX$.  In particular, $\fX{j+1}$ is the singular locus of $\fX{j}$ for $j \ge 1$.  By transversality, each irreducible component of $\fX{j}$ is a smooth closed submanifold of codimension $j$.  Indeed, in local normal crossings coordinates as above, the subvariety $\fX{j}$ is the locus where at least $j$ of the coordinates $y_1,\ldots,y_k$ vanish, giving the union of ${ k \choose j}$ linear subspaces of codimension $j$.

Recall that the \defn{dual complex} of the pair $(\X,\bdX)$ is the CW complex $\dcX$,
whose $k$-simplices correspond to the irreducible components of $\bdX[k+1]$, and whose inclusions of faces are dual to the inclusions of subvarieties; see, e.g.~\cite{Danilov1975}.  We shall adopt the following notations. 

\begin{definition}
We denote by $\dc[k]{\X}$ the set of $(k-1)$-dimensional simplices in $\dcX$; they are in bijection with irreducible components of $\bdX[k]$.  If $\Delta \in \dc[k]{\X}$ is a $(k-1)$-simplex, we denote by
\[
\idel : \Xdel \to \X
\]
the inclusion of the irreducible component $\Xdel \subset \fX{k}$ indexed by $\Delta$. This component contains an induced normal crossings divisor
\[
\bdXdel := \Xdel \cap \fX{k+1}
\]
whose complement is the locally closed submanifold
\[
\Xodel := \Xdel \setminus \bdXdel.
\]
We refer to $\Xodel$ as a \defn{stratum of codimension }$k$.  We denote by
\[
\Delta_0 :=  \Delta \cap \dcX[0] 
\]
the set of vertices of $\Delta$; they correspond to the irreducible components of $\bdX$ that intersect along $\Xodel$.
\end{definition}

We now give several examples; the first of these forms the basis of a ``running example'' that will be developed throughout the paper.
\begin{example}[\runex]\label{ex:3cpt-dual-complex}
Let $\X = \CC^4$ with coordinates $(y_1,y_2,y_3,z)$.  Consider the divisor $\bdX \subset \X$ given by the union of three coordinate hyperplanes $y_i = 0$.  Any pair of these hyperplanes intersects in a unique codimension-two stratum, and intersects the remaining hyperplane along the unique codimension-three stratum given by the $z$-line $y_1=y_2=y_3=0$.  The dual complex is therefore a standard two-simplex:
\[
\dcX = 
\vcenter{\hbox{ \begin{tikzpicture}[scale=1.2,decoration={
    markings,
    mark=at position 0.5 with {\arrow{>}}}
    ] 
    \draw[fill=gray] (-30:1) -- (90:1) -- (210:1) -- (-30:1);
\draw (-30:1) -- (90:1);
\draw (90:1) -- (210:1) ;
\draw (210:1) -- (-30:1);
\draw (90:1.2) node {$\scriptstyle(y_1)$};
\draw (-30:1.2) node {$\scriptstyle(y_2)$};
\draw (210:1.2) node {$\scriptstyle(y_3)$};
\draw (30:0.9) node {$\scriptstyle(y_1,y_2)$};
\draw (150:0.9) node {$\scriptstyle(y_3,y_1)$};
\draw (-90:0.7) node {$\scriptstyle(y_3,y_2)$};
\draw (0,0) node {$\scriptstyle(y_1,y_2,y_3)$};
\end{tikzpicture}}}
\]
where we have labelled each simplex $\Delta$ in $\dcX$ by the ideal of functions that define the corresponding submanifold $\Xdel$.  For instance, the edge $\Delta$ labelled $(y_1,y_2)$ corresponds to the codimension-two stratum
\[
\Xodel = \set{(0,0,y_3,z)}{y_3\neq 0}\cong \CC^*\times \CC.
\]
and the remaining codimension-two strata are obtained by cyclically permuting $y_1,y_2,y_3$. 
\end{example}

\begin{example}
Let $\X = \CC^n$ with the boundary divisor $\bdX$ given by the union of the $n$ coordinate hyperplanes.  The intersection of any $k$-tuple of components of $\bdX$ has exactly one irreducible component, and hence the dual complex $\dcX$ is an $(n-1)$-simplex. 
\end{example}

\begin{example}
Let $\X = \PP^n$ with its standard toric boundary divisor $\bdX$ given by the union of the $n+1$ coordinate hyperplanes.  Then the dual complex $\dcX$ is given by gluing $n+1$ distinct $(n-1)$-simplices along common codimension-one faces, giving the boundary of an $n$-simplex, which is homeomorphic to the $(n-1)$-sphere $S^{n-1}$. If $\Delta \in \dc[k]{\X}$ then the stratum $\Xodel$ is isomorphic to the torus $(\CC^*)^{n-k}$.  
\end{example}

Note that if $\Delta$ is a simplex, the normal sheaf $\cNdel$ of the corresponding stratum $\Xodel$ splits canonically as a direct sum of line bundles
\begin{align}
\cNdel \cong \bigoplus_{i \in \Delta_0} \cN{\Delta,i} \label{eq:nb-split}
\end{align}
corresponding to the normal bundles of the irreducible components of $\bdX$ that intersect along $\Xodel$; here, as above, $\Delta_0$ is the set of vertices of $\Delta$.  The total space $\nbdel := \mathsf{Tot}(\cNdel)$ of this locally free sheaf therefore contains a normal crossings divisor $\bdnbdel$ given by the union of the corank-one subbundles obtained from the splitting; this gives the normal cone of $\bdX$ over $\Xodel$.  The following ``rectification'' lemma is standard and will be useful in what follows. 

\begin{lemma}\label{lem:rectify}
Suppose that $\Xodel$ admits a holomorphic tubular neighbourhood in $\X$.  Then $\Xodel$ admits a holomorphic tubular neighbourhood in which $\bdX \subset \X$ is identified with its normal cone $\bdnbdel \subset \nbdel$. 
\end{lemma}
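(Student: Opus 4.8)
The plan is to take the given holomorphic tubular neighbourhood $\phi \colon \U \xrightarrow{\ \sim\ } \V$, where $\V \subseteq \nbdel$ is an open neighbourhood of the zero section, and compose it with a fibrewise ``shear'' of $\nbdel$ that straightens each branch of $\bdX$ onto a linear coordinate subbundle. After shrinking $\U$ I may assume that it meets no irreducible component of $\bdX$ beyond the $k$ components $Y_i$, $i \in \Delta_0$, that contain $\Xdel$; this is possible since $\Xodel$ is closed in the complement in $\X$ of the union of the remaining components of $\bdX$ with the closed subset $\bigl(\bigcap_{i\in\Delta_0}Y_i\bigr) \setminus \Xdel$. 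Then, by transversality, $\bdX \cap \U$ is the union of $k$ smooth hypersurfaces $Y_i \cap \U$ meeting transversally precisely along $\Xodel$. Post-composing $\phi$ with a linear bundle automorphism of $\nbdel$, I may also assume the standard normalisation that the derivative of $\phi$ along $\Xodel$ induces the identity of $\cNdel$.

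Writing $\nbdel^{(i)} := \bigoplus_{j \in \Delta_0\setminus\{i\}} \cN{\Delta,j}$ for the corank-one linear subbundle complementary to $\cN{\Delta,i}$, so that $\bdnbdel = \bigcup_{i\in\Delta_0}\nbdel^{(i)}$, set $D_i := \phi(Y_i\cap\U)$. Each $D_i$ is a smooth hypersurface of $\V$ containing the zero section, and by the normalisation it is tangent to $\nbdel^{(i)}$ along the zero section. Hence, decomposing $v = \sum_j v_j$ with $v_j \in \cN{\Delta,j}$ according to~\eqref{eq:nb-split}, the holomorphic implicit function theorem lets me shrink $\V$ so that
\[
D_i = \bigl\{\, v \in \V \ :\ v_i = \sigma_i(v) \,\bigr\},
\]
where $\sigma_i$ is a holomorphic section of the pullback of $\cN{\Delta,i}$ that depends only on the components $v_j$ with $j \ne i$ and vanishes to order $\ge 2$ along $\Xodel$.

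Now define $\Psi \colon \V \to \nbdel$ by $\Psi(v) = v - \sum_{i\in\Delta_0}\sigma_i(v)$. This is fibrewise over $\Xodel$, restricts to the identity on the zero section, and — since each $\sigma_i$ vanishes to second order along $\Xodel$ — has derivative equal to the identity all along the zero section; by the holomorphic inverse function theorem along the submanifold $\Xodel$ it therefore restricts to a biholomorphism between neighbourhoods of the zero section. If $v \in D_i$ then $v_i = \sigma_i(v)$, so the $\cN{\Delta,i}$-component of $\Psi(v)$ vanishes and $\Psi(D_i) \subseteq \nbdel^{(i)}$; being a smooth hypersurface through the zero section sitting inside the smooth hypersurface $\nbdel^{(i)}$, it coincides with $\nbdel^{(i)}$ near the zero section, and after one final shrinking of $\V$ we get $\Psi(D_i) = \nbdel^{(i)} \cap \Psi(\V)$ for all $i$. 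Thus $\Psi$ carries $\bigcup_i D_i$ onto $\bdnbdel$, and $\Psi\circ\phi$ is the required tubular neighbourhood identifying $\bdX$ with $\bdnbdel$.

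I expect the only substantive point to be the simultaneity in the last step: all $k$ branches must be straightened by one biholomorphism that is still a legitimate tubular-neighbourhood chart, i.e.\ one whose linearisation along $\Xodel$ is trivial. This works because the $Y_i$ are sent to \emph{distinct} linear summands and $\Psi$ has the shape ``identity plus terms of order $\ge 2$ along $\Xodel$'', so the corrections $\sigma_i$ neither interfere with one another to first order nor spoil invertibility. Everything else is routine bookkeeping with the successive shrinkings of $\V$, which needs a little care only because $\Xodel$ may be non-compact.
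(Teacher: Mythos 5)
Your proof is correct and follows essentially the same strategy as the paper's: write each branch of $\bdX$ as a graph $v_i=\sigma_i(v)$ over the complementary corank-one linear subbundle, with $\sigma_i$ vanishing to second order along the zero section, and remove it by a shear whose linearization along $\Xodel$ is therefore the identity. The only (immaterial) difference is that you perform all the shears simultaneously via $\Psi=\mathrm{id}-\sum_i\sigma_i$, whereas the paper applies them one component at a time, noting that each shear preserves the already-rectified linear components.
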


\begin{proof}
Choosing an arbitrary holomorphic tubular neighbourhood, we may assume without loss of generality that $\X$ is an open neighbourhood of the zero section in $\nbdel$, and that the linearization of this identification is the identity map on the normal bundle of $\Xodel$.  Thus the normal cone of $\bdX$ in $\nbdel$ is identified with $\bdnbdel$ as a subvariety in $\nbdel$.  

Fix some component $\Y$ of $\bdX$ that contains $\Xodel$, and let $\Z$ be the corresponding component of $\bdnbdel$.  Shrinking $\X$ if necessary, we may assume that  $\Y$ projects isomorphically onto its image in $\Z$ under the canonical splitting $\nbdel = \nb{\Y} \oplus \Z$. It may therefore be viewed as a graph of a map $\phi$ from an open set in $\Z$ to $\nb{\Y}$, which vanishes with multiplicity two along the zero section.  Define a new map $\psi : \X \to \nbdel$ by $\psi(y,z)=(y-\phi(z),z)$ with respect to the splitting. Then $\psi$ is tangent to the identity along the zero section, so after shrinking $\X$ we may assume that it is a biholomorphism onto its image, defining a new tubular neighbourhood embedding.  By construction, $\psi(\Y) = \Z$ and $\psi(\Z') =\Z'$ for any component $\Z'\subset \bdnbdel$ other than $\Z$.  Hence we may repeat this construction for each irreducible component in turn, arriving at the rectified tubular neighbourhood, as desired.
\end{proof}

\subsection{Logarithmic forms}
Recall from \cite{Deligne1971a,Saito1980} that a differential form $\omega$ on $\X$ is \defn{logarithmic with respect to $\bdX$} if the following conditions hold:
\begin{enumerate}
\item $\omega$ is holomorphic on $\Xo = \X\setminus \bdX$
\item $\omega$ and $\dd\omega$ have at worst first-order poles on $\bdX$
\end{enumerate}
We denote by $\logforms{\X} \cong \wedge^\bullet\logforms[1]{\X}$ the sheaf of logarithmic forms. In local normal crossings coordinates $(y_1,\ldots,y_k,z_1,\ldots,z_j)$ we have 
\[
\logforms{\X} \cong  \cO{\X}\abrac{\dlog{y_1},\ldots,\dlog{y_k},\dd z_1,\ldots,\dd z_j}
\]
Recall that this sheaf comes equipped with a canonical increasing filtration, called the \defn{weight filtration}, given by the subsheaves
\[
\forms{\X} = \W_0\logforms{\X} \subset \W_1\logforms{\X} \subset \cdots \subset \W_n \logforms{\X} = \logforms{\X}
\]
where $n=\dim \X$, defined by
\[
\W_j\logforms{\X} := \logforms[\le j]{\X} \wedge \forms{\X}. 
\]
Equipped with the wedge product and de Rham differential,  $\W_\bullet \logforms{\X}$ becomes a sheaf of filtered differential graded (dg) commutative algebras.  Recall that the hypercohomology $\coH{\logforms{\X},\dd}$ is canonically isomorphic to $\coH{\Xo;\CC}$, the cohomology of the open complement $\Xo=\X\setminus\bdX$.

For a $(k-1)$-simplex $\Delta  \in \dc[k]{\X}$, let  $\ornt_\Delta$ be the orientation of $\Delta$, i.e., the one-dimensional vector space of pairs of a complex number and an ordering of the irreducible components of $\bdX$ intersecting along $\Xodel$, modulo permuting the components and multiplying the complex number by the sign of the permutation.

Taking residues of logarithmic forms along the irreducible components containing $\Xdel$, one forms the classical Deligne--Poincar\'e \defn{residue map}:
\begin{align*}
\xymatrix{
\res : \W_k\logforms{\X}\ar[r] &  i_{\Delta*}\forms{\Xdel} \otimes \ornt_\Delta[-k]
}  %\label{eq:form-res}
\end{align*}
which descends to an isomorphism of complexes
\begin{align}
\gr_k^\W \logforms{\X} \cong \bigoplus_{\Delta \in \dc[k]{\X}} i_{\Delta*}\forms{\Xdel} \otimes \ornt_\Delta[-k]. \label{eq:gr-logforms}
\end{align}
Here and throughout, the symbol $[n]$ for $n \in \ZZ$ is the usual degree-shift operator, defined so that $(\mathcal{C}^\bullet[n])^j = \mathcal{C}^{n+j}$ for any complex $\mathcal{C}^\bullet$.  Thus, up to a choice of orientation for all $k$-simplices, the right hand side of \eqref{eq:gr-logforms} is the direct sum of the ordinary de Rham complexes of the submanifolds $\Xdel$, shifted up in degree by their codimensions. 

\begin{remark}\label{rmk:non-simple-or}
If the normal crossings divisor $\bdX$ is not simple, then it may not be possible to find a globally consistent ordering of the components along $\Xodel$; in this case $\ordel$, rather than a vector space, forms a rank-one local system on $\Xodel$. 
\end{remark}

\subsection{Polyvector fields}
\label{sec:nc-polyvect}

Dual to the standard constructions for forms above is a structure on polyvector fields, which we now describe; the proofs of the lemmas in this section are straightforward and are left to the reader.

 We denote by $\cTlog{\X} \subset \cT{\X}$ the locally free sheaf of \defn{logarithmic vector fields}, i.e.~vector fields tangent to $\bdX$ and by
\[
\derlog{\X} := \wedge^\bullet \cTlog{\X} \subset \der{\X}
\]
the sheaf of \defn{logarithmic polyvector fields}.  In normal crossings coordinates $(y_1,\ldots,y_k,z_1,\ldots,z_j)$, we have
\[
\derlog{\X} = \cO{\X}\abrac{y_1\cvf{y_1},\ldots,y_k\cvf{y_k},\cvf{z_1},\ldots,\cvf{z_j}}.
\]
We remark that the inclusion $\derlog{\X} \subset \der{\X}$ is $\cO{\X}$-dual to the inclusion $\forms{\X} \subset \logforms{\X}$, and correspondingly we have a dual weight filtration on polyvectors:
\[
\derlog{\X} = \W_0 \der{\X} \subset \W_1\der{\X} \subset \cdots \subset \W_n \der{\X} = \der{\X}
\]
defined by the formula
\[
\W_j \der{\X} := \der[\le j]{\X} \wedge \derlog{\X}.
\]
Note that we added a shift of $n$ to the standard dualization. This is
convenient in view of the following elementary result.
\begin{lemma}\label{lem:weight-gerst}
The weight filtration is compatible with the wedge product and Schouten--Nijenhuis bracket of polyvector fields:
\[
\W_j\der{\X}\wedge \W_k\der{\X} \subset \W_{j+k} \der{\X} \qquad [\W_{j}\der{\X},\W_k\der{\X}] \subset \W_{j+k}\der{\X}.
\]
Thus $\W_\bullet \der{\X}$ is a sheaf of filtered Gerstenhaber algebras.
\end{lemma}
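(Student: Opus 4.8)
The plan is to prove both inclusions locally, by a direct computation in normal crossings coordinates $(y_1,\ldots,y_k,z_1,\ldots,z_m)$. The key preliminary step is to describe the filtration explicitly in these coordinates: a monomial
\[
P = f\,\cvf{y_{i_1}}\wedge\cdots\wedge\cvf{y_{i_a}}\wedge\cvf{z_{j_1}}\wedge\cdots\wedge\cvf{z_{j_b}},
\]
with $f\in\cO{\X}$ and distinct indices, lies in $\W_p\der{\X}$ if and only if its \emph{weight} $w(P):=\#\{\,\ell : y_{i_\ell}\nmid f\,\}$ is at most $p$. Indeed, writing $f = h\prod_{\ell\,:\,y_{i_\ell}\mid f}y_{i_\ell}$ and grouping the corresponding $\cvf{y_{i_\ell}}$ into logarithmic factors $\logcvf{y_{i_\ell}}$ exhibits $P$ as an element of $\der[\le w(P)]{\X}\wedge\derlog{\X}$; conversely, a wedge product of a polyvector of degree $\le p$ and a logarithmic polyvector is an $\cO{\X}$-linear combination of monomials each having at most $p$ factors $\cvf{y_\ell}$ not matched by a factor $y_\ell$ in the coefficient, hence of weight $\le p$. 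Granting this description, compatibility with $\wedge$ is immediate: for monomials $P,Q$ the product $P\wedge Q$ is $\pm$ a monomial or zero, its coefficient is the product of those of $P$ and $Q$, and any coordinate dividing the coefficient of $P$ still divides that of $P\wedge Q$, so $w(P\wedge Q)\le w(P)+w(Q)$; $\cO{\X}$-bilinearity then gives $\W_p\der{\X}\wedge\W_q\der{\X}\subseteq\W_{p+q}\der{\X}$.

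For the bracket I would use that the Schouten--Nijenhuis bracket is the unique biderivation of $(\der{\X},\wedge)$ extending the Lie bracket of vector fields, along with the standard facts that (i) $\derlog{\X}$ is closed under the bracket (equivalently, $\cTlog{\X}$ is a Lie algebroid), and (ii) $[\der[p]{\X},\der[q]{\X}]\subseteq\der[p+q-1]{\X}$. Writing a local section of $\W_p\der{\X}$ as a sum of terms $P'\wedge L$ with $P'\in\der[\le p]{\X}$ and $L\in\derlog{\X}$, and similarly $Q'\wedge M$ for $\W_q\der{\X}$, the Leibniz rule expands $[P'\wedge L,\,Q'\wedge M]$, up to signs, into
\[
[P',Q']\wedge M\wedge L,\quad Q'\wedge[P',M]\wedge L,\quad P'\wedge[L,Q']\wedge M,\quad P'\wedge Q'\wedge[L,M].
\]
By (ii) the first term lies in $\der[\le p+q-1]{\X}\wedge\derlog{\X}\subseteq\W_{p+q}\der{\X}$, and by (i) the last lies in $\der[\le p]{\X}\wedge\der[\le q]{\X}\wedge\derlog{\X}\subseteq\W_{p+q}\der{\X}$. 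For the two middle terms it suffices to establish the auxiliary inclusion $[\der[\le c]{\X},\derlog{\X}]\subseteq\W_c\der{\X}$ for all $c$, since then $Q'\wedge[P',M]\wedge L\in\der[\le q]{\X}\wedge\W_p\der{\X}\wedge\derlog{\X}\subseteq\W_{p+q}\der{\X}$, and likewise for the third term after applying graded antisymmetry to $[L,Q']$.

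The auxiliary inclusion is where the (modest) real content lies, and I expect it to be the main obstacle. Applying the Leibniz rule once more — and using the elementary facts that $[\der[\le c]{\X},\cO{\X}]\subseteq\der[\le c-1]{\X}$ and that bracketing a logarithmic polyvector with a function, i.e.\ contracting it with an exact holomorphic one-form, again produces a logarithmic polyvector — one reduces to a bracket $[\xi_1\wedge\cdots\wedge\xi_c,\,\eta_1\wedge\cdots\wedge\eta_r]$ in which the $\xi_i$ are coordinate vector fields and the $\eta_j$ belong to $\cTlog{\X}$. The explicit biderivation formula writes this as a signed sum of terms $[\xi_i,\eta_j]\wedge\xi_1\wedge\cdots\widehat{\xi_i}\cdots\wedge\xi_c\wedge\eta_1\wedge\cdots\widehat{\eta_j}\cdots\wedge\eta_r$; since each $[\xi_i,\eta_j]$ is a single vector field, such a term is a wedge of at most $c$ vector fields with at most $r$ logarithmic vector fields, hence of weight $\le c$ and so in $\W_c\der{\X}$. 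This establishes the auxiliary inclusion and completes the proof; the remaining work — tracking signs in the Leibniz expansions and absorbing the coefficient functions — is routine.
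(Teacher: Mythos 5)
The paper offers no proof to compare against: it states explicitly that the proofs of the lemmas in this subsection are ``straightforward and left to the reader,'' so your job is simply to supply a correct argument, and in substance you have done so. Two remarks. First, for the wedge product your coordinate computation is correct but unnecessary: the inclusion follows in one line from the definition, since
\[
\W_j\der{\X}\wedge \W_k\der{\X} \;=\; \der[\le j]{\X}\wedge\derlog{\X}\wedge\der[\le k]{\X}\wedge\derlog{\X}\;\subseteq\;\der[\le j+k]{\X}\wedge\derlog{\X}\;=\;\W_{j+k}\der{\X},
\]
using only graded commutativity of $\wedge$ and the fact that $\derlog{\X}$ is closed under wedge. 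Second, your preliminary ``if and only if'' characterization of $\W_p\der{\X}$ on individual monomials is false in the ``only if'' direction: the weight of a monomial is not an invariant of the element, and a single monomial of weight $>p$ can lie in $\W_p\der{\X}$. For instance $(y_1+y_2)\,\cvf{y_1}\wedge\cvf{y_2}=(y_1\cvf{y_1})\wedge\cvf{y_2}+\cvf{y_1}\wedge(y_2\cvf{y_2})$ lies in $\W_1\der{\X}$, yet has weight $2$ by your count since neither coordinate divides $y_1+y_2$. This does not damage your proof, because the only two facts you actually use are the true ones (monomials of weight $\le p$ lie in $\W_p\der{\X}$, and every section of $\W_p\der{\X}$ is an $\cO{\X}$-combination of such monomials), but the characterization should be restated as ``$\W_p\der{\X}$ is locally spanned by monomials of weight $\le p$'' rather than as a membership criterion for individual monomials.

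The bracket argument is the part with genuine content and it is correct: the Leibniz expansion of $[P'\wedge L, Q'\wedge M]$ into four terms, the disposal of the first and last via $[\der[\le j]{\X},\der[\le k]{\X}]\subseteq\der[\le j+k-1]{\X}$ and the closure of $\derlog{\X}$ under the Schouten bracket, and the reduction of the middle terms to the auxiliary inclusion $[\der[\le c]{\X},\derlog{\X}]\subseteq\W_c\der{\X}$ — itself verified on generators, where each $[\xi_i,\eta_j]$ contributes a single (not necessarily logarithmic) vector field and hence keeps the non-logarithmic degree at most $c$ — is exactly the kind of routine verification the authors had in mind.
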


The associated graded $\gr^\W \der{\X}$ has the following description, which is dual to the corresponding identification $\eqref{eq:gr-logforms}$ for logarithmic forms. Given a simplex $\Delta \in \dc[k]{\X}$, consider the standard exact sequence for the normal bundle $\cN{\Xdel}$:
\[
\xymatrix{
0\ar[r] & \cT{\Xdel} \ar[r] & i^*_\Xdel \cT{\X} \ar[r] & \cN{\Xdel} \ar[r] & 0.
}
\]
Taking exterior powers of this sequence and composing with the restriction map $\der{\X}\to i_{\Delta*}i_{\Delta}^* \der{\X}$, i.e.~the unit map of the adjunction $(i_{\Delta}^*,i_{\Delta*})$, we obtain a canonical map of graded coherent sheaves 
\[
\xymatrix{
\der{\X} \ar[r] & i_{\Delta*}\der{\Xdel} \otimes \det\cN{\Xdel}[-k]
}
\]
which restricts to a canonical ``coresidue'' morphism
\begin{align}
 \W_k \der{\X} \to i_{\Delta*} \derlog{\Xdel} \otimes \det \cN{\Xdel}[-k]. \label{eq:Tpol-res}
\end{align}
\begin{lemma}\label{lem:gr-poly}
The coresidue map \eqref{eq:Tpol-res} induces an isomorphism 
\begin{align*}
\gr_k^\W \der{\X} &\cong \bigoplus_{\Delta \in \dc[k]{\X}} i_{\Delta*} \derlog{\Xdel} \otimes \det \cN{\Xdel}[-k].
\end{align*}
\end{lemma}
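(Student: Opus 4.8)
The plan is to imitate the classical computation of $\gr^\W \logforms{\X}$ via the residue map, dualizing throughout. The statement is local, so it suffices to work in a fixed system of normal crossings coordinates $(y_1,\ldots,y_k,z_1,\ldots,z_j)$ on an open set $\U$ meeting a single component $\Xdel$ with $\Delta_0 = \{1,\ldots,k\}$, and to check that the coresidue map \eqref{eq:Tpol-res} is surjective with kernel exactly $\W_{k-1}\der{\X}$ over $\U$; compatibility with restriction of coordinates and gluing over $\X$ is automatic since the construction is canonical. First I would write down an explicit $\cO{\X}$-basis of $\W_j\der{\X}$ in these coordinates: monomials in $y_1\cvf{y_1},\ldots,y_k\cvf{y_k},\cvf{z_1},\ldots,\cvf{z_j}$ wedged with at most $j$ of the $\cvf{y_i}/(y_i\cvf{y_i})$ ``corrections'', or more precisely the sheaf generated by wedges $\cvf{y_{i_1}}\wedge\cdots\wedge\cvf{y_{i_r}}\wedge(\text{log polyvector})$ with $r\le j$. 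Then $\W_k\der{\X}$ is spanned over $\W_{k-1}\der{\X}$ by the single ``top'' generator $\cvf{y_1}\wedge\cdots\wedge\cvf{y_k}\wedge(\text{log polyvector in }z,\text{ and the }y_i\cvf{y_i})$, modulo lower terms.

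Next I would compute what \eqref{eq:Tpol-res} does to such elements. Restricting to $\Xdel = \{y_1=\cdots=y_k=0\}$ kills $\cO{\X}$-coefficients down to their value on $\Xdel$; the wedge $\cvf{y_1}\wedge\cdots\wedge\cvf{y_k}$ maps to a generator of $\det\cN{\Xdel}$ (this is where the $\det\cN{\Xdel}$ twist and the shift $[-k]$ come from — it sits in the degree-$k$ part), and the remaining factors $\cvf{z_1},\ldots,\cvf{z_j}$ together with images of the $y_i\cvf{y_i}$ map into $\derlog{\Xdel}$, which in the induced normal crossings coordinates on $\Xdel$ (it has empty induced boundary in this local chart, or more generally its own $y$-coordinates when $\fX{k+1}$ is nonempty) is exactly $\cO{\Xdel}\abrac{\cvf{z_1},\ldots,\cvf{z_j}}$ or the appropriate log version. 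The key points to verify are: (i) any element of $\W_{k-1}\der{\X}$ maps to zero, because each of its basis monomials has at most $k-1$ of the $\cvf{y_i}$ factors, hence maps into the image of $\wedge^{k-1}\cN{\Xdel}$ inside $\wedge^k$, which is zero; and the log-only part maps into $\cT{\Xdel}\wedge(\cdots)$, i.e.\ the image of $\cT{\Xdel}$ in the normal exact sequence, killing the $\det\cN{\Xdel}$ component; (ii) conversely the top generator surjects onto $\derlog{\Xdel}\otimes\det\cN{\Xdel}$. Both are routine term-by-term checks, exactly dual to the residue computation, but I would spell out at least the behaviour of $\cvf{y_i}$ versus $y_i\cvf{y_i}$ under restriction so that the reader sees why $y_i\cvf{y_i}\mapsto 0$ in $\cN{\Xdel}$ but contributes a nonzero element of $\cT{\Xdel}$ when there is a nonempty induced boundary.

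Finally I would note that the canonical map $\der{\X}\to i_{\Delta*}i_\Delta^*\der{\X}$ and the maps in the normal bundle sequence are globally defined, so the local computation glues: the coresidue \eqref{eq:Tpol-res} has image landing in $\W_{k-1}\der{\X}$-free quotient and induces on $\gr_k^\W$ an isomorphism onto the $\Delta$-summand, and summing over all $\Delta\in\dc[k]{\X}$ (the distinct local sheets through a point are precisely indexed by the $(k-1)$-simplices) gives the claimed decomposition. The main obstacle is purely bookkeeping: keeping the $\det\cN{\Xdel}$ twist and the degree shift $[-k]$ consistent with the Koszul signs in $\wedge^\bullet$ of the normal exact sequence, and making sure the identification of $\W_{k-1}\der{\X}$ with the kernel is exact rather than just an inclusion — equivalently, that the associated graded has no extra terms in degrees $>k$. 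This last point follows because $i_\Delta^*\der{\X}$ in degree $> \dim\Xdel + k$ vanishes for dimension reasons, but it is worth stating. Since the paper explicitly delegates the lemmas of this subsection to the reader, I would keep the write-up to a short paragraph indicating the local model, the explicit image of the top generator, and the vanishing on $\W_{k-1}$, exactly as above.
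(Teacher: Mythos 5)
The proposal is correct and follows exactly the intended argument: the paper explicitly leaves the proofs in this subsection to the reader, and the expected proof is precisely the local normal-crossings-coordinate computation dual to the classical residue calculation, with the monomial basis of $\W_j\der{\X}$, the vanishing of the coresidue on $\W_{k-1}\der{\X}$ and on monomials belonging to other $k$-subsets, and the identification of the top generators with $\derlog{\Xdel}\otimes\det\cN{\Xdel}$. Your write-up supplies all the relevant checks, so there is nothing to add.
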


\section{Log symplectic geometry}
\label{sec:log-symp}

\label{sec:log-symp}

\subsection{Log symplectic structures}

We continue to fix a connected complex manifold $\X$ and a simple normal crossings divisor $\bdX\subset \X$ with dual complex $\dcX$ as in \autoref{sec:nc-divisors}.  We will now incorporate a Poisson structure.  We shall assume that the reader is familiar with basic aspects of Poisson geometry, the details of which are covered, for instance, in \cite{Dufour2005,Laurent-Gengoux2013,Polishchuk1997}.

We recall the following notion from \cite{Goto2002}, which makes sense more generally when $\bdX$ is an arbitrary hypersurface.
\begin{definition}
A \defn{log symplectic form} on the pair $(\X,\bdX)$ is a global closed logarithmic two-form $
\omega \in \coH[0]{\logforms[2,cl]{\X}}$ that defines a nondegenerate pairing on the coherent sheaf $\cTlog{\X}$.
\end{definition}

Note that the existence of a log symplectic form implies that the dimension of $\X$ is even.  Inverting $\omega$, we obtain a Poisson bivector
\[
\pi \in \coH[0]{\derlog[2]{\X}}
\]
that is generically nondegenerate, and has $\bdX$ as its degeneracy divisor. That is,  $\bdX$ is the scheme-theoretic vanishing locus of the Pfaffian
\[
\pi^{\tfrac{1}{2}\dim \X} \in \coH[0]{\det{\cT{\X}}(-\bdX)},
\]
so that $\bdX$ is an anti-canonical divisor in $\X$.  In this way, we obtain a bijection between the set of log symplectic structures on $(\X,\bdX)$ and the set of generically nondegenerate Poisson structures on $\X$ with degeneracy divisor $\bdX$.

From now on, we fix a log symplectic structure $\omega$ on $(\X,\bdX)$, or equivalently the corresponding Poisson bivector $\pi$.   Note that $\pi$ is automatically tangent to all strata of $\bdX$, so that for each simplex $\Delta$ of $\dcX$, the restriction
\[
\pidel := \pi|_{\Xdel} \in \coH[0]{\Xdel;\derlog[2]{\Xdel}}
\]
defines a Poisson structure on $\Xdel$, making $\Xdel$ into a closed Poisson submanifold.  Equivalently, $\Xdel$ is preserved by the flow of all Hamiltonian vector fields.   In fact, since $\bdX$ is determined by $\pi$, the strata are preserved by all infinitesimal symmetries, not just the Hamiltonian ones.

Suppose that $\Delta$ is a simplex of $\dcX$, and denote by $\Delta_0 \subset \Delta$ its vertices, so that elements in $\Delta_0$ are in bijection with the irreducible components of $\bdX$ that intersect along $\Xdel$.  Since the log symplectic form $\omega$ is closed, so is its residue $\res{\omega} \in \gr_2\logforms{\X}$.  Restricting the latter to $\Xdel$, we obtain a constant section
\[
B_\Delta  \in   \wedge^2 \CC^{\Delta_0} \subset  \sect{\Xdel,\wedge^2 \cO{\Xdel}^{\Delta_0}}
\]
which we call the \defn{biresidue of $\omega$ along $\Xdel$}.  Note that if we choose an identification $\Delta_0 \cong \{1,\ldots,k\}$, then the biresidue is simply a skew-symmetric matrix of constants $B_\Delta = (B_{ij}) \in \CC^{k\times k}$.  We shall indicate the biresidue pictorially by orienting the edges of the dual complex and labelling them with the corresponding biresidue, as in the following example.

\begin{example}[\runex]
Let $\X=\CC^4$ with boundary divisor $\bdX = \{y_1y_2y_3=0\}$ as in \autoref{ex:3cpt-dual-complex}. Consider the logarithmic two form 
\begin{align}
\omega = b_1 \dlog{y_2}\wedge \dlog{y_3} + b_2 \dlog{y_3}\wedge \dlog{y_1} + b_3 \dlog{y_1}\wedge \dlog{y_2}+ \dd z \wedge \sum_i \dlog{y_i} \label{eq:3cpt-logform}
\end{align}
where $b_i \in \CC$ are constants.  It has a biresidue along each codimension two stratum, i.e.~each edge of the triangle, corresponding to the coefficients of the forms $\dlog{y_i}\wedge\dlog{y_j}$.  We represent this pictorially as follows:
\begin{align}
\res \omega &= \vcenter{\hbox{ \begin{tikzpicture}[scale=0.7,decoration={
    markings,
    mark=at position 0.5 with {\arrow{>}}}
    ] 
    \draw[fill=gray] (-30:1) -- (90:1) -- (210:1) -- (-30:1);
\draw[postaction={decorate}] (-30:1) -- (90:1);
\draw[postaction={decorate}] (90:1) -- (210:1) ;
\draw[postaction={decorate}] (210:1) -- (-30:1);
\draw (90:1.3) node {$(y_2)$};
\draw (-30:1.5) node {$(y_1)$};
\draw (210:1.5) node {$(y_3)$};
\draw (30:0.9) node {$b_3$};
\draw (150:0.9) node {$b_1$};
\draw (-90:0.9) node {$b_2$};
\end{tikzpicture}}} \label{eq:3cpt-bires}
\end{align}
If  $\Delta \in\dc[3]{\X}$ is the 2-simplex corresponding to the unique codimension-three stratum, and we order the vertices $(1,2,3)$ in the obvious way, then the biresidue $B_\Delta \in \wedge^2 \CC^{\Delta_0}$ is represented by the skew-symmetric matrix
\[
B_\Delta = \begin{pmatrix}
0 & b_3 & -b_2 \\
-b_3 & 0 & b_1 \\
b_2 & -b_1 & 0
\end{pmatrix}
\]
containing the biresidues along all codimension two strata.

One easily checks that $\omega$ is nondegenerate if and only if the constant $c := b_1+b_2+b_3$ is nonzero.  In this case, it defines a log symplectic structure on $\X = \CC^4$, whose inverse is the logarithmic bivector
\[
\pi = -\frac{1}{c} \rbrac{ \sum_{i} \logcvf{y_i}\wedge\logcvf{y_{i+1}}+ \cvf{z} \wedge \sum_i {b_i\logcvf{y_i}} }
\]
The restriction  of $\pi$ to any stratum $\Xodel$ is easily determined by setting the relevant coordinates to zero. For instance, $\pi$ vanishes identically along the codimension-three stratum $y_1=y_2=y_3=0$, i.e., if $\Delta \in\dc[3]{\X}$ is the unique 2-simplex as above, then $\pidel = 0$.  
\end{example}

\begin{example}\label{ex:magnetic}
Let $\bdZ \subset \Z$ be a simple normal crossings divisor in a complex manifold $\Z$ with complement $\Zo := \Z \setminus \bdZ$, and let $\X := \logctb{\Z}$ be the logarithmic cotangent bundle, i.e.~the total space of the locally free sheaf $\logforms[1]{\Z}$.  Then $\X$ carries a canonical log symplectic form $\omega$ which agrees with the standard symplectic structure on the open dense set $\Xo = \ctb{\Zo} \subset \X$.  There is a canonical projection $\phi : \X \to \Z$, and the degeneracy divisor on $\X$ is the pullback $\bdX = \phi^*\bdZ$.

If $B \in \coH[0]{\logforms[2,\mathrm{cl}]{\Z}}$ is any closed logarithmic two-form, then the form $\omega + \phi^*B$ is again a log symplectic structure on $\X$ with the same polar divisor $\bdX \subset \X$, yielding a deformation of the Poisson bivector.  In this case, $B$ is called the \defn{magnetic term}.  
\end{example}

\begin{example}
As a special case of the previous example, suppose that $\Z=\CC^k$ with coordinates $y_1,\ldots,y_k$ and that $\bdZ \subset \Z$ is the union of the coordinate hyperplanes. If we denote by $p_j := y_j\cvf{y_j}$ the corresponding coordinates on the logarithmic cotangent bundle, then the canonical Poisson structure and log symplectic form are given by
\begin{align*}
\pi &= \sum_{j=1}^k \logcvf{y_j} \wedge \cvf{p_j}  & \omega &= \sum_{j=1}^k \dd p_j \wedge \dlog{y_j}.
\end{align*}
Every magnetic term is cohomologous in $\logforms{\Z}$ to one of the form
\[
B = \sum_{1 \le i < j \le k} B_{ij} \dlog{y_i}\wedge\dlog{y_j}
\]
where $(B_{ij}) \in \CC^{k\times k}$ is a skew-symmetric matrix.  We then calculate the deformed bivector
\begin{align}
\pi' := (\omega+\phi^*B)^{-1} &= \sum_j \logcvf{y_j} \wedge \cvf{p_j} + \sum_{i,j}B_{ij}\cvf{p_i}\wedge\cvf{p_j},  \label{eq:local-magnetic}
\end{align}
giving the corresponding Poisson structure on $\logctb{\Z}$.
\end{example}

Every symplectic manifold is locally isomorphic to a cotangent bundle via the Darboux theorem.  Similarly, every log symplectic structure on $(\X,\bdX)$ is locally equivalent to a log cotangent bundle with magnetic term, but the relevant notion of equivalence is weaker than isomorphism:
\begin{definition}
Two germs $\Y,\Y'$ of Poisson manifolds are \defn{stably equivalent} if there exist symplectic germs $\bS,\bS'$ such that $\Y \times \bS \cong \Y'\times \bS'$.
\end{definition}
A fundamental result in Poisson geometry, known as Weinstein's splitting theorem~\cite{Weinstein1983}, states that every Poisson germ is stably equivalent to a germ of a Poisson structure that vanishes at a point, the latter being uniquely determined up to isomorphism.  We apply this result to establish the following local model:

\begin{proposition}\label{prop:stable-form}
Suppose that $\pi$ is a log symplectic Poisson structure on the normal crossings divisor $(\X,\bdX)$, and suppose that $\Xodel \subset \X$ is a stratum of codimension $k$, corresponding  to a simplex $\Delta$.  Then near any point $p \in \Xodel$, the germ $(\X,\pi)_p$ is stably equivalent to the germ \eqref{eq:local-magnetic}, where $(B_{ij})$ is the matrix of biresidues along $\Xdel$ relative to an ordering of $\Delta_0$. 
\end{proposition}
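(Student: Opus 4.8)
The plan is to combine Weinstein's splitting theorem with a normal-form argument for the symplectic leaf through $p$, and then identify the transverse data with the magnetic-term model. First I would apply Weinstein's splitting theorem to the Poisson germ $(\X,\pi)_p$: since $p$ lies on the stratum $\Xodel$ of codimension $k$, the symplectic leaf through $p$ is contained in $\Xodel$, and its dimension equals $\dim\X - 2k = 2j$ where $j = \dim \Xodel - k$ (using $\dim\X = 2k+2j$, forced by nondegeneracy of $\omega$ on the symplectic part). Splitting then gives a stable equivalence $(\X,\pi)_p \cong \bS \times (\Y,\pi_\Y)_0$, where $\bS$ is a $2j$-dimensional symplectic ball and $(\Y,\pi_\Y)$ is a germ of a Poisson structure on a $2k$-dimensional manifold vanishing at the origin $0$, uniquely determined up to isomorphism. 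Crucially, since the degeneracy divisor is intrinsic to $\pi$, the splitting must carry $\bdX$ to $\bS \times \bdY$ where $\bdY \subset \Y$ is the degeneracy divisor of $\pi_\Y$, which is a normal crossings divisor through $0$ with exactly $k$ branches (matching the $k$ components meeting along $\Xodel$). Hence it suffices to prove the proposition for the germ $(\Y,\pi_\Y)_0$, i.e.\ in the case where $\pi$ vanishes at the point and the stratum is a single point $0 \in \CC^{2k}$.

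Next I would put the transverse germ into the log-cotangent form. In normal crossings coordinates $y_1,\dots,y_k,z_1,\dots,z_k$ on $\Y$ with $\bdY = \{y_1\cdots y_k = 0\}$, the log symplectic form $\omega$ is a closed logarithmic two-form. I would argue that after a logarithmic change of coordinates fixing $0$, one can bring $\omega$ to the form $\sum_j \dd z_j \wedge \dlog{y_j} + B$ where $B$ is a closed logarithmic two-form pulled back from the $y$-factor (the ``magnetic'' part). The mechanism is a relative/equivariant Moser-type argument: the de Rham cohomology $\coH[2]{\Yo;\CC}$ of the open part $\Yo = (\CC^*)^k \times \CC^k$ is $\wedge^2 \CC^k$, spanned by the classes $\dlog{y_i}\wedge\dlog{y_j}$; so modulo exact logarithmic forms $\omega$ is determined by its periods, which are exactly the biresidues $B_{ij}$ along $\Xdel$. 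Writing $\omega = \omega_0 + B + \dd\alpha$ with $\omega_0 = \sum \dd z_j \wedge \dlog{y_j}$ the standard log-cotangent form and $B = \sum_{i<j} B_{ij}\dlog{y_i}\wedge\dlog{y_j}$, I would absorb the exact term $\dd\alpha$ by a Moser flow: the family $\omega_t = \omega_0 + B + t\,\dd\alpha$ consists of log symplectic forms (nondegeneracy is open and I would choose the representative $\alpha$ small, or check nondegeneracy persists since the biresidue matrix $(B_{ij})$, together with the $\dd z_j \wedge \dlog{y_j}$ block, governs invertibility as in the running example), and integrating the time-dependent logarithmic vector field $X_t$ with $\hook{X_t}\omega_t = -\alpha$ gives a logarithmic diffeomorphism pulling $\omega_1 = \omega$ back to $\omega_0 + B$. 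Inverting, this is precisely the bivector \eqref{eq:local-magnetic}, with $(B_{ij})$ the biresidue matrix along $\Xdel$ relative to the chosen ordering of $\Delta_0$.

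The main obstacle I anticipate is the Moser argument in the logarithmic category: one must ensure the primitive $\alpha$ of the exact part can be chosen to be a \emph{logarithmic} one-form vanishing appropriately at $0$ (so that the flow fixes the origin and preserves $\bdY$), and that $\omega_t$ remains nondegenerate for all $t \in [0,1]$ rather than merely near $t=0$. For the first point I would use the logarithmic Poincar\'e lemma (the complex $\logforms{\Y}$ computes $\coH{\Yo;\CC}$, which is purely combinatorial here) to choose $\alpha$ with logarithmic poles, and shrink the germ if needed. For the second, I would note that nondegeneracy of $\omega_t$ on $\cTlog{\Y}$ at the origin depends only on the constant ``leading term'' $\sum \dd z_j \wedge \dlog{y_j} + B$, which is independent of $t$; hence nondegeneracy holds on a neighbourhood of $0$ uniformly in $t$, and one restricts to the intersection of these neighbourhoods. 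An alternative, if the Moser flow is delicate, is to invoke the holomorphic tubular neighbourhood and rectification lemma (\autoref{lem:rectify}) to first normalize the divisor, reducing the classification of $\omega$ to its cohomology class, but the Moser approach seems cleanest and most self-contained.
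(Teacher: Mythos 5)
Your overall strategy --- reduce by Weinstein splitting to a germ of minimal dimension, then show that two log symplectic forms on $(\CC^{2k},\{y_1\cdots y_k=0\})$ with the same biresidues are related by a Moser isotopy --- is the same as the paper's, which also uses Weinstein splitting and then cites \cite[Example 3.9]{Pym2017} for the Moser step. However, your first step contains a genuine error. The symplectic leaf through $p$ does \emph{not} have dimension $\dim\X-2k$ in general: by \autoref{lem:const-rank} its dimension is $\dim\X-2k+\rank B_\Delta$, so the transverse germ produced by Weinstein splitting has dimension $2k-\rank B_\Delta$, and there is no stable equivalence to a $2k$-dimensional germ \emph{vanishing at the origin} unless $B_\Delta=0$. (Concretely, for \eqref{eq:local-magnetic} with $k=2$ and $B_{12}\ne 0$, the bivector has rank $2$ at the origin of the codimension-two stratum, not rank $0$.) The paper avoids this by arguing only that the corank at $p$ is \emph{at most} $2k$ (via the order of vanishing of the Pfaffian) and then stabilizing --- splitting off symplectic factors or taking products with symplectic balls --- to reach dimension exactly $2k$, \emph{without} requiring $\pi$ to vanish at the point. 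Since your Moser step never actually uses the vanishing, the reduction is repairable, but as written it is false.

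Second, the Moser step has a gap precisely at the point you flag as delicate. The linear path $\omega_t=\omega_0+B+t\,\dd\alpha$ can degenerate for intermediate $t$: the class of $\dd\alpha$ is forced on you by $\omega-\omega_0-B$ and cannot be ``chosen small,'' and $\dd\alpha$ need not vanish at the origin as a logarithmic form --- the biresidues of $\omega$ and $\omega_0+B$ agree, but the $\dlog{y_i}\wedge\dd z_j$ and $\dd z_i\wedge\dd z_j$ components of their values at the origin need not (consider $\omega=\omega_0+B+\dd z_1\wedge\dd z_2$, which is closed, logarithmic, and nondegenerate for generic $B$). So nondegeneracy of $\omega_t|_0$ is \emph{not} governed by a $t$-independent leading term, and for suitable $B$ the Pfaffian of $\omega_t|_0$ does vanish for some $t\in(0,1)$. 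This can be fixed --- for instance by replacing $[0,1]$ with a path in $\CC$ avoiding the finitely many zeros of the polynomial $t\mapsto\mathrm{Pf}(\omega_t|_0)$, or by the scaling/homogenization device used in \autoref{prop:homogenization} and in the cited \cite[Example 3.9]{Pym2017} --- but the justification you give does not work.
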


\begin{proof}
First note that near a point of corank $2j$, the Pfaffian of the Poisson tensor must vanish to order at least $j$.  Since $\bdX$ has multiplicity $k$ at $p$, we conclude that the corank of $\pi$ at $p$ is at most $2k$. Thus, by taking products with symplectic germs or splitting off transverse factors using Weinstein's splitting theorem, we may  assume without loss of generality that the dimension of $\X$ is twice the number of irreducible components of $\bdX$ containing $p$.  Thus the normal crossings germ $(\X,\bdX)_p$ is equivalent to the germ at the origin of the logarithmic cotangent bundle of $\Z=\CC^k$ with divisor $\bdZ \subset \Z$ given by the union of the coordinate hyperplanes.  Evidently the biresidues of $\omega$ are the same as those of the normal form \eqref{eq:local-magnetic}. A Moser argument then shows that their germs are isomorphic; see \cite[Example 3.9]{Pym2017}. 
\end{proof}

\begin{remark}
It was observed in \cite[Example 3.9]{Pym2017} that the problem of parameterizing the isomorphism classes of normal crossings log symplectic germs of a given dimension is somewhat subtle, due to constraints on the rank imposed by the dimension and the number of irreducible components.  The notion of stable equivalence gives a cleaner formulation, and this weaker notion will suffice for our purposes. 
\end{remark}

\subsection{Symplectic leaves}

Recall that every Poisson manifold comes equipped with a canonical foliation whose leaves are symplectic.  The tangent sheaf of this foliation is the image of the \defn{anchor map}
\[
\pis : \forms[1]{\X} \to \cT{\X}.
\]
given by contraction of forms into the bivector $\pi$.  In the log symplectic case, this map extends to an isomorphism $\logforms{\X} \cong \cTlog{\X}$, which we denote by the same symbol.

It was observed in~\cite{Polishchuk1997} that that if $\pi$ is log symplectic on $(\X,\bdX$), then the rank of $\pi$ is constant along every stratum $\Xodel \subset \X$, i.e., every symplectic leaf in $\Xodel$ has the same dimension.  This is also evident from the stable local normal form \eqref{eq:local-magnetic}, which furthermore makes it clear that the rank along $\Xodel$ is determined by the residue of $\omega$ near $p$.

This can be seen more directly, as follows~\cite[Section 5.3.2]{Pym2018a}.  Suppose that  $\Delta \subset \dcX$ is a simplex, and $\Delta_0$ is its set of vertices.  Then by taking residues of logarithmic one-forms along the components of $\bdX$ labelled by $\Delta_0$, we obtain a short exact sequence
\[
\xymatrix{
0 \ar[r] & \logforms[1]{\Xdel} \ar[r] & \idelb\logforms[1]{\X} \ar[r] & \cO{\Xdel}^{\Delta_0}  \ar[r] & 0
}
\]
Note that the bundle $\cO{\Xdel}^{\Delta_0}$ is trivial, with basis given by the elements of $\Delta_0$; hence it is canonically self-dual, and we may alternatively view the biresidue $\Bdel \in \wedge^2 \CC^{\Delta_0}$ as a constant skew-adjoint endomorphism of this bundle.   We then have the following commutative diagram of vector bundles on $\Xdel$, with exact rows:
\begin{align*}
\xymatrix{
0 \ar[r] 	&	\logforms[1]{\Xdel} \ar[r]\ar[d]_{\pisdel}		& \idelb\logforms[1]{\X} \ar[r] \ar@/_/[d]_{\pis} 	& \cO{\Xdel}^{\Delta_0}  \ar[r] & 0  \\
0  			& \cTlog{\Xdel} \ar[l] 								& \idelb\cTlog{\X} \ar[l] \ar@/_/[u]_{\omega^\flat}		& \cO{\Xdel}^{ \Delta_0} \ar[l] \ar[u]_{\Bdel} & 0.  \ar[l] 
}
\end{align*}
A quick diagram chase then establishes the following:
\begin{lemma}\label{lem:const-rank}
The normal bundle of the symplectic leaves in $\Xodel$ is canonically isomorphic to $\coker \Bdel$.  In particular, all symplectic leaves in $\Xodel$ have the same dimension.
\end{lemma}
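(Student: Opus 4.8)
The plan is to deduce the statement from the displayed commutative diagram by a diagram chase, the essential input being that the middle vertical map $\pis\colon\idelb\logforms[1]{\X}\to\idelb\cTlog{\X}$ is an isomorphism, with inverse $\omega^\flat$, because $\pi$ is log symplectic on $\X$. First I would reduce to a sheaf-theoretic statement on $\Xdel$. Over the open stratum $\Xodel$ the induced divisor $\bdXdel$ is empty, so $\logforms[1]{\Xdel}$ and $\cTlog{\Xdel}$ restrict there to the ordinary cotangent and tangent sheaves, and $\pisdel$ restricts to the anchor map $\pidel^\sharp$ of the Poisson structure $\pidel$. Hence the tangent sheaf of the symplectic foliation of $\Xodel$ is $\img(\pisdel)|_{\Xodel}$, and $\coker(\pisdel)|_{\Xodel}$ is — once we know this image is a subbundle — the normal bundle of the leaves. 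So it suffices to construct a canonical isomorphism $\coker\pisdel\cong\coker\Bdel$ of coherent sheaves on $\Xdel$.

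To do so I would compute $\coker\pisdel$ in two steps. Commutativity of the left-hand square factors $\pisdel$ as $\logforms[1]{\Xdel}\hookrightarrow\idelb\logforms[1]{\X}\xrightarrow{\pis}\idelb\cTlog{\X}\twoheadrightarrow\cTlog{\Xdel}$, where the last arrow is the natural surjection of the dual residue sequence $0\to\cO{\Xdel}^{\Delta_0}\to\idelb\cTlog{\X}\to\cTlog{\Xdel}\to 0$. It follows that
\[
\coker\pisdel\;\cong\;\idelb\cTlog{\X}\big/\bigl(\cO{\Xdel}^{\Delta_0}+\pis(\logforms[1]{\Xdel})\bigr),
\]
the subsheaf $\cO{\Xdel}^{\Delta_0}$ being its image in $\idelb\cTlog{\X}$ under the dual sequence. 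Applying the isomorphism $\omega^\flat=(\pis)^{-1}$ carries the right-hand side isomorphically onto $\idelb\logforms[1]{\X}\big/\bigl(\logforms[1]{\Xdel}+\omega^\flat(\cO{\Xdel}^{\Delta_0})\bigr)$; and since $\logforms[1]{\Xdel}$ is the kernel of the residue surjection $\idelb\logforms[1]{\X}\twoheadrightarrow\cO{\Xdel}^{\Delta_0}$, this quotient is canonically the cokernel of the composite
\[
\cO{\Xdel}^{\Delta_0}\hookrightarrow\idelb\cTlog{\X}\xrightarrow{\omega^\flat}\idelb\logforms[1]{\X}\twoheadrightarrow\cO{\Xdel}^{\Delta_0}.
\]
By commutativity of the right-hand square this composite is exactly $\Bdel$, so $\coker\pisdel\cong\coker\Bdel$, canonically.

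Finally I would record the consequences. Since $\Bdel$ is a constant skew-adjoint endomorphism of the trivial bundle $\cO{\Xdel}^{\Delta_0}$, the sheaf $\coker\Bdel$ is a trivial bundle of rank $|\Delta_0|-\rank\Bdel$; in particular $\coker(\pisdel)|_{\Xodel}$ is locally free, which forces $\img(\pisdel)|_{\Xodel}$ to be a subbundle, and hence justifies identifying $\coker(\pisdel)|_{\Xodel}\cong(\coker\Bdel)|_{\Xodel}$ with the normal bundle of the symplectic leaves. Its constant rank shows all leaves in $\Xodel$ have the same dimension (namely $\dim\Xodel-|\Delta_0|+\rank\Bdel$). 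The main — indeed essentially the only — subtlety is the commutativity of the two squares with the maps as drawn, which encodes that the bottom row is the $\cO{\Xdel}$-dual of the residue sequence and that $\Bdel$ is, by construction, the restriction to $\Xdel$ of the residue of $\omega$; granting the diagram (already furnished just above the lemma), the argument is purely formal.
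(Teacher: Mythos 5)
Your proof is correct and is essentially the paper's argument: the paper displays the same commutative diagram and simply asserts that ``a quick diagram chase'' establishes the lemma, and the chase you carry out --- factoring $\pisdel$ through the two residue sequences, transporting by $\omega^\flat=(\pis)^{-1}$, and identifying the resulting cokernel with $\coker(\res\circ\omega^\flat\circ\iota)=\coker\Bdel$ --- is exactly the intended one. Your closing remark that local freeness of $\coker\Bdel$ forces the image of the anchor to be a subbundle is a worthwhile point that the paper leaves implicit.
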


\begin{lemma}\label{lem:bires-nondegen}
The Poisson submanifold $(\Xdel,\pidel)$ is log symplectic if and only if the bilinear form $B_\Delta$ is nondegenerate.  
\end{lemma}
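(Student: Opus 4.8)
The plan is to deduce the statement from the local normal form of \autoref{prop:stable-form} together with a short determinant computation. Being log symplectic means precisely that $\pidel$ induces a nondegenerate pairing on the locally free sheaf $\cTlog{\Xdel}$ at every point, and nondegeneracy of the constant matrix $\Bdel$ is a single condition, so it is enough to work near an arbitrary point $q\in\Xdel$. Let $\Delta'$ be the simplex of $\dcX$, necessarily having $\Delta$ as a face, whose open stratum $\Xo_{\Delta'}$ contains $q$, and put $\Delta'_0\supseteq\Delta_0$ for its vertex set and $k':=\lvert\Delta'_0\rvert$. First I would apply \autoref{prop:stable-form} at $q$ with respect to $\Delta'$ to identify the germ $(\X,\pi)_q$ with a product $(\logctb{\CC^{k'}},\pi')_0\times(\bS,\pi_\bS)_0$, where $\bS$ is a symplectic ball and $\pi'$ is the magnetic bivector \eqref{eq:local-magnetic} whose magnetic matrix $(B_{ij})$ is the biresidue $B_{\Delta'}$ along $\X_{\Delta'}$. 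Under this identification $\bdX$ corresponds to $\{y_1\cdots y_{k'}=0\}\times\bS$; the stratum closure $\Xdel$, being the component of $\fX{k}$ through $q$ cut out by the branches indexed by $\Delta_0$, corresponds to $\{y_i=0:i\in\Delta_0\}\times\bS$; the divisor $\bdXdel$ corresponds to $\{\prod_{j\in\Delta'_0\setminus\Delta_0}y_j=0\}$ on that slice; and the biresidue of $\omega$ along $\Xdel$ is the principal submatrix $\Bdel=(B_{ij})_{i,j\in\Delta_0}$ of $B_{\Delta'}$.

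Next I would restrict $\pi'$ to this slice by setting $y_i=0$ for $i\in\Delta_0$. Since each such $\logcvf{y_i}$ restricts to the zero vector field, one obtains
\[
\pidel=\sum_{j\in\Delta'_0\setminus\Delta_0}\logcvf{y_j}\wedge\cvf{p_j}+\sum_{i,j}B_{ij}\,\cvf{p_i}\wedge\cvf{p_j}+\pi_\bS,
\]
which visibly lies in $\derlog[2]{\Xdel}$ with respect to $\bdXdel$ (so this computation also reproves, locally, that each stratum of a log symplectic structure is a logarithmic Poisson submanifold). As the summand $\pi_\bS$ is already symplectic, $(\Xdel,\pidel)$ is log symplectic near $q$ if and only if its remaining two sums induce a nondegenerate pairing on the span of the vector fields $\logcvf{y_j}$, $j\in\Delta'_0\setminus\Delta_0$, and $\cvf{p_i}$, $i\in\Delta'_0$, i.e.\ on the logarithmic tangent sheaf of the $\logctb{\CC^{k'}}$ factor restricted to the slice.

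Finally, this reduces to linear algebra over the constants. Writing $I:=\Delta_0$, $J:=\Delta'_0\setminus\Delta_0$ and decomposing $B_{\Delta'}$ into blocks accordingly, in the ordered frame $\bigl((\logcvf{y_j})_{j\in J},(\cvf{p_i})_{i\in J},(\cvf{p_i})_{i\in I}\bigr)$ the pairing has, up to an irrelevant overall scalar, the constant skew matrix
\[
M=\begin{pmatrix}0 & \mathrm{Id}_J & 0\\ -\mathrm{Id}_J & B_{JJ} & B_{JI}\\ 0 & B_{IJ} & B_{II}\end{pmatrix}.
\]
Expanding the determinant along the first block row (the identity block clears the second block column, after which $M$ becomes block triangular) yields $\det M=\pm\det B_{II}=\pm\det\Bdel$, so $M$ is nonsingular exactly when $\Bdel$ is nondegenerate. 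Since $M$ is constant, $\pidel$ is nondegenerate on a neighbourhood of $q$ precisely when $\Bdel$ is nondegenerate, and since $q\in\Xdel$ was arbitrary while $\Bdel$ is a global constant, the lemma follows. (For $q$ in the open stratum $\Xodel$ one has $J=\varnothing$ and $M=B_{II}=\Bdel$, recovering \autoref{lem:const-rank}; the new content is that over $\bdXdel$ the extra directions $\logcvf{y_j},\cvf{p_j}$ enter in nondegenerate hyperbolic pairs and therefore do not change the criterion.) I expect the only genuine bookkeeping — the main obstacle, such as it is — to be this first step of matching $\Xdel$, $\bdXdel$ and the biresidue $\Bdel$ against the corresponding coordinate data in the normal form; the determinant identity itself is routine, and the rest is formal.
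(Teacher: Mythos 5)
Your argument is correct, but it takes a genuinely different route from the paper. The paper establishes both \autoref{lem:const-rank} and this lemma in one stroke by a diagram chase on the residue exact sequence: the rows of the displayed diagram are exact, the middle vertical map $\pis:\idelb\logforms[1]{\X}\to\idelb\cTlog{\X}$ is an isomorphism because $(\X,\omega)$ is log symplectic, and the induced map on the quotient term is $\Bdel$; the five lemma (or snake lemma) then shows that $\pisdel:\logforms[1]{\Xdel}\to\cTlog{\Xdel}$ is an isomorphism if and only if $\Bdel$ is, with $\coker\Bdel$ identified with the normal bundle of the leaves as a byproduct. That argument is coordinate-free, works at all points of $\Xdel$ simultaneously, and needs no normal form. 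Your proof instead invokes \autoref{prop:stable-form} (hence Weinstein splitting and a Moser argument) to reduce to the explicit magnetic bivector \eqref{eq:local-magnetic}, restricts it to the slice, and evaluates a block determinant; the bookkeeping identifying $\Xdel$, $\bdXdel$ and $\Bdel=B_{II}$ with the coordinate data is carried out correctly, and the determinant identity $\det M=\pm\det\Bdel$ is right. What your route buys is an explicit local expression for $\pidel$ together with a hands-on verification that the criterion is insensitive to the ambient boundary components in $J=\Delta'_0\setminus\Delta_0$ (they contribute hyperbolic pairs); what it costs is reliance on the stable normal form and a pointwise check, where the paper's sheaf-level argument is shorter and yields the stronger statement \autoref{lem:const-rank} for free.
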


\begin{example}
If $\Delta \in \dcX[2]$ is a 1-simplex (an edge), then the corresponding codimension-two stratum $\Xdel$ is log symplectic if and only if the biresidue $\Bdel$ is nonzero. 
\end{example}

\begin{example}\label{ex:tetra-nondegen}
If $\Delta \in \dcX[4]$ is a 3-simplex (a tetrahedron), the biresidue $\Bdel$ may depicted as follows:
\begin{center}
\begin{tikzpicture}[scale=1,decoration={
    markings,
    mark=at position 0.55 with {\arrow{>}}}]
\coordinate (A) at (210:1) {};
\coordinate (B) at (0,0) {};
\coordinate (C) at (-30:1)  {};
\coordinate (D) at (90:1) {};
\draw[fill=gray,opacity=0.6] (A) -- (B) -- (C) -- (A);
\draw[gray,fill,opacity=0.8] (A) -- (C) -- (D) -- (C);
\draw[fill=gray,opacity=0.2] (A) -- (B) -- (D) -- (A);
\draw[fill=gray,opacity=0.4] (C) -- (B) -- (D) -- (C);
\foreach \i in {0,1,2}
{
	\draw (120*\i-90:0.75) node {$\scriptstyle a_{\i}$};
	\draw (120*\i+65:0.4) node {$\scriptstyle b_{\i}$};
}
\draw[postaction=decorate] (A) -- (B);
\draw[postaction=decorate] (C) -- (B);
\draw[postaction=decorate] (D) -- (B);
\draw[postaction=decorate] (A) -- (C);
\draw[postaction=decorate] (C) -- (D);
\draw[postaction=decorate] (D) -- (A);
\end{tikzpicture}
\end{center}
The corresponding codimension-four stratum $\Xdel$ is then log symplectic if and only if $a_0b_0+a_1b_1+a_2b_2 \ne 0$. 
\end{example}

\begin{remark}\label{rmk:non-simple-symplectic}
It follows from \autoref{lem:bires-nondegen} that if $\bdX$ is a non-simple normal crossings divisor and $\Xdel$ is a symplectic stratum, then the orientation local system $\ordel$ is trivial. In particular, for a codimension-two stratum this implies that the divisor is simple in a tubular neighbourhood of $\Xodel$.  
\end{remark}

\subsection{Characteristic symplectic leaves}

Recall from \cite{Brylinski1999,Polishchuk1997,Weinstein1997} that if $\mu \in \forms[\dim \X]{\X}$ is a volume form on a Poisson manifold, then there is canonical vector field $\zeta \in \cT{\X}$, called the \defn{modular vector field}, which plays the role of the divergence of the Poisson tensor with respect to $\mu$.  It is defined by the property that 
\[
\lie{\pis(\dd f)} \mu = -\zeta(f) \cdot \mu
\]
for all $f \in \cO{\X}$.  One can show that if $\bS\subset \X$ is a symplectic leaf, then $\zeta$ is either tangent to $\bS$ at every point, or transverse to $\bS$ at every point.  Moreover, if we change the volume form, then the modular vector field changes by a Hamiltonian vector field, so that the condition of tangency is independent of the choice of $\mu$.  

\begin{definition}\label{def:char-leaves}
We say that a symplectic leaf $\bS$ of a Poisson manifold is \defn{characteristic} if the modular vector field with respect to some (and hence any) locally defined volume form is tangent to $\bS$.
\end{definition}

\begin{example}\label{ex:char-surf}
Let $\X$ is a Poisson surface with degeneracy divisor $\bdX$, then the open leaf $\Xo = \X \setminus \bdX$ is characteristic, and a point $p \in \bdX$ is a characteristic symplectic leaf if and only if it is a singular point of $\bdX$; see \cite[Example 3.3]{Pym2018}. 
\end{example}

\begin{example}\label{ex:char-stratum}
The modular vector fields is an infinitesimal symmetry of the Poisson structure and is therefore tangent to all normal crossings strata.  In particular, if some stratum $\Xodel$ is a symplectic leaf, it is necessarily characteristic.
\end{example}

The use of the word ``characteristic'' is motivated by the importance of these leaves in the study of Poisson cohomology; see \autoref{conj:char-holonomic}.  We warn the reader that this notion is sensitive to the ambient Poisson manifold: if $\Y \subset \X$ is a Poisson submanifold, then a symplectic leaf $\bS \subset \Y$  that is characteristic for $\Y$ need not be a characteristic leaf for $\X$. For instance, $\bS$ is always characteristic in itself, but \autoref{ex:char-surf} shows that it need not be characteristic in $\X$.

In the case of a normal crossing log symplectic structure, we may determine the characteristic symplectic leaves using the biresidue as follows.

\begin{proposition}\label{prop:res-adapt}
Suppose that $k \ge 0$, and let $\Delta \in \dc[k]{\X}$ be a simplex.  Then the following statements are equivalent:
\begin{enumerate}
\item $\Xodel$ contains a characteristic symplectic leaf of $(\X,\pi)$.
\item Every symplectic leaf in $\Xodel$ is characteristic for $(\X,\pi)$.
\item The constant vector $(1,\ldots,1) \in \CC^{\Delta_0}$ lies in the image of the biresidue form $\Bdel \in \wedge^2 \CC^{\Delta_0}$.
\end{enumerate}
\end{proposition}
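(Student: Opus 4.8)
The plan is to reduce to the explicit local normal form of \autoref{prop:stable-form} and compute the modular vector field by hand there. All three conditions are local on $\Xodel$, and condition (3) is in fact constant along $\Xodel$ because $\Bdel$ is a constant section; so it suffices to check, for each point $p\in\Xodel$, that the symplectic leaf through $p$ is characteristic for $(\X,\pi)$ if and only if $(1,\ldots,1)\in\img\Bdel$. The notion of a characteristic leaf is insensitive to taking products with symplectic factors: for any Poisson germ $\Y$ and any symplectic germ $\bS_0$, the modular vector field of the product Poisson manifold $\Y\times\bS_0$, computed with respect to a product of a volume form on $\Y$ with the Liouville form on $\bS_0$, is the pullback of the modular vector field of $\Y$ (since symplectic manifolds are unimodular); hence a leaf of the form $\bL\times\bS_0$ is characteristic in $\Y\times\bS_0$ exactly when $\bL$ is characteristic in $\Y$. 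Since symplectic leaves, strata, and---by \autoref{prop:stable-form}---biresidues all correspond under stable equivalence, I may replace the germ $(\X,\pi)_p$ by the germ at the origin of the local model \eqref{eq:local-magnetic}, with $\Delta_0 = \{1,\ldots,k\}$ and $\Xodel$ the germ of $\{y_1 = \cdots = y_k = 0\}$ at the origin.

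I would then compute the modular vector field $\zeta$ of $\pi' = \sum_j \logcvf{y_j}\wedge\cvf{p_j} + \sum_{i,j}B_{ij}\cvf{p_i}\wedge\cvf{p_j}$ with respect to the coordinate volume form $\dd y_1\wedge\cdots\wedge\dd y_k\wedge\dd p_1\wedge\cdots\wedge\dd p_k$. Writing $\pi' = \tfrac12\sum_{a,b}{\pi'}^{ab}\,\cvf{x_a}\wedge\cvf{x_b}$ in the coordinates $(x_a) = (y_1,\ldots,y_k,p_1,\ldots,p_k)$, one has $\zeta = -\sum_a\rbrac{\sum_b \cvf{x_b}{\pi'}^{ab}}\cvf{x_a}$; only the diagonal summands $\logcvf{y_j}\wedge\cvf{p_j}$ contribute a non-constant coefficient, and a one-line calculation gives $\zeta = \sum_{j=1}^k \cvf{p_j}$, the constant vector field corresponding to $(1,\ldots,1)\in\CC^{\Delta_0}$ under the identification of the $p$-coordinate space with $\CC^{\Delta_0}$. (As it must be by \autoref{ex:char-stratum}, $\zeta$ is tangent to $\Xodel$.) Restricting $\pi'$ to $\Xodel$ by setting every $y_j = 0$ yields the constant Poisson structure $\sum_{i,j}B_{ij}\cvf{p_i}\wedge\cvf{p_j}$ on $\CC^{\Delta_0}$, whose symplectic leaves are exactly the affine translates of the subspace $\img\Bdel\subseteq\CC^{\Delta_0}$---viewing $\Bdel$ as a skew endomorphism via the canonical self-duality of $\CC^{\Delta_0}$ as in \autoref{lem:const-rank}---the leaf through the origin being $\img\Bdel$ itself.

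Finally, since $\zeta$ is constant and every symplectic leaf in $\Xodel$ is a translate of the single subspace $\img\Bdel$, the field $\zeta$ is tangent to one such leaf if and only if it is tangent to all of them, if and only if $(1,\ldots,1)\in\img\Bdel$. Running this at each point $p\in\Xodel$, and using that the last condition does not depend on $p$, gives the equivalences $(1)\Leftrightarrow(3)$ and $(2)\Leftrightarrow(3)$. I expect the one genuinely delicate point to be the reduction in the first paragraph---verifying that being a characteristic leaf is invariant under stable equivalence and behaves correctly under the localization---whereas the computation of $\zeta$ and the description of the leaves of the normal form are routine.
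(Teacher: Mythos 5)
Your proof is correct and follows essentially the same route as the paper's: reduce to the stable normal form \eqref{eq:local-magnetic} via \autoref{prop:stable-form}, compute the modular vector field $\zeta=\sum_j\cvf{p_j}$ with respect to the coordinate volume form, and observe that the leaves of the constant bivector $\pidel$ are the translates of $\img\Bdel$, so tangency of $\zeta$ to one leaf is equivalent to tangency to all and to $(1,\ldots,1)\in\img\Bdel$. The only difference is that you spell out the invariance of the characteristic condition under stable equivalence (via unimodularity of the symplectic factor), which the paper asserts without proof; that is a welcome addition but not a change of method.
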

Here, a vector $v$ lies in the image of a form $\alpha \in \wedge^2 V$ if
there exists an element $\xi \in V^*$ such that $(\xi \otimes 1)(\alpha) = v$. Thus 3) can alternatively be phrased by writing $\Bdel$ as a matrix using the basis $\Delta_0$ and requiring that the constant vector is in the image.
\begin{proof}[Proof of Proposition \ref{prop:res-adapt}]
The condition that a leaf $\bS$ be characteristic can be checked locally near a point of $\bS$, and is moreover invariant under stable equivalence of germs.  Hence by \autoref{prop:stable-form}, we may assume without loss of generality that the Poisson structure $\pi$ is given near a point $p \in \Xodel$ by the stable normal form \eqref{eq:local-magnetic}.  Using the standard expression for the modular vector field $\zeta$ of $\pi$ with respect to the coordinate volume form~\cite[Lemma 4.5]{Polishchuk1997}, we compute
\[
\zeta = \sum_{j=1}^k \cvf{p_i}
\]
Meanwhile, the Poisson structure on $\Xodel$ is a constant bivector:
\[
\pidel =
  \sum_{1 \le i < j \le k} B_{ij} \cvf{p_i}\wedge\cvf{p_j}.
\]
Its symplectic leaves are the translates of the linear subspace given by the image of the matrix $(B_{ij})$ of $\pidel$ in the basis $\cvf{p_i}$.  Meanwhile $\zeta$ is represented in this basis by the vector $(1,\ldots,1)$.  The equivalence of the statements 1 through 3 follows immediately.
\end{proof}

By considering the linear algebra of $k\times k$ skew-symmetric matrices for small values of $k$, one easily determines when a stratum of small codimension admits characteristic leaves.  We summarize the result as follows.
\begin{corollary}\label{ex:tri-holonomic} Let $\Xodel$ be a codimension-$k$ stratum.  Then the following statements hold. 
\begin{itemize}
\item If $k=0$, then the whole stratum $\Xodel = \Xo$ is a characteristic leaf.
\item If $k > 0$ and $B_\Delta = 0$, then no leaves in $\Xodel$ are characteristic
\item If $k=2$, and $B_\Delta \ne 0 \in \CC$, then the whole stratum $\Xodel$ is a characteristic leaf.  
\item If $k=3$, $B_\Delta\ne0$, and we represent the biresidue as follows:
\begin{align*}
B_\Delta &= \vcenter{\hbox{ \begin{tikzpicture}[scale=0.6,decoration={
    markings,
    mark=at position 0.5 with {\arrow{>}}}
    ] 
    \draw[fill=gray] (-30:1) -- (90:1) -- (210:1) -- (-30:1);
\draw[postaction={decorate}] (-30:1) -- (90:1);
\draw[postaction={decorate}] (90:1) -- (210:1) ;
\draw[postaction={decorate}] (210:1) -- (-30:1);
\draw (30:0.9) node {$b_2$};
\draw (150:0.9) node {$b_0$};
\draw (-90:0.9) node {$b_1$};
\end{tikzpicture}}}, \label{eq:3cpt-bires}
\end{align*}
then the leaves in $\Xodel$ are characteristic if and only if $b_0+b_1+b_2=0$.
\end{itemize}
\end{corollary}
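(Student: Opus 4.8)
The plan is to derive all four bullet points directly from \autoref{prop:res-adapt}, which reduces the existence of a characteristic symplectic leaf in $\Xodel$ to the single linear-algebraic condition that the constant vector $(1,\ldots,1)\in\CC^{\Delta_0}$ lie in the image of the biresidue $\Bdel\in\wedge^2\CC^{\Delta_0}$, regarded via the standard basis of $\Delta_0$ as a skew-adjoint endomorphism of $\CC^{\Delta_0}$. Thus each case becomes an elementary statement about $k\times k$ skew-symmetric matrices, where $k=|\Delta_0|$ is the codimension. Whenever I also want to assert that the \emph{whole} stratum is a single leaf, I will invoke \autoref{lem:const-rank}, which identifies the normal bundle of the leaves in $\Xodel$ with $\coker\Bdel$, together with the fact that $\Xodel=\Xdel\setminus\bdXdel$ is connected.

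First I would dispose of the degenerate cases. If $k=0$ then $\CC^{\Delta_0}=0$, the condition of \autoref{prop:res-adapt} is vacuously satisfied, $\coker\Bdel=0$, and $\Xodel=\Xo$ is the open symplectic leaf, whose modular vector field is Hamiltonian since $\pi$ is nondegenerate there. If $k>0$ and $\Bdel=0$, then $\img\Bdel=0$ while $(1,\ldots,1)\ne 0$ because $\Delta_0\ne\varnothing$, so no leaf in $\Xodel$ is characteristic. Next, for $k=2$ one has $\wedge^2\CC^2=\CC\cdot(e_1\wedge e_2)$, so $\Bdel\ne 0$ means its matrix $\left(\begin{smallmatrix}0&b\\-b&0\end{smallmatrix}\right)$ has $b\ne 0$ and is therefore invertible; its image is all of $\CC^2$ and contains $(1,1)$, while $\coker\Bdel=0$, so by \autoref{prop:res-adapt} and \autoref{lem:const-rank} the connected stratum $\Xodel$ is a single characteristic leaf.

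The remaining case $k=3$ is the only one with content beyond ``invertible or zero''. Here every $3\times3$ skew-symmetric matrix has even rank $\le 2$, so $\Bdel\ne 0$ forces rank exactly $2$ and $\ker\Bdel$ one-dimensional. Writing $\Bdel$ as a matrix $A$ in an ordering of $\Delta_0$ compatible with the arrows in the displayed triangle, one reads off that $\ker A$ is spanned by the vector whose entries are the three biresidues $(b_0,b_1,b_2)$ (up to the reindexing dictated by the orientation). Since $A^{\mathsf T}=-A$, we have $\img A=(\ker A^{\mathsf T})^{\perp}=(\ker A)^{\perp}$ for the standard symmetric bilinear form on $\CC^3$, i.e.\ $\img A=\{v: b_0v_1+b_1v_2+b_2v_3=0\}$. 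Hence $(1,1,1)\in\img A$ if and only if $b_0+b_1+b_2=0$, which, via \autoref{prop:res-adapt}, is exactly the claimed criterion; note that in this case $\coker\Bdel$ is one-dimensional, consistent with the leaves forming a codimension-one foliation of $\Xodel$ rather than a single leaf.

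The only point that requires genuine care is bookkeeping rather than mathematics: one must match the labelled, oriented edges of the triangle to the correct entries $A_{ij}$ of the skew-symmetric matrix, consistently with the residue sign conventions fixed in the running example, so that $\ker A$ is really spanned by the biresidue vector $(b_0,b_1,b_2)$ in the appropriate order, and not by some signed permutation of it which would produce a spurious sign in the final equation. Once that correspondence is pinned down the verification is immediate, and no input beyond \autoref{prop:res-adapt}, \autoref{lem:const-rank}, and the linear algebra of small skew-symmetric matrices is needed.
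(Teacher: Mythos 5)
Your proposal is correct and follows exactly the route the paper intends: the corollary is stated as an immediate consequence of \autoref{prop:res-adapt} via "the linear algebra of $k\times k$ skew-symmetric matrices for small values of $k$," and your case analysis (image of the zero matrix, invertibility for $k=2$, and the identity $\img \Bdel=(\ker \Bdel)^{\perp}$ with kernel spanned by the biresidue vector for $k=3$) fills in precisely that omitted computation, with the appeal to \autoref{lem:const-rank} correctly justifying the "whole stratum is a single leaf" assertions.
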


\begin{corollary}\label{cor:incident-nonhol}
Let $\Xodel$ be a stratum of codimension $k$ whose Poisson structure is degenerate, and whose leaves are all characteristic in $\X$.  Then $\Xodel$ lies in the boundary of a stratum of  codimension $k-1$ whose leaves are characteristic in $\X$.
\end{corollary}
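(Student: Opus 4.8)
The plan is to deduce the corollary from an elementary statement about skew-symmetric matrices, after translating the hypotheses through the combinatorics of the dual complex. Fix an ordering of the vertex set $\Delta_0$, so that the biresidue $\Bdel$ becomes a $k\times k$ skew-symmetric matrix $B$. By \autoref{lem:const-rank} the normal bundle of the symplectic leaves in $\Xodel$ is $\coker B$, so the hypothesis that $\pidel$ is degenerate says exactly that $B$ is singular; and by \autoref{prop:res-adapt} the hypothesis that every leaf in $\Xodel$ is characteristic in $\X$ says exactly that the constant vector $\mathbf{1}=(1,\dots,1)$ lies in the image of $B$. On the other hand, for any vertex $i\in\Delta_0$ the codimension-one face $\Delta':=\Delta\setminus\{i\}$ lies in $\dc[k-1]{\X}$, the corresponding stratum $\Xo_{\Delta'}$ has codimension $k-1$ and contains $\Xodel$ in its boundary, and -- directly from the definition of the biresidue, since the edges of $\Delta'$ are among those of $\Delta$ -- the matrix $B_{\Delta'}$ is the principal submatrix $\widehat{B}_i$ of $B$ obtained by deleting the $i$-th row and column. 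So, applying \autoref{prop:res-adapt} to $\Delta'$, it suffices to exhibit an index $i$ for which $\mathbf{1}\in\CC^{k-1}$ lies in the image of $\widehat{B}_i$.

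For the linear-algebra step I would use that for any skew-symmetric $M$ one has $\img M=(\ker M)^{\perp}$ with respect to the standard symmetric bilinear form (since $\ker M^{T}=\ker M$), so that $\mathbf{1}\in\img M$ is equivalent to $\sum_j v_j=0$ for every $v\in\ker M$. Assume, towards a contradiction, that $\mathbf{1}\notin\img\widehat{B}_i$ for every $i$. Then for each $i$ there is $w^{(i)}\in\ker\widehat{B}_i$ with $\sum_{j\neq i}w^{(i)}_j\neq0$; extend it by $0$ in the $i$-th coordinate to $\widetilde{w}^{(i)}\in\CC^{k}$. A direct check gives $(B\widetilde{w}^{(i)})_\ell=(\widehat{B}_i w^{(i)})_\ell=0$ for $\ell\neq i$, so $B\widetilde{w}^{(i)}=c_i e_i$ with $c_i:=(B\widetilde{w}^{(i)})_i$ and $e_i$ the $i$-th standard basis vector. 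If $c_i=0$ then $\widetilde{w}^{(i)}\in\ker B$, forcing $0=\sum_j\widetilde{w}^{(i)}_j=\sum_{j\neq i}w^{(i)}_j$, a contradiction; hence $c_i\neq0$ and $e_i=c_i^{-1}B\widetilde{w}^{(i)}\in\img B$. Since this holds for every $i$ we get $\img B=\CC^{k}$, so $B$ is invertible, contradicting its singularity. Therefore some index $i$ works, and $\Delta':=\Delta\setminus\{i\}$ is the desired stratum.

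The only step that needs a little care -- and the closest thing to an obstacle -- is the identification of $B_{\Delta'}$ with the principal submatrix $\widehat{B}_i$: one must check that enlarging $\Xodel$ to the stratum $\Xo_{\Delta'}$ produces no new biresidue contributions. This is immediate once one unwinds the definition of $\Bdel$ as the restriction of $\res\omega$ (the biresidues making up $B_{\Delta'}$ are indexed by precisely the edges of $\Delta$ not containing $i$), or alternatively from the stable normal form of \autoref{prop:stable-form}. Everything else is bookkeeping with the dual complex and the two-line contradiction above; note also that the two hypotheses together force $2\le\rank B<k$, hence $k\ge3$, so $\Delta'$ always has at least two vertices and the statement is vacuous for $k\le2$, consistent with \autoref{ex:tri-holonomic}.
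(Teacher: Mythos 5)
Your proof is correct and takes essentially the same route as the paper: via Proposition~\ref{prop:res-adapt} and the observation that $B_{\Delta'}$ is a principal submatrix of $B_\Delta$, the corollary reduces to the assertion that a degenerate skew-symmetric matrix whose image contains $(1,\ldots,1)$ admits a principal submatrix of size one less with the same property. The paper merely states this linear-algebra fact without proof, whereas your argument (using $\img M=(\ker M)^{\perp}$ and the extension-by-zero contradiction) supplies a complete and correct verification of it, so your write-up is if anything more detailed than the original.
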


\begin{proof}
By \autoref{prop:res-adapt}, this amounts to the statement that if $B$ is a degenerate skew matrix of size $k$ whose image contains the constant vector $(1,\ldots,1) \in \CC^k$, then $B$ has a skew submatrix of size $k-1$ whose image contains $(1,\ldots,1)\in \CC^{k-1}$.
\end{proof}

\subsection{Tubular neighbourhoods}

Suppose that $\Xdel \subset \X$ is a closed stratum of a normal crossings log symplectic manifold.  Then we may construct a natural Poisson structure on its normal bundle $\nbdel$ by deformation to the normal cone, as follows.

To begin, note that if $\cI\subset \cO{\X}$ is the ideal sheaf of $\Xdel$, then near $\Xdel$, the ideal $\cI$ is the intersection of the ideals $\cO{\X}(-\Y_i)$ where $\Y_i \subset \bdX$ are the irreducible components containing $\X$.  Each of these components is a Poisson subvariety, so that the corresponding ideals are Poisson.  It follows that $\{\cI,\cI\}\subset\cI^2$.  Hence the associated graded ring
\[
\bigoplus_{j \ge 0} \cI^j/\cI^{j+1} \cong \Sym_{\cO{\Xdel}} \coNdel
\]
carries a canonical homogeneous Poisson bracket.  But this ring is precisely the fibrewise-polynomial functions on the normal bundle $\nbdel$, and hence we obtain a canonically defined Poisson structure on $\nbdel$, which we call the \defn{homogenization} of $(\X,\pi)$ along $\Xdel$.  Note that the homogenization is invariant under the natural action of the torus $(\CC^*)^{\Delta_0}$ on the normal bundle induced by the splitting \eqref{eq:nb-split} into  line subbundles.  Moreover, the inclusion $\Xdel \to \nbdel$ and the projection $\nbdel \to \Xdel$ are automatically Poisson maps, dual to the inclusion and projection of the pair of Poisson algebras $(\cO{\Xdel},\Sym_{\cO{\Xdel}} \coNdel)$.  

\begin{example}\label{ex:symplectic-tube}
When $\Xodel$ is symplectic, the homogeneous log symplectic form $\omega_0$ on $\nbdel$  takes on a particularly simple shape, as follows.  By taking the $\omega_0$-orthogonal  we split the tangent sheaf as a direct sum of horizontal and vertical components, defining a flat connection on the vector bundle $\nbdel$ with structure group given by the torus $(\CC^*)^{\Delta_0}$. In this decomposition, the homogenized log symplectic form may be written canonically as 
\[
\omega_0 = \sum_{i,j \in \Delta_0} B_{ij}\dlog{y_i}\wedge\dlog{y_j} + \rho^* \omega_{\Delta}
\]
where $y_i$ are any flat fibre-linear coordinates, $(B_{ij})$ is the biresidue along $\Xdel$, and $\omega_\Delta$ is the log symplectic form on $\Xdel$, which has been pulled back by the bundle projection $\rho : \nbdel \to \Xdel$. 
\end{example}

\begin{proposition}\label{prop:homogenization}
Suppose that the stratum $\Xodel \subset \X$ admits a holomorphic tubular neighbourhood.  Then $\Xodel$ admits a holomorphic Poisson tubular neighbourhood, i.e.~a holomorphic tubular neighbourhood embedding that identifies $\pi$ with its homogenization. 
\end{proposition}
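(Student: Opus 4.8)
The plan is to reduce to the normal bundle using the rectification lemma, interpolate between $\pi$ and its homogenization by the fibrewise scaling action on $\nbdel$ (a ``deformation to the normal cone''), and then transport one to the other by a Moser argument adapted to the logarithmic setting. First I would apply \autoref{lem:rectify}: since $\Xodel$ admits a holomorphic tubular neighbourhood by hypothesis, we may assume that $\X$ is an open neighbourhood of the zero section of $\nbdel$, that $\bdX = \bdnbdel$, and that the linearization of this identification is the identity on the normal bundle. Then $\pi$ and its homogenization $\pi_0$ are both log symplectic Poisson bivectors for the \emph{same} normal crossings divisor $\bdnbdel$, and in these coordinates $\pi_0$ is precisely the $\cI$-adic leading term of $\pi$, where $\cI$ is the ideal sheaf of $\Xdel$; write $\omega := \pi^{-1}$ and $\omega_0 := \pi_0^{-1}$ for the corresponding closed logarithmic two-forms.

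Next I would construct the interpolating family. Let $\lambda_t : \nbdel\to\nbdel$ be fibrewise multiplication by $t\in\CC$ and decompose $\omega = \sum_{d\ge 0}\omega_{(d)}$ by homogeneity degree $d$ in the fibre directions; since $\lambda_t^*$ fixes the logarithmic generators $\dlog{y_i}$, $\dd z_a$ and rescales a degree-$d$ coefficient by $t^d$, one has $\lambda_t^*\omega = \sum_{d\ge 0}t^d\omega_{(d)}$. After shrinking the neighbourhood this series converges for $t$ in a disk containing $[0,1]$, defining a holomorphic family $\omega_t$ of closed logarithmic two-forms with $\omega_1 = \omega$ and with the $d=0$ term $\omega_0$ the homogenization; similarly $\pi_t := \lambda_t^*\pi = \omega_t^{-1}$, with $\pi_0$ the homogenization. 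Each $\omega_t$ is log symplectic: for $t\ne 0$ because $\lambda_t$ is a biholomorphism preserving $\bdnbdel$, and at $t=0$ because the homogenization is log symplectic (by \autoref{prop:stable-form} its germ near each point of $\Xdel$ has the form \eqref{eq:local-magnetic}).

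Finally comes the Moser step. Let $E$ be the Euler vector field of the bundle $\nbdel$ (in local fibre coordinates $E = \sum_i \logcvf{y_i}$), which is logarithmic. Using $\dd\omega_t = 0$, $\lie{E}\omega_{(d)} = d\,\omega_{(d)}$ and Cartan's formula, one checks that
\[
\beta_t := \tfrac1t\,\hook{E}(\omega_t-\omega_0) = \hook{E}\Bigl(\sum_{d\ge 1} t^{d-1}\omega_{(d)}\Bigr)
\]
is a logarithmic one-form, holomorphic in $t$ across $t=0$, with coefficients in $\cI$, satisfying $\dd\beta_t = \dot\omega_t$; subtracting $\omega_0$ is exactly what removes the apparent pole of $\tfrac1t\hook{E}\omega_t$, which is legitimate because $\hook{E}\omega_0$ is $\dd$-closed ($\lie{E}\omega_0 = 0$ and $\dd\omega_0 = 0$). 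Set $v_t := -\pi_t^\sharp(\beta_t)$, the vector field with $\hook{v_t}\omega_t = -\beta_t$; since $\pi_t^\sharp : \logforms[1]{\nbdel}\to\cTlog{\nbdel}$ is an isomorphism and $\beta_t$ has coefficients in $\cI$, $v_t$ is a logarithmic vector field vanishing along $\Xdel$, and $\lie{v_t}\omega_t = \dd\hook{v_t}\omega_t = -\dd\beta_t = -\dot\omega_t$. Its flow $\phi_t$ from $\phi_0 = \id$ is defined on a neighbourhood of $\Xdel\times[0,1]$, hence — by the tube lemma, $[0,1]$ being compact — on $W\times[0,1]$ for some open $W\supseteq\Xdel$; it preserves $\bdnbdel$, restricts to the identity on $\Xdel$, and satisfies $\tfrac{\dd}{\dd t}\bigl(\phi_t^*\omega_t\bigr) = \phi_t^*\bigl(\lie{v_t}\omega_t + \dot\omega_t\bigr) = 0$, so $\phi_1^*\omega = \omega_0$. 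Shrinking $W$ so that $\phi_1$ is a biholomorphism onto its image and composing with the rectified identification yields a holomorphic tubular neighbourhood embedding carrying $\pi_0$ to $\pi$, as required.

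I expect the crux to be the construction of the primitive $\beta_t$: one needs it to be global on the neighbourhood, regular at $t=0$, logarithmic, and $\cI$-valued all at once, so that the resulting Moser flow is holomorphic, complete over the (possibly non-compact) stratum, and fixes $\Xodel$ pointwise. The device that makes this work is to apply the Cartan homotopy for the Euler field \emph{after} stripping off the weight-zero part $\omega_0$ — whose contraction $\hook{E}\omega_0$ is closed yet nonzero, precisely because $\hook{E}\dlog{y_i} = 1$ rather than $0$. The remaining ingredients (convergence of the scaling family in $t$, log symplecticity of the homogenization, and the tube-lemma argument for completeness of the flow) are routine, and the linearization of $\phi_1$ along $\Xdel$ may be nontrivial without affecting the conclusion, since the homogenization is defined intrinsically from the ideal $\cI$ and is transported correctly under any embedding fixing $\Xdel$.
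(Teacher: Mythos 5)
Your proof is correct and follows essentially the same route as the paper's: rectify the tubular neighbourhood via \autoref{lem:rectify}, decompose $\omega$ by fibre weight, interpolate via the family $\omega(t)=\sum_{k\ge0}t^k\omega_k$, and run a logarithmic Moser argument with a time-dependent logarithmic vector field vanishing along the zero section. Your explicit Cartan-homotopy primitive $\beta_t=\iota_{E}\bigl(\sum_{d\ge1}t^{d-1}\omega_{(d)}\bigr)$ simply makes precise the exactness of the positive-weight components that the paper's proof asserts without proof.
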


\begin{proof}
Without loss of generality, we may assume that $\X$ is a neighbourhood of the zero section in $\nbdel$ and that $\Xodel=\Xdel$. 
Furthermore, by \autoref{lem:rectify} we may assume that $\bdX = \bdnbdel \cap \X$.  Taking Taylor expansions along the fibres, we obtain a canonical decomposition $\omega = \sum_{k=0}^\infty \omega_k$ where $\omega_k$ is a logarithmic two-form whose germ along the zero section has weight $k$ for the $\CC^*$-action.   In particular, the homogenization of $\omega$ is equal to $\omega_0$, and the form $\omega_k$ is exact for all $k > 0$. The analytic family of forms $\omega(t) = \sum_{k=0}^\infty t^k \omega_k$ therefore has constant cohomology class and interpolates between $\omega(0)=\omega_0$ and $\omega(1) =\omega$.  Moreover, the restriction $\omega(t)|_{\Xdel} \in \logforms[2]{\nbdel}|_{\Xdel}$ is constant in $t$.  In particular, the germ of $\omega(t)$ along the zero section is nondegenerate for all $t$. Applying the Moser argument, we produce a time dependent logarithmic vector field that vanishes identically along the zero section, giving an isotopy of $(\nbdel,\bdnbdel)$ that takes  $\omega_0$ to $\omega$ and fixes $\Xdel$ pointwise.
\end{proof}

\section{Poisson cohomology}
\label{sec:HP}

\subsection{The Lichnerowicz complex}
\label{sec:chlgy-basics}

Throughout the present brief \autoref{sec:chlgy-basics}, $(\X,\pi)$ denotes an arbitrary complex Poisson manifold; we make no assumption of nondegeneracy, but will return  to the log symplectic case in \autoref{sec:HP-logsymp}.

As observed by Lichnerowicz~\cite{Lichnerowicz1977}, the adjoint action of the bivector $\pi$ on $\der{\X} = \Tpol{\X}$  defines a differential
\[
\dpi := [\pi,-] : \der{\X} \to \der[\bullet +1]{\X}.
\]
We call the resulting complex $(\der{\X},\dpi)$ the \defn{Lichnerowicz complex}. Its hypercohomology
\[
\Hpi{\X} := \coH{\der{\X},\dpi}
\]
is called the \defn{Poisson cohomology of $\X$}; see, e.g.~\cite{Dufour2005,Laurent-Gengoux2013} for a detailed exposition.

We will also need the more general notion of Poisson cohomology with coefficients in a module, as follows.  Recall that a \defn{Poisson module} is a coherent sheaf $\cE$ equipped with a contravariant connection operator $\nabla : \cE \to \cT{\X}\otimes \cE$ that extends to a differential $\dpi^\nabla$ on $\der{\X}\otimes \cE$, making the latter into a differential graded module over $(\der{\X},\dpi)$.  We then define the Poisson cohomology with coefficients in $\cE$ as the hypercohomology $\Hpi{\X;\cE} := \coH{\der{\X}\otimes \cE,\dpi^\nabla}$.  For details of this construction, see~e.g.~\cite[Section 1]{Polishchuk1997}. We shall only need the case in which $\cE=\sL$ is a line bundle.  In this case, if $s \in \sL$ is a local trivialization, we have
\[
\nabla s = \zeta \otimes s
\]
where $\zeta \in \cT{\X}$ is a Poisson vector field, i.e., $\lie{\zeta}\pi = 0$. 

For convenience, we will say that a contravariant
connection $\nabla$ is tangent to a submanifold $\Y$ if the image is,
i.e., if $\nabla s|_{\Y} \in \cE \otimes \cT{\Y}$ for all local sections $s$ of $\cE$.
\begin{example}\label{ex:modular-rep}
The canonical bundle $\can{\X} = \det\forms[1]{\X}$ carries a natural Poisson module structure defined by the formula $\nabla_{\dd f} \mu =- \lie{\pis(\dd f)} \mu$ for all $f \in \cO{\X}$ and all locally defined volume forms $\mu \in \can{\X}$.  The connection vector field is the modular vector field with respect to $\mu$.  Thus a symplectic leaf $\bL$ is characteristic in the sense of \autoref{def:char-leaves} if and only if the connection operator $\nabla : \can{\X} \to \cT{\X}\otimes \can{\X}$ is tangent to $\bL$. 
\end{example}

\begin{example}\label{ex:det-normal}
Suppose that that $i : \Y \hookrightarrow \X$ is a closed Poisson submanifold of codimension $k$, with normal bundle $\cN{\Y}$.  Suppose that the Poisson connection on the canonical bundle $\can{\X}$ is tangent to $\Y$.  Then it descends to a Poisson module structure on the line bundle $i^*\can{\X}$ over $\Y$.  It follows that $\det{\cN{\Y}} \cong i^*\acan{\X}\otimes \can{\Y}$ carries a canonical Poisson module structure in this case.  One can then verify that the natural projection
\[
\der{\X} \to i_*(\der{\Y}\otimes \det{\cN{\Y}}[-k])
\]
is a morphism of complexes.  
\end{example}

In general the local structure of a Poisson module can be quite complicated, but there is a simple case in which the local structure is trivial:
\begin{lemma}\label{lem:locally-trivial}
Suppose that $\pi$ has constant rank and that $(\sL,\nabla)$ is a Poisson line bundle whose connection is tangent to the symplectic leaves.  Then $\sL$ is locally trivial as a Poisson module.  In particular, $\scoH[0]{\der{\X}\otimes \sL}$ is a free module over the sheaf $\scoH[0]{\der{\X}} \subset \cO{\X}$ of Casimir functions.
\end{lemma}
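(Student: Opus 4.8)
The plan is to construct, near every point of $\X$, a \emph{flat} local frame of $\sL$, i.e.\ a nowhere-vanishing local section $s$ with $\nabla s = 0$. Granting this, the map $\cO{\X}\to\sL$, $g\mapsto g\cdot s$, is an isomorphism of Poisson modules, where $\cO{\X}$ carries its tautological Poisson-module structure (with vanishing connection vector field). Tensoring with the Lichnerowicz complex and using $\nabla s = 0$, this upgrades to an isomorphism of differential graded $(\der{\X},\dpi)$-modules $\der{\X}\otimes\sL \cong \der{\X}$; taking degree-zero cohomology sheaves then identifies $\scoH[0]{\der{\X}\otimes\sL}$ with $\scoH[0]{\der{\X}}\cdot s$, which is free of rank one over the Casimir sheaf $\scoH[0]{\der{\X}}$. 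So the whole statement (a local one in any case, since $s$ exists only locally) reduces to producing $s$.

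To construct $s$ I would first invoke a local normal form. Since $\pi$ has constant rank, Weinstein's splitting theorem~\cite{Weinstein1983} (equivalently, the Darboux--Weinstein form for regular Poisson structures) identifies a neighbourhood of any point of $\X$ with a product $(\bS,\omega_\bS)\times(\W,0)$ of a symplectic polydisc and a polydisc with the zero bracket. In this model the symplectic leaves are the slices $\bS\times\{w\}$, the Casimir sheaf $\scoH[0]{\der{\X}}$ is the pullback of $\cO{\W}$, and the anchor $\pis$ annihilates the differentials of the $\W$-coordinates and restricts fibrewise to the isomorphism between leafwise $1$-forms and leafwise vector fields determined by $\omega_\bS$. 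Now choose an arbitrary local frame $e$ of $\sL$ and write $\nabla e = \zeta\otimes e$; as recalled in the text, $\zeta$ is then a Poisson vector field, and by hypothesis it is tangent to every symplectic leaf, so in the model it has no $\W$-component.

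The crucial point is that such a $\zeta$ is locally Hamiltonian. Indeed, tangency to the leaves together with $\lie{\zeta}\pi = 0$ forces the leafwise $1$-form $\hook{\zeta}\omega_\bS$ to be leafwise closed: since $\pi$ and $\omega_\bS$ are mutually inverse along the leaves, $\lie{\zeta}\pi = 0$ is equivalent to the vanishing of the leafwise part of $\lie{\zeta}\omega_\bS = \dd\hook{\zeta}\omega_\bS$. Applying the Poincar\'e lemma fibrewise over $\W$---with holomorphic dependence on the transverse coordinates via the explicit homotopy operator, and normalised along a fixed transverse slice---produces $u\in\cO{\X}$ with $\pis(\dd u) = \zeta$. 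Finally set $s := e^{u}\cdot e$. The Leibniz rule for the contravariant connection gives $\nabla_\alpha s = (\pis\alpha)(e^{u})\,e + e^{u}\abrac{\alpha,\zeta}\,e$ for every local $1$-form $\alpha$, and since $(\pis\alpha)(e^{u}) = -e^{u}\abrac{\alpha,\pis(\dd u)} = -e^{u}\abrac{\alpha,\zeta}$ by skew-symmetry of $\pi$, we conclude $\nabla s = 0$. This is the desired flat frame.

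The only real obstacle is the locally-Hamiltonian step; everything else---the reduction to the model, the Leibniz computation, and the passage to $\scoH[0]$---is formal. Two points there require care: first, deducing closedness of the leafwise form $\hook{\zeta}\omega_\bS$ from the bare hypotheses, which is exactly where nondegeneracy of $\pi$ along the leaves enters; and second, integrating that closed leafwise form to a \emph{single} holomorphic function $u$ on the product, rather than a family of potentials defined only up to leafwise constants, which is why the normalisation along a transverse slice and the holomorphic parameter-dependence of the Poincar\'e homotopy operator are needed.
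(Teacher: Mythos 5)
Your proof is correct and follows essentially the same route as the paper: trivialize $\sL$ locally, observe that the connection vector field $\zeta$ is a Poisson vector field tangent to the leaves, use constant rank to write $\zeta=\pis\dd h$ locally, and rescale the frame by $e^{h}$ to make it flat. The only difference is that the paper simply asserts that a leaf-tangent Poisson vector field on a regular Poisson manifold is locally Hamiltonian, whereas you supply the (correct) Darboux--Weinstein and fibrewise Poincar\'e-lemma argument for that step.
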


\begin{proof}[Proof]
  Choose a local trivialization $s \in \sL$. Then the connection vector field $\zeta$ is a Poisson vector field tangent to the symplectic leaves.  Since $\pi$ has constant rank in a neighbourhood of $s$, such a vector field is automatically locally Hamiltonian; hence by shrinking the domain we may assume that $\zeta = \pis \dd h$ for some $h \in \cO{\X}$.  But then we have $\nabla(e^hs) = 0$, so that the flat section $e^hs \in \sL$ is a local trivialization of $(\sL,\nabla)$.
\end{proof}

\subsection{Cohomology of log symplectic structures}
\label{sec:HP-logsymp}
From now on, we fix a normal crossing divisor $(\X,\bdX)$, and a Poisson structure $\pi$ induced by a log symplectic form as in \autoref{sec:log-symp}, and consider the structure of its Lichnerowicz complex.

Note that since $\pi \in \derlog[2]{\X} = \W_0 \der{\X}$, it follows from \autoref{lem:weight-gerst} that the differential $\dpi$ on $\der{\X}$ preserves the weight filtration on $\der{\X}$.  In particular, the weight zero part, given by the logarithmic polyvectors $(\derlog{\X},\dpi)$, is a subcomplex which we call the \defn{logarithmic Lichnerowicz complex}.  More generally, if $\cE$ is a Poisson module whose connection $\cE \to \cT{\X}\otimes \cE$ factors through $\cTlog{\X}\otimes \cE$, then we may consider the logarithmic Lichnerowicz complex $\derlog{\X}\otimes \cE$ with coefficients in $\cE$.

In light of \autoref{lem:gr-poly}, summands in the associated graded complex $(\gr \der{\X},\dpi)$ are indexed by simplices in the dual complex $\dc{\X}$.  More precisely, if $\Delta \in \dc[k]{\X}$ is a $(k-1)$-simplex, let 
\[
\grpol{\Delta} := \idelb\gr_k\der{\X}[k]  \cong \derlog{\Xdel}\otimes \det{\cN{\Xdel}}
\]
Then $\idelf\grpol{\Delta}[-k]$ is the unique summand of $\gr_k \der{\X}$ whose support is the codimension-$k$ submanifold $\Xdel$.  Note that since the modular vector field is a symmetry of $\pi$, it is automatically tangent to all strata of $\bdX$.  In light of \autoref{ex:det-normal}, we conclude that $\grpol{\Delta}$ is simply the logarithmic Lichnerowicz complex of the Poisson line bundle $\det{\cN{\Xdel}}$ on the Poisson manifold $(\Xdel,\pidel)$.  The behaviour of this complex is controlled, in large part, by whether or not the symplectic leaves in $\Xodel$ are characteristic, giving another condition on strata that is equivalent to those listed in \autoref{prop:res-adapt}:
\begin{proposition}\label{prop:coh-adapt}
Suppose that $\pi$ is a log symplectic Poisson structure on the simple normal crossings divisor $(\X,\bdX)$, and that $\Delta$ is a simplex of $\dcX$.  Then the complex $\grpol{\Delta}$ has nontrivial cohomology sheaves if and only if the symplectic leaves in $\Xodel$ are characteristic for $(\X,\pi)$.  In this case, the stalk cohomology is finite-dimensional if and only if $\Xodel$ is symplectic.
\end{proposition}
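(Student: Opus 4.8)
The plan is to reduce the statement to a purely local computation and then to the study of the logarithmic Lichnerowicz complex of a constant Poisson structure on a torus-by-affine-space. By \autoref{prop:stable-form}, near any point $p \in \Xodel$ the germ of $(\X,\pi)$ is stably equivalent to the magnetic normal form \eqref{eq:local-magnetic} on $\logctb{\CC^k}$, and since $\grpol{\Delta}$ is, by the discussion preceding the proposition, the logarithmic Lichnerowicz complex of the Poisson line bundle $\det\cN{\Xdel}$ on $(\Xdel,\pidel)$, its stalk at $p$ only depends on this germ. Under stable equivalence the complex $\grpol{\Delta}$ changes by tensoring with the (exact, acyclic) Koszul-type complex of a symplectic factor, so cohomology sheaves are unchanged; thus we may assume $\dim \X = 2k$ and that $\pi$ is given by \eqref{eq:local-magnetic}. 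In these coordinates $\Xdel = \{y_1 = \cdots = y_k = 0\}$ has coordinates $p_1,\ldots,p_k$, the induced Poisson structure is the \emph{constant} bivector $\pidel = \sum_{i<j} B_{ij}\cvf{p_i}\wedge\cvf{p_j}$, and (by \autoref{lem:const-rank} and the proof of \autoref{prop:res-adapt}) its modular vector field is $\zeta = \sum_i \cvf{p_i}$, representing the class $(1,\ldots,1) \in \CC^{\Delta_0}$ on the Poisson module $\det\cN{\Xdel}$.

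\textbf{Step 1: The complex $\grpol{\Delta}$ is the logarithmic Lichnerowicz complex of a constant bivector with a linear modular twist.} Since $\Xdel$ has trivial normal crossings boundary $\bdXdel = \emptyset$ in the stable model (we shrink so that the stratum is the full affine space in these coordinates), we have $\derlog{\Xdel} = \der{\Xdel} = \cO{\Xdel}\abrac{\cvf{p_1},\ldots,\cvf{p_k}}$, and $\det\cN{\Xdel}$ is trivialized by a section $s$ with $\nabla s = \zeta \otimes s = \big(\sum_i \cvf{p_i}\big)\otimes s$. So $\grpol{\Delta} \cong \big(\cO{\Xdel}\otimes \wedge^\bullet \CC^k,\ D\big)$ where $D = [\pidel,-] + \zeta\wedge(-)$ acting on constant-coefficient-times-function polyvectors; here the first term is the usual constant-bivector differential and the second comes from the Poisson module twist (the sign and exact form of the twist should be checked against \autoref{ex:det-normal}, but the key point is only that it adds the term $\zeta\wedge(-)$ up to Hamiltonian corrections).

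\textbf{Step 2: Fourier/algebraic transform to a Koszul-type complex.} The constant Poisson bivector $\pidel$ defines a linear map $\CC^k \to \cT{\Xdel}$ (the anchor, $\xi \mapsto \pidel(\xi,-)$), whose image is the tangent distribution to the linear symplectic leaves and whose cokernel is $\coker B_\Delta$ by \autoref{lem:const-rank}. Choosing linear coordinates adapted to the splitting $\CC^k = \img B_\Delta \oplus \coker B_\Delta$, the differential $[\pidel,-]$ becomes the de Rham differential in the ``symplectic'' directions (constant symplectic structures are formal), while the $\zeta\wedge(-)$ term contributes a Koszul differential in the direction of the class of $\zeta$. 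The upshot is that $\grpol{\Delta}$ is quasi-isomorphic to the tensor product of (a) the de Rham complex of $\cO{\Xdel}$ along the leaf directions — this is acyclic in positive degrees, contributing $\cO$ of the leaf space in degree $0$, actually the constant sheaf up to the Poincaré lemma, so we must be careful: the stalk computation gives the cohomology of a polynomial/power-series de Rham complex, which is $\CC$ in degree $0$ — and (b) the Koszul complex on $\coker B_\Delta$ determined by whether the image of $\zeta$ in $\coker B_\Delta$ is zero or not, together with a trivial factor $\wedge^\bullet$ of the transverse symplectic-pairing directions.

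\textbf{Step 3: Extract the two dichotomies.} If $\zeta \notin \img B_\Delta$, i.e.\ $(1,\ldots,1) \notin \img B_\Delta$ — equivalently, by \autoref{prop:res-adapt}, the leaves of $\Xodel$ are \emph{not} characteristic — then the image of $\zeta$ in $\coker B_\Delta$ is a nonzero vector, contraction with it (equivalently wedging, after dualizing) is a nonzero constant Koszul operator, and the resulting Koszul complex is exact; hence $\grpol{\Delta}$ is acyclic, i.e.\ has no nontrivial cohomology sheaves. Conversely, if the leaves are characteristic, $\zeta \in \img B_\Delta$, the Koszul differential on $\coker B_\Delta$ vanishes, and $\grpol{\Delta}$ has nontrivial cohomology (at least in the top degree $\dim\coker B_\Delta$, but also in degree $0$). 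Finally, for the finite-dimensionality: the factor coming from the de Rham complex in the leaf directions contributes $\scoH[0]$ equal to the algebra of Casimirs, which is the pullback of functions on the leaf space $\coker B_\Delta$; this is finite-dimensional (just $\CC$) exactly when $\coker B_\Delta = 0$, i.e.\ when $B_\Delta$ is nondegenerate, i.e.\ when $\Xodel$ is symplectic by \autoref{lem:bires-nondegen}. When $\coker B_\Delta \ne 0$ the stalk $\scoH[0]$ already contains a power series ring in $\dim\coker B_\Delta > 0$ variables, hence is infinite-dimensional, and conversely when $\Xodel$ is symplectic one checks directly that each stalk cohomology is a finite exterior algebra.

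\textbf{Main obstacle.} The delicate point is Step 2: correctly identifying the ``mixed'' differential $[\pidel,-] + \zeta\wedge(-)$ on $\cO{\Xdel}\otimes\wedge^\bullet\CC^k$ as, up to quasi-isomorphism, a tensor product of a de Rham complex and a Koszul complex — in particular keeping track of how the non-constant coefficients $\cO{\Xdel}$ interact with the two constant-coefficient operators, and ensuring the Poincaré-lemma step is valid at the level of stalks of holomorphic (or formal) functions. One clean way to organize this is to filter $\grpol{\Delta}$ by polynomial degree in the $p_i$ (or pass to the associated graded of the order filtration), compute the cohomology of the resulting complex of \emph{constant}-coefficient polyvectors with the combined differential via elementary linear algebra of the pair $(B_\Delta, (1,\ldots,1))$, and then run a spectral sequence / degeneration argument — or, more simply, invoke that a constant Poisson structure on affine space is formal and that the modular twist by a linear vector field is itself the Hamiltonian-free part of a gauge transformation, reducing everything to the Koszul complex of the single linear functional $\langle(1,\ldots,1),-\rangle$ on $\coker B_\Delta$. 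I expect the referee-proof version to run the filtration argument explicitly, but the conceptual content is entirely the linear algebra of whether $(1,\ldots,1)$ lies in $\img B_\Delta$ and whether $B_\Delta$ is invertible.
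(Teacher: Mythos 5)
Your core mechanism is the right one, and at points of the open stratum $\Xodel$ your computation agrees in substance with the paper's: in the non-characteristic case your ``nonzero constant Koszul operator'' is exactly the paper's contracting homotopy $h=\iota_\alpha$, where $\alpha$ is dual to the component of the modular vector field transverse to the leaves (equivalently, $\alpha = \omega^\flat(v)$ for $v\in\ker\Bdel$ with $\abrac{v,(1,\ldots,1)}=1$), and in the characteristic case your identification of $\scoH[0]{\grdel}$ with a free rank-one module over the Casimirs is what the paper extracts from \autoref{lem:locally-trivial}. However, there is a genuine gap: you compute stalks only at points $p\in\Xodel$, after shrinking so that ``the stratum is the full affine space'' and $\bdXdel=\varnothing$. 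The proposition concerns the cohomology \emph{sheaves} of $\grdel$ on all of $\Xdel$, and the vanishing statement in the non-characteristic case must hold at points of $\bdXdel$ as well --- this is essential for the later application in \autoref{thm:hol-complex}, where one needs $\grdel\cong 0$ as a complex on $\Xdel$. At a boundary point the local model has more divisor components, $\grdel$ is built from \emph{logarithmic} polyvector fields $\derlog{\Xdel}$, and your de Rham $\otimes$ Koszul factorization as written does not apply there; the twisted leafwise ``de Rham'' factor acquires log poles and extra generators $y_j\cvf{y_j}$, so the Poincar\'e-lemma step and the splitting $\CC^k=\img\Bdel\oplus\coker\Bdel$ no longer organize the complex by themselves.

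The fix is essentially the paper's formulation, which sidesteps the normal form entirely: since $v\in\ker\Bdel$ is a \emph{constant} vector, $\alpha:=\omega^\flat(v)$ is a single closed logarithmic one-form defined on all of $\Xdel$, lying in $\ker\pisdel$ and satisfying $\nabla_\alpha\mu=\mu$; contraction $\iota_\alpha$ then gives the homotopy $\dpi^\nabla h+h\dpi^\nabla=1$ on the whole logarithmic complex, uniformly over interior and boundary points. Likewise, for the characteristic case the paper avoids coordinates by observing that the connection on $\det\cN{\Xdel}\cong\idelb\acan{\X}\otimes\can{\Xdel}$ is tangent to a leaf iff the leaf is characteristic in $\X$, and then invoking \autoref{lem:locally-trivial} on $\Xodel$ (where constancy of rank holds) to get nonvanishing and the finite-dimensionality criterion. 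One small additional slip: the Lichnerowicz complex of a symplectic factor is not acyclic but quasi-isomorphic to $\CC$ in degree zero, which is what you actually need for the stable-equivalence reduction to preserve cohomology.
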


\begin{proof}
We have the isomorphism $\det{\cN{\Xdel}} \cong \idelb\acan{\X}\otimes \can{\Xodel}$ as Poisson modules, and by \autoref{lem:const-rank} the rank of $\pi|_{\Xodel}$ is constant; hence by the Darboux--Weinstein local form, the induced Poisson structure on $\Xodel$ is constant in suitable coordinates.  It follows that every symplectic leaf in $\Xodel$ is characteristic for $\Xodel$, i.e., the connection on $\can{\Xodel}$ is tangent to all symplectic leaves in $\Xodel$.    We conclude that the connection on $\det{\cN{\Xdel}}$ is tangent to a leaf $\bL \subset \Xodel$ if and only if the same holds for $\acan{\X}$, i.e., the modular vector field of $(\X,\pi)$ is tangent to $\bL$, so that $\bL$ is a characteristic symplectic leaf of $(\X,\pi)$.  In light of \autoref{prop:res-adapt}, there are therefore only two cases to consider: either all leaves in $\Xodel$ are characteristic, or none of them are.

Suppose first that all leaves are characteristic.  Then by \autoref{lem:locally-trivial}, the zeroth cohomology of $\grpol{\Delta}|_{\Xodel}$ is a locally free module over the Casimir functions on $\Xodel$.  In particular it is nonzero, and finite-dimensional if and only if $\Xodel$ is symplectic.

For the converse, suppose that that the leaves are not characteristic.  We will show that the cohomology sheaves vanish by constructing a contracting homotopy, as follows.  (The construction was inspired by the argument in \cite{Marcut2014} in the case of smooth hypersurfaces.)  By \autoref{prop:res-adapt}, there exists a vector  $v \in \ker B_\Delta \subset \CC^{\Delta_0}$ such that $\abrac{v,(1,\ldots,1)} = 1$, where $(1,\ldots,1) \in \CC^{\Delta_0}$ is the constant vector, and $\abrac{-,-}$ is the self-duality pairing on $\CC^{\Delta_0}$ determined by the basis $\Delta_0$.  Viewing $v$ as a section of $\idelb\cTlog{\X}$ and applying the log symplectic form $\omega$, we obtain a closed logarithmic one-form $\alpha \in \logforms[1]{\Xdel}$ lying in the kernel of $\pisdel$ and having the following additional property: if $\mu \in \acan{\X}|_{\Xodel}$ is a section, then $\nabla_\alpha\mu = \mu$.  Contraction with $\alpha$ then defines an operator  $h = \iota_{\alpha}$ of degree $-1$ on $\grpol{\Delta}$ such that $\dpi^\nabla h + h\dpi^\nabla = 1$, giving the desired contracting homotopy.
\end{proof}

\subsection{Contributions from log symplectic strata}
\label{sec:log-dR-strata}

Of particular relevance to us are  the contributions to the Poisson cohomology corresponding to submanifolds $\Xdel$ that are themselves log symplectic.  We now explain how to compute these contributions in topological terms.

Suppose that $\Delta \subset\dcX$ is a simplex for which $\Xdel$ is log symplectic.  Then under the resulting isomorphism $\cTlog{\Xdel} \cong \logforms[1]{\Xdel}$ given by the dual $(\pisdel)^\vee = - \pisdel$ of the anchor map, the Poisson module structure on $\det{\cNdel}$ becomes a flat logarithmic connection, and we obtain a canonical isomorphism
\[
\grdel  \cong (\logforms{\Xdel}\otimes \det{\cN{\Xdel}},\dd^\nabla)
\]
with the logarithmic de Rham complex of this flat connection; it is  simply the determinant of the $(\CC^*)^{\Delta_0}$-connection on the normal bundle induced by the log symplectic structure as in \autoref{ex:symplectic-tube}.

The de Rham complexes of flat logarithmic connections have been well studied; see, e.g.~the classic text~\cite{Deligne1970} of Deligne. The key point for us is that the cohomology is isomorphic to a certain sheaf cohomology group of the corresponding local system
\[
\sLdel := \ker \nabla|_{\Xodel} \subset \det{\cN{\Xodel}}
\]
on the open part $\Xodel \subset \Xdel$, relative to certain boundary components.  To express this precisely, we must first explain how to determine the invariants that classify the connection up to isomorphism (the monodromy and residue) in terms of periods of the log symplectic form.

\subsubsection{Monodromy}
\label{sec:monodromy}
Over the stratum $\Xodel$, the $(\CC^*)^{\Delta_0}$-connection $(\cNdel,\nabla)$ is flat and holomorphic, and is therefore determined up to isomorphism by its monodromy around loops $\torloop \subset \Xodel$.  These monodromies are elements of the torus $(\CC^*)^{\Delta_0}$ and may be expressed purely in terms of the periods of  $\omega$ along cycles in the open leaf $\Xo$, as follows.

Observe that if $\torloop$ is any oriented loop, the bundle $\nbdelo|_{\torloop}$ is topologically trivial, but not canonically so.  If we choose a trivialization, then we may construct, for each $i \in \Delta_0$, a positively oriented loop $\gamma_i$ in the fibre that wraps once around the origin in the line subbundle $\nb_{\Delta,i} \subset \nbdel$ and is constant on the other factors. The product $\torloop \times \gamma_i$ then defines an oriented two-torus in $\nbdelo$ which we may push forward along an arbitrary  $C^\infty$ tubular neighbourhood embedding $(\nbdel,\bdnbdel) \hookrightarrow (\X,\bdX)$ to define a homology class
\[
\torcyc_i \in \Hlgy[2]{\Xo;\ZZ}.
\]
We may then define the period
\[
\int_{\torcyc_i} \omega \in \CC.
\]
In this way, our choice of trivialization of the bundle $\nbdelo|_\torloop$ gives a vector
\begin{align}
P_\torloop := \rbrac{\int_{\torcyc_i}\omega}_{i \in \Delta_0} \in \CC^{\Delta_0} \label{eq:period-vector}
\end{align}
Now the biresidue $\Bdel \in \wedge^2 \CC^{\Delta_0}$  is nondegenerate since $\Xodel$ is symplectic, so that the vector $\Bdel^{-1} P_{\torloop} \in \CC^{\Delta_0}$ is defined.  This allows us to state the following result.
\begin{proposition}\label{prop:monodromy}
The monodromy of the flat torus bundle $(\cNdel,\nabla)$ around a loop $\torloop$ in $\Xodel$ is given by
\begin{align}
\exp\rbrac{\tfrac{-1}{2 \pi \iu} \Bdel^{-1} P_\torloop} \in (\CC^*)^{\Delta_0} \label{eq:period-monodromy}
\end{align}
where $P_\torloop$ is the period vector \eqref{eq:period-vector} with respect to any trivialization of $\nbdelo|_{\torloop}$.
\end{proposition}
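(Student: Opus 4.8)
The plan is to reduce to the explicit homogeneous model of \autoref{ex:symplectic-tube} and then read off the monodromy by integrating closed logarithmic one-forms over the tori $\torcyc_i$. First I would transfer the whole question to the normal bundle. The connection $(\cNdel,\nabla)$, the local system $\sLdel$, and hence the monodromy, depend only on the homogenization $\omega_0 = \sum_{i<j\in\Delta_0}B_{ij}\dlog{y_i}\wedge\dlog{y_j} + \rho^*\omega_\Delta$ of \autoref{ex:symplectic-tube}. After shrinking to a neighbourhood of $\torloop$ inside $\Xodel$ and applying \autoref{prop:homogenization}, I may assume that $\X$ is a neighbourhood of the zero section in $\nbdel$ with $\omega = \omega_0$, so that $\torcyc_i$ is literally the product $\torloop\times\gamma_i$ inside $\nbdelo$ and $\int_{\torcyc_i}\omega$ may be computed using $\omega_0$. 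By construction $\nabla$ is the $\omega_0$-orthogonal connection, i.e.\ the flat $(\CC^*)^{\Delta_0}$-connection whose flat fibre-linear coordinates are the $y_i$.

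Next I would express the monodromy through the $y_i$. Choosing a trivialization of $\nbdelo|_{\torloop}$ amounts to choosing a section $\torloop'$ of $\nbdelo$ over $\torloop$ together with the associated fibre loops $\gamma_i$; with respect to it, each $\log y_i$ is a multivalued function on $\nbdelo|_{\torloop}$, and I let $\lambda_i\in\CC^*$ be the factor by which the flat coordinate $y_i$ is multiplied on going once around $\torloop$. Since $\dlog{y_i}$ is closed and single-valued, Stokes' theorem gives $\log\lambda_i \equiv \int_{\torloop'}\dlog{y_i}\pmod{2\pi\iu\ZZ}$; in particular the $\lambda_i$ are independent of the trivialization, because a different section changes $\int_{\torloop'}\dlog{y_i}$ only by an element of $2\pi\iu\ZZ$, which reproves the independence asserted in the statement. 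Then I restrict $\omega_0$ to $\torcyc_i=\torloop'\times\gamma_i$: the basic term $\rho^*\omega_\Delta$ contributes nothing since $\rho|_{\torcyc_i}$ factors through the curve $\torloop$, and for the remaining terms the Künneth formula $\int_{A\times B}\alpha\wedge\beta = (\int_A\alpha)(\int_B\beta)-(\int_A\beta)(\int_B\alpha)$ for closed one-forms, together with $\int_{\gamma_i}\dlog{y_j} = 2\pi\iu\,\delta_{ij}$ and $\int_{\torloop'}\dlog{y_j} = \log\lambda_j$, gives
\[
\int_{\torcyc_i}\omega_0 \;=\; -2\pi\iu\sum_{j\in\Delta_0} B_{ij}\,\log\lambda_j ,
\]
that is, $P_\torloop = -2\pi\iu\,\Bdel\,(\log\lambda_j)_{j\in\Delta_0}$ in $\CC^{\Delta_0}$. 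Since $\Xodel$ is symplectic, $\Bdel$ is invertible by \autoref{lem:bires-nondegen}, so $(\log\lambda_j)_{j} = \tfrac{-1}{2\pi\iu}\Bdel^{-1}P_\torloop$ and exponentiating yields \eqref{eq:period-monodromy}.

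The main obstacle is purely one of bookkeeping: orienting $\torloop$, the fibre loops $\gamma_i$, and the tori $\torcyc_i$, fixing the direction in which parallel transport is taken (equivalently, whether one tracks the flat frame or the flat coordinates), and pinning down the normalization of the biresidue matrix in \autoref{ex:symplectic-tube}, all compatibly, so that the resulting constant is exactly $-\tfrac{1}{2\pi\iu}$ rather than some other multiple of it. The only genuinely geometric input beyond the explicit model is the reduction to $\omega_0$ via \autoref{prop:homogenization}; after that the argument is linear algebra together with the integration of closed one-forms on the topologically trivial bundle $\nbdelo|_{\torloop}\cong S^1\times(\CC^*)^{\Delta_0}$.
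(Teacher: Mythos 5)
Your proposal is correct and follows essentially the same route as the paper's proof: reduce to the homogenized normal-bundle model via \autoref{prop:homogenization} and then evaluate $\int_{\torcyc_i}\omega$ directly over the tori $\torloop\times\gamma_i$; the paper does the bookkeeping in a single-valued holomorphic trivialization (writing the connection form as $\Bdel^{-1}(A_i)_i$) rather than in multivalued flat coordinates, but the computation is the same. The only step you elide is that \autoref{prop:homogenization} requires a holomorphic tubular neighbourhood as a hypothesis, which the paper secures by first shrinking $\Xodel$ to a Stein neighbourhood retracting onto $\torloop$ and invoking Siu's theorem.
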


\begin{remark}
The value of $P_\torloop$ depends on the choice of the cycles $\torcyc_i$, but the monodromy \eqref{eq:period-monodromy} is independent of this choice. 
\end{remark}

\begin{proof}
Note that the computation may be done in an arbitrary neighbourhood of $\torloop$ in $\X$.  Hence we may assume without loss of generality that $\Xodel$ is a Stein manifold that retracts onto $\torloop$, and in particular $\Xodel$ admits a holomorphic tubular neighbourhood in $\X$ by a theorem of Siu~\cite{Siu1976}.  Thus by \autoref{prop:homogenization} we may reduce the problem to the case in which $\X = \nbdel$ is the normal bundle, and $\omega$ is a torus-invariant log symplectic structure.  Since $\Xodel$ is Stein and $\coH[2]{\Xodel;\ZZ} =0$ every line bundle over $\Xodel$ is trivial, and hence we may choose a holomorphic trivialization of $\nbdel$ over $\Xodel$. Relative to such a trivialization, we may write
\[
\omega = \sum_{i,j \in \Delta_0} B_{ij}\dlog{y_i}\wedge\dlog{y_j} + \sum_{i \in \Delta_0} A_i \wedge \dlog{y_i} + \omega'
\]
where $y_i$ are fibre-linear coordinates, $A_i \in \coH[0]{\Xodel;\forms[1]{\Xodel}}$ are closed one-forms and $\omega' \in \coH[0]{\Xodel;\forms[2]{\Xodel}}$ is the symplectic form on $\Xodel$.  As discussed in \autoref{ex:symplectic-tube}, the flat connection on the normal bundle is precisely the symplectic connection induced by $\omega$, for which the decomposition into vertical and horizontal pieces is $\omega$-orthogonal.  From this we deduce that the connection is given, in this trivialization, by the vector-valued operator
\[
\nabla = \dd + \Bdel^{-1} \cdot (A_i)_{i \in \Delta_0}
\] 
Furthermore the class $\torcyc_i \in \Hlgy[2]{\nbdelo;\ZZ}$ is represented by $\torloop \times \gamma_i$ where $\gamma_i$ is a loop in the fibre defined by $|y_i|=1$ and $y_j=1$ for $j\ne 1$.  The holonomy around $\torloop$ is therefore given by the exponential of the expression
\begin{align*}
-\Bdel^{-1}\rbrac{\int_{\torloop} A_i}_{i} &=   -\Bdel^{-1} \rbrac{ \tfrac{1}{2\pi \iu} \int_{\torloop \times \gamma_i} A_i \wedge \dlog{y_i}}_i \\
&= \tfrac{-1}{2\pi \iu} \Bdel^{-1} \rbrac{\int_{\torcyc_i} \omega}_i \\
&= \tfrac{-1}{2\pi \iu} \Bdel^{-1}P_\torloop,
\end{align*}
as claimed.
\end{proof}

The holonomy of the determinant bundle is the product of the holonomies of the line bundles $\cN{\Delta,i}$, so the local system $\sLdel \subset \det{\cNdel}|_{\Xodel}$ appearing in the Poisson cohomology is determined up to isomorphism by the following corollary.
\begin{corollary}\label{cor:no-monodromy}
The monodromy of the local system $\sLdel$ of flat sections of $\det{\cNdel}$ around a loop $\torloop$ in $\Xodel$ is given by
\[
\exp\rbrac{\tfrac{-1}{2 \pi \iu} (1,...,1)\cdot \Bdel^{-1} \cdot P_\torloop}  \in \CC^*
\]
where $(1,\ldots,1) \in \CC^{\Delta_0}$ is the constant vector.  In particular, the local system $\sLdel$ is trivial if and only if
\[
(1,\ldots,1)\Bdel^{-1}P_\torloop \in  (2 \pi \iu)^{2} \ZZ
\]
for all loops $\torloop$.
\end{corollary}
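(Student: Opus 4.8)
The plan is to obtain this directly from \autoref{prop:monodromy} by passing from the flat torus bundle $(\cNdel,\nabla)$ to its determinant line bundle. First I would recall the canonical splitting $\cNdel \cong \bigoplus_{i\in\Delta_0}\cN{\Delta,i}$ from \eqref{eq:nb-split}, under which $\det\cNdel \cong \bigotimes_{i\in\Delta_0}\cN{\Delta,i}$. Since the flat connection $\nabla$ on $\cNdel$ induced by $\omega$ (see \autoref{ex:symplectic-tube}) has structure group the torus $(\CC^*)^{\Delta_0}$, it preserves this decomposition into line subbundles, so the induced connection on $\det\cNdel$ is the tensor-product connection. Its holonomy around any loop $\torloop\subset\Xodel$ is therefore the product, over $i\in\Delta_0$, of the holonomies of the factors $\cN{\Delta,i}$.

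Next I would feed in \autoref{prop:monodromy}: the holonomy of $(\cNdel,\nabla)$ around $\torloop$ is $\exp\rbrac{\tfrac{-1}{2\pi\iu}\Bdel^{-1}P_\torloop}\in(\CC^*)^{\Delta_0}$, whose $i$-th coordinate is $\exp$ of the $i$-th entry of the vector $\tfrac{-1}{2\pi\iu}\Bdel^{-1}P_\torloop$ (the inverse $\Bdel^{-1}$ exists because $\Xodel$ is symplectic). Multiplying these coordinates converts the product of exponentials into the exponential of the sum of the entries, i.e.\ into $\exp\rbrac{\tfrac{-1}{2\pi\iu}\,(1,\ldots,1)\cdot\Bdel^{-1}\cdot P_\torloop}$, where $(1,\ldots,1)$ contracts a vector in $\CC^{\Delta_0}$ to the sum of its coordinates. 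As $\sLdel=\ker\nabla|_{\Xodel}$ is the local system of flat sections of $\det\cNdel$, its monodromy around $\torloop$ is precisely this scalar, which establishes the first displayed formula; independence of the choice of trivialization of $\nbdelo|_\torloop$ is inherited from \autoref{prop:monodromy}.

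For the second assertion I would use that a rank-one local system on the connected manifold $\Xodel$ is trivial if and only if its monodromy around every loop is $1$. By the formula above this happens exactly when $\tfrac{-1}{2\pi\iu}(1,\ldots,1)\Bdel^{-1}P_\torloop\in 2\pi\iu\ZZ$ for all $\torloop$; multiplying by $-2\pi\iu$ (and absorbing the sign into $\ZZ$) rewrites this as $(1,\ldots,1)\Bdel^{-1}P_\torloop\in(2\pi\iu)^2\ZZ$, as claimed.

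I do not expect a genuine obstacle here: the substance is already contained in \autoref{prop:monodromy}, and what remains is the observation that the determinant connection's holonomy is multiplicative across the torus splitting, together with the elementary exponential bookkeeping that produces the factor $(2\pi\iu)^2$ rather than $2\pi\iu$.
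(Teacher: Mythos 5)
Your proposal is correct and matches the paper's reasoning exactly: the paper deduces the corollary in one line from \autoref{prop:monodromy} by noting that the holonomy of the determinant bundle is the product of the holonomies of the line subbundles $\cN{\Delta,i}$, which is precisely your multiplicativity-across-the-splitting argument followed by the same exponential bookkeeping.
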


\subsubsection{Residue}

Recall that the \defn{residue} of a logarithmic connection $\nabla$ on a line bundle along an irreducible component $\Y$ of its polar divisor is the number
\[
\res_{\Y}(\nabla) \in \CC
\]
defined by choosing any local trivialization of the bundle, writing the connection in the form $\nabla = \dd + \alpha$ for a closed logarithmic one-form $\alpha$, and setting $\res_{\Y}(\nabla):=\res_{\Y}(\alpha)$.  The result is independent of the choice of trivialization, and in the case at hand it may be computed directly from the biresidues of the log symplectic form:

\begin{lemma}\label{lem:block-res}
Let $\Xdel$ be a log symplectic stratum of codimension $c$, suppose that $\X_{\Delta'}$ is an irreducible component of $\bdXdel$, and write the biresidue along the codimension-$(c+1)$ stratum $\X_{\Delta'}$ in block form
\[
B_{\Delta'} = \begin{pmatrix}
B_{\Delta} & A \\
- A^T & 0
\end{pmatrix}.
\]
Then the residue of the flat $(\CC^*)^{\Delta_0}$-connection $\nabla$ on the $\cN{\Xdel}$ is given by
\[
\res_{\X_{\Delta'}}(\cNdel,\nabla) = B_{\Delta}^{-1} A \in \CC^{\Delta_0}.
\]
In particular, the residue of the determinant connection is
\[
\res_{\X_{\Delta'}} (\det{\cNdel},\nabla) =  (1,\ldots,1)\cdot \Bdel^{-1}\cdot  A
\]
\end{lemma}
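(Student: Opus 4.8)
The plan is to localize near a general point of $\X_{\Delta'}$, reduce to the explicit normal form of \autoref{prop:stable-form}, and then read off the residue from the connection one-form, computed exactly as in the proof of \autoref{prop:monodromy}. The residue $\res_{\X_{\Delta'}}$ may be computed in an arbitrarily small neighbourhood of a general point $p\in\X_{\Delta'}$, i.e.\ one lying on no component of $\bdXdel$ other than $\X_{\Delta'}$ itself; moreover it is unchanged if we replace $(\X,\pi)_p$ by any stably equivalent germ, since taking products with symplectic germs affects neither $\cNdel$ nor the induced connection on $\det\cNdel$ (symplectic factors are unimodular and transverse to $\Xdel$). By \autoref{prop:stable-form} and this invariance we may therefore compute in the germ \eqref{eq:local-magnetic} attached to the $(c+1)\times(c+1)$ biresidue $B_{\Delta'}$: in coordinates $(y_1,\ldots,y_{c+1},q_1,\ldots,q_{c+1})$,
\[
\omega = \sum_{i=1}^{c+1}\dd q_i\wedge\dlog{y_i} + \sum_{1\le i<j\le c+1}(B_{\Delta'})_{ij}\,\dlog{y_i}\wedge\dlog{y_j},
\]
with $\Delta_0 = \{1,\ldots,c\}$, $\Xdel = \{y_1 = \cdots = y_c = 0\}$ and $\X_{\Delta'} = \{y_1 = \cdots = y_{c+1} = 0\}$. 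This $\omega$ is invariant under the torus $(\CC^*)^{\Delta_0}$ rescaling $y_1,\ldots,y_c$, hence is its own homogenization along $\Xdel$, so by \autoref{ex:symplectic-tube} the connection $\nabla$ on $\cNdel$ is the $\omega$-orthogonal splitting of $\nbdel$, regarded as a logarithmic connection on $\Xdel$ with poles on $\bdXdel$.

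To extract the residue I would decompose, over $\Xodel$ and in the manner of the proof of \autoref{prop:monodromy},
\[
\omega = \sum_{i,j\in\Delta_0}(B_\Delta)_{ij}\,\dlog{y_i}\wedge\dlog{y_j} + \sum_{i\in\Delta_0}\alpha_i\wedge\dlog{y_i} + \omega_\Delta,
\]
where each $\alpha_i$ is pulled back from $\Xdel$ (now logarithmic, with poles along $\X_{\Delta'}$) and $\omega_\Delta$ is the log symplectic form of $\Xdel$. Matching this with the displayed normal form gives $\alpha_i = \dd q_i - A_i\,\dlog{y_{c+1}}$, where $A = (A_i)_{i\in\Delta_0}$ is precisely the off-diagonal block of $B_{\Delta'}$ from the statement, i.e.\ $A_i = (B_{\Delta'})_{i,c+1}$. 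The proof of \autoref{prop:monodromy} identifies the connection form of the $(\CC^*)^{\Delta_0}$-connection, in the frame $(y_1,\ldots,y_c)$, with $B_\Delta^{-1}(\alpha_i)_{i\in\Delta_0}$; since $\dd q_i$ is holomorphic while $\dlog{y_{c+1}}$ has residue $1$ along $\X_{\Delta'} = \{y_{c+1} = 0\}$, taking residues yields $\res_{\X_{\Delta'}}(\cNdel,\nabla) = B_\Delta^{-1}A$, which lies in $\CC^{\Delta_0}$ because the residue is a diagonal endomorphism of $\cNdel|_{\X_{\Delta'}}$ in this frame. Summing the $c$ diagonal entries (equivalently, taking the trace) then gives the residue $(1,\ldots,1)\cdot B_\Delta^{-1}\cdot A$ of the determinant connection on $\det\cNdel$.

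The computation itself is short; the step to be careful about is the bookkeeping around it. One must check that $\res_{\X_{\Delta'}}$ really depends only on the germ near a general point of $\X_{\Delta'}$ and is invariant under stabilization and homogenization, and — more delicately — pin down the signs so that the orientation conventions built into the block decomposition of $B_{\Delta'}$, the anchor isomorphism $\cTlog{\Xdel}\cong\logforms[1]{\Xdel}$ given by $(\pisdel)^\vee$, and the normalization of $\res$ all combine to give exactly $B_\Delta^{-1}A$ rather than its negative.
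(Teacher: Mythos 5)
Your proposal is correct and follows essentially the same route as the paper: localize at a general point of $\X_{\Delta'}$, put $\omega$ in a normal form where the cross terms $\alpha_i\wedge\dlog{y_i}$ are explicit, identify the connection form as $B_\Delta^{-1}(\alpha_i)_{i\in\Delta_0}$ exactly as in the proof of \autoref{prop:monodromy}, and take residues; the paper reduces to its normal form via a local Poisson tubular neighbourhood (\autoref{prop:homogenization}) rather than via stabilization and \autoref{prop:stable-form}, but that difference is immaterial (and your observation that the residue is invariant under taking products with symplectic germs is valid, since such factors do not pair with the normal directions under $\omega$). The only point to tidy is the sign you flag yourself --- with $\alpha_i=\dd q_i-A_i\dlog{y_{c+1}}$ the literal residue of the connection form is $-B_\Delta^{-1}A$, so landing on $+B_\Delta^{-1}A$ requires pinning down the ordering convention relating the coefficient of $\dlog{y_i}\wedge\dlog{y_{c+1}}$ to the block $A$ of $B_{\Delta'}$ --- together with the cosmetic remark that your residual term $\dd q_{c+1}\wedge\dlog{y_{c+1}}$ is not the full log symplectic form of $\Xdel$, though this does not affect the computation since only the $B_\Delta$-block and the cross terms enter the $\omega$-orthogonal splitting.
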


\begin{proof}
Note that since the residue is constant along $\X_{\Delta'}$, it is enough to calculate it at a single point $p \in \Y$ which does not intersect any other boundary components of $\Xdel$.  Choosing a Poisson tubular neighbourhood of $\Xdel$ in a neighbourhood of $p$ and a coordinate $z$ on $\Xdel$ vanishing on $\Y$, we may express the log symplectic structure in the form
\[
\sum_{i,j\in\Delta_0} (\Bdel)_{ij} \dlog{y_i}\wedge \dlog{y_j} + \sum_{i \in \Delta_0} \rbrac{a_i \dlog{z}} \wedge \dlog{y_i} +  \omega_\Delta
\]
where $\omega_\Delta$ is the log symplectic form on $\Xdel$, and where $a_i\in\CC$ are the biresidues forming the block $A = (a_i)_{i}$ in the biresidue matrix $B_{\Delta'}$ along $\Y$.  As explained in the proof of \autoref{prop:monodromy}, the $\CC^{\Delta_0}$-valued connection form on $\Xdel$ in this chart is given by $B_{\Delta}^{-1}(A_i)_i$, where now $A_i = a_i \dlog{z}$.  Taking the residue at $z=0$ gives the result.
\end{proof}

\begin{example}\label{ex:res23}
Suppose that $\Xdel$ is a log symplectic stratum of codimension two, and let $\X_{\Delta'} \subset \bdXdel$ be a boundary component.  Dually, we have a triangle $\Delta' \subset \dcX$ containing the edge $\Delta$. The biresidue along $\X_{\Delta'}$ can be represented graphically as
\[
B_{\Delta'} =\vcenter{\hbox{ \begin{tikzpicture}[scale=0.7,decoration={
    markings,
    mark=at position 0.5 with {\arrow{>}}}
    ] 
\draw[fill=gray] (-30:1) -- (90:1) -- (210:1) -- (-30:1);
\draw[postaction={decorate}] (90:1) -- (-30:1);
\draw[postaction={decorate}] (210:1)  -- (90:1);
\draw[postaction={decorate}] (-30:1) -- (210:1);
\draw (30:0.9) node {$a_1$};
\draw (150:0.9) node {$a_2$};
\draw (-90:0.8) node {$B_\Delta = b$};
\end{tikzpicture}}}
\]
or in matrix form as
\[
B_{\Delta'} = \begin{pmatrix}
B_{\Delta} & A \\ -A^T & 0
\end{pmatrix} = \begin{pmatrix}
0 & b & -a_1 \\
-b & 0 & a_2 \\
a_1 & -a_2 & 0
\end{pmatrix}.
\]
Then $b \ne 0$, and the residue of the determinant connection is given by
\[
\res_{\X_{\Delta'}}(\det \cNdel,\nabla) = -\frac{a_1+a_2}{b}.
\]
Note that this specific expression for the residue depends on the chosen orientation for the edges. 
\end{example}

\subsubsection{Resonance and cohomology}
The residue controls the local behaviour of the flat sections of $(\det\cNdel,\nabla)$, as follows: if $y$ is a local defining equation for an irreducible component $\Y \subset \bdXdel$, then on the open part $\Y^\circ\subset \Y$ where no other boundary components meet, a local flat section has the form $y^{-r}s$ where $s \in \det \cNdel$ is a local trivialization and $r = \res_\Y \nabla$ is the residue.  Thus the local monodromy is trivial if and only if $r \in \ZZ$, in which case the section is meromorphic of order $r$ along $\Y$.  In particular, the section extends to  a holomorphic section of $\det\cNdel$ over  $\Y^\circ$ if and only if $r \in \ZZ_{\le 0} $.  There is a standard name for the case $r \in \ZZ_{>0}$, where the local monodromy is trivial but the flat section has a pole:

\begin{definition}
A point $p \in \bdXdel$ is \defn{resonant} for $\nabla$ if for every irreducible component $\Y \subset \bdXdel$ containing $p$, the residue along $\Y$ is a positive integer:
\[
\res_\Y(\nabla) \in \ZZ_{>0}
\]
We say that a log symplectic stratum $\Xdel \subset \X$ is \defn{nonresonant} if no point of $\bdXdel$ is resonant for the corresponding logarithmic flat connection $\nabla$ on $\det \cN{\Xdel}$, i.e., $\res_\Y(\nabla) \notin \ZZ_{>0}$ for any irreducible component $\Y$ of $\bdXdel$.
\end{definition}

The computation of the logarithmic de Rham cohomology is simplest in the nonresonant case.  Applying \cite[Corollaire 6.10]{Deligne1970}, we obtain the following computation of the composition factor $\grdel$ of the Lichnerowicz complex:
\begin{proposition}\label{prop:nonres-cohlgy}If $\Xdel$ is nonresonant, then  the hypercohomology of $\grdel$ is given by the ordinary sheaf cohomology of the local system $\sLdel$ on the characteristic symplectic leaf $\Xodel$:
\[
\coH{\grdel} \cong \coH{\Xodel;\sLdel}.
\]
In fact, there is a canonical isomorphism
\[
\grdel \cong Rj_* \sLdel \in \dcat{\Xdel}
\]
in the constructible derived category, where $j : \Xodel \hookrightarrow \Xdel$ is the inclusion.  
\end{proposition}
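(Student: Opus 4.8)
The plan is to deduce the proposition from Deligne's comparison theorem for de Rham complexes of regular-singular connections, \cite[Corollaire 6.10]{Deligne1970}; the first assertion then follows formally from the second by taking hypercohomology over $\Xdel$. As recalled just above, the anchor isomorphism identifies $\grdel$ with the logarithmic de Rham complex $(\logforms{\Xdel}\otimes\det\cN{\Xdel},\dd^\nabla)$ of the flat logarithmic connection $\nabla$ on the line bundle $\det\cN{\Xdel}$ over $\Xdel$, whose polar divisor $\bdXdel=\Xdel\setminus\Xodel$ is simple normal crossings by our standing hypothesis; since a logarithmic connection automatically has regular singularities, Deligne's machinery applies. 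Restricting to $\Xodel$ and invoking the holomorphic Poincaré lemma for $\nabla|_{\Xodel}$ gives a quasi-isomorphism $\sLdel\xrightarrow{\ \sim\ }j^*(\logforms{\Xdel}\otimes\det\cN{\Xdel})$, so the unit of the $(j^*,Rj_*)$-adjunction produces a canonical comparison morphism
\[
\grdel\;\cong\;(\logforms{\Xdel}\otimes\det\cN{\Xdel},\dd^\nabla)\;\longrightarrow\;Rj_*\sLdel
\]
in $\dcat{\Xdel}$ (the target lies in the constructible derived category since $Rj_*$ of a local system along the complement of a normal crossings divisor is constructible).

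It remains to see that this morphism is an isomorphism, and this is exactly where \cite[Corollaire 6.10]{Deligne1970} enters: the morphism above is a quasi-isomorphism provided that, along each irreducible component $\Y$ of $\bdXdel$, the residue endomorphism of $\nabla$ has no eigenvalue lying in $\ZZ_{>0}$. In the present rank-one situation the residue along $\Y$ is the single scalar $\res_{\Y}(\det\cN{\Xdel},\nabla)$, which was computed explicitly in \autoref{lem:block-res}, and the hypothesis that $\Xdel$ is nonresonant says precisely that this scalar avoids $\ZZ_{>0}$ for every $\Y$. Hence $\grdel\cong Rj_*\sLdel$ in $\dcat{\Xdel}$, and applying $\coH{\Xdel;-}$ gives $\coH{\grdel}\cong\coH{\Xdel;Rj_*\sLdel}\cong\coH{\Xodel;\sLdel}$.

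The only genuinely delicate point is keeping track of conventions, so as to be sure that it is $Rj_*$ — and not, say, $j_!$ or a canonical extension — that appears on the right. I would pin this down using the one-dimensional local model $(\Xdel,\bdXdel)=(\CC,\{0\})$ with $\nabla=\dd+r\,\dlog{z}$: a direct computation of the resulting two-term logarithmic de Rham complex shows that it has nonvanishing cohomology exactly when $r\in\ZZ_{\le0}$, matching $Rj_*$ of the associated rank-one local system precisely on the nonresonant range $r\notin\ZZ_{>0}$. This fixes the sign conventions, and the general normal crossings statement is then the content of Deligne's corollary; the rest is the formal calculus of the constructible derived category.
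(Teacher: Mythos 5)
Your proposal is correct and follows essentially the same route as the paper, which establishes the identification of $\grdel$ with the logarithmic de Rham complex of $(\det\cN{\Xdel},\nabla)$ in the surrounding discussion and then simply cites \cite[Corollaire 6.10]{Deligne1970}, with the nonresonance hypothesis matching Deligne's condition that the residues avoid $\ZZ_{>0}$. Your additional care with the comparison morphism and the one-dimensional sanity check is a reasonable fleshing-out of what the paper leaves implicit.
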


In the presence of resonance, the computation of the cohomology is slightly more involved, but still essentially topological, as follows.  Let us denote by $\Xdelr \subset \Xdel$ the open set obtained as the union of $\Xodel$ and the set of boundary points where $\nabla$  is resonant; equivalently $\Xdelr$ is the complement of the nonresonant boundary components in $\Xdel$. Let $\bdXdelr := \bdXdel \cap \Xdelr$ be its boundary divisor.  Since $\sLdel$ has trivial monodromy around $\bdXdelr$, it extends canonically to a rank-one local system $\tsLdel$ on $\Xdelr$. However, while the sections of $\sLdel$ extend to nonvanishing sections of the sheaf $\tsLdel$, the corresponding sections of $\det{\cN{\Xdel}}$ have poles and therefore they do not extend to sections of $\scoH[0]{\grdel}$ along $\Xdelr$.  This has the effect of introducing relative cohomology:
\begin{proposition}\label{prop:res-cohlgy}
For an arbitrary log symplectic stratum $\Xdel$, the hypercohomology  of $\grdel$ is given by the relative cohomology
\[
\coH{\Xdel;\grdel} \cong \coH{\Xdel,\bdXdelr;\tsLdel}.
\]
In fact, there is a canonical isomorphism
\[
\grdel \cong Rj_{2*}(j_{1!}\sLdel) \in \dcat{\X}
\]
in the derived category, where we have factored the inclusion $j : \Xodel \hookrightarrow \Xdel$ as a composition of open inclusions
\[
\xymatrix{
\Xodel \ar@{^(->}[r]^-{j_{1}} & \Xdelr \ar@{^(->}[r]^-{j_2} & \Xdel.
}
\]
\end{proposition}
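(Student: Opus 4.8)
The plan is to establish the derived-category statement $\grdel \cong Rj_{2*}(j_{1!}\sLdel)$ directly — this contains the cohomology formula, as I note at the end — by reducing to a local computation that separates the resonant and nonresonant boundary components. Recall from \autoref{sec:log-dR-strata} that $\grdel$ is the logarithmic de Rham complex $(\logforms{\Xdel}\otimes\det\cN{\Xdel},\dd^\nabla)$ of the flat logarithmic connection $\nabla$ on $\det\cN{\Xdel}$, that $\sLdel=\ker\nabla|_{\Xodel}$, and that restricting $\grdel$ to the open stratum yields the ordinary de Rham complex of $(\det\cN{\Xdel},\nabla)|_{\Xodel}$, which is canonically resolved by the local system $\sLdel$. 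Thus there is a canonical quasi-isomorphism $j^*\grdel\cong\sLdel[0]$ for $j\colon\Xodel\hookrightarrow\Xdel$, hence by adjunction canonical maps $j_!\sLdel\to\grdel\to Rj_*\sLdel$ in $\dcat{\Xdel}$, and the proposition asserts that the intermediate object $Rj_{2*}(j_{1!}\sLdel)$, which also lies between these, is obtained. To pin it down I would factor $j=j_2\circ j_1$ as in the statement and produce the isomorphism as a zig-zag $\grdel\xrightarrow{\,\sim\,}Rj_{2*}(j_2^*\grdel)\xleftarrow{\,\sim\,}Rj_{2*}(j_{1!}\sLdel)$, where the first arrow is the unit of $(j_2^*,Rj_{2*})$ and the second is $Rj_{2*}$ applied to the counit $j_{1!}\sLdel\to j_2^*\grdel$ (using $j_1^*j_2^*\grdel=j^*\grdel\cong\sLdel$). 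Both arrows are canonical, so the only point is to check that they are isomorphisms, which may be done on stalks.

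First I would record the local normal form: near a point $p\in\Xdel$, choose normal crossings coordinates $y_1,\dots,y_\ell,z_1,\dots,z_m$ with $\bdXdel=\{y_1\cdots y_\ell=0\}$; since every closed logarithmic one-form on a polydisc is, after a holomorphic rescaling of the trivialization of $\det\cN{\Xdel}$, of the shape $\sum_i r_i\dlog{y_i}$ with $r_i=\res_{\{y_i=0\}}\nabla$, we may assume $\nabla=\dd+\sum_{i=1}^\ell r_i\dlog{y_i}$. Reorder so that $r_1,\dots,r_a\notin\ZZ_{>0}$ (the nonresonant directions, deleted in passing to $\Xdelr$) and $r_{a+1},\dots,r_\ell\in\ZZ_{>0}$ (the resonant directions, which survive in $\Xdelr$). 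In these coordinates the logarithmic de Rham complex of $\nabla$ is the total complex of an external tensor product over the factors of the polydisc: one factor $C_r=[\cO_\Delta\to\cO_\Delta\dlog{y}]$ with differential $f\mapsto(y\cvf{y}f+rf)\dlog{y}$ for each divisor coordinate $y$ (with $r$ the corresponding residue), and the ordinary holomorphic de Rham complex for each remaining coordinate $z_j$ (each resolving $\underline{\CC}$). The objects $j_2^*\grdel=\grdel|_{\Xdelr}$ and $Rj_{2*}(j_{1!}\sLdel)$ decompose compatibly along the same product: $j_{1!}\sLdel$ is the external product of the rank-one local systems $L_{r_1},\dots,L_{r_a}$ (with $L_r$ generated by the multivalued flat section $y^{-r}$), the extensions-by-zero $j_!L_{r_{a+1}},\dots,j_!L_{r_\ell}$, and the constant sheaf in the $z_j$, while $Rj_{2*}$ replaces the first $a$ factors by $Rj_*L_{r_i}$ and leaves the rest in place. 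By the Künneth formula the unit and counit maps above are therefore isomorphisms near $p$ as soon as: for $r\in\ZZ_{>0}$, the counit $j_!L_r\to C_r$ is a quasi-isomorphism; for $r\notin\ZZ_{>0}$, the unit $C_r\to Rj_*L_r$ is a quasi-isomorphism; and the Poincar\'e lemma holds in the $z_j$. The second is precisely Deligne's comparison theorem \cite[Corollaire 6.10]{Deligne1970} in dimension one (the same input as \autoref{prop:nonres-cohlgy}); the first is the elementary fact that $f\mapsto y\cvf{y}f+rf$ acts on $\cO_\Delta$ by $\sum_n(n+r)a_ny^n$, hence is an isomorphism when $r\in\ZZ_{>0}$, so that $C_r$ is concentrated in degree zero with $\scoH[0]{C_r}=\ker\nabla=j_!L_r$ — since $y^{-r}$ has a pole, no nonzero flat section extends holomorphically across the origin. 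Running these one-variable identifications through the Künneth decomposition, and letting $p$ vary, produces the canonical isomorphism $\grdel\cong Rj_{2*}(j_{1!}\sLdel)$ in $\dcat{\Xdel}$.

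Finally, the cohomology formula follows formally: $\coH{\Xdel;\grdel}\cong\coH{\Xdel;Rj_{2*}(j_{1!}\sLdel)}\cong\coH{\Xdelr;j_{1!}\sLdel}$, and since $\bdXdelr=\Xdelr\setminus\Xodel$ is a closed normal crossings divisor in $\Xdelr$ across which $\sLdel$ has trivial monodromy (the residues along it being positive integers), the short exact sequence $0\to j_{1!}\sLdel\to\tsLdel\to\iota_*\iota^*\tsLdel\to 0$ on $\Xdelr$ (with $\iota\colon\bdXdelr\hookrightarrow\Xdelr$) identifies this hypercohomology with the relative cohomology $\coH{\Xdel,\bdXdelr;\tsLdel}$ via the long exact sequence of the pair. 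I expect the main obstacle to be the bookkeeping in the mixed case: one must verify that the Künneth decompositions of $\grdel$, of $\grdel|_{\Xdelr}$, and of $Rj_{2*}(j_{1!}\sLdel)$ are compatible with the canonical unit and counit maps — so that the conclusion is a genuine canonical isomorphism rather than an abstract one — and one must be careful that the partial pushforward $Rj_{2*}$, which undoes only the nonresonant directions, leaves the resonant factors $j_!L_r$ untouched while interacting correctly with them. Beyond these formal manipulations, the only analytic input is the one-dimensional comparison theorem together with the explicit one-variable computation above.
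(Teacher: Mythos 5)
Your argument is correct, and its skeleton coincides with the paper's (explicitly sketched) proof: both reduce to the local normal form $\nabla = \dd + \sum_i r_i\,\dd y_i/y_i$ by a holomorphic gauge transformation and then invoke a K\"unneth decomposition. The difference lies in how the resonant directions are handled. The paper first treats the purely resonant case by applying Verdier duality to the nonresonant statement of \autoref{prop:nonres-cohlgy}, following \cite[Remark 1.8]{Esnault1986}, and then uses K\"unneth only to combine the two pure cases; you instead push the decomposition all the way down to one-variable factors and verify the resonant factor $C_r \cong j_!L_r$ by the elementary observation that $y\cvf{y}+r$ acts on power series by $a_n\mapsto (n+r)a_n$ and is therefore invertible at the origin when $r\in\ZZ_{>0}$. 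Your route is more self-contained (the only external input is Deligne's one-dimensional comparison theorem) and has the additional merit of exhibiting the isomorphism as an explicit zig-zag of unit and counit maps, which makes the claimed canonicity transparent --- a point the paper's sketch leaves implicit. What the paper's route buys is brevity and a conceptual explanation (the resonant extension is Verdier dual to the nonresonant one), and its K\"unneth step only has to marry two pure factors rather than $\ell$ one-dimensional ones, so it largely avoids the bookkeeping you flag at the end. Both proofs pass over the same analytic point --- that the K\"unneth formula applies to these external products of complexes of analytic sheaves on a polydisc --- so your write-up is at no disadvantage there; your derivation of the relative-cohomology statement from the triangle $j_{1!}\sLdel\to\tsLdel\to\iota_*\iota^*\tsLdel$ is also the standard one and is fine.
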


\begin{proof}[Sketch of proof]
When all boundary components are resonant, so that $j = j_1$ and $j_2 = \id$, this statement follows from \autoref{prop:nonres-cohlgy} by Verdier duality, as explained in \cite[Remark 1.8]{Esnault1986}.  The general case follows by observing that near a point on an arbitrary stratum of $\Xdel$, one can find a holomorphic gauge transformation taking the connection to one of the form $\nabla = \dd + \sum a_i \dlog{y_i}$ where $a_i \in \CC$ are the residues.  In particular, the triple $(\Xdel,\bdXdel,\nabla)$ decomposes locally as a Cartesian product of a purely nonresonant connection and a purely nonresonant connection, so that the cohomology sheaves of the complex $\grdel$ may be computed using the K\"unneth formula, giving the stated result. 
\end{proof}

\begin{definition}\label{def:nonres-push}
We call the complex
\[
\sLdelnr := Rj_{2*}(j_{1!}\sLdel) \in \dcat{\Xdel}
\]
appearing in \autoref{prop:res-cohlgy} the \defn{nonresonant extension of $\sLdel$}.  In particular, when $\Xdel$ is nonresonant, $\sLdelnr = R\jdelf\sLdel$ is the usual derived
direct image of the local system $\sLdel$ along the inclusion $j_\Delta : \Xodel  \to \X$.
\end{definition}

\begin{example}[\runex]\label{ex:runex-strata-contrib}
For the log symplectic form \eqref{eq:3cpt-logform}, the open stratum $\Xo$ (corresponding to the empty simplex $\varnothing)$ is homotopic to a three-torus, so that 
\[
\coH{\grpol{\varnothing}} = \coH{\Xo;\CC} \cong \coH{(S^1)^3;\CC} \cong \CC[e_1,e_2,e_3]
\]
where $e_i = [\pis(\dlog{y_i})]$ are the classes of the log Hamiltonian vector fields of the coordinate functions.

Meanwhile,  a codimension two stratum $\Xodel$ is symplectic if and only if the corresponding biresidue is nonzero.  Consider, without loss of generality, the stratum with biresidue $b_1$.  This stratum is isomorphic to $\CC^*\times \CC$ with coordinates $(y_1,z)$.  If $b_1 \ne 0$, it carries a local system $\sLdel \subset \det{\cN{\Xodel}}$ generated by the (possibly multivalued) bivector $y_1^m \cvf{y_2}\wedge\cvf{y_3}$ where $m = (b_2+b_3)/b_1$ is minus the residue as in \autoref{ex:res23}.  The monodromy of the local system $\sLdel$ around a positively oriented generating loop is therefore $\exp(2\pi\mathrm{i}m)$. In particular the monodromy is trivial if and only if $m \in \ZZ$ and the stratum is resonant if and only if  $m \in \ZZ_{<0}$.  When the monodromy is nontrivial, the cohomology of $\sLdel$ vanishes.  Similarly, when the stratum is resonant, the relative cohomology from \autoref{prop:res-cohlgy} vanishes, since the inclusion $\CC \cong  \bdXdel =\bdXdelr \hookrightarrow \Xdelr = \Xdel  \cong \CC^2$ is a homotopy equivalence.  Finally when $m \in \ZZ_{\ge 0}$ we obtain the cohomology of $\CC^*\times \CC \sim S^1$ with coefficients in a trivial local system. It is straightforward to determine the generators explicitly, which gives
\[
\Hpi{\grpol{\Delta}} = \coH{\Xdel;\sLdelnr} \cong \begin{cases}
\coH{S^1;\CC} \cong \CC f_1 \oplus \CC f_1e_1 & m \in \ZZ_{\ge 0} \\
0 & \textrm{otherwise}
\end{cases}
\]
where $f_1 = [y_1^m \cvf{y_2}\wedge\cvf{y_3}]$.  The remaining strata are analyzed similarly, by cyclically permuting the variables. 
\end{example}

\subsection{Holonomicity and total cohomology}
\label{sec:holonomic}
We now assemble the results of the previous subsections to deduce a complete calculation of the Poisson cohomology, under natural nondegeneracy and nonresonance assumptions.

The relevant nondegeneracy condition is the notion of holonomicity for a (possibly non-normal crossings) log symplectic manifold $(\X,\pi)$, introduced in \cite{Pym2018}; it is a condition on the germs of $\pi$ at each point $p \in \X$.  Formally, $(\X,\pi)$ is \defn{holonomic} if the complex of D-modules $\der{\X}\otimes \sD{\X}$ induced by the Lichnerowicz complex is holonomic in the usual sense.  In \emph{op.~cit.}~it was shown that the conormal bundle of a symplectic leaf is contained in the singular support of $\der{\X}\otimes \sD{\X}$ if and only if it is characteristic in the sense of \autoref{def:char-leaves}, and furthermore the following chain of implications was explained:
\begin{align*}
(\X,\pi) \textrm{ is holonomic } & \implies (\der{\X},\dpi)[\dim\X] \textrm{ is a perverse sheaf} \\
&\implies \textrm{the stalk cohomology }\scoH{\der{\X},\dpi}_p \textrm{ is finite-} \\
&\qquad\ \  \textrm{dimensional  for all }p \in \X \\
&\implies (\X,\pi) \textrm{ has only finitely many characteristic }\\
&\qquad\ \ \textrm{symplectic leaves near any point} 
\end{align*}
We remark that the first implication follows from standard facts about D-modules, namely Kashiwara's constructibility theorem and Roos' bound on the projective dimension of holonomic modules, and meanwhile the second implication is true by definition. Thus the novel ingredient from Poisson geometry is the third implication.

We expect that these statements essentially characterize holonomicity, so that the implications above are in fact biconditional:
\begin{conjecture}\label{conj:char-holonomic}
If $(\X,\pi)$ has only finitely many characteristic symplectic leaves near any point, then $(\X,\pi)$ is holonomic.  In this case, the Lichnerowicz complex $(\der{\X},\dpi)$ has a finite exhaustive filtration whose composition factors are products of indecomposable extensions of finite-rank local systems on characteristic symplectic leaves.
\end{conjecture}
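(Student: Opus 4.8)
The plan is to carry out, in the general setting, the same argument that proves \autoref{thm:hol-complex} for normal crossings divisors, replacing the weight filtration $\W_\bullet\der{\X}$ by a canonical filtration extracted from the stratification of $\X$ by characteristic symplectic leaves, and using Weinstein's splitting theorem in place of the explicit normal form of \autoref{prop:stable-form}.

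For the first assertion one must bound the characteristic variety $\mathrm{Ch}(\der{\X}\otimes\sD{\X})$ of the $\sD{\X}$-module complex attached to the Lichnerowicz complex. Since the principal symbol of $\dpi=[\pi,-]$ in the codirection $\xi$ is exterior multiplication by the vector $\pis(\xi)$, the symbol complex at a covector is the Koszul complex on $\pis(\xi)$, which is exact whenever $\pis(\xi)\neq 0$; hence a priori $\mathrm{Ch}(\der{\X}\otimes\sD{\X})$ lies in the union $\bigcup_{S}\overline{\coN{S}}$ of conormal bundles of all symplectic leaves $S$. The real work is to improve this to the union over the \emph{characteristic} leaves alone. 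Near a point of a non-characteristic leaf $\bS$, Weinstein splitting presents the germ as a product of a symplectic factor with a transverse Poisson germ vanishing at a point; non-characteristicity means the modular vector field of the transverse factor is nonzero there, so --- exactly as in the contracting-homotopy step of the proof of \autoref{prop:coh-adapt} --- there is a closed one-form acting invertibly on the determinant Poisson module, and interior product with it gives a contracting homotopy. The new ingredient is to lift this homotopy to the complex $\der{\X}\otimes\sD{\X}$ microlocalized along $\coN{\bS}$, so that $\overline{\coN{\bS}}$ drops out of $\mathrm{Ch}(\der{\X}\otimes\sD{\X})$. Granting that, local finiteness of the set of characteristic leaves makes $\mathrm{Ch}(\der{\X}\otimes\sD{\X})$ locally a finite union of conormal bundles, each of dimension $\dim\X$; it is therefore Lagrangian, so $(\X,\pi)$ is holonomic.

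For the second assertion, assume $(\X,\pi)$ holonomic. As recalled before the conjecture, Kashiwara's constructibility theorem and Roos's bound on the projective dimension of holonomic $\sD{\X}$-modules give that $(\der{\X},\dpi)[\dim\X]$ is a perverse sheaf, constructible for the stratification by characteristic leaves, hence one with a finite composition series whose simple factors are intermediate extensions of local systems on closures $\overline{\bS}$ of characteristic leaves. To obtain the explicit filtration claimed, restrict to the open locus of each $\overline{\bS}$ on which $\pi$ has locally constant rank, apply Weinstein splitting uniformly there to factor the Lichnerowicz complex as an external product of the de Rham complex of the symplectic leaf with the Lichnerowicz complex of a transverse Poisson germ vanishing at a point, and read off a finite-rank local system $\sL_\bS$ on $\bS$ --- containing as its determinant part the rank-one system of flat sections of $\det\coN{\bS}$ that already appears in \autoref{prop:nonres-cohlgy} in the normal crossings case --- whose monodromy is computed from periods of $\omega$ along linking tori exactly as in \autoref{prop:monodromy} and \autoref{cor:no-monodromy}. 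One then builds the filtration on $\der{\X}$ by induction on the codimension of characteristic strata, at the $k$-th step retaining the polyvectors tangent to sufficiently high order along the codimension-at-most-$k$ characteristic strata, so that its associated graded is the direct sum over characteristic leaves $\bS$ of the nonresonant extensions $\sL_\bS^{\mathrm{nr}}$ of \autoref{def:nonres-push}, pushed forward and shifted by codimension, paralleling \autoref{lem:gr-poly} and \autoref{prop:res-cohlgy}. In this picture the conjecture's ``indecomposable extensions'' are the $\sL_\bS^{\mathrm{nr}}$, and the ``products'' record the external product decomposition supplied by Weinstein splitting.

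The main obstacle is the lack of a finite normal form outside the normal crossings case: Weinstein splitting yields a transverse Poisson germ vanishing at a point, but, unlike the constant biresidue matrices that govern the normal crossings situation, such germs are not classified, so the local shape of $\der{\X}$, the connection $\nabla$ on $\det\coN{\bS}$, its residues, and hence all resonance phenomena are not controlled in advance. One must instead argue homologically that holonomicity alone forces each transverse germ to be elementary enough that its Lichnerowicz complex concentrates on a single local system --- this is precisely the microlocal contracting homotopy above --- and making that argument uniform in families over $\overline{\bS}$ is the principal difficulty. A secondary obstacle is the splitting of the filtration in the resonant case: the normal crossings proof used Deligne's theory of logarithmic connections \cite{Deligne1970}, whereas in general one would need the relevant $\mathrm{Ext}$ groups between shifted local systems on incident characteristic strata to vanish or to produce only the stated indecomposable extensions, and it is conceivable that genuinely non-split phenomena intervene --- which is part of why the statement is only conjectural.
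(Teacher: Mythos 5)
This statement is \autoref{conj:char-holonomic}, which the paper explicitly leaves as a conjecture: the only case it proves is the normal crossings one (\autoref{thm:hol-complex}), where the weight filtration and the explicit stable normal form of \autoref{prop:stable-form} are available. So there is no proof in the paper to compare against, and your proposal must be judged on its own terms. Judged that way, it is a reasonable strategy sketch --- and in the normal crossings case it specializes to essentially what the paper does (filtration whose graded pieces are supported on strata, contracting homotopy killing the contribution of non-characteristic leaves as in \autoref{prop:coh-adapt}, holonomicity of the associated graded implying holonomicity of the whole) --- but it is not a proof, and the gaps are exactly where you locate them.

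The central missing step is the excision of $\overline{\coN{\bS}}$ from the characteristic variety of $\der{\X}\otimes\sD{\X}$ for a non-characteristic leaf $\bS$. The Koszul symbol argument only gives containment of the characteristic variety in the union over \emph{all} leaves of closures of conormal bundles, and since there are in general uncountably many leaves with badly behaved closures, local finiteness of the characteristic ones does not by itself produce a Lagrangian. The contracting homotopy of \autoref{prop:coh-adapt} is an interior product with a closed one-form built from the constant biresidue matrix; outside the normal crossings case the transverse Poisson germ supplied by Weinstein splitting is an arbitrary germ vanishing at a point, it need not admit such a one-form, and nothing in the cited results lifts a sheaf-level homotopy to a microlocal statement along $\coN{\bS}$, let alone uniformly in families over $\overline{\bS}$. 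The same lack of a normal form undermines the second half: the claim that the composition factors are extensions of \emph{finite-rank} local systems on the characteristic leaves requires that each transverse Lichnerowicz complex have finite-dimensional cohomology concentrated on a rank-one (or at least finite-rank) module, which is precisely what fails for resonant biresidues even in the normal crossings setting and is completely open beyond it. You correctly flag both issues, which is to say the proposal correctly identifies why the statement is a conjecture rather than resolving it.
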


\autoref{thm:hol-complex} below confirms this conjecture in the case when the boundary divisor $\bdX$ of the log symplectic form is normal crossings.  If we further assume that all characteristic leaves are nonresonant, it gives a complete calculation of the Poisson cohomology in topological terms. 

 To state the result precisely, we introduce some notation: for even integer $k \ge 0$, let $\Kleaf[k]{\pi}$ denote the set of all $(k-1)$-simplices $\Delta \subset \dcX$ such that the corresponding submanifold $\Xdel \subset \X$ is log symplectic, i.e., the biresidue $B_\Delta$ is a nondegenerate skew form.
\begin{theorem}\label{thm:hol-complex}
Let $(\X,\bdX,\pi)$ be a normal crossings log symplectic manifold.  Then the following statements hold.
\begin{enumerate}
\item $(\X,\pi)$ is holonomic at a point $p \in \X$ if and only if $p$ has a neighbourhood intersecting only finitely many characteristic symplectic leaves.  Equivalently, for every stratum $\Xodel$ of odd codimension such that $p \in \Xdel$, the constant vector $(1,\ldots,1)$ is not in the image of $\Bdel$.
\item If $\pi$ is holonomic, then we have a canonical isomorphism in $\dcat{\X}$:
\[
\gr^\W_k( \der{\X},\dpi) \cong  \begin{cases} 
0 & k \textrm{ is odd} \\
\bigoplus_{\Delta \in \Kleaf[k]{\pi}} \sLdelnr[-k] & k  \textrm{ is even}.
\end{cases}
\]
\item If $\pi$ is holonomic, and all characteristic symplectic leaves are nonresonant, then the weight filtration splits canonically in $\dcat{\X}$:
\[
\der{\X} \cong \gr^\W \der{\X} \cong \bigoplus_{k \ge 0} \bigoplus_{\Delta \in \Kleaf[k]{\pi}} R\jdelf \sLdel[-k] 
\]
where $\jdel : \Xodel \to \X$ is the inclusion.
\end{enumerate}
\end{theorem}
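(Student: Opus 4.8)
The plan is to derive all three parts from a single computation of the associated graded of the weight filtration. By \autoref{lem:gr-poly} one has $\gr^\W_k\der{\X}\cong\bigoplus_{\Delta\in\dc[k]{\X}}\idelf\grpol{\Delta}[-k]$, and the key structural input is that $\grpol{\Delta}$ is \emph{acyclic}, hence zero in $\dcat{\X}$, unless the symplectic leaves in $\Xodel$ are characteristic --- this is exactly the contracting-homotopy half of the proof of \autoref{prop:coh-adapt}. Among the surviving (characteristic) strata, \autoref{prop:res-adapt}, \autoref{lem:const-rank} and \autoref{cor:incident-nonhol} produce a dichotomy near any point: a characteristic stratum is either log symplectic, i.e.\ $\Delta\in\Kleaf[k]{\pi}$ (forcing $k$ even), on which $\grpol{\Delta}$ is the logarithmic de Rham complex of the flat logarithmic connection $(\det\cN{\Xdel},\nabla)$ of \autoref{sec:log-dR-strata}; or it is degenerate, in which case it lies in the closure of a characteristic stratum of odd codimension. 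This dichotomy is the combinatorial engine behind all three parts.

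For Part 1, the implication from holonomicity to finiteness of characteristic leaves near every point is already recorded in the text, via Kashiwara constructibility and the singular-support computation of \cite{Pym2018}, and its equivalence with the biresidue condition is elementary from the dichotomy: an odd-codimension characteristic stratum has degenerate Poisson structure by \autoref{lem:const-rank}, hence positive-dimensional leaves, so infinitely many characteristic leaves pass through every nearby point; conversely, if no odd-codimension stratum through $p$ satisfies $(1,\ldots,1)\in\img\Bdel$, then \autoref{cor:incident-nonhol} rules out every degenerate characteristic stratum through $p$, leaving only log symplectic ones, each a single leaf, so only finitely many characteristic leaves occur near $p$. The substantive direction is that finiteness of characteristic leaves near $p$ implies holonomicity at $p$: here I would pass to the induced complex of $\sD{\X}$-modules $\der{\X}\otimes_{\cO{\X}}\sD{\X}$ equipped with the weight filtration (which is $\dpi$-stable by \autoref{lem:weight-gerst}), and observe that over a sufficiently small neighbourhood of $p$ each graded piece $\gr^\W_k\der{\X}\otimes\sD{\X}$ is a \emph{finite} direct sum of zero complexes and of pushforwards, along the closed embeddings $\idel$, of de Rham complexes of flat logarithmic connections, which are regular holonomic; hence each $\gr^\W_k$ is holonomic, and since holonomicity of complexes is stable under the extensions appearing in distinguished triangles, the finiteness of the weight filtration propagates holonomicity up to $\der{\X}\otimes\sD{\X}$ itself.

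Part 2 is then immediate: assuming holonomicity, the dichotomy kills the odd-weight graded pieces and the degenerate even-weight summands, and \autoref{prop:res-cohlgy} together with \autoref{def:nonres-push} identify each surviving $\grpol{\Delta}$ with the nonresonant extension $\sLdelnr$, giving $\gr^\W_k\der{\X}\cong\bigoplus_{\Delta\in\Kleaf[k]{\pi}}\idelf\sLdelnr[-k]$. For Part 3 one invokes the additional nonresonance hypothesis, which by \autoref{prop:nonres-cohlgy} refines this to $\gr^\W_k\der{\X}\cong\bigoplus_{\Delta\in\Kleaf[k]{\pi}}R\jdelf\sLdel[-k]$ with $\jdel\colon\Xodel\to\X$, and it remains to split the finite filtration in $\dcat{\X}$. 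I would do this by induction on the weight, reducing the splitting of $\W_{k-1}\der{\X}\to\W_k\der{\X}\to\gr^\W_k\der{\X}$ to the vanishing of the connecting morphism $\delta\colon\gr^\W_k\der{\X}\to\W_{k-1}\der{\X}[1]$; using the inductive hypothesis $\W_{k-1}\der{\X}\cong\bigoplus_{j<k}\gr^\W_j\der{\X}$, this reduces further to the vanishing of $\mathrm{Hom}_{\dcat{\X}}(R\jdelf\sLdel[-k],\,Rj_{\Delta'*}\sL_{\Delta'}[1-j])$ for all $\Delta\in\Kleaf[k]{\pi}$, $\Delta'\in\Kleaf[j]{\pi}$ with $j<k$. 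Each such group vanishes for a support reason: by the $(j_{\Delta'}^*,Rj_{\Delta'*})$ adjunction it equals a $\mathrm{Hom}$ over $\Xo_{\Delta'}$ applied to the restriction there of $R\jdelf\sLdel[-k]$, which is supported on $\Xdel\subseteq\fX{k}\subseteq\fX{j+1}$, and $\Xo_{\Delta'}$ is disjoint from $\fX{j+1}$; so $\delta=0$, the induction closes, and $\der{\X}\cong\bigoplus_{k\ge 0}\bigoplus_{\Delta\in\Kleaf[k]{\pi}}R\jdelf\sLdel[-k]$.

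The step I expect to be the main obstacle is the converse in Part 1, namely transferring between the Poisson-geometric condition (finiteness of characteristic leaves) and the D-module condition (holonomicity). Everything in Parts 2 and 3 is either recalled from the earlier sections or a formal manipulation in the constructible derived category, but establishing holonomicity genuinely leaves that framework: one must know that the concrete graded pieces --- de Rham complexes of flat logarithmic connections on strata --- are holonomic $\sD$-module complexes, and then organize the finitely many extensions packaged by the weight filtration. Some care is also required to see that the decomposition of $\gr^\W_\bullet\der{\X}$ over a neighbourhood of $p$ is genuinely finite, which is why this part of the argument is carried out locally.
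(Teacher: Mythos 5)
Your argument for Parts 1 and 2 is essentially the paper's: necessity of local finiteness is quoted from \cite{Pym2018}, the biresidue reformulation comes from \autoref{prop:res-adapt} together with \autoref{cor:incident-nonhol}, and sufficiency is proved by tensoring the weight filtration with $\sD{\X}$, identifying each graded summand as either acyclic (non-characteristic strata, via the contracting homotopy of \autoref{prop:coh-adapt}) or the pushforward of a regular holonomic logarithmic de Rham complex on a log symplectic stratum, and closing the induction using stability of holonomicity under extensions. (One small slip: a degenerate constant-rank Poisson structure on a stratum has leaves of positive \emph{codimension} in the stratum, not necessarily positive dimension; that is what yields infinitely many characteristic leaves.) For Part 3, however, you take a genuinely different route. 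The paper splits the filtration by showing that the composite $\W_k\der{\X}\to\der{\X}\to Rj_*\der{\U_k}$ is a quasi-isomorphism for $\U_k=\X\setminus\bdX[k+1]$, so the projection $\der{\X}\to\W_k\der{\X}$ is realized concretely as restriction to an open set; you instead kill the connecting morphisms $\gr^\W_k\der{\X}\to\W_{k-1}\der{\X}[1]$ by an adjunction-plus-support computation showing $\mathrm{Hom}_{\dcat{\X}}(R\jdelf\sLdel[-k],Rj_{\Delta'*}\sL_{\Delta'}[1-j])=0$ for $j<k$, which is correct (and note that the same support argument kills $\mathrm{Hom}$ in degree $0$, so your splitting is unique, hence canonical as the theorem asserts; nonresonance is genuinely used, since with only the nonresonant extension $Rj_{2*}j_{1!}\sL_{\Delta'}$ the adjunction would land on $\Xdelr[\Delta']$, which can meet $\Xdel$). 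Your version is shorter and more formal; the paper's version buys the explicit identification of the splitting with a restriction map, which is what gets reused in \autoref{rmk:weight-split} and in the proofs of \autoref{lem:nonres-def} and \autoref{thm:decouple}.
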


\begin{proof}
  That the local finiteness of the set of characteristic leaves is necessary for holonomicity was proven in \cite{Pym2018} as stated above.  To see that this is equivalent to the statement about biresidues along odd-codimension strata, note that the number of characteristic leaves is locally infinite if and only if there exists a stratum whose Poisson structure is degenerate and whose leaves are characteristic. If such a stratum has even codimension, then by \autoref{cor:incident-nonhol} there is also a stratum of odd codimension with the same property.  Hence for local finiteness, it is enough to check that the leaves in all strata of odd codimension are non-characteristic, which reduces to the stated condition on biresidues by \autoref{prop:res-adapt}.
  
We now prove that the local finiteness  implies holonomicity. Since the problem is local,  we may assume without loss of generality that $\pi$ has only finitely many characteristic leaves globally, not just in a neighbourhood of some point. Consider the complex of $\sD{\X}$-modules $\cMpi := \der{\X} \otimes \sD{\X}$.  It inherits a weight filtration $\W_\bullet \cMpi$ from that of $\der{\X}$, and since $\sD{\X}$ is flat over $\cO{\X}$, we have $\gr^\W \cMpi \cong (\gr^\W\der{\X})\otimes \sD{\X}$ as complexes of $\sD{\X}$-modules.  Since extensions of holonomic D-modules are holonomic, by an induction on weight it suffices to show that the complex $\gr^\W \der{\X} \otimes \sD{\X}$ is holonomic.  From the description of $\gr^\W\der{\X}$ and the fact that the direct image preserves holonomicity, it suffices to prove that for every $k \ge 0$ and every $(k-1)$-simplex $\Delta$,  the complex $\grdel \otimes \sD{\Xdel}$ is holonomic.

By \autoref{prop:coh-adapt} there are two possibilities: either all symplectic leaves in $\Xodel$ are non-characteristic, in which case $\grdel  \cong 0$, or all symplectic leaves in $\Xodel$ are characteristic, and hence there must be at most finitely many of them by assumption.  Since $\Xodel$ is smooth and connected, the only possibility is that $\Xodel$ itself is symplectic, in which case $\Xdel$ is log symplectic by \autoref{lem:bires-nondegen}, and $\grdel$ is a logarithmic de Rham complex as in \autoref{sec:log-dR-strata}, and is therefore holonomic.  In this way we establish both the holonomicity of $\der{\X}$ and the description of the associated graded (parts 1 and 2 of the theorem).

For part 3, suppose that all characteristic symplectic leaves are nonresonant, and consider the open set $\U_k \subset \X$ obtained by removing all strata of codimension greater than $k$. Let $j : \U_k \to \X$ be the inclusion.  We will prove that the natural composition
\[
\W_k\der{\X} \to \der{\X} \to Rj_*j^*\der{\X} \cong Rj_* \der{\U_k}
\]
is a quasi-isomorphism; hence the inclusion of $\W_k\der{\X}$ is split by the map $\der{\X} \to Rj_*\der{\U_k} \cong \W_k\der{\X}$.  Since $\W_k\der{\U_k} = \der{\U_k}$, it suffices to show that the adjunction map
\[
\W_k\der{\X} \to Rj_*j^*\W_k\der{\X}
\]
is a quasi-isomorphism, and by induction on $k$ this reduces to showing that
\[
\gr_k\der \X \cong Rj_*j^*\gr_k \der{\X},
\]
or equivalently that
\[
\sLdelnr \cong Rj_*j^*\sLdelnr
\]
for all $\Delta \in \Kleaf[k]{\pi}$. But $j^*\sLdelnr = \sLdel$ by definition, and hence this is equivalent to the statement that $\sLdelnr = Rj_*\sLdel$, i.~e., the stratum is nonresonant.
\end{proof}

\begin{remark}\label{rmk:weight-split}
The proof of part 3 shows, more generally, that if all characteristic leaves of codimension $\le k$ are nonresonant, then $\W_k\der{\X}$ is a direct summand in $\der{\X}$ in $\dcat{\X}$, with projection $\der{\X} \to \W_k\der{\X}$ induced by the restriction to the open set $\X \setminus \bdX[k+1]$. 
\end{remark}

Taking hypercohomology and applying the theorem, we obtain the following information about the Poisson cohomology $\Hpi{\X}$ with its induced weight filtration
\[
\W_k \Hpi{\X} := \img\rbrac{{\coH{\W_k\der{\X}} \to \Hpi{\X}}}.
\]

\begin{corollary}\label{cor:htpy-equiv}
Let $(\X,\pi)$ be a normal crossings holonomic Poisson manifold, and suppose that $\U\subset \X$ is an open set for which the inclusion $(\U,\partial\U) \subset (\X,\partial \X)$ is a homotopy equivalence of normal crossings divisors.  Then the restriction $\Hpi{\X}\to\Hpi{\U}$ is an isomorphism.
\end{corollary}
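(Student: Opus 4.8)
The plan is to deduce this from the structural description of the weight-graded pieces of the Lichnerowicz complex in \autoref{thm:hol-complex}, combined with the fact that the constituent local systems are built out of homotopy invariants (periods and residues of $\omega$).

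Write $\iota\colon\U\hookrightarrow\X$ for the open inclusion. The restriction map $\Hpi{\X}\to\Hpi{\U}$ is induced on hypercohomology by the adjunction morphism $\der{\X}\to R\iota_*\iota^*\der{\X}=R\iota_*\der{\U}$. Since $\iota^*\W_k\der{\X}=\W_k\der{\U}$ for all $k$, this is a morphism of complexes filtered by the weight filtration, which is finite and exhaustive; hence it induces a morphism of the associated weight spectral sequences, whose $E_1$-terms are $\coH{\X;\gr^\W_k\der{\X}}$ and $\coH{\U;\gr^\W_k\der{\U}}$, abutting to $\Hpi{\X}$ and $\Hpi{\U}$ respectively, with the map on $E_1$ given by the pullback of hypercohomology along $\iota$ (under the canonical identification $\gr^\W_k\der{\U}=\iota^*\gr^\W_k\der{\X}$, and using that $R\iota_*$ preserves the mapping cone computing $\gr^\W_k$). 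As both spectral sequences are concentrated in the range $0\le k\le\dim\X$, they converge, and a morphism of convergent spectral sequences that is an isomorphism on $E_1$ is an isomorphism on the abutments. It therefore suffices to show that the restriction $\coH{\X;\gr^\W_k\der{\X}}\to\coH{\U;\gr^\W_k\der{\U}}$ is an isomorphism for every $k$.

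Here I would apply \autoref{thm:hol-complex}(2). Since holonomicity is a condition on germs, $(\U,\pi|_\U)$ is again holonomic, and we have compatible canonical identifications $\gr^\W_k\der{\X}\cong\bigoplus_{\Delta\in\Kleaf[k]{\pi}}\sLdelnr[-k]$, and likewise over $\U$, with $\gr^\W_k\der{\U}$ the restriction of $\gr^\W_k\der{\X}$. The biresidue $\Bdel$ is the restriction to $\Xdel$ of a locally constant quantity, hence is unchanged after passing to the nonempty open subset $\Xdel\cap\U$; moreover the homotopy-equivalence hypothesis forces $\U$ to meet every stratum, so the index set $\Kleaf[k]{\pi}$ and the term-by-term decomposition coincide for $\X$ and $\U$, and the restriction map respects it. Finally, the operations $j_{1!}$ and $Rj_{2*}$ defining $\sLdelnr$ from $\sLdel$ commute with restriction to open subsets, so $\iota^*\sLdelnr$ is the nonresonant extension of $\sLdel|_{\Xodel\cap\U}$. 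The problem thus reduces to proving, for each log symplectic stratum $\Xdel$, that $\coH{\Xdel;\sLdelnr}\to\coH{\Xdel\cap\U;\iota^*\sLdelnr}$ is an isomorphism.

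For this I would invoke \autoref{prop:res-cohlgy}, which presents $\coH{\Xdel;\sLdelnr}$ as the relative cohomology $\coH{\Xdel,\bdXdelr;\tsLdel}$ of the rank-one local system $\tsLdel$ on $\Xdelr$. Two inputs make this group evidently invariant under the inclusion $\Xdel\cap\U\hookrightarrow\Xdel$: by \autoref{lem:block-res} the residues of $\nabla$ along the irreducible components of $\bdXdel$ depend only on the locally constant biresidues, so the splitting of $\bdXdel$ into resonant and nonresonant parts is preserved by restriction (i.e.\ $\Xdelr\cap\U$ and $\bdXdelr\cap\U$ are the analogous objects for $\U$); and by \autoref{prop:monodromy} the monodromy of $\tsLdel$ is computed from periods of $\omega$ along $2$-tori in $\Xo$, which are homology invariants, so $\tsLdel$ over $\U$ is the pullback of $\tsLdel$. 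Granting that the hypothesis --- the inclusion $(\U,\partial\U)\hookrightarrow(\X,\partial\X)$ is a homotopy equivalence of normal crossings divisors --- entails that each induced inclusion of pairs $(\Xdelr\cap\U,\bdXdelr\cap\U)\hookrightarrow(\Xdelr,\bdXdelr)$ is a homotopy equivalence of pairs compatibly with the local system, homotopy invariance of relative local-system cohomology then yields the desired isomorphism, completing the argument. The main obstacle is precisely this last step: one must unwind the definition of ``homotopy equivalence of normal crossings divisors'' to verify that it descends to homotopy equivalences of all the stratum pairs $(\Xdelr,\bdXdelr)$ occurring above, and then combine this with the period/residue computations just cited to confirm that the restricted local systems are genuinely the pullbacks.
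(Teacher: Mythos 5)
Your argument is correct and follows exactly the route the paper intends: the corollary is stated without a separate proof precisely because it is the direct consequence of \autoref{thm:hol-complex}(2) that you spell out, namely that the weight-graded pieces of $\der{\X}$ are constructible complexes built from local systems on strata whose defining data (biresidues, residues, monodromy) are locally constant, so a stratified homotopy equivalence induces isomorphisms on the $E_1$-page of the weight spectral sequence and hence on the abutment. The only caveat you raise --- unwinding ``homotopy equivalence of normal crossings divisors'' to get homotopy equivalences of the stratum pairs $(\Xdelr,\bdXdelr)$ --- is indeed the content of that hypothesis, so there is no gap.
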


\begin{corollary}\label{cor:weight-split}
If $(\X,\pi)$ is holonomic and all characteristic symplectic leaves are nonresonant, there is a canonical isomorphism
\[
\W_j\Hpi{\X} \cong \bigoplus_{k \le j} \prod_{\Delta \in \Kleaf[k]{\pi}} \coH[\bullet-k]{\Xodel;\sLdel}.
\]
where the right hand side is the sum of the ordinary sheaf cohomology of local systems on characteristic symplectic leaves of codimension $\le j$.
\end{corollary}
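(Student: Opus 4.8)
The plan is to deduce the statement from part~3 of \autoref{thm:hol-complex} by passing to hypercohomology. That result gives a canonical decomposition $\der{\X}\cong\bigoplus_{k\ge0}\bigoplus_{\Delta\in\Kleaf[k]{\pi}}R\jdelf\sLdel[-k]$ in $\dcat{\X}$, and — since its proof identifies $\W_j\der{\X}$ with the summand of $\der{\X}$ cut out by restriction to the open set $\X\setminus\bdX[j+1]$ obtained by deleting strata of codimension $>j$ (see also \autoref{rmk:weight-split}) — this decomposition is compatible with the weight filtration: $\W_j\der{\X}$ corresponds under the splitting to the partial sum $\bigoplus_{k\le j}\bigoplus_{\Delta\in\Kleaf[k]{\pi}}R\jdelf\sLdel[-k]$. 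In particular $\W_j\der{\X}$ is a direct summand of $\der{\X}$ in $\dcat{\X}$, so the natural map $\coH{\W_j\der{\X}}\to\Hpi{\X}$ is split injective; hence its image, which by definition is $\W_j\Hpi{\X}$, is canonically isomorphic to $\coH{\W_j\der{\X}}$, and it remains to compute the latter.

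For this I would take hypercohomology of the partial sum above, commuting $\coH{\X;-}$ past the direct sum. The sum over $k\le j$ is finite, but for fixed $k$ the set $\Kleaf[k]{\pi}$ may be infinite and the strata $\Xodel$ need not be compact; here one uses that holonomicity forces the family of characteristic leaves to be locally finite, which makes the (derived) direct sum well-behaved and turns its hypercohomology into the product $\bigoplus_{k\le j}\prod_{\Delta\in\Kleaf[k]{\pi}}\coH{\X;R\jdelf\sLdel[-k]}$ appearing in the statement. Each factor is then identified by factoring $\jdel\colon\Xodel\to\X$ as the open embedding $\Xodel\hookrightarrow\Xdel$ followed by the closed embedding $\Xdel\hookrightarrow\X$: derived pushforward along the former and pushforward along the latter both preserve hypercohomology, so $\coH{\X;R\jdelf\sLdel}\cong\coH{\Xodel;\sLdel}$, and the shift $[-k]$ gives $\coH[\bullet-k]{\Xodel;\sLdel}$. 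Assembling these identifications yields the claimed isomorphism, with canonicity inherited from that of the splitting in \autoref{thm:hol-complex} (the hypotheses of which are exactly those of the corollary).

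The argument is thus essentially a formal unwinding of \autoref{thm:hol-complex}, and the only step requiring genuine care is the interchange of hypercohomology with the (possibly infinite) direct sum over characteristic leaves — equivalently, the $\bigoplus$-versus-$\prod$ bookkeeping on the right-hand side, which is controlled precisely by the local finiteness of characteristic leaves that holonomicity provides.
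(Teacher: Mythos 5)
Your proposal is correct and follows essentially the same route as the paper, which states this corollary as an immediate consequence of ``taking hypercohomology and applying the theorem,'' i.e.\ part~3 of \autoref{thm:hol-complex} together with the splitting of the weight filtration recorded in \autoref{rmk:weight-split}. Your additional care about commuting hypercohomology past the (possibly infinite) sum over $\Kleaf[k]{\pi}$, which accounts for the $\bigoplus$-versus-$\prod$ distinction in the statement, is a detail the paper leaves implicit but is handled correctly here.
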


Note that by \autoref{ex:tri-holonomic}, every log symplectic manifold with normal crossings degeneracy divisor is holonomic away from a codimension-three subset.  We therefore obtain the following result, valid even when $(\X,\pi)$ is not holonomic:
\begin{corollary}\label{cor:low-deg}
If $\pi$ is a log symplectic Poisson structure on the normal crossings divisor $(\X,\bdX)$, then
\begin{align}
\Hpi[k]{\X} \cong \begin{cases}
\coH[k]{\Xo;\CC}  & k=0,1 \\
\coH[2]{\Xo;\CC} \oplus \prod_{\Delta \in \Kleaf[2]{\pi}} \coH[0]{\Xdel;\sLdelnr} &  k=2
\end{cases} \label{eq:hol-coh}
\end{align}
\end{corollary}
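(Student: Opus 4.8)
The plan is to read off $\Hpi[k]{\X}$ for $k\le 2$ from the bottom three steps $\W_0\subset\W_1\subset\W_2$ of the weight filtration on $\der{\X}$. The starting point is that $\W_2\der[d]{\X}=\der[d]{\X}$ for every $d\le 2$ --- immediate from the definition $\W_j\der{\X}=\der[\le j]{\X}\wedge\derlog{\X}$, since $\der[d]{\X}=\der[d]{\X}\wedge\derlog[0]{\X}$. Hence the quotient complex $\der{\X}/\W_2\der{\X}$ is concentrated in degrees $\ge 3$, so its hypercohomology vanishes in degrees $\le 2$, and therefore $\Hpi[k]{\X}\cong\coH[k]{\W_2\der{\X}}$ for $k\le 2$; it remains to compute the latter.

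Next I would resolve $\W_2\der{\X}$ through its graded pieces via \autoref{lem:gr-poly}. The piece $\gr_1^\W\der{\X}$ is a sum of the complexes $\grpol{\Delta}$ indexed by the codimension-one components $\Xdel$ of $\bdX$; there each biresidue $B_\Delta$ is a $1\times1$ skew form, hence zero, so by \autoref{prop:coh-adapt} (equivalently \autoref{ex:tri-holonomic}) these $\grpol{\Delta}$ are acyclic and $\derlog{\X}=\W_0\der{\X}\to\W_1\der{\X}$ is a quasi-isomorphism. Likewise $\gr_2^\W\der{\X}=\bigoplus_{\Delta\in\dc[2]{\X}}\idelf\grpol{\Delta}[-2]$: summands with $B_\Delta=0$ are acyclic, while those with $B_\Delta\ne 0$ are exactly the log symplectic codimension-two strata $\Delta\in\Kleaf[2]{\pi}$ (\autoref{lem:bires-nondegen}), for which $\grpol{\Delta}$ is the logarithmic de Rham complex of \autoref{sec:log-dR-strata} and \autoref{prop:res-cohlgy} identifies it with the nonresonant extension $\sLdelnr$ of \autoref{def:nonres-push}. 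This produces a distinguished triangle
\[
\derlog{\X}\longrightarrow\W_2\der{\X}\longrightarrow\bigoplus_{\Delta\in\Kleaf[2]{\pi}}\idelf\sLdelnr[-2]\xrightarrow{[1]}
\]
in $\dcat{\X}$. Taking hypercohomology, using that the anchor map $\pis$ identifies $(\logforms{\X},\dd)$ with $\derlog{\X}$ (so $\coH[k]{\derlog{\X}}\cong\coH[k]{\Xo;\CC}$) together with $\coH[k]{\idelf\sLdelnr[-2]}\cong\coH[k-2]{\Xdel;\sLdelnr}$, which vanishes for $k<2$, the long exact sequence gives $\Hpi[k]{\X}\cong\coH[k]{\Xo;\CC}$ for $k=0,1$ and a four-term exact sequence
\[
0\to\coH[2]{\Xo;\CC}\to\Hpi[2]{\X}\to\prod_{\Delta\in\Kleaf[2]{\pi}}\coH[0]{\Xdel;\sLdelnr}\xrightarrow{\delta}\coH[3]{\Xo;\CC},
\]
in which a product rather than a direct sum appears because the family of closed strata $\{\Xdel\}$ is locally finite.

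Everything then reduces to the vanishing of the connecting map $\delta$, which is the one genuinely nontrivial point. I would deduce it from the injectivity of $\coH[3]{\derlog{\X}}=\coH[3]{\Xo;\CC}\to\coH[3]{\W_2\der{\X}}$, since exactness of the triangle then forces $\img\delta=0$. To obtain this injectivity, restrict to the open symplectic leaf $j\colon\Xo\hookrightarrow\X$: by naturality of $\pis$ under restriction, the composite $\derlog{\X}\to\der{\X}\to Rj_*\der{\Xo}$ is carried by $\pis$ onto the canonical comparison $\logforms{\X}\to Rj_*\forms{\Xo}$, which is a quasi-isomorphism by Deligne's theorem that logarithmic de Rham cohomology computes $\coH{\Xo;\CC}$. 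Hence $\coH[3]{\derlog{\X}}\to\coH[3]{\der{\X}}$ is injective, and as it factors through $\coH[3]{\W_2\der{\X}}$, so is $\coH[3]{\derlog{\X}}\to\coH[3]{\W_2\der{\X}}$; therefore $\delta=0$, the four-term sequence becomes a short exact sequence of vector spaces, and it splits, yielding the asserted decomposition of $\Hpi[2]{\X}$.

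I expect the vanishing of $\delta$ to be the only real obstacle; the remainder is bookkeeping with the weight filtration and the earlier structural results, and no holonomicity hypothesis is needed, since in cohomological degrees $\le 2$ only strata of codimension $\le 2$ contribute and these are always either acyclic or log symplectic of the type described above. The two elementary facts I would verify carefully in passing are $\W_2\der[d]{\X}=\der[d]{\X}$ for $d\le 2$ (a local computation in normal crossings coordinates) and that $\pis$ is a morphism of complexes from the logarithmic de Rham complex to the logarithmic Lichnerowicz complex, which is standard for log symplectic structures and already implicit in \autoref{sec:log-dR-strata}.
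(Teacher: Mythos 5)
Your proposal is correct and follows essentially the route the paper intends: the corollary is extracted from the weight filtration exactly as in the proof of \autoref{thm:hol-complex}, using that $\W_2\der[d]{\X}=\der[d]{\X}$ for $d\le 2$, the identification of $\gr_1^\W$ and $\gr_2^\W$ via \autoref{prop:coh-adapt} and \autoref{prop:res-cohlgy}, and the splitting of the weight-zero piece by restriction to $\Xo$ (your $\delta=0$ step is precisely the $k=0$ instance of the splitting argument in part 3 of \autoref{thm:hol-complex}, which needs no nonresonance hypothesis). The paper leaves these details implicit, so your write-up is a faithful expansion rather than a different proof.
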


\begin{example}[\runex]\label{ex:runex-cohomology}
Let $\omega$ be the log symplectic form \eqref{eq:3cpt-logform} on $\X = \CC^4$ in our running example.  The characteristic leaves are completely determined by \autoref{ex:tri-holonomic}, and in particular we see that since $b_0+b_1+b_2\ne0$, there are only finitely many characteristic leaves.  Hence this Poisson structure is holonomic, with no contributions from strata of codimension greater than two. Thus the weight filtration splits, giving an isomorphism $\Hpi{\X} \cong \coH{\Xo;\CC} \oplus \prod_{\Delta \in \dc[2]{\X}} \coH{\Xdel;\sLdelnr}[-2]$, where the individual summands are computed in \autoref{ex:runex-strata-contrib}.  We conclude that the Poincar\'e polynomial of the Poisson cohomology $\Hpi{\X}$ is given by $P(t) = (1+t)^3 + kt^2(1+t)$ where $k$ is the number of nonresonant codimension-two strata with trivial monodromy.  
\end{example}

\begin{example} 
Consider the following log symplectic form on $\X=\mathbb{C}^{n+k}$:
\[
\omega
=\sum_{1\le i<j\le n} B_{ij}\dlog{y_i}\wedge \dlog{y_j} + \sum_{i=1}^n\sum_{j=1}^k a_{ij} \dd p_j \wedge  \dlog{y_i} + \sum_{1\le i<j\le k} c_{ij} \dd p_i \wedge  \dd p_j
\]
Assume that $\omega$ is holonomic.  The simplices $\Delta$ in the dual complex are in bijection with subsets $\Delta_0 \subset \{1,2,\ldots,n\}$; the corresponding subvarieties are given by $\X_\Delta=\set{y_i=0}{i\in \Delta_0}$.  The weight filtration may be split using the action of the torus $(\CC^*)^n$ which rescales the variables $y_i$.  Using \autoref{prop:res-cohlgy}, and that all strata are homotopic to tori, one can check that the summand in $\Hpi{\X}$ corresponding to a stratum $\Xodel$ is nonzero if and only if $\Xodel$ is symplectic and the corresponding logarithmic connection is nonresonant, with trivial monodromy.  By \autoref{lem:bires-nondegen}, \autoref{prop:monodromy} and \autoref{lem:block-res}, this is equivalent to requiring that $B_\Delta$ is invertible, and that $t_k :=-\sum_{i,j\in \Delta_0} (B_\Delta^{-1})_{ij} B_{jk}$ is a nonnegative integer for each $k\notin \Delta_0$.   We therefore obtain
\begin{equation}\label{eq:poisson_coh_Cn}
\Hpi{\X} \cong \bigoplus_{\Delta} \coH{\Xodel}[-\,\mathrm{codim}\, \Xodel]
\end{equation}
where the sum is over simplices $\Delta$ satisfying the linear-algebraic conditions above, and $\coH{\Xodel} \cong \wedge^\bullet \CC^{n-\# \Delta_0}$ is the cohomology of a torus.

In the special case when $\pi$ is in the stable normal form \eqref{eq:local-magnetic}, the Poisson cohomology classes in the $\Delta$ summand in \eqref{eq:poisson_coh_Cn} have the following representatives:
\[
\tau \wedge \exp\left( \sum_{i\in \Delta_0} \alpha_i p_i\right)\left(\prod_{k\notin \Delta_0} y_k^{t_k}\right)  \bigwedge_{i\in \Delta_0} \partial_{y_i}  ,
\]
where $\alpha_i = \sum_{j\in \Delta_0}(B_\Delta^{-1})_{ji}$, $i\in I$, and $\tau$ is a constant polyvector field lying in the algebra $\wedge^\bullet \W_{\Delta}$, where $\W_{\Delta}\subset  \mathrm{span}\{\cvf{p_1},\ldots,\cvf{p_n}\} \cong \mathbb{C}^n$ is any fixed complement to the column space of the matrix $(B_{ij})_{1 \le i \le n,j \in \Delta_0}$. 
\end{example}

\section{First order deformations}

\label{sec:smoothable}

\subsection{Smoothable strata}

We now turn to the problem of describing the deformations of a normal crossings log symplectic manifold $(\X,\bdX,\pi)$.  The isomorphism classes of first-order deformations of $(\X,\pi)$, are parameterized by the second Poisson cohomology $\Hpi[2]{\X}$. Note that \autoref{cor:low-deg}
gives a decomposition
\begin{align*}
\Hpi[2]{\X} \cong \coH[2]{\Xo;\CC} \oplus \prod_{\edge \in \Kleaf[2]{\pi}} \coH[0]{\Xe; \sLenr} ,
\end{align*}
where the product is over all characteristic symplectic leaves of codimension two; these correspond to edges $\edge$ in the dual complex $\dcX$ whose biresidue is nonzero and each contribute a factor given by the vector space $\coH{\Xe;\sLenr} \subset \coH[0]{\Xoe;\sL}$ of cohomology of the rank-one local system $\sLe$ relative to the resonant boundary of the stratum $\Xe$ as in \autoref{prop:res-cohlgy} and \autoref{def:nonres-push}. This decomposition has the following deformation-theoretic interpretation:
\begin{itemize}
\item The summand $\coH[2]{\Xo;\CC}$ corresponds to deformations of $(\X,\pi)$ for which $\bdX$ deforms locally trivially (i.e.~deformations that remain normal crossings with locally unchanged number of components); 
\item For each $\edge \in \Kleaf[2]{\pi}$, the space $\coH[0]{ \Xe;\sLenr } $ has dimension at most one; it corresponds to first-order deformations in which the singularities of $\bdX$ along the codimension-two stratum $\Xoe$ are smoothed. 
\end{itemize}

We therefore introduce the following

\begin{definition}\label{def:smoothable}
A \defn{smoothable stratum} is a codimension two characteristic symplectic leaf $\Xoe \subset\X$ such that $\coH[0]{ \Xe;\sLenr }  \ne 0$, or equivalently $\dim \coH[0]{\Xe;\sLenr} =1$. We refer to the 1-simplices of $\dcX$ dual to smoothable strata as \defn{smoothable edges}.  We will indicate smoothable edges pictorially by colouring them blue as in \eqref{eq:triangle-bires} below. 
\end{definition}

Rephrasing the above in light of this definition, we have the following calculation of the first-order deformation space.
\begin{proposition}\label{prop:HP2-decomp}
We have a canonical isomorphism
\[
\Hpi[2]{\X} \cong \coH[2]{\Xo;\CC} \oplus \prod_{\substack{\textrm{smoothable}\\\textrm{edges }\edge }} \coH[0]{ \Xoe;\sLe}.
\]
In particular, when the second Betti number $b_2(\Xo)$ and the number of smoothable strata are finite, we have
\[
\dim \Hpi[2]{\X} = b_2(\Xo) + (\#\textrm{ of smoothable strata}).
\]
\end{proposition}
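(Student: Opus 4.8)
The plan is to obtain the proposition as a direct reformulation of the $k=2$ case of \autoref{cor:low-deg}, which already provides a canonical isomorphism
\[
\Hpi[2]{\X} \cong \coH[2]{\Xo;\CC} \oplus \prod_{\Delta \in \Kleaf[2]{\pi}} \coH[0]{\Xdel;\sLdelnr},
\]
the product ranging over all codimension-two characteristic symplectic leaves, i.e.\ over all edges $\Delta$ of $\dcX$ with nondegenerate biresidue. All that remains is to show that for each such $\Delta$ the factor $\coH[0]{\Xdel;\sLdelnr}$ vanishes unless $\Delta$ is a smoothable edge, and that for smoothable $\Delta$ it is canonically identified with $\coH[0]{\Xodel;\sLdel}$; the stated isomorphism then follows by discarding the vanishing factors.

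First I would unwind the nonresonant extension of \autoref{def:nonres-push}. Factoring the inclusion $j\colon\Xodel\hookrightarrow\Xdel$ as $j_2\circ j_1$ with $j_1\colon\Xodel\hookrightarrow\Xdelr$ and $j_2\colon\Xdelr\hookrightarrow\Xdel$, one has $\sLdelnr = Rj_{2*}(j_{1!}\sLdel)$, and hence (as $R\Gamma(\Xdel,-)\simeq R\Gamma(\Xdelr,-)\circ j_2^{-1}$ under $Rj_{2*}$)
\[
\coH[0]{\Xdel;\sLdelnr} \cong \coH[0]{\Xdelr; j_{1!}\sLdel},
\]
which is simply the space of global sections of the extension-by-zero sheaf $j_{1!}\sLdel$ on $\Xdelr$. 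Now $\Xodel$ is connected, being dense in the irreducible variety $\Xdel$, and $\sLdel$ is a rank-one local system on it; so a nonzero global section of $\sLdel$ is nowhere vanishing, whence $\dim_\CC\coH[0]{\Xodel;\sLdel}\le 1$, with equality exactly when the monodromy of $\sLdel$ is trivial. A global section of $j_{1!}\sLdel$ over $\Xdelr$ is a section of $\sLdel$ over $\Xodel$ whose support is disjoint from $\bdXdelr$; since a nonzero such section is supported on all of $\Xodel$, which is dense in $\Xdelr$, this forces $\bdXdelr=\varnothing$, i.e.\ $\Xdel$ nonresonant. Thus $\coH[0]{\Xdel;\sLdelnr}=0$ whenever $\Xdel$ is resonant, while if $\Xdel$ is nonresonant then $\sLdelnr = R\jdelf\sLdel$ and $\coH[0]{\Xdel;\sLdelnr}\cong\coH[0]{\Xodel;\sLdel}$.

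Combining these observations, for $\Delta\in\Kleaf[2]{\pi}$ the group $\coH[0]{\Xdel;\sLdelnr}$ is nonzero exactly when $\Xdel$ is nonresonant and $\sLdel$ has trivial monodromy — which, by \autoref{def:smoothable}, is precisely the condition that $\Delta$ be a smoothable edge — and in that case it is canonically isomorphic to the one-dimensional space $\coH[0]{\Xodel;\sLdel}$. Restricting the product in \autoref{cor:low-deg} accordingly yields the first assertion. The dimension count is then immediate: $\coH[2]{\Xo;\CC}$ contributes $b_2(\Xo)$, and each smoothable stratum contributes $\dim_\CC\coH[0]{\Xodel;\sLdel}=1$. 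Since the substantive analysis is already contained in \autoref{cor:low-deg} together with the cohomology computations of \autoref{sec:log-dR-strata}, I do not expect a genuine obstacle here; the only point demanding care is the sheaf-theoretic identification of $\coH[0]{\Xdel;\sLdelnr}$, where connectedness of the stratum and the rank-one nature of $\sLdel$ are what make this group ``all or nothing'' and force it to reduce to $\coH[0]{\Xodel;\sLdel}$ exactly in the nonresonant case.
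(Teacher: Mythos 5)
Your proposal is correct and follows essentially the same route as the paper, which presents \autoref{prop:HP2-decomp} as a direct rephrasing of the $k=2$ case of \autoref{cor:low-deg} in light of \autoref{def:smoothable} (noting, as you do, that a stratum is smoothable iff it is nonresonant with trivial monodromy, in which case $\coH[0]{\Xdel;\sLdelnr}\cong\coH[0]{\Xodel;\sLdel}$ is one-dimensional). Your unwinding of $Rj_{2*}(j_{1!}\sLdel)$ just makes explicit the details the paper leaves to the reader.
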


Observe that  an edge $\edge \in \Kleaf[2]{\pi}$ is smoothable  if and only if it is nonresonant and the local system $\sLe$ has trivial monodromy.  In particular, all residues of the logarithmic connection on $\Xe$ are nonpositive integers.  Applying \autoref{cor:no-monodromy} and \autoref{ex:res23}, we obtain the following characterization of smoothability:
\begin{proposition}\label{prop:smoothable-bires}
Let $\edge$ be an oriented edge of $\dcX$.  Then the codimension-two stratum $\Xoe$ is smoothable with respect to the log symplectic form $\omega$ if and only if the following statements hold:
\begin{enumerate}
\item\label{cnd:nondegen} (Nondegeneracy) The biresidue along $\Xe$  is nonzero:
\begin{align}
\Be := \res_{e}(\omega)  \ne 0 \in \CC \label{eq:smoothable-nondegen}
\end{align}
\item\label{cnd:nonres} (Regularity of flat sections along the boundary) For every triangle $\Delta$ containing $e$, with biresidues
\begin{align}
\Bdel =\vcenter{\hbox{ \begin{tikzpicture}[scale=0.7,decoration={
    markings,
    mark=at position 0.5 with {\arrow{>}}}
    ] 
\draw[fill=gray] (-30:1) -- (210:1) -- (90:1) -- (-30:1);
\draw[postaction={decorate}] (-30:1) -- (90:1);
\draw[postaction={decorate}] (90:1)  -- (210:1);
\draw[thick,blue,postaction={decorate}]  (210:1) -- (-30:1);
\draw (30:0.9) node {$a_1$};
\draw (150:0.9) node {$a_2$};
\draw[blue] (-90:0.8) node {$B_e$};
\end{tikzpicture}}} \label{eq:triangle-bires}
\end{align}
we have the integrality condition
\begin{align}
a_1+a_2 \in  \Be\cdot \ZZ_{\ge0}.\label{eq:bires-smoothable}
\end{align} 
\item\label{cnd:nomon} (No monodromy) For some (and hence any) collection of loops $\{\torloop^j\}_{j \in \I}$ in $\Xoe$ that generates $\Hlgy[1]{\Xoe;\ZZ}$, we have the integrality condition
\begin{align}
\int_{\torcyc_1^j-\torcyc_2^j}\omega \in (2\pi\iu)^2 \Be \cdot \ZZ \qquad \textrm{for all }j\in \I. \label{eq:smooth-monodromy}
\end{align}
where $\torcyc^j_{i}$ are the associated tori in $\Xo$ as in \autoref{sec:monodromy}.
\end{enumerate}
\noindent Moreover, if $\Hlgy[1]{\Xe;\ZZ}=0$, then the  second condition implies the third.
\end{proposition}
For the last claim in the proposition, a Mayer-Vietoris argument shows that $\Hlgy[1]{\Xe;\ZZ}=0$ implies that every element in $\Hlgy[1]{\Xoe; \ZZ}$ is homologous to a $\ZZ$-linear combination of several small loops in $\Xoe$ around its boundary components.

Note that if $\edge$ is an edge of $\dcX$ corresponding to any nondegenerate codimension two stratum (i.e.~characteristic leaf of codimension two), of $(\X,\omega)$, and $\Delta \supset \edge$ is a triangle with biresidue as depicted in \eqref{eq:triangle-bires}, then the quantity
\[
m(\edge,\Delta) := \frac{a_1+a_2}{\Be} \in \CC
\]
is independent of the orientation of $\Delta$; when it is an integer it is exactly the order of vanishing of a flat section of $\sLe$ along the component of $\bdXe$ corresponding to $\Delta$.  We refer to $m(e,\Delta)$ as the \defn{order of the pair $(\edge,\Delta)$} and depict it graphically by inscribing $m$ in the vertex of $\Delta$ opposite $e$, as follows:
\[
\Bdel =\vcenter{\hbox{ \begin{tikzpicture}[scale=0.7,decoration={
    markings,
    mark=at position 0.5 with {\arrow{>}}}
    ] 
\draw[fill=gray] (-30:1) -- (210:1) -- (90:1) -- (-30:1);
\draw[postaction={decorate}] (-30:1) -- (90:1);
\draw[postaction={decorate}] (90:1)  -- (210:1);
\draw[thick,blue,postaction={decorate}]  (210:1) -- (-30:1);
\draw (30:0.9) node {$a_1$};
\draw (150:0.9) node {$a_2$};
\draw[blue] (-90:0.8) node {$\Be$};
\draw[blue] (90:0.5) node {$\scriptstyle m$};
\end{tikzpicture}}}% \label{eq:triangle-bires}
\]
With this notation understood, the first Chern class of $\det{\cN{\Xe}}$ for a smoothable stratum $\Xe$ may be computed in two ways.  On the one hand, it is the divisor class of a global flat section.  On the other hand, by the adjunction formula, it is the pullback of the classes of the components of $\bdX$ that intersect along $\Xe$.  This immediately yields the following. 
\begin{lemma}\label{lem:smooth-c1}
Suppose that $\Xoe$ is a smoothable stratum, and let $\Y_0,\Y_1 \subset \bdX$ be the boundary components that intersect along $\Xoe$.  Then we having the following identity in $\coH[2]{\Xe;\ZZ}$:
\begin{align}
\sum_{\Delta \supset \edge} m(e,\Delta) [\Xdel] = i_e^*( [\Y_0] + [\Y_1])  \label{eq:smooth-c1}
\end{align}
\end{lemma}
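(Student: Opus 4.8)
The plan is to compute the first Chern class $c_1\rbrac{\det\cN{\Xdel}} \in \coH[2]{\Xdel;\ZZ}$ in two independent ways: via adjunction, which produces the right-hand side of \eqref{eq:smooth-c1}, and via the divisor of a global flat section, which produces the left-hand side.

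For the adjunction side, I would observe that since $\bdX$ has normal crossings, the stratum $\Xdel$ is the transverse intersection of the two smooth hypersurfaces $\Y_0,\Y_1 \subset \X$, and the canonical splitting \eqref{eq:nb-split} identifies $\cN{\Xdel}$ with $\cN{\Delta,0}\oplus \cN{\Delta,1}$, where $\cN{\Delta,i}$ is the restriction to $\Xdel$ of $\cO{\X}(\Y_i)|_{\Y_i} \cong \cN{\Y_i/\X}$. Taking determinants and using $c_1\rbrac{\cN{\Delta,i}} = \idelb[\Y_i]$, I get
\[
c_1\rbrac{\det\cN{\Xdel}} = c_1\rbrac{\cN{\Delta,0}} + c_1\rbrac{\cN{\Delta,1}} = \idelb\rbrac{[\Y_0] + [\Y_1]}.
\]

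For the flat-section side, I would use that $\Xodel$ is smoothable. By \autoref{prop:smoothable-bires} the stratum $\Xdel$ is nonresonant and the local system $\sLdel$ on $\Xodel$ has trivial monodromy, so $\coH[0]{\Xdel;\sLdelnr} \cong \coH[0]{\Xodel;\sLdel}$ is one-dimensional; a generator is a global flat section $s$ of $\det\cN{\Xdel}$ defined over $\Xodel$, which, being flat, is nonvanishing there. The local description of flat sections of logarithmic connections recalled in \autoref{sec:log-dR-strata} shows that near a general point of a component $\X_{\Delta'} \subset \bdXdel$ one has $s = y^{m(\Delta,\Delta')}\sigma$, where $y$ is a local defining equation of $\X_{\Delta'}$, $\sigma$ is a local trivialization of $\det\cN{\Xdel}$, and $m(\Delta,\Delta') = -\res_{\X_{\Delta'}}\nabla \in \ZZ_{\ge0}$ by nonresonance. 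Hence $s$ extends to a genuine holomorphic section of $\det\cN{\Xdel}$ over all of $\Xdel$ whose zero divisor is exactly $\sum_{\Delta'\supset\Delta}m(\Delta,\Delta')[\X_{\Delta'}]$, with no other zeros since $s$ is nonvanishing on the dense open set $\Xodel$. Therefore $\det\cN{\Xdel}$ is isomorphic to the line bundle of this divisor, so $c_1\rbrac{\det\cN{\Xdel}} = \sum_{\Delta'\supset\Delta}m(\Delta,\Delta')[\X_{\Delta'}]$. Comparing the two formulas for $c_1\rbrac{\det\cN{\Xdel}}$ yields \eqref{eq:smooth-c1}.

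I expect the only delicate point to be the global extension of $s$ across all of $\bdXdel$ to a holomorphic section with the stated divisor; this is exactly where the three conditions of smoothability enter, namely nonresonance (all residues are nonpositive integers, so $s$ has no poles), triviality of the monodromy of $\sLdel$ (so that a single-valued global flat section exists on $\Xodel$ at all), and $\coH[0]{\Xdel;\sLdelnr}\ne 0$ (so that such a section can be taken nonzero). The remaining ingredients — the adjunction identity $c_1\rbrac{\cN{\Delta,i}} = \idelb[\Y_i]$ for transverse intersections of smooth divisors, and the fact that a holomorphic section of a line bundle that is generically nonzero exhibits that bundle as $\cO{\Xdel}$ of its zero divisor — are standard and valid on the possibly noncompact manifold $\Xdel$.
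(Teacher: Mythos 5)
Your proposal is correct and follows exactly the paper's argument: the paper also computes $c_1\rbrac{\det\cN{\Xdel}}$ once as the divisor class of a global flat section (giving the left-hand side) and once via adjunction as the pullback of $[\Y_0]+[\Y_1]$ (giving the right-hand side). You have merely spelled out the details that the paper leaves implicit, including the correct use of nonresonance and trivial monodromy to produce a single-valued holomorphic section vanishing to order $m(\Delta,\Delta')$ along each boundary component.
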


\begin{example}\label{ex:P2n-single-edge}
Let $\X = \PP^{2n}$ and consider the anticanonical divisor $\partial \PP^{2n}$ given by the union of the coordinate hyperplanes.  Then all strata are linear subspaces of $\X$ and the dual complex has a unique simplex for every possible tuple of vertices. In particular, every codimension-two stratum $\Xe$ is the intersection of two linear subspaces, and each of its boundary components is also linear.  Hence all fundamental classes appearing in \eqref{eq:smooth-c1} are equal to the hyperplane class $H$, and we obtain the simpler numerical formula
\begin{align}
\sum_{\Delta \supset \edge} \orddel = 2
\end{align}
Therefore if $\edge$ is a smoothable edge, there are two possibilities: either $\edge$ has two distinct incident triangles of order one, or a single incident triangle of order two, and all remaining incident triangles have order zero.  We simplify the notation in this case by colouring the opposite vertices where the orders are nonzero; see \autoref{fig:P4-one-edge} for the case $n=2$.
\end{example}

\begin{figure}
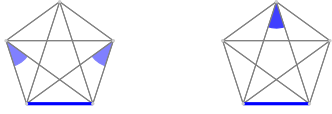

\[
\sagepentagon[0.5]{\smoothedge{v3}{v4}{v0}{v2}} \qquad \qquad
\sagepentagon[0.5]{\smoothedge{v3}{v4}{v1}{v1}}
\]
\caption{The possible orders of a smoothable edge for $\PP^4$, up to isomorphism.} \label{fig:P4-one-edge}
\end{figure} 

\subsection{Linear arrangements and smoothing diagrams}
\label{sec:arrangement}
Note that since the biresidues of $\omega$ are $(2 \pi \iu)^{-2}$ times the integral of $\omega$ over corresponding tori in $\Xo$, each of the conditions \eqref{eq:smoothable-nondegen}, \eqref{eq:bires-smoothable} and \eqref{eq:smooth-monodromy} for a stratum to be smoothable amounts to a $\ZZ$-linear relation on the periods of $\omega$ over classes in $\Hlgy[2]{\Xo;\ZZ}$.  In particular, they depend only on the cohomology class of $\omega$, and make sense even for degenerate forms.  This allows us to make the following definition.

\begin{definition}\label{def:lin-arr}
For an edge $\edge \subset \dcX$, we denote by
\[
\Se\subset \coH[2]{\Xo;\CC}
\]
the set of cohomology classes satisfying the conditions 1) through 3) of \autoref{prop:smoothable-bires}, and the identity \eqref{eq:smooth-c1} from \autoref{lem:smooth-c1}.
\end{definition}

Thus, by construction, a log symplectic form on $(\X,\bdX)$ has $\edge$ as a smoothable edge if and only if $[\omega] \in \Se$.  Note that $\Se$ is a smooth locally closed subset of $\coH[2]{\Xo;\CC}$, given by intersecting the hyperplane complement $\Be \ne 0$ with a countable union of disjoint linear subspaces in $\coH[2]{\Xo;\CC}$ defined by $\ZZ$-linear relations between periods.  Note furthermore that $\Se$ decomposes as a disjoint union $\Se = \sqcup_{m} \Semu$ of locally closed submanifolds in which the orders $\orddel \in \ZZ_{\ge 0}$ of all incident triangles take on fixed values.  

The collection $\{\Semu\}_{\edge,m}$ therefore defines a countable arrangement of linearly embedded submanifolds of $\coH[2]{\Xo;\CC}$.  By considering the mutual intersections of such submanifolds, we obtain a stratification of $\coH[2]{\Xo;\CC}$.  For each stratum $\W$ we obtain a collection $\Gamma$ of edges of $\dcX$, by declaring that $\edge \in \Gamma$ if and only if $\W \subset \Se$, so that
\begin{align}
\W = \cap_{\edge \in \Gamma} \Se \setminus \cup_{\edge \notin \Gamma} \Se. \label{eq:linear-statum}
\end{align}
Further remembering the order $\orddel$ for each incident triangle, we obtain from each stratum of the arrangement the following combinatorial datum:
\begin{definition}
A \defn{pre-smoothing diagram for $(\X,\bdX)$} is pair $(\Gamma,m)$ consisting of a collection $\Gamma$ of edges in the dual complex, and an assignment of a nonnegative integer $\orddel \in \ZZ_{\ge 0}$ to each pair $(\edge,\Delta)$ of an edge $\edge \in \Gamma$ and a triangle $\Delta \supset \edge$, such that the equation \eqref{eq:smooth-c1} is satisfied. 
\end{definition}

\begin{definition}
A pre-smoothing diagram $(\Gamma,m)$ is a \defn{smoothing diagram} if there exists a log symplectic form $\omega$ such that $[\omega]\in\coH[2]{\Xo;\CC}$ lies on a stratum whose pre-smoothing diagram is $(\Gamma,m)$. 
\end{definition}

\begin{remark}
The number of smoothable strata of a log symplectic form $\omega$ is the number of edges in its corresponding smoothing diagram. 
\end{remark}

\begin{remark}
  A pre-smoothing diagram may fail to be a smoothing diagram in two ways.  Firstly, the set $\W$ from \eqref{eq:linear-statum} may be empty due to linear dependence between the various subspaces $\Semu$ indexed by $(\Gamma,m)$.  Secondly, even if this set is nonempty, it may happen that $\W$ does not contain the class of a global nondegenerate logarithmic form.  
\end{remark}

\begin{remark}\label{rmk:subdiagrams}
There is a natural partial order on smoothing diagrams, defined by $\Gamma \le \Gamma'$ if $\Gamma$ is a smoothing sub-diagram of $\Gamma'$, i.e.~it can be obtained from $\Gamma'$ by removing some edges, leaving the orders $\orddel$ of the remaining triangles unchanged.  Note that this partial order is dual to the poset of inclusions of closures of strata in the arrangement. 
\end{remark}

The classification of smoothing diagrams for a given normal crossings divisor $(\X,\bdX)$ seems to be a subtle problem.  However, there are several important constraints that will be useful in what follows.  The first concerns the relation between smoothability and holonomicity:

\begin{lemma}\label{lem:nonhol-res}
If  $\omega$ is a log symplectic form and $\Delta \subset \dcX$ is a triangle corresponding to a nonholonomic codimension three stratum, then any edge of $\Delta$ with nonzero biresidue is resonant. In particular, no edge of $\Delta$ can be smoothable.
\end{lemma}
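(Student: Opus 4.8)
The plan is to reduce the statement to explicit linear algebra of the $3\times 3$ biresidue matrix of $\Delta$, combining the residue formula of \autoref{ex:res23} (the codimension-two case of \autoref{lem:block-res}) with the biresidue criterion for holonomicity from \autoref{prop:res-adapt}.

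First I would fix an edge $E$ of the triangle $\Delta$ with $B_E\neq 0\in\CC$. By the codimension-two case of \autoref{lem:bires-nondegen} the stratum $\X_E$ is log symplectic, so it carries the flat logarithmic connection $\nabla$ on $\det\cN{\X_E}$ whose residues control resonance; and since $\Delta$ is a triangle containing $E$, the closed stratum $\X_\Delta$ is an irreducible component of the boundary of $\X_E$. A generic point of $\X_\Delta$ meets no other boundary component of $\X_E$, so to conclude that $\X_E$ is resonant it is enough to show that $\res_{\X_\Delta}(\det\cN{\X_E},\nabla)\in\ZZ_{>0}$.

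Next I would put $B_\Delta$ into the block form of \autoref{ex:res23}, splitting the vertex set $\Delta_0$ into the two vertices of $E$ and the opposite vertex:
\[
B_\Delta=\begin{pmatrix}0 & b & -a_1\\ -b & 0 & a_2\\ a_1 & -a_2 & 0\end{pmatrix},\qquad b=B_E\neq 0,
\]
so that \autoref{ex:res23} gives $\res_{\X_\Delta}(\det\cN{\X_E},\nabla)=-(a_1+a_2)/b$. A one-line computation shows that $\ker B_\Delta$ is spanned by $(a_2,a_1,b)$, so the constant vector $(1,\dots,1)\in\CC^{\Delta_0}$ lies in $\img B_\Delta=(\ker B_\Delta)^{\perp}$ exactly when $a_1+a_2+b=0$. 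By hypothesis $\X_\Delta$ is a nonholonomic codimension-three stratum, which by \autoref{thm:hol-complex}(1) (equivalently \autoref{prop:res-adapt}) means precisely that $(1,\dots,1)\in\img B_\Delta$; hence $a_1+a_2=-b$ and $\res_{\X_\Delta}(\det\cN{\X_E},\nabla)=-(-b)/b=1\in\ZZ_{>0}$, so $\X_E$ is resonant.

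For the final clause I would observe that an edge of $\Delta$ with vanishing biresidue is not log symplectic, hence not a characteristic symplectic leaf by \autoref{ex:tri-holonomic} and in particular not smoothable, whereas an edge with nonzero biresidue is resonant by the above and therefore fails condition~(2) of \autoref{prop:smoothable-bires}; either way no edge of $\Delta$ can be smoothable. I do not expect a genuine obstacle: the content is simply that nonholonomicity at a triangle pins the relevant residue to the value $1$. The only points requiring care are keeping the orientation conventions of \autoref{ex:res23} consistent throughout, and recalling that $\res(\nabla)$ along a boundary component is independent of those conventions even though the labels $a_1,a_2,b$ are not, so that the equality "$\res=1$" is well posed.
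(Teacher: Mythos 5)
Your proof is correct and follows essentially the same route as the paper's: the paper likewise combines the identity ``sum of biresidues equals zero'' for a nonholonomic triangle (quoted from \autoref{ex:tri-holonomic} rather than re-derived via the kernel of $B_\Delta$) with the residue formula of \autoref{ex:res23} to conclude that the residue equals $1$, hence the edge is resonant and not smoothable. The only differences are cosmetic: a cyclic versus block orientation convention, and your extra (correct) care about zero-biresidue edges and about resonance being detected at a generic point of the boundary component.
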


\begin{proof}
Orient the triangle $\Delta$ cyclically so that the biresidues are given by complex numbers $b_1,b_2,b_3 \in \CC$.    Since $\Delta$ is nonholonomic, we have the identity $b_1+b_2+b_3 = 0$ by  \autoref{ex:tri-holonomic}.  If some biresidue, say $b_3$, is nonzero, then we have $\tfrac{b_1+b_2}{b_3} = -1$, so that the edge corresponding to $b_3$ is resonant, and is therefore not smoothable.
\end{proof}

The second constraint classifies the possible decorations of a triangle with multiple smoothable edges, along with the corresponding biresidues of the log symplectic structure:
\begin{lemma}\label{prop:smooth-2chain}
  Let $\omega$ be a log symplectic structure, and let $\Delta \subset \dcX$ be a triangle with two  edges corresponding to strata that are smoothable near $\Xodel$ with orders  $m, n\in\ZZ_{\ge 0}$. Assume without loss of generality that $m \le n$.  Then the biresidue $B_\Delta$ has the following form:
\begin{enumerate}
\item If $m = 0$, then the third edge is resonant, and $\Bdel$ is a scalar multiple of the following, for some $n \ge 0$:
\begin{align}
\ObsTri{n} := \vcenter{\hbox{ \begin{tikzpicture}[scale=0.7,decoration={
    markings,
    mark=at position 0.55 with {\arrow{>}}}
    ] 
\draw[fill=gray] (-30:1) -- (90:1) -- (210:1) -- (-30:1);
\draw[thick,blue,postaction={decorate}] (-30:1) -- (90:1) ;
\draw[thick,blue,postaction={decorate}]  (90:1) -- (210:1) ;
\draw[thick,red,postaction={decorate}] (210:1) -- (-30:1);
\draw[blue] (30:1.2) node {$n+1$};
\draw[blue] (150:1.2) node {$1$};
\draw[red](-90:0.9) node {$-1$};
\draw[blue] (-30:0.6) node {$\scriptstyle n$};
\draw[blue] (-150:0.6) node {$\scriptstyle 0$};
\draw[red] (90:0.5) node {$ \scriptstyle -n$};
\draw[red] (90:0.25) node {$ \scriptstyle -2$};
\end{tikzpicture}}} \label{eq:2chain-special}
\end{align} 
\item If $(m,n) \in \{ (1,2),(1,3),(1,5),(2,2),(2,5),(3,3) \}$, then the remaining edge is smoothable near $\Xodel$, and $\Bdel$ is a scalar multiple of the following:
\begin{align}
 \widetilde E_6  := \vcenter{\hbox{ \begin{tikzpicture}[scale=0.7,decoration={
    markings,
    mark=at position 0.5 with {\arrow{>}}}
    ] 
    \draw[fill=gray] (-30:1) -- (90:1) -- (210:1) -- (-30:1);
\draw[thick,blue,postaction={decorate}] (-30:1) -- (90:1);
\draw[thick,blue,postaction={decorate}] (90:1) -- (210:1) ;
\draw[thick,blue,postaction={decorate}] (210:1) -- (-30:1);
\draw[blue] (30:0.9) node {$1$};
\draw[blue] (150:0.9) node {$1$};
\draw[blue] (-90:0.9) node {$1$};
\draw[blue] (-30:0.6) node {$\scriptstyle2$};
\draw[blue] (-150:0.6) node {$\scriptstyle2$};
\draw[blue] (90:0.6) node {$\scriptstyle2$};
\end{tikzpicture}}}
&&
\widetilde E_7 := \vcenter{\hbox{ \begin{tikzpicture}[scale=0.7,decoration={
    markings,
    mark=at position 0.5 with {\arrow{>}}}
    ] 
    \draw[fill=gray] (-30:1) -- (90:1) -- (210:1) -- (-30:1);
\draw[thick,blue,postaction={decorate}] (-30:1) -- (90:1);
\draw[thick,blue,postaction={decorate}] (90:1) -- (210:1) ;
\draw[thick,blue,postaction={decorate}] (210:1) -- (-30:1);
\draw[blue] (30:0.9) node {$1$};
\draw[blue] (150:0.9) node {$1$};
\draw[blue] (-90:0.9) node {$2$};
\draw[blue] (-30:0.6) node {$\scriptstyle3$};
\draw[blue] (-150:0.6) node {$\scriptstyle3$};
\draw[blue] (90:0.6) node {$\scriptstyle1$};
\end{tikzpicture}}}
&&
\widetilde E_8 := \vcenter{\hbox{ \begin{tikzpicture}[scale=0.7,decoration={
    markings,
    mark=at position 0.5 with {\arrow{>}}}
    ] 
    \draw[fill=gray] (-30:1) -- (90:1) -- (210:1) -- (-30:1);
\draw[thick,blue,postaction={decorate}] (-30:1) -- (90:1);
\draw[thick,blue,postaction={decorate}] (90:1) -- (210:1) ;
\draw[thick,blue,postaction={decorate}] (210:1) -- (-30:1);
\draw[blue] (30:0.9) node {$1$};
\draw[blue] (150:0.9) node {$2$};
\draw[blue] (-90:0.9) node {$3$};
\draw[blue] (-30:0.6) node {$\scriptstyle 2 $};
\draw[blue] (-150:0.6) node {$\scriptstyle 5$};
\draw[blue] (90:0.6) node {$\scriptstyle 1$};
\end{tikzpicture}}}
\label{eq:simple-elliptic-degen}
\end{align}
\item Otherwise, the remaining edge is neither resonant nor smoothable near $\Xodel$, and $\Bdel$ is a scalar multiple of the following:
\begin{align}
T_{\infty,m+1,n+1} := \vcenter{\hbox{ \begin{tikzpicture}[scale=0.7,decoration={
    markings,
    mark=at position 0.55 with {\arrow{>}}}
    ] 
\draw[fill=gray] (-30:1) -- (90:1) -- (210:1) -- (-30:1);
\draw[thick,blue,postaction={decorate}] (-30:1) -- (90:1) ;
\draw[thick,blue,postaction={decorate}]  (90:1) -- (210:1) ;
\draw[postaction={decorate}] (210:1) -- (-30:1);
\draw[blue] (30:1.2) node {$n+1$};
\draw[blue] (150:1.2) node {$m+1$};
\draw(-90:0.9) node {$nm-1$};
\draw[blue] (-30:0.6) node {$\scriptstyle n$};
\draw[blue] (-150:0.6) node {$\scriptstyle m$};
\end{tikzpicture}}} \label{eq:2chain-bires}
\end{align}
\end{enumerate}
\end{lemma}
Note that in the diagrams, the orders placed at vertices are invariant under scaling the biresidues (and in fact they determine the biresidues up to simultaneous scaling).
\begin{proof}
The form \eqref{eq:2chain-bires} follows immediately by solving the linear equations on biresidues \eqref{eq:bires-smoothable} with the given order $m,n$ for the smoothable edges, and the condition for a resonance follows immediately by requiring the order $\frac{(n+1)+(m+1)}{nm-1}$ on the third edge to be a negative integer.

To treat the case of three smoothable edges, let us orient the triangle cyclically so that the biresidues are given by nonzero numbers $(b_1,b_2,b_3)$. By \autoref{lem:nonhol-res}, we have $b_1+b_2+b_3 \ne 0$, and hence by rescaling, we may assume without loss of generality that $b_1+b_2+b_3=1$. If all three edges are smoothable, we may apply \eqref{eq:bires-smoothable} to each of them, giving three linear equations of the form $b_{i-1}+b_{i+1}=m_ib_i$ 
where the indices are taken modulo three and each $m_i \in \ZZ_{\ge 0}$.  These linear equations are equivalent to the equations $\frac{1}{m_i+1}=b_i$, which imply that $\sum_{i=1}^3 \frac{1}{m_i+1} = 1$. Up to permutation, this equation has only three solutions for which $m_1,m_2,m_3 \in \mathbb{Z}_{\ge0}$, namely $(2,2,2)$, $(1,3,3)$ and $(1,2,5)$.
\end{proof}

\begin{example}[\runex]
We continue with the example of the divisor $y_1y_2y_3=0$ in $\CC^4$, whose dual complex is a triangle.  The open stratum $\Xo \cong (\CC^*)^3 \times \CC$ is homotopic to a three-torus, so that $\coH[2]{\Xo;\CC} \cong \CC^3$, with basis given by the biresidues of logarithmic two-forms.  The codimension-two submanifolds $\Xe$ are planes, hence contractible, so that the Chern class constraint is vacuous, and $\Hlgy[1]{\Xe;\ZZ}=0$.  Thus to determine the arrangement $\{\Se\}$ we simply need to solve the relations \eqref{eq:smoothable-nondegen} and \eqref{eq:bires-smoothable} for each of the three edges of the triangle. This gives three countable sequences of planes in $\CC^3$, one sequence for each edge. Projecting from $\CC^3$ to $\PP^2$, we arrive at the subspace arrangement illustrated in \autoref{fig:3cpt-line-arr} in the introduction. The possible smoothing diagrams with at least two smoothable edges are completely determined by \autoref{prop:smooth-2chain}; they correspond to the double and triple points in the subspace arrangement.
\end{example}

Recall that a \defn{3-horn} in $\dcX$ is a subcomplex given by three triangles arranged cyclically around a common vertex.  We have the following:

\begin{lemma}\label{lem:horn}
Let $\Sigma$ be any 3-horn of $\dcX$.  Then at most two of the edges in $\Sigma$ are smoothable.
\end{lemma}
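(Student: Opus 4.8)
The plan is to assume, for contradiction, that three edges of the $3$-horn $\Sigma$ are smoothable, and then to run a case analysis organised by how many of those three edges are the ``spokes'' through the central vertex. First I would fix notation: let $v_0$ be the central vertex of $\Sigma$ and $v_1,v_2,v_3$ the outer ones, numbered so that the three triangles are $T_i=\{v_0,v_i,v_{i+1}\}$ with indices mod $3$; then the six edges of $\Sigma$ are the spokes $\{v_0v_i\}$ and the rim edges $\{v_iv_{i+1}\}$, with $\{v_0v_i\}$ lying in $T_{i-1}$ and $T_i$ and $\{v_iv_{i+1}\}$ lying in $T_i$ only. I would orient each $T_i$ compatibly with the cyclic order, which has the effect that each spoke inherits \emph{opposite} orientations from the two triangles containing it.

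The key preliminary input is a refinement of \autoref{lem:nonhol-res} together with the computation in the proof of \autoref{prop:smooth-2chain}: any triangle $T$ that contains a smoothable edge is holonomic, so its cyclic biresidue sum $c_T$ is nonzero; and after rescaling so that $c_T=1$, a smoothable edge of $T$ of order $m$ has biresidue $1/(m+1)\in(0,1]$ relative to the cyclic orientation (this is just the order relation $b_{i-1}+b_{i+1}=m\,b_i$ rewritten). In particular, along the cyclic orientation the biresidue of a smoothable edge of $T$ is a \emph{positive} rational multiple of $c_T$.

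The main case is when all three spokes are smoothable. Each $T_i$ then contains the two smoothable spokes bounding it, so for the spoke $\{v_0v_i\}$ its biresidue, read with the two cyclic orientations, is a positive multiple of $c_{T_{i-1}}$ and of $c_{T_i}$ respectively; since those two orientations differ by a sign, $c_{T_{i-1}}$ and $c_{T_i}$ differ by a negative real factor. Carrying this around the three triangles produces three ratios $c_{T_1}/c_{T_2},\,c_{T_2}/c_{T_3},\,c_{T_3}/c_{T_1}$ that are each negative real numbers, while their product is $1$ --- a contradiction. (Equivalently: in a fixed orientation the three spoke biresidues would be pairwise negative real multiples of one another, impossible for three nonzero complex numbers.)

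It remains to treat the configurations in which at most two spokes are smoothable, so at least one rim edge is; here I would observe that, except in the case of all three rim edges, some triangle still contains two smoothable edges, so \autoref{prop:smooth-2chain} applies and pins down its biresidues up to scale, forcing it to be holonomic and, if all three of its edges are smoothable, to be of type $\widetilde E_6,\widetilde E_7$ or $\widetilde E_8$. The contradiction in these configurations --- and in the all-rim-edge configuration --- would then be obtained by combining this with the requirement that each smoothable edge be smoothable also across the remaining triangle(s) of $\Sigma$ containing it (condition (2) of \autoref{prop:smoothable-bires}), the monodromy constraints (condition (3) of \autoref{prop:smoothable-bires}), and the Chern-class identity \eqref{eq:smooth-c1} restricted to the relevant codimension-two strata (using $[\X_{\Delta'}]=\idelb[\Y_k]$ for the triangles $\Delta'$ incident to a given edge). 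The hard part, and the step that genuinely cannot be settled by the linear algebra of biresidues alone, is precisely this last point: three smoothable rim edges are \emph{not} obstructed by the local biresidue relations, so one really must feed in these global conditions --- in effect, the fact that $\bdX$ is anticanonical --- to rule them out.
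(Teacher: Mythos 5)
Your treatment of the main case --- all three edges through the central vertex smoothable --- is correct and is essentially the paper's entire proof: the paper likewise applies \autoref{prop:smooth-2chain} to each of the three triangles of the horn to conclude that, in a consistent outward orientation, adjacent spoke biresidues are negative rational multiples of one another, and derives the contradiction from the product of three negative ratios being equal to $1$.

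The problem is everything after that. You have misread the statement: in this paper the ``edges of the $3$-horn'' are only the three edges emanating from the central vertex (the pairwise intersections of the three triangles), not all six edges of the underlying simplicial subcomplex. This is explicit in the first line of the paper's proof (``The horn $\Sigma$ has exactly three edges, all emanating from the central vertex'') and is all that is needed for the application in \autoref{prop:smoothing-diagrams}, where the lemma bounds the valency of a vertex in a smoothing diagram. The strengthened claim you set out to prove --- at most two smoothable edges among all six --- is false, so the remaining cases of your analysis cannot be closed by any argument. Concretely, the smoothing diagram for $\PP^4$ consisting of a single triangle $v_0v_1v_2$ with all three edges smoothable (it appears in \autoref{tab:confs_P4} and realizes the $\widetilde E_6$-type configuration \eqref{eq:simple-elliptic-degen}) yields a $3$-horn centred at $v_3$ whose three rim edges are all smoothable, while its spokes are not; and the $D_5$-symmetric pentagon diagram of \autoref{fig:D5} yields $3$-horns with two smoothable spokes and one smoothable rim edge. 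Your own closing remark --- that three smoothable rim edges are not obstructed by the local biresidue relations and would require global input --- is the symptom: no such input will rule them out, because these configurations occur. Once the lemma is read as intended, your first case already completes the proof.
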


\begin{proof}
The horn $\Sigma$ has exactly three edges, all emanating from the central vertex; we must show that they cannot all be smoothable.  Suppose to the contrary that all three were smoothable.  Then we would have the following graphical representation of the biresidues: \begin{center}
\begin{tikzpicture}[scale=1,decoration={
    markings,
    mark=at position 0.55 with {\arrow{>}}}
    ] 
\coordinate (A) at (-1,0) {};
\coordinate (B) at (0.5,-1) {};
\coordinate (C) at (0.9,0)  {};
\coordinate (D) at (0,1.5) {};
\draw[gray,fill,opacity=0.8] (A) -- (C) -- (D) -- (C);
\draw[fill=gray,opacity=0.2] (A) -- (B) -- (D) -- (A);
\draw[fill=gray,opacity=0.4] (C) -- (B) -- (D) -- (C);
\draw[blue,thick,postaction={decorate}] (D)--(A);
\draw[blue,thick,postaction={decorate}] (D)--(B);
\draw[blue,thick,postaction={decorate}] (D)--(C);
\draw[blue] (0.8,0.8) node {$b_1$};
\draw[blue] (-0.8,1) node {$b_2$};
\draw[blue] (-0.1,0.2) node {$b_3$};
\end{tikzpicture}
\end{center}
where each $b_i \in \CC$ is nonzero.  Applying \autoref{prop:smooth-2chain} to each face of the horn, we find that
\[
b_1 = \lambda b_2 = \lambda \lambda' b_3 = \lambda \lambda' \lambda '' b_1
\]
for some strictly negative rational numbers $\lambda, \lambda',\lambda''$.  Hence $b_1$ is a negative multiple of itself, a contradiction.
\end{proof}

\subsection{Example: projective space}
\label{sec:P2n-diagrams}
As an extended example, consider $\X = \PP^{2n}$ equipped with the toric boundary divisor $\partial \PP^{2n}$ given by the union of the coordinate hyperplanes.  The open stratum is a torus $\Xo \cong (\CC^*)^{2n}$.  All global logarithmic forms are invariant under the torus action, and in particular a global logarithmic two-form can be written uniquely in any affine toric chart $\CC^{2n} \subset \PP^{2n}$ with coordinates $y_1,\ldots,y_{2n}$ in the form
\[
\omega := \sum_{i<j} B_{ij} \dlog{y_i}\wedge\dlog{y_j}.
\]
where $B := (B_{ij})_{ij} \in \CC^{2n\times 2n}$ is the matrix of biresidues in this chart.  The form $\omega$ is log symplectic if and only if the matrix  $B$  is invertible.  In this way the strata $\bS_e$ are identified with Zariski open sets in linear subspaces of the space of skew-symmetric matrices of size $2n$, defined over $\ZZ$.

We denote the vertex of $\dc{\PP^{2n}}$ corresponding to the hyperplane $y_{i}=0$ by $(i)$ for $1 \le i \le 2n$, and use $(0)$ for the vertex corresponding to the hyperplane at infinity.  We denote the unique oriented edge from $i$ to $j$ by $(ij)$.  With this notation understood, the biresidues along strata $(0i)$ at infinity in the chart are determined from the matrix $B$ by the formula
\[
B_{0i} := - \sum_{j=1}^{2n} B_{ji}.
\]

The $1$-skeleton of $\dc{\PP^{2n}}$ is the complete graph on $2n+1$ vertices, and there is a unique triangle joining every edge to every opposite vertex.  Hence  a pre-smoothing diagram consists of a choice of a subgraph $\Gamma$ of the complete graph, and an assignment of an order $\ord$ to every vertex opposite and edge of $\Gamma$.      By \autoref{prop:smoothable-bires}, an edge $\edge = (ij)$ of $\Gamma$ is smoothable for some given $B$ if only if the following relations holds:
\begin{align}
 B_{ij} \neq 0, \qquad B_{ik}+B_{kj} =  \orddel B_{ij}\textrm{ for all }k \neq i,j\label{eqn:matrix-smoothings}
\end{align}
These conditions correspond to conditions \ref{cnd:nondegen} and \ref{cnd:nonres} of \autoref{prop:smoothable-bires}; the remaining condition \ref{cnd:nomon} is automatic since the closed strata $\Xe \cong \PP^{2n}$ are simply connected.  Therefore a pair $(\Gamma,\ord)$ defines a smoothing diagram if and only if the following hold
\begin{itemize}
\item There exists a nondegenerate matrix $B$ solving the relations \eqref{eqn:matrix-smoothings} for all edges $\edge = (ij)$ of $\Gamma$ and all triangles $\Delta = (ijk)$ containing $\edge$.
\item If $\edge = (ij)$ is any edge not lying in $\Gamma$ then either $B_{ij} = 0$ or there exists an index $k$ such that $B_{ij}^{-1}(B_{ik}+B_{kj}) \notin \ZZ_{\ge 0}$.
\end{itemize}
It would be interesting to give a purely combinatorial characterization of such pairs $(\Gamma,m)$.  While we do not have a complete description, we can identify several constraints, as in the following proposition.  It constrains things enough to make it straightforward to program a computer to produce a complete enumeration of the possible smoothing diagrams up to isomorphism.  We have done so for $n=2$; see \autoref{tab:confs_P4}.

\begin{table}
\caption{Classification of smoothing diagrams $(\Gamma,m)$ for $\mathbb{P}^4$, and the dimensions of the corresponding strata $\bS_{\Gamma,m} \subset \coH[2]{(\CC^*)^4;\CC}\cong \CC^6$.  Blue edges correspond to smoothable strata, dashed edges correspond to codimension-two strata with zero biresidue, and the orders are indicated by colouring the opposite angles when the order is nonzero as in \autoref{ex:P2n-single-edge} and \autoref{fig:P4-one-edge}.}
\label{tab:confs_P4}
\begin{center}
\begin{tabular}{ccc}
\hline
Smoothing diagram $(\Gamma,m)$ & $\dim(\textrm{stratum})$ \\ 
\hline
%$5$-cycle &
\sagepentagon{\smoothedge{v3}{v4}{v1}{v1} 
\smoothedge{v2}{v3}{v0}{v0} 
\smoothedge{v1}{v2}{v4}{v4} 
\smoothedge{v0}{v1}{v3}{v3} 
\smoothedge{v4}{v0}{v2}{v2} } 
\sagepentagon{\smoothedge{v3}{v4}{v0}{v2} 
\smoothedge{v2}{v3}{v1}{v4} 
\smoothedge{v1}{v2}{v3}{v0} 
\smoothedge{v0}{v1}{v2}{v4} 
\smoothedge{v4}{v0}{v1}{v3} 
\zeroedge{v0}{v2}
\zeroedge{v0}{v3}
\zeroedge{v2}{v4}
\zeroedge{v1}{v4}
\zeroedge{v1}{v3}
} & 1
\\
\hline
%$4$-chain & 
\sagepentagon{\smoothedge{v0}{v1}{v2}{v3} \smoothedge{v0}{v4}{v3}{v3} \smoothedge{v1}{v2}{v0}{v4} \smoothedge{v2}{v3}{v4}{v4}
\zeroedge{v0}{v2}
\zeroedge{v0}{v3}
\zeroedge{v2}{v4}
}
\sagepentagon{ \smoothedge{v4}{v0}{v3}{v3} \smoothedge{v0}{v1}{v4}{v4} \smoothedge{v1}{v2}{v3}{v3} \smoothedge{v2}{v3}{v4}{v4}}
\sagepentagon{ \smoothedge{v4}{v0}{v1}{v2} \smoothedge{v0}{v1}{v4}{v4} \smoothedge{v1}{v2}{v3}{v3} \smoothedge{v2}{v3}{v1}{v0}} 
& 
1
\\
\hline
%$3$-chain & 
\sagepentagon{  \smoothedge{v2}{v3}{v1}{v1} \smoothedge{v3}{v4}{v1}{v1} \smoothedge{v4}{v0}{v1}{v1}}
\sagepentagon{  \smoothedge{v2}{v3}{v1}{v1} \smoothedge{v3}{v4}{v2}{v0} \smoothedge{v4}{v0}{v1}{v1}
\zeroedge{v1}{v3}
\zeroedge{v1}{v4}
}
\sagepentagon{  \smoothedge{v2}{v3}{v1}{v4} \smoothedge{v3}{v4}{v1}{v1} \smoothedge{v4}{v0}{v3}{v1}}
\sagepentagon{  \smoothedge{v2}{v3}{v4}{v4} \smoothedge{v3}{v4}{v1}{v1} \smoothedge{v4}{v0}{v3}{v3}}
\sagepentagon{  \smoothedge{v2}{v3}{v1}{v0} \smoothedge{v3}{v4}{v2}{v0} \smoothedge{v4}{v0}{v2}{v1}
\zeroedge{v1}{v3}
\zeroedge{v1}{v4}
} 
&
1
\\
%&
\sagepentagon{  \smoothedge{v2}{v3}{v4}{v0} \smoothedge{v3}{v4}{v1}{v1} \smoothedge{v4}{v0}{v1}{v1}}
\sagepentagon{  \smoothedge{v2}{v3}{v1}{v1} \smoothedge{v3}{v4}{v1}{v0} \smoothedge{v4}{v0}{v1}{v1}
\zeroedge{v1}{v4}
}
\sagepentagon{  \smoothedge{v2}{v3}{v4}{v0} \smoothedge{v3}{v4}{v1}{v0} \smoothedge{v4}{v0}{v1}{v1}}
\sagepentagon{  \smoothedge{v2}{v3}{v4}{v0} \smoothedge{v3}{v4}{v0}{v0} \smoothedge{v4}{v0}{v1}{v1}}
\sagepentagon{  \smoothedge{v2}{v3}{v0}{v4} \smoothedge{v3}{v4}{v2}{v0} \smoothedge{v4}{v0}{v1}{v1}
\zeroedge{v0}{v2}
\zeroedge{v2}{v4}
}
&
1\\
\hline
%$1$- \& 2-chain &
\sagepentagon{  \smoothedge{v1}{v2}{v3}{v3}  \smoothedge{v1}{v0}{v4}{v4} \smoothedge{v3}{v4}{v2}{v0}  
\zeroedge{v2}{v4}
\zeroedge{v4}{v1}
\zeroedge{v1}{v3}
\zeroedge{v3}{v0}
}
\sagepentagon{  \smoothedge{v1}{v2}{v0}{v0}  \smoothedge{v1}{v0}{v4}{v4} \smoothedge{v3}{v4}{v1}{v2}  }
\sagepentagon{  \smoothedge{v1}{v2}{v4}{v3}  \smoothedge{v1}{v0}{v4}{v2} \smoothedge{v3}{v4}{v0}{v0}  }
\sagepentagon{  \smoothedge{v1}{v2}{v0}{v0}  \smoothedge{v1}{v0}{v4}{v2} \smoothedge{v3}{v4}{v0}{v0}  } 
&
1
\\
\hline 
%$3$-cycle & 
\sagepentagon{\smoothedge{v0}{v1}{v2}{v2} \smoothedge{v0}{v2}{v1}{v1} \smoothedge{v1}{v2}{v0}{v0}} 
&
2
\\
\hline 
%maximal $2$-chain & 
\sagepentagon{\smoothedge{v0}{v1}{v3}{v4} \smoothedge{v1}{v2}{v3}{v4}}
\sagepentagon{\smoothedge{v0}{v1}{v3}{v4} \smoothedge{v1}{v2}{v0}{v0}} 
&
%2
\\
%\hline
%$2$-chain &
\sagepentagon{\smoothedge{v0}{v1}{v4}{v4} \smoothedge{v1}{v2}{v3}{v3}}
\sagepentagon{\smoothedge{v0}{v1}{v3}{v4} \smoothedge{v1}{v2}{v3}{v3}}
\sagepentagon{\smoothedge{v0}{v1}{v3}{v3} \smoothedge{v1}{v2}{v3}{v3}}
\sagepentagon{\smoothedge{v2}{v1}{v0}{v3} \smoothedge{v1}{v0}{v3}{v3}}
\sagepentagon{\smoothedge{v0}{v1}{v2}{v4} \smoothedge{v1}{v2}{v3}{v3}}
&
2
\\
%&
\sagepentagon{\smoothedge{v2}{v1}{v0}{v3} \smoothedge{v1}{v0}{v3}{v4}}
\sagepentagon{\smoothedge{v0}{v1}{v2}{v3} \smoothedge{v1}{v2}{v0}{v3}
\zeroedge{v0}{v2}}
\sagepentagon{\smoothedge{v0}{v1}{v2}{v3} \smoothedge{v1}{v2}{v0}{v4}
\zeroedge{v0}{v2}
}
\sagepentagon{\smoothedge{v0}{v1}{v2}{v2} \smoothedge{v1}{v2}{v0}{v4}}
\sagepentagon{\smoothedge{v0}{v1}{v2}{v2} \smoothedge{v1}{v2}{v3}{v3}} 
&
%2
\\
\hline
%pair of $1$-chains &
\sagepentagon{\smoothedge{v0}{v4}{v1}{v1} \smoothedge{v2}{v3}{v1}{v1}}
\sagepentagon{\smoothedge{v0}{v4}{v2}{v3} \smoothedge{v2}{v3}{v1}{v1}}
&
3\\
%&
\sagepentagon{\smoothedge{v0}{v4}{v2}{v1} \smoothedge{v2}{v3}{v1}{v0}}
\sagepentagon{\smoothedge{v0}{v4}{v2}{v2} \smoothedge{v2}{v3}{v1}{v0}}
\sagepentagon{\smoothedge{v0}{v4}{v2}{v2} \smoothedge{v2}{v3}{v0}{v0}}
&
2
\\
\hline 
%$1$-chain & 
\sagepentagon{\smoothedge{v3}{v4}{v1}{v1}}
\sagepentagon{\smoothedge{v3}{v4}{v0}{v2}}
&
4
\\
\hline 
%empty & 
\sagepentagon{} &  6\\
\hline
\end{tabular}
\end{center}
\end{table}

\begin{proposition}\label{prop:smoothing-diagrams}
Suppose that $(\Gamma,m)$ is a smoothing diagram for $\PP^{2n}$ where $n \ge 2$.  Then the following statements hold:
\begin{enumerate}
\item Every vertex in the graph $\Gamma$ has valency at most two; hence $\Gamma$ is a disjoint union of connected chains and cycles.
\item\label{part:cycle-containment} If $C \subset \Gamma$ is a cycle, $\Delta \subset C$ is an edge, and $v$ is a vertex opposite $\Delta$ with nonzero order, then $v \in C$.
\item The graph $\Gamma$ contains no cycles of even length.
\end{enumerate}
\end{proposition}

\begin{proof}
Suppose that $\omega$ is a log symplectic form on $(\PP^{2n},\partial\PP^{2n})$.  We must show that the smoothing diagram associated to $\omega$ has the listed properties.  

For the first statement, suppose that  $v \in \dcP{2n}$ is a vertex.  We must show that it has at most two smoothable edges.  But if it had three, then they would span a three-horn in $\dcP{2n}$, which is impossible by \autoref{lem:horn}. Hence $\Gamma$ is at most bivalent, and the decomposition into chains and cycles follows immediately.  

For the second statement, observe that any adjacent pair of edges in $\dc{\PP^{2n}}$ spans a unique triangle.  If both edges are smoothable we may apply \autoref{prop:smooth-2chain} to deduce that the corresponding biresidues are positive rational multiplies of one another.  Hence if $C$ is a cycle, we may orient $C$ cyclically and apply this argument at each vertex to deduce that all biresidues along $C$ are positive rational multiplies of one another.  Then by rescaling $\omega$, we may assume without loss of generality that the biresidues along $C$ are positive numbers.  Now suppose that $v$ is a vertex not lying in $C$, and identify $C$ with $\ZZ/k\ZZ$ in cyclic order.  For $i \in \ZZ/k\ZZ$, let $a_i$ be the biresidue along the unique edge from the vertex $i \in C$ to $v$, oriented towards $v$.  Considering the triangle $\Delta'$ with vertices $i,i+1,v$, we obtain a relation
\[
a_{i+1}-a_i = \orddel B_\Delta \ge 0
\]
for every edge $\Delta = (i,i+1)$ in $C$.  Going around the cycle we conclude that $a_0 \le a_1 \le \cdots \le a_k \le a_0$, so that $a_{i}=a_{i+1}$ for all $i$, and hence $\orddel=0$ whenever $v \notin C$, as claimed.

Finally, for the third statement, suppose that $C$ is a cycle in $\Gamma$ whose length $l$ is even.  By part 2 and \autoref{ex:P2n-single-edge}, we have for any $i \in \ZZ/l\ZZ$ that
\[
\sum_{j \in \ZZ/l\ZZ}B_{i,j} = 2 =\sum_{j \in \ZZ/l\ZZ} B_{i+1,j}.
\]
In other words, if $B_C := (B_{i,j})_{i,j\in\ZZ/l\ZZ}$ is the matrix of biresidues along the simplex spanned by $C$, then the column sums of $B_C$ are all equal.  But since $B_C$ is skew-symmetric, its entries add up to zero.  Hence the individual columns must sum to zero, so that $B_C$ is degenerate, and since it is skew-symmetric of even size, its rank is at most $l-2$.  Now consider an arbitrary $(2n-1)$-simplex $\Delta \in \dcP[2n]{2n}$ that contains $C$.  (This is always possible since $\dcP{2n}$ has an odd number of vertices.)  The biresidue along $\Delta$ then has the form of a $2n\times 2n$ block matrix
\[
B_\Delta = \begin{pmatrix}
 B_C & A \\
 -A^T & D
\end{pmatrix}
\]
where the block $A$ encodes the biresidues from vertices in $C$ to vertices outside $C$.  Therefore statement 2 implies that all rows of $A$ are constant, and hence $A$ has rank at most one.  Therefore
\[
\rank(B_\Delta) \le \rank(B_C) + \rank(A) + (2n-2l) \le (2l-2) + 1 + (2n-2l) = 2n-1,
\]
so that $B_\Delta$ is degenerate.  But the stratum in $\PP^{2n}$ corresponding to $\Delta$ is a single point; hence it is necessarily a symplectic leaf.  This contradicts~\autoref{lem:bires-nondegen}.
\end{proof}

\section{Higher order deformations}

\label{sec:L-infinity}

\subsection{$L_\infty$ algebras and the deformation functor}

We now turn to the study of the obstructions to deformations. To this end, we make use of the general formalism of derived deformation theory via $L_\infty$ algebras. 

Since $\der{\X}$ is a sheaf of differential Gerstenhaber algebras, its hypercohomology carries a homotopy Gerstenhaber structure ($G_\infty$ structure), canonically defined up to gauge equivalence.  It is constructed in two steps.   In the first step, one chooses a suitable resolution of $\der{\X}$ by sheaves of dg Gerstenhaber algebras in order to compute the algebra $R\Gamma(\der{\X})$ of derived global sections, giving the latter the structure of dg Gerstenhaber algebra.  In the second step, one chooses a homotopy equivalence of complexes $\Hpi{\X} \cong R\Gamma(\der{\X})$ and uses it to transfer the $G_\infty$ structure to $\Hpi{\X}$, e.g.~using the general formulae in~\cite{Berglund2014} or \cite[Section 10.3]{Loday2012}. 

\begin{remark}
A suitable resolution can be obtained, for instance, using the Dolbeault resolution.  Alternatively, one can use Sullivan's Thom--Whitney normalization of the \v{C}ech resolution as reviewed, for instance, in \cite[Section 4]{Calaque2014} or \cite[Appendix B]{Dolgushev2015}.  The latter has the advantage of working equally well for sheaves that are not coherent, such as the constant sheaf $\CC_\X$.  More generally, it can be used to show that the derived direct image functor $Rf_*$ for any morphism $f : \X \to \Y$ is compatible with algebra structures on sheaves.  
\end{remark}

In particular, corresponding to the Schouten--Nijenhuis bracket on $\der{\X}$ is a sequence of brackets $\Hpi{\X}^{\otimes k} \to \Hpi{\X}[3-2k]$ of arity $k \ge 2$, which induce an $L_\infty$ structure on the graded vector space $\Hpi{\X}[1]$.

\begin{definition}
We denote by $\Def{\X}$, the simplicial set-valued deformation functor associated to the $L_\infty$ algebra underlying $\Hpi{\X}$.
\end{definition}
More precisely, $\Def{\X}$ is the functor sending an Artin ring with maximal ideal $\m$ to the simplicial set of Maurer--Cartan elements in $\Hpi[2]{\X}\otimes_\CC \m$ as in \cite{Hinich1997,Getzler2009}.  Since the underlying $L_\infty$ algebra $\Hpi{\X}[1]$ is concentrated in degrees $\ge -1$, this simplicial set is the nerve of the corresponding Deligne (weak-)2-groupoid~\cite{Bressler2015,Getzler2002}: objects are solutions of the Maurer--Cartan equation in $\Hpi[2]{\X} \otimes_\CC \m$, acted on by formal gauge transformations parameterized by $\Hpi[1]{\X} \otimes_\CC \m$ and with homotopies between gauge transformations parameterized by $\Hpi[0]{\X} \otimes_\CC \m$.  We remark that if $\X$ is compact or generically symplectic, then $\Hpi[0]{\X} = \CC$ is a strict central subalgebra, and hence it acts trivially on the space of gauge transformations.  In this sense it is not crucial for understanding the deformation functor per se, but it becomes more important when we consider natural operations, such as fibre products, which relate collections of deformation functors. 

This deformation functor has a natural geometric interpretation: it parametrizes formal deformations of $(\X,\pi)$ as a generalized complex manifold in the sense of Hitchin~\cite{Hitchin2003} and Gualtieri~\cite{Gualtieri2011}; see, in particular \cite[Section 5]{Gualtieri2011}.  Alternatively, it parametrizes Poisson deformations of $(\X,\pi)$ twisted by a gerbe as in \cite{VandenBergh2007,Yekutieli2015}.  Either way, if $h^1(\cO{\X})=h^2(\cO{\X}) = 0$, it reduces deformations of the pair $(\X,\pi)$ as a holomorphic Poisson manifold as in \cite{Ginzburg2004,Kim2014,Namikawa2008}.  Note that if, in addition, $(\X,\pi)$ is compact and log symplectic, then since nondegeneracy is an open condition, small deformations will remain log symplectic, although as we shall see the deformed anticanonical divisor need not remain normal crossings.  Similarly, one can show that holonomicity is an open condition in proper families, but this is more subtle, and since we will not need this statement in this paper, we shall omit the proof.

If $\U\subset \X$ is an open set, then the restriction on Poisson cohomology gives a $G_\infty$ morphism $\Hpi{\X} \to \Hpi{\U}$, canonically defined up to homotopy, and a corresponding morphism $\Def{\X} \to \Def{\U}$ of deformation functors.  We will require two basic lemmas about these restriction maps.  The first allows us to simplify the deformation functor by throwing out irrelevant subsets.

\begin{lemma}\label{lem:retrict-def}
Suppose that the linear restriction map $\Hpi[k]{\X} \to \Hpi[k]{\U}$ is an isomorphism for $k\le 2$ and injective for $k=3$.  Then the $G_\infty$ restriction map induces an isomorphism
\[
\Def{\X} \cong \Def{\U}
\]
of deformation functors.
\end{lemma}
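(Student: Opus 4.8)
The plan is to realize this as an instance of the standard invariance of the deformation functor under sufficiently connective morphisms of $L_\infty$-algebras. First I would recall that the map $\Def{\X}\to\Def{\U}$ is induced by the $G_\infty$ restriction morphism $\Phi\colon\Hpi{\X}\to\Hpi{\U}$ introduced above, which is well defined up to homotopy and whose linear Taylor coefficient is the restriction map on Poisson cohomology. After the shift $[1]$ this becomes an $L_\infty$-morphism between the minimal $L_\infty$-algebras $\Hpi{\X}[1]$ and $\Hpi{\U}[1]$, both concentrated in degrees $\ge -1$ because $\Hpi[k]{\X}=\Hpi[k]{\U}=0$ for $k<0$. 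Writing $H^i$ for degree $i$ in this convention, so that $H^i=\Hpi[i+1]{\cdot}$, the hypotheses of the lemma say exactly that $\Phi$ induces an isomorphism on $H^i$ for all $i\le 1$ and a monomorphism on $H^2$.

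I would then reduce the lemma to the following general statement, which is the $L_\infty$- and simplicial refinement of the Goldman--Millson comparison theorem for differential graded Lie algebras: if $\mathfrak g\to\mathfrak h$ is an $L_\infty$-morphism of $L_\infty$-algebras concentrated in degrees $\ge -1$ whose linear part is an isomorphism on $H^{\le 1}$ and a monomorphism on $H^2$, then the induced map of the associated deformation functors --- the Maurer--Cartan spaces of \cite{Getzler2009,Hinich1997}, equivalently the nerves of the Deligne $2$-groupoids of \cite{Getzler2002,Bressler2015} --- is a weak equivalence on every Artinian ring, hence an isomorphism of deformation functors. Applying this to $\Phi$ gives the lemma.

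To prove the general statement I would fix an Artinian local ring $A$ with maximal ideal $\m$ and argue by induction on the length of $\m$ via a small extension $0\to\CC t\to A'\to A\to 0$ with $t\m'=0$. The standard obstruction-theoretic analysis shows that the homotopy fibres of $\Def{\X}(A')\to\Def{\X}(A)$ are, where nonempty, Maurer--Cartan spaces of the abelian $L_\infty$-algebra $\Hpi{\X}[1]\otimes\CC t$ --- a product of Eilenberg--MacLane spaces with homotopy groups $\Hpi[2]{\X}\otimes t$, $\Hpi[1]{\X}\otimes t$, $\Hpi[0]{\X}\otimes t$ in simplicial degrees $0$, $1$, $2$ --- and that the set of components over which the fibre is nonempty is cut out by an obstruction map valued in $\Hpi[3]{\X}\otimes t$. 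Since $\Phi$ is a morphism of $L_\infty$-algebras, not merely of complexes, it intertwines these obstruction maps and fibration data with the corresponding ones for $\U$; comparing the two fibration sequences by a homotopy five-lemma then completes the inductive step. Concretely: the isomorphisms on $\Hpi[\le 2]{\X}$ identify the fibres, the inductive hypothesis identifies the bases, and injectivity of $\Hpi[3]{\X}\to\Hpi[3]{\U}$ transports the vanishing of the obstruction from $\Def{\U}$ back to $\Def{\X}$, so that any element of $\Def{\U}(A')$ lifting a given element of $\Def{\X}(A)$ is matched by a genuine lift in $\Def{\X}(A')$; the case $A=\CC$ is trivial.

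The main obstacle will be the careful bookkeeping at the level of the Deligne $2$-groupoid: one must track not only Maurer--Cartan elements and gauge transformations but also the homotopies between gauge transformations --- the layer governed by $\Hpi[0]{\X}$, which is precisely where the isomorphism on $\Hpi[0]{\X}$ enters --- and one must verify that the obstruction classes and fibration structures are compatible with all the higher Taylor coefficients of $\Phi$, not merely its linear part. This is routine within the $L_\infty$-formalism (cf.~\cite{Loday2012,Berglund2014} and \autoref{rmk:gerst}), but it is the step that requires genuine care; the reduction to the general statement and the inductive scheme itself are formal.
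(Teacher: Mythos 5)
Your proposal is correct and follows essentially the same route as the paper: both reduce the lemma to the standard invariance statement for $L_\infty$-morphisms whose linear part is an isomorphism in degrees $\le 1$ and injective in degree $2$ (after the shift by one), and both note that the usual quasi-isomorphism hypothesis of the references can be weakened because the deformation functor only sees the truncation in degrees $\le 2$. The paper simply cites \cite[Theorem V.51]{Manetti2004}, \cite[Proposition 4.9]{Getzler2009} and \cite[Proposition 3.9]{Bressler2015} for the inductive small-extension/obstruction argument that you sketch explicitly.
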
 
\begin{proof}
Passing to the corresponding $L_\infty$ algebras (which involves a shift in degree by one), this follows from a standard statement in deformation theory: an $L_\infty$ morphism whose linear part is an isomorphism in degrees $\le 1$ and injective in degree two induces an isomorphism of deformation functors.  For set-valued deformation functors, see~e.g.~\cite[Theorem V.51]{Manetti2004}.  For simplicial set-valued functors, this follows from \cite[Proposition 4.9]{Getzler2009}; see also \cite[Proposition 3.9]{Bressler2015} for more details on the inductive argument.   Note that the cited propositions assume that the linear part is an isomorphism in all degrees, but an examination of the proof reveals that  the weaker conditions listed above suffice, since the deformation functor only depends on the truncations of the $L_\infty$ algebras in degree $\le 2$.
\end{proof}

The second lemma allows for gluing over an open cover.  More precisely, observe that if $\V$ is another open set, we obtain a homotopy  commutative diagram
\begin{align}
\vcenter{\xymatrix{
& \Hpi{\U\cup\V} \ar[ld] \ar[rd] \\
\Hpi{\U}\ar[rd] && \Hpi{\V}\ar[ld] \\
& \Hpi{\U \cap \V}} }\label{eq:MV-square}
\end{align}
The following lemma then follows from Mayer--Vietoris sequence.  
\begin{lemma}\label{lem:MV}
The square \eqref{eq:MV-square} is  homotopy Cartesian, giving an equivalence
\[
\Def{\U\cup\V} \cong \Def{\U}\fibprod_{\Def{\U\cap\V}}\Def{\V}
\]
where the right hind side is the homotopy fibre product.  It reduces to the strict fibre product when the linear restriction map $\Hpi{\V} \to \Hpi{\U\cap \V}$ is surjective.
\end{lemma}
See, e.g., \cite[Section 7]{Bandiera2018} for generalities on fibre products of $L_\infty$ algebras, and \cite{Bandiera2017,Bressler2015,Hinich1997,Rogers2020} for their compatibility with deformation functors and descent over open covers.

Now observe that $\X$ itself may be built up by starting from the open set $\U = \Xo$, and inductively attaching tubular neighbourhoods $\V$ of all higher-codimension strata; this mirrors the cell attachments defining the dual  complex $\dcX$.  In this way, we may construct the entire $G_\infty$ algebra $\Hpi{\X}$ as a homotopy limit over the simplices in the dual complex.  At least in principle, this reduces the problem to describing $\Hpi{\Xo}$, and explaining what happens when we attach tubular neighbourhood using the fibre square above.  We focus here on the pieces relevant to our local Torelli theorem, starting with the open set $\Xo$. 

\subsection{Codimension zero: unobstructed log deformations}

\begin{proposition}\label{prop:weight-zero-abelian}
If $(\X,\pi)$ is any normal crossings log symplectic manifold, then the $L_\infty$ structure on $\Hpi{\X}[1]$ is equivalent to one for which the summand $\W_0\Hpi{\X} \cong \coH{\Xo} \subset \Hpi{\X}$ is a strict abelian $L_\infty$ subalgebra, and the linear restriction map $\Hpi{\X} \to \coH{\Xo}$ is a strict $L_\infty$ morphism. 
\end{proposition}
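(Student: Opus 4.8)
The plan is to isolate the weight-zero summand at the level of sheaves of dg Gerstenhaber algebras, identify it with the deformation complex of the open symplectic manifold $\Xo$, and exploit that this latter complex is homotopy-trivial. First I would split off $\W_0$: by \autoref{lem:weight-gerst} the inclusion $\W_0\der{\X}=\derlog{\X}\hookrightarrow\der{\X}$ is a morphism of sheaves of dg Gerstenhaber algebras, and by \autoref{rmk:weight-split} with $k=0$ --- which holds unconditionally, since the open stratum is trivially nonresonant --- it admits a retraction in $\dcat{\X}$, namely restriction to $\Xo=\X\setminus\bdX$ followed by the adjunction identification $\W_0\der{\X}\cong Rj_*\der{\Xo}$, where $j\colon\Xo\hookrightarrow\X$. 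All three of these maps respect the Gerstenhaber operations, so passing to a Thom--Whitney model of derived global sections (\autoref{rmk:gerst}) exhibits $R\Gamma\,\W_0\der{\X}$ as a retract of $R\Gamma\,\der{\X}$ in dg Gerstenhaber algebras. Choosing the homotopy transfer data for $R\Gamma\,\der{\X}$ compatibly with this retract, the resulting $G_\infty$ --- hence $L_\infty$ --- structure on $\Hpi{\X}$ makes $\W_0\Hpi{\X}=\img\bigl(\coH{\W_0\der{\X}}\to\Hpi{\X}\bigr)$ --- which by injectivity equals $\coH{\W_0\der{\X}}\cong\coH{\Xo;\CC}$ --- a strict $L_\infty$ subalgebra, with the linear restriction $\Hpi{\X}\to\coH{\Xo;\CC}$ the corresponding strict retraction onto it.

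Next I would show that the $L_\infty$ structure so induced on $\W_0\Hpi{\X}$ is trivial up to isomorphism. Over $\Xo$ the anchor map gives an isomorphism of sheaves of dg Lie algebras $\pis\colon(\forms{\Xo},\dd,\{-,-\})\xrightarrow{\sim}(\der{\Xo},\dpi,[-,-])$ between the de Rham complex with the Koszul bracket and the polyvector complex with the Schouten bracket; moreover $R\Gamma\,\W_0\der{\X}\simeq R\Gamma(\Xo,\der{\Xo})$ via the restriction of the previous paragraph. Now the inclusion of the constant sheaf $\CC_{\Xo}\hookrightarrow(\forms{\Xo},\dd,\{-,-\})$ is a quasi-isomorphism of sheaves of dg Lie algebras: it is a quasi-isomorphism by the Poincar\'e lemma, and it is a Lie map because the Koszul bracket lowers total degree and therefore vanishes on functions, $\CC_{\Xo}$ being moreover \emph{abelian}. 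Applying Thom--Whitney (which, as recalled in \autoref{rmk:gerst}, applies to non-coherent sheaves such as $\CC_{\Xo}$) yields a dg Lie quasi-isomorphism from the abelian $\coH{\Xo;\CC}[1]$ onto $R\Gamma(\Xo,\der{\Xo})[1]$, so the minimal $L_\infty$ structure carried by $\W_0\Hpi{\X}[1]$ is quasi-isomorphic, hence (both being minimal) isomorphic, to the abelian one.

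Finally I would promote this to a literal abelian subalgebra while keeping the restriction strict. Taking the $L_\infty$-isomorphism from the abelian structure onto the induced one on $\W_0\Hpi{\X}$ with identity linear part, I extend it by zero on the remaining coordinates to an invertible $L_\infty$-automorphism of $\Hpi{\X}$; since this automorphism and its inverse preserve both the sub-coalgebra of tensors with all factors in $\W_0\Hpi{\X}$ and the one with a factor in the complementary ideal $\ker(\Hpi{\X}\to\coH{\Xo;\CC})$, pushing the $L_\infty$ structure forward along it gives an equivalent structure in which $\W_0\Hpi{\X}$ is still a strict subalgebra but now carries the abelian operations, the complement is still a strict ideal, and the linear restriction $\Hpi{\X}\to\coH{\Xo;\CC}$ --- unchanged since the automorphism is the identity to first order --- is still a strict $L_\infty$ morphism; this is the required structure. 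The substantive inputs are \autoref{rmk:weight-split} and the triviality of the symplectic deformation complex in the second step; I expect the delicate point to be not any individual step but the coherent bookkeeping of the homotopy-transfer and Thom--Whitney data, ensuring that the splitting and the restriction come out strict on the nose rather than merely up to homotopy.
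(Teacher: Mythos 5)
Your overall architecture coincides with the paper's: both arguments identify the weight-zero part with the strictly abelian algebra $\coH{\Xo;\CC}$ via the zigzag $\W_0\der{\X}\cong\logforms{\X}\to Rj_*\forms{\Xo}\xleftarrow{\sim}Rj_*\CC_{\Xo}$ (you route this through $Rj_*\der{\Xo}$ and the Koszul bracket, the paper through $j_*\forms{\Xo}$ using that $j$ is Stein --- same content), and both then strictify. The one genuine soft spot is your first step. The claim that ``compatible homotopy transfer data'' simultaneously makes $\W_0\Hpi{\X}$ a strict subalgebra \emph{and} the linear restriction a strict retraction is not justified: the subalgebra half is achievable (extend a contraction of the subcomplex $R\Gamma(\W_0\der{\X})$ to one of $R\Gamma(\der{\X})$), but the retraction onto $\W_0$ exists only after inverting a quasi-isomorphism in the zigzag --- there is no complement of $\W_0\der{\X}$ in $\der{\X}$ that is an ideal at the sheaf level --- and transfer along such data produces, in general, an $\infty$-morphism with nonvanishing higher Taylor coefficients. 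This is precisely the point you flag as delicate. The paper closes it differently: from the zigzag it extracts $L_\infty$ morphisms $i:\coH{\Xo;\CC}\to\Hpi{\X}$ and $p:\Hpi{\X}\to\coH{\Xo;\CC}$ with $p_1i_1=1$ and applies the gauge-transformation lemma \cite[Lemma 1.5.4]{Schuhmacher2004} twice, first to strictify $i$ (making the image a strict abelian subalgebra) and then to strictify $p$, checking from the explicit formula that the second gauge transformation preserves abelianness. Note that your step 3 --- extending an $L_\infty$-isomorphism by zero on a complement to an automorphism of the whole algebra --- is sound and is essentially a hands-on instance of that lemma; running the same extension argument once more to strictify the projection, rather than asking the transfer to deliver it for free, would complete your proof along the paper's lines. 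Your step 2 (minimality forces the induced structure on $\W_0\Hpi{\X}$ to be isomorphic to the abelian one) is correct and matches the paper's use of the constant sheaf with zero bracket.
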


\begin{proof}
  Let $j : \Xo = \X\setminus \bdX \to \X$ be the inclusion of the open symplectic leaf.  Note that since $\Xo$ is a hypersurface complement, the inclusion $j$ is a Stein morphism, so that the higher direct images of any coherent sheaf vanish.  In particular, the natural map $ j_*\forms{\Xo} \to Rj_*\forms{\Xo}$ in $\dcat{\X}$ is an isomorphism.
  
Using the canonical isomorphism $j^*\der{\X} \cong \forms{\Xo}$ induced by the symplectic form, we obtain the following commutative diagram of sheaves of differential Gerstenhaber algebras on $\X$:
\[
\xymatrix{
\W_0{\der{\X}} \ar[r] & \der{\X} \ar[r]& j_*\forms{\Xo} \ar[r]^{\sim} & Rj_*\forms{\Xo} & Rj_*\CC_{\Xo}\ar[l]_{\sim} \\
& \logforms{\X} \ar[ul]^-{\sim} \ar[ur]_-{\sim}
}
\]
where the differential and bracket on $\CC_{\Xo}$ are identically zero, and the symbol $\sim$ indicates quasi-isomorphisms.  Taking hypercohomology and inverting weak equivalences, we see immediately that $\W_0\Hpi{\X}$ is equivalent to the strictly abelian algebra $\coH{\Xo;\CC}$, and we furthermore obtain a pair of $L_\infty$ morphisms $i = (i_1,i_2,...) : \coH{\Xo;\CC} \to \Hpi{\X}$ and $p=(p_1,p_2,\ldots) : \Hpi{\X} \to \coH{\Xo;\CC}$ such that $p_1i_1=1$.

By \cite[Lemma 1.5.4]{Schuhmacher2004} we may apply a gauge transformation to $\Hpi{\X}$ to make $i$ a strict $L_\infty$ morphism, so that $\W_0\Hpi{\X}\cong \coH{\Xo;\CC}$ is a strict abelian subalgebra.  Finally, by \cite[Lemma 1.5.4]{Schuhmacher2004} again (see also \cite[Lemma 7.2]{Bandiera2018}) we may apply a further gauge transformation to make the projection $p$ a strict morphism; from the explicit formula for this gauge transformation, we see that the restriction of the transformed $L_\infty$ structure to the subspace $\W_0\Hpi{\X}$ remains abelian, giving the result. 
\end{proof}

\begin{remark}
The proof shows that $\Hpi{\Xo}$ is $L_\infty$-equivalent to the minimal abelian algebra $\coH{\Xo;\CC}$, but does not give the explicit $L_\infty$ quasi-isomorphism.  However, as explained in \cite{Fiorenza2012,Gualtieri2020}, there is a canonical $L_\infty$ quasi-isomorphism from the sheaf $(\derlog{\X}[1],\dpi,[-,-])$ of dg Lie algebras to the corresponding abelian sheaf $(\derlog{\X}[1],\dpi,0)$, given by exponentiating the binary operation $R(\xi,\omega) = \hook{\omega}(\xi\wedge \eta) - \hook{\omega}\xi \wedge \eta - \xi \wedge \hook{\omega}\eta$ that measures the failure of contraction with $\omega$ to be a derivation of the wedge product. 
\end{remark}

Recalling that a homotopy abelian $L_\infty$ algebra presents a smooth deformation functor, we have the following immediate corollaries:
\begin{corollary}\label{cor:wt0-unobstr}
First-order deformations lying in the weight-zero  subspace $\W_0\Hpi[2]{\X} = \coH[2]{\Xo;\CC}$ are unobstructed to all orders; they correspond to  deformations for which $\bdX$ deforms locally trivially (i.e.~remains normal crossings with locally unchanged number of components).
\end{corollary}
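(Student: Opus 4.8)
The plan is to derive the corollary directly from \autoref{prop:weight-zero-abelian}, which already carries all the analytic content. First I would invoke that proposition to replace the $L_\infty$ structure on $\Hpi{\X}[1]$ by a gauge-equivalent one in which $\W_0\Hpi{\X}[1]\cong\coH{\Xo;\CC}[1]$ is a \emph{strict} abelian $L_\infty$-subalgebra and the inclusion is a strict $L_\infty$-morphism. A strict abelian $L_\infty$-algebra has a smooth (pro-representable) deformation functor: since all brackets of arity $\ge 2$ vanish on it and the internal differential is zero (we are working on cohomology), every element of $\W_0\Hpi[2]{\X}\otimes_\CC\m$ solves the Maurer--Cartan equation, for any Artin ring with maximal ideal $\m$, so $\Def{\X}$ restricted to this subalgebra is simply the smooth functor $\m\mapsto\W_0\Hpi[2]{\X}\otimes_\CC\m$. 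Strictness of the inclusion means it induces a morphism of deformation functors whose effect on tangent spaces is the inclusion $\W_0\Hpi[2]{\X}\hookrightarrow\Hpi[2]{\X}$. Hence any first-order deformation lying in $\W_0\Hpi[2]{\X}$ lifts, through this smooth functor, to a formal deformation of $(\X,\pi)$; equivalently the successive obstruction classes in $\Hpi[3]{\X}$ vanish because each lift may be chosen inside the abelian subalgebra. For the standard facts about (simplicial-set valued) deformation functors used here I would cite \cite{Manetti2004,Getzler2009} as in \autoref{lem:retrict-def}.

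The second step is the geometric identification. Here I would trace through the commutative diagram in the proof of \autoref{prop:weight-zero-abelian}: the inclusion $\W_0\Hpi{\X}\hookrightarrow\Hpi{\X}$ is induced by the quasi-isomorphism $\logforms{\X}\xrightarrow{\ \sim\ }\derlog{\X}\subset\der{\X}$ of sheaves of dg Gerstenhaber algebras given by contraction into $\pi$ (the inverse of the anchor). Consequently a class in $\coH[2]{\Xo;\CC}=\W_0\Hpi[2]{\X}$ is represented by a \emph{closed logarithmic} two-form $B\in\coH[0]{\logforms[2,\mathrm{cl}]{\X}}$, and the corresponding formal deformation of $(\X,\pi)$ is the ``magnetic term'' deformation already discussed in the examples of \autoref{sec:log-symp}: the family $\omega_t:=\omega+tB$, equivalently $\pi_t:=\omega_t^{-1}$, over a formal parameter $t$ (and, letting $B$ vary over a linear representative of $\coH[2]{\Xo;\CC}$, the whole smooth family produced above). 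Since $B$ has at worst first-order poles along $\bdX$, each $\omega_t$ is again logarithmic with respect to the \emph{same} divisor $\bdX$, and since nondegeneracy is an open condition $\omega_t$ stays log symplectic for $t$ in a formal neighbourhood of $0$; its degeneracy divisor is therefore $\bdX$, unchanged. In particular $\bdX$ remains normal crossings with the same irreducible components, so it deforms trivially — indeed globally, hence a fortiori locally.

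I expect the proof to be short precisely because \autoref{prop:weight-zero-abelian} does the hard work; the one point that needs genuine care is the second step, namely verifying that the abstract weight-zero classes, transported along the strict $L_\infty$-morphism of \autoref{prop:weight-zero-abelian}, correspond to these concrete $B$-field deformations rather than to more exotic (gerby-Poisson or generalized-complex) ones. This is exactly what the explicit quasi-isomorphism $\logforms{\X}\to\derlog{\X}$ in that proof delivers, so the main obstacle is bookkeeping — keeping straight the identifications $\der{\X}|_{\Xo}\cong\forms{\Xo}$, the anchor, and the weight filtration — rather than any new idea. It is worth recording in a remark that this recovers and globalizes the unobstructedness-of-normal-crossings results of \cite{Marcut2014,Ran2020} cited in the introduction.
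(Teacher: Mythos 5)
Your proposal is correct and follows essentially the same route as the paper, which derives the corollary immediately from \autoref{prop:weight-zero-abelian} via the standard fact that a (homotopy) abelian $L_\infty$ algebra presents a smooth deformation functor, with the geometric identification of weight-zero classes as magnetic-term deformations. The only caveat is your parenthetical claim that the divisor deforms ``globally'' trivially: a general class in $\coH[2]{\Xo;\CC}$ need not be represented by a global closed logarithmic two-form (it is hypercohomology, and may deform the complex structure of $\X$ itself), which is precisely why the corollary asserts only \emph{local} triviality of the deformation of $\bdX$.
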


\begin{corollary}\label{cor:no-smoothings}
If $(\X,\pi)$ has no smoothable strata, then $\Def{\X}$ is smooth, and the divisor $\bdX \subset \X$ deforms locally trivially under all deformations of $(\X,\pi)$.  In fact, $\Def{\Xo}$ is isomorphic to the formal completion of the 2-stack
\[
\coH[2]{\Xo;\CC} \times \B\coH[1]{\Xo;\CC} \times \B^2 \CC
\]
at the origin, where $\coH[1]{\Xo;\CC}$ is abelian. 
\end{corollary}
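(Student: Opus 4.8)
The plan is to show that, in the absence of smoothable strata, the deformation functor collapses onto the abelian one attached to the open symplectic leaf, so that $\Def{\Xo}\cong\Def{\X}$, and then to identify $\Def{\Xo}$ explicitly. First I would fix a gauge as in \autoref{prop:weight-zero-abelian}, in which $\W_0\Hpi{\X}\cong\coH{\Xo;\CC}$ is a strict abelian $L_\infty$-subalgebra of $\Hpi{\X}$; write $i\colon\coH{\Xo;\CC}\hookrightarrow\Hpi{\X}$ for the associated strict $L_\infty$-morphism. When $(\X,\pi)$ has no smoothable strata, \autoref{cor:low-deg} gives $\Hpi[k]{\X}\cong\coH[k]{\Xo;\CC}$ for $k=0,1$ and \autoref{prop:HP2-decomp} gives $\Hpi[2]{\X}\cong\coH[2]{\Xo;\CC}$ (the product over smoothable edges being empty); comparing these with the identification $\coH[k]{\Xo;\CC}\cong\W_0\Hpi[k]{\X}$ shows that the linear term $i_1$ is an isomorphism in degrees $k\le 2$, and since $i_1$ is the inclusion of a graded subspace it is in addition injective in degree $3$.

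Next I would invoke the comparison result already used in the proof of \autoref{lem:retrict-def} (see \cite{Getzler2009,Manetti2004,Bressler2015}): an $L_\infty$-morphism whose linear part is an isomorphism in degrees $\le 1$ and injective in degree $2$ — after the shift $\mathfrak{g}=\Hpi{\X}[1]$ these are precisely the conditions just checked — induces an isomorphism of deformation functors. Applied to $i$, this yields a canonical isomorphism $\Def{\Xo}\cong\Def{\X}$, where $\Def{\Xo}$ is the deformation functor of the abelian $L_\infty$-algebra $\coH{\Xo;\CC}[1]$ (recall $\Hpi{\Xo}=\coH{\Xo;\CC}$ because $\Xo$ is symplectic).

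To finish I would unwind $\Def{\Xo}$. The algebra $\coH{\Xo;\CC}[1]$ has vanishing differential and brackets and is concentrated in degrees $\ge -1$, with $\coH[0]{\Xo;\CC}=\CC$ in degree $-1$, $\coH[1]{\Xo;\CC}$ in degree $0$, and $\coH[2]{\Xo;\CC}$ in degree $1$. Hence over an Artin ring with maximal ideal $\m$ the Maurer--Cartan equation is vacuous, the objects of the Deligne $2$-groupoid form $\coH[2]{\Xo;\CC}\otimes\m$, the gauge group $\coH[1]{\Xo;\CC}\otimes\m$ acts trivially, and the $2$-morphisms form a $\CC\otimes\m$-torsor; taking the nerve and completing at the origin identifies $\Def{\Xo}$ — hence $\Def{\X}$ — with the formal completion of $\coH[2]{\Xo;\CC}\times\B\coH[1]{\Xo;\CC}\times\B^2\CC$ with $\coH[1]{\Xo;\CC}$ abelian, which is in particular formally smooth. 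For the statement about the divisor, note that $\W_0\Hpi{\X}$ is the hypercohomology of the logarithmic Lichnerowicz complex $\derlog{\X}$, so under $\Def{\Xo}\cong\Def{\X}$ every deformation of $(\X,\pi)$ is gauge-equivalent to one realized by a family of logarithmic bivectors; by \autoref{cor:wt0-unobstr} these are exactly the deformations along which $\bdX$ remains normal crossings with an unchanged local number of components, i.e.\ deforms locally trivially.

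The one step needing care is the injectivity of $i_1$ in degree $3$: the naive restriction $\Hpi[3]{\X}\to\coH[3]{\Xo;\CC}$ need not be injective — in general $\Hpi[3]{\X}$ acquires contributions from higher-codimension characteristic leaves that restrict to zero on $\Xo$ — so one cannot simply apply \autoref{lem:retrict-def} to the inclusion $\Xo\hookrightarrow\X$. The argument genuinely uses the strict abelian splitting of \autoref{prop:weight-zero-abelian}, which supplies a morphism in the opposite direction whose linear term, being a subspace inclusion, is automatically injective in all degrees; everything else is bookkeeping about the Deligne $2$-groupoid of an abelian $L_\infty$-algebra.
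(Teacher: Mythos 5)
Your proposal is correct and follows essentially the same route as the paper: both reduce $\Def{\X}$ to the deformation functor of the strict abelian weight-zero subalgebra $\W_0\Hpi{\X}\cong\coH{\Xo;\CC}$ via the criterion of \autoref{lem:retrict-def} (using that the absence of smoothable strata forces $\W_0\Hpi[\le 2]{\X}=\Hpi[\le 2]{\X}$, with injectivity in degree $3$ coming for free from the split inclusion of \autoref{prop:weight-zero-abelian}), and then identify the Deligne $2$-groupoid of the abelian algebra by Dold--Kan. Your closing remark about why one must use the weight-zero inclusion rather than the restriction to $\Xo$ is exactly the point the paper's phrasing ``as in \autoref{lem:retrict-def}'' is implicitly relying on.
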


\begin{proof}
In this case, $\gr^\W_2 \Hpi[2]{\X} = 0$ so that $\W_0\Hpi[\le 2]{\X} = \Hpi[\le 2]{\X}$ and $\W_0\Hpi[3]{\X} \subset \Hpi[3]{\X}$.  Hence as in \autoref{lem:retrict-def}, the deformation functors of $\Hpi{\X}$ and its weight-zero part are equivalent.  Since $\W_0\Hpi{\X}$ is strictly abelian, the Deligne 2-groupoid $\Def{\Xo}(A)$ for an Artin ring $A$ with maximal ideal $\mathfrak{m}$ is the simplicial vector space associated to the graded vector space $\coH[\le 2]{\Xo;\CC} \otimes_\CC \mathfrak{m}$ by the Dold--Kan correspondence~\cite{Getzler2009}, which gives the description of the full 2-stack.
\end{proof}

\begin{remark}\label{rmk:period-map}
The corollary has the following geometric interpretation as a period mapping: given a topologically trivial normal crossings deformation $(\X_t,\omega_t)$, where $t$ is a formal parameter (or a parameter taking values in a contractible space), the Gauss--Manin connection on logarithmic cohomology gives a canonical identification $\phi_t : \coH[2]{\Xo_t;\CC} \to \coH[2]{\Xo;\CC}$ and the map $\Def{\X} \to \coH[2]{\Xo;\CC}$ sends $(\X_t,\omega_t)$ to $\phi_t([\omega_t]) \in \coH[2]{\Xo;\CC}$. 
\end{remark}

\begin{remark}\label{rmk:ran}
The corollaries can be viewed as extensions of Bogomolov's unobstructedness theorem for deformations of compact hyperk\"ahler manifolds~\cite{Bogomolov1978} to the log symplectic setting.  Similar criteria for unobstructedness were obtained by Ran~\cite{Ran2017,Ran2020,Ran2020a} in the slightly different context of Poisson (rather than generalized complex) deformations; they are related to \autoref{cor:no-smoothings} as follows.  The P-normality condition of \cite{Ran2017} is equivalent to the statement that all biresidues are identically zero.  In this case, there are no smoothable strata by part 1 of \autoref{prop:smoothable-bires}, so \autoref{cor:no-smoothings} applies.    Meanwhile, the ``2-very general position'' condition of \cite{Ran2020,Ran2020a} implies that the integer-linear condition \eqref{eq:bires-smoothable} on biresidues from \autoref{prop:smoothable-bires} will fail for all codimension-two strata, so that again none of them are smoothable and  \autoref{cor:no-smoothings} applies. 
\end{remark}

\subsection{Codimension two: unobstructed smoothings}
\label{sec:codim2}

We now consider what happens to the deformation functor when we add strata of codimension two.  Near such a stratum, the Poisson structure is locally stably equivalent to a normal crossings structure on $\CC^2$, so we treat that simple case first and then explain how to globalize the calculation.

\subsubsection{Case $\X = \CC^2$:}

Equip $\CC^2$ with the following log symplectic form $\omega$ and its inverse bivector $\pi$:
\[
\omega = B \dlog{y_0}\wedge \dlog{y_1} \qquad \qquad \pi = -\frac{1}{B} \logcvf{y_0}\wedge\logcvf{y_1}
\]
where $B \in \CC$ is nonzero.   Its dual complex consists of a single smoothable edge, with orientation and biresidue as follows:
  \[
\begin{tikzpicture}[scale=0.7,decoration={
    markings,
    mark=at position 0.55 with {\arrow{>}}}
    ] 
\draw[thick,blue,postaction={decorate}] (0,0) -- (1,0);
\draw[blue] (0.5,-0.4) node {$B$};
\draw (-0.3,0) node {$y_0$};
\draw (1.3,0) node {$y_1$};
\end{tikzpicture}
\]
The universal deformation of this structure is obtained by varying the biresidue $B$ and smoothing the nodal at the origin, as explained in the introduction.  This can be obtained from the following complete description of $\Hpi{\CC^2}$ as a $G_\infty$ algebra.

Let us denote by
\begin{align}
v_0 = \pis\dlog{y_0}  = - \frac{1}{B}\logcvf{y_1}  \qquad \qquad v_1 = \pis \dlog{y_1} = \frac{1}{B}\logcvf{y_0} \label{eq:v-classes}
\end{align}
weight zero vector fields dual to loops around the coordinate axes, and let
\begin{align}
s = \cvf{y_0}\wedge\cvf{y_1} \label{eq:s-classes}
\end{align}

Then $s$ has weight two and projects to a basis of $\gr_2\der{\CC^2} \cong i_*\sLe$, where $\sL_e = \det \tb[0]{\CC^2}$ is a one-dimensional vector space, viewed as a skyscraper sheaf supported at the origin.

It is straightforward to check that the elements $v_0,v_1,s$ generate a Gerstenhaber subalgebra of $\der{\CC^2}$ of the following form:

\begin{definition}\label{def:2dGerst}
For a one-dimensional vector space $\sL$ and a constant $B \in \CC$, we denote by
\[
\CC[v_0,v_1] \ltimes_B \sL
\]
the Gerstenhaber algebra generated by degree one elements $v_0,v_1$ and a copy of $\sL$ placed in degree two, with multiplication determined by the relations
\[
v_0\sL = v_1 \sL =\sL \cdot \sL = 0
\]
and with the only nonzero bracket given by
\[
[a_0v_0+a_1 v_1,s] = \frac{a_0-a_1}{B} s
\]
for $a_0,a_1\in\CC$ and $s \in \sL$.
\end{definition}

\begin{proposition}\label{prop:C2}
We have a $G_\infty$ equivalence
\[
\Hpi{\CC^2} \cong \CC[v_0,v_1] \ltimes_B \sLe
\]
and the $G_\infty$ restriction map $\Hpi{\CC^2} \to \coH{(\CC^2)^\circ}$ is homotopic to the strict quotient map $\CC[v_0,v_1] \ltimes_B \sLe \to \CC[v_0,v_1]$ that kills the ideal $\sLe$.
\end{proposition}

\begin{proof}
That these elements form a Gerstenhaber subalgebra with trivial differential and the given multiplicative and bracket relations is immediate.  That this subalgebra projects isomorphically to the cohomology follows immediately from \autoref{thm:HPhol}, since $\W_0\Hpi{\CC^2} = \coH{(\CC^*)^2}$ is the cohomology of a two-torus and is therefore freely generated by the classes of the logarithmic forms $\dlog{y_i}$.  Meanwhile the unique codimension two stratum $\Xe$ is a point $\coH{\Xe;\sLe}$ is simply the one-dimensional space of global sections, for which $s$ is a basis.

For the statement about the restriction map, consider the square
\[
\xymatrix{
\CC[v_0,v_1] \ltimes_B \sLe \ar[r]^-{\phi} \ar[d]^-{p} & \der{\CC^2} \ar[d]^-{r} \\
\CC[v_0,v_1] \ar[r]^{\phi_0} & \der{(\CC^*)^2}
}
\]
induced by the embeddings $\phi$, $\phi_0$, the quotient map $p$ and the restriction map $r$.  This diagram is not commutative, because $\phi(s) = \cvf{y_0}\wedge\cvf{y_1}$ does not vanish identically on the open leaf $(\CC^*)^2$, but we  claim that it commutes up to homotopy, which suffices.  Indeed,  the operator $h$ given by contraction with the one-form $\dlog{y_0}$ on $\der{(\CC^*)^2}$ is a dg Gerstenhaber derivation of degree $-1$, satisfying the homotopy identities
\[
[\dpi,h]v_i = 0 \qquad [\dpi,h]s=s
\]
and therefore $h$ gives a homotopy of dg Gerstenhaber morphisms from $r \circ \phi$ to $\phi_0 \circ p$, as desired.
\end{proof}

\subsubsection{Neighbourhood of a single stratum}

Now let $(\X,\omega)$ be an arbitrary normal crossings log symplectic manifold.  We do not assume that the normal crossings divisor is simple.  Let $e$ be an edge of the dual complex.  It corresponds to a codimension-two stratum $\Xoe \subset \X$.  Let $\V \subset \X$ be a $C^\infty$ tubular neighbourhood of $\Xoe$ that is compatible with the normal crossings divisor.  We have $\Def{\V} \neq \Def{\Vo}$ if and only if $e$ is a smoothable edge for $(\V,\omega|_\V)$, or equivalently conditions \ref{cnd:nondegen} and \ref{cnd:nomon} of \autoref{prop:smoothable-bires} are satisfied, the remaining condition being vacuous since $\Xoe$ has no boundary strata in $\V$.  Note that in this case, $\partial \V$ is simple by  \autoref{rmk:non-simple-symplectic}.  We will describe the deformation functor of $\V$ in this case as follows. 

Choose a basepoint $p \in \Xoe$, let $\{\ell^j\}_{j \in \I}$ be a collection of loops based at $p$ that generate $\Hlgy[1]{\Xoe;\ZZ}$, giving an embedded graph
\[
\bouqe \cong  \bigvee_{j \in \I} \ell^j \hookrightarrow \Xoe
\]
\begin{lemma}\label{lem:skeltube}
The subspace $\bouqe \subset \Xoe$ admits a Stein tubular neighbourhood $\W \subset \Xoe$.  For any such $\W$, its normal bundle in $\X$ is holomorphically trivial.
\end{lemma}
\begin{proof}
We may thicken $\bouqe$ to a totally real two-dimensional submanifold of $\Xoe$, which then admits a Stein tubular neighbourhood in $\Xoe$ by the results of \cite[Section 3]{Grauert1958} (see also \cite[Section 3.5]{Forstneric2017}).   Since $\coH[2]{\V;\ZZ}\cong \coH[2]{\bouqe;\ZZ} = 0$, every line bundle on $\V$ is topologically trivial, and since $\V$ is Stein, every topologically trivial bundle is holomorphically trivial.  But the normal bundle $\cN{e}$ is a direct sum of line bundles, and therefore it is holomorphically trivial over $\V$ as claimed.
\end{proof}

Let $\W\subset \Xoe$ be a tubular neighbourhood of $\bouqe$ as in \autoref{lem:skeltube}.  Then $\W$ further admits a tubular neighbourhood  $\tV$ inside $\V$ by a theorem of Siu~\cite{Siu1976}. Homogenizing the Poisson structure on $\tV$ using \autoref{prop:homogenization}, we deduce that $\tV$ is holomorphic Poisson isomorphic to a tubular neighbourhood of the zero section in the trivial bundle $\W \times \CC^2$, equipped with a fibrewise homogeneous log symplectic structure and a possibly nontrivial symplectic connection.  Moreover we can choose the isomorphism so that the chosen orientation of $e$ matches the standard orientation for the edge of $\dc{\CC^2}$.  Since the local system $\sLe$ is trivial, the polyvectors $v_0,v_1,s$ on $\CC^2$ given by the formulae \eqref{eq:v-classes} and \eqref{eq:s-classes} define global holomorphic Poisson cocycles on $\W \times \CC^2$. Meanwhile, $\W$ is a symplectic manifold, so that $\Hpi{\W} \cong \coH{\W;\CC}$ is equipped with the abelian bracket, and since $\W$ is homotopy equivalent to a bouquet of circles, its commutative dg algebra of cochains is formal.   We therefore obtain the following strong form of the K\"unneth decomposition for the Poisson cohomology:
\begin{proposition}
The Poisson cohomology $\Hpi{\tV}$ is formal as a $G_\infty$ algebra; in fact
\[
\Hpi{\tV} \cong \coH{\W;\CC} \otimes \rbrac{\CC[v_0,v_1]\ltimes_{\Be} \coH[0]{\sLe}} 
\]
and
\[
\Hpi{\tVo} \cong \coH{\W;\CC} \otimes \CC[v_0,v_1],
\]
where $\coH{\W;\CC}$ is viewed as a Gerstenhaber algebra with the cup product and zero bracket. The restriction $\Hpi{\tV} \to \Hpi{\tVo}$ is homotopic to the quotient by the ideal generated by $\coH[0]{\sLe}$.
\end{proposition}

From the form of the brackets on $\CC[v_0,v_1]\ltimes_{\Be} \coH[0]{\sLe}$ from \autoref{def:2dGerst}, it follows that the bracket on $\Hpi{\tV}$ is completely determined by following components:
\[
\begin{array}{ccc}
\coH[1]{\tVo;\CC} \times \coH[0]{\sLe} &\to& \coH[0]{\sLe} \\
( \mu , s ) &\mapsto& \frac{1}{\Be} (\res_0\mu  -\res_1 \mu)  s
\end{array}
\]
where $\res_i : \coH[1]{\tVo;\CC} \to \CC$ are the residue maps for the two components of $\bdX$ intersecting along $\Xoe$, and
\[
\begin{array}{ccc}
\coH[2]{\tVo;\CC} \times \coH[0]{\sLe} &\to& \coH[1]{\W} \otimes \coH[0]{\sLe}\\
( \eta , s ) &\mapsto& \frac{1}{2 \pi \sqrt{-1}\Be}\sum_{j \in \I} \rbrac{\int_{\torcyc^j_0-\torcyc^j_1} \eta}\cdot \alpha^j \otimes s
\end{array}
\]
where $\{\alpha^j\}_{j \in \I} \subset \coH[1]{\W;\CC}$ are dual to the classes $\ell^j \in \Hlgy[1]{\W;\ZZ}$ and $\torcyc^j_{i}$ are the extensions of $\ell^j$ to tori in the open leaf $\tVo$ as in \autoref{sec:monodromy}. 

The elements $(\eta,s) \in \coH[2]{\tVo;\CC} \times \coH[0]{\sLe} = \Hpi[2]{\tV}$ that solve the Maurer--Cartan equation $[\eta+s,\eta+s] =0$ therefore  have the following form:
\begin{itemize}
\item if $s=0$, then $\eta \in \coH[2]{\tVo;\CC}$ is arbitrary
\item $s\ne 0$, then $\int_{\torcyc^j_1-\torcyc^j_0} \eta = 0$ for all $j \in \I$.
\end{itemize}

The deformation functor $\Def{\tV}$ is then the 2-stack quotient of this Maurer--Cartan locus by the linear action of $\coH[\le 1]{\tVo;\CC}$.  This explicit description has the following gauge-invariant geometric counterpart.
\begin{corollary}\label{cor:edge-constructible}
Let $\Def[\edge]{\tVo} \subset \Def{\tVo}$ be the closed subfunctor representing normal crossings deformations of $\tV$ for which the unique edge $\edge$ remains smoothable.  Then the projection $\Def{\tV} \to \Def{\tVo}$ is a constructible vector bundle, given by the union of the zero section and a line bundle over $\Def[\edge]{\tVo}$ with fibre $\coH[0]{\sLe}$.
\end{corollary}

\subsubsection{Case without strata of codimension three:}

Now let $(\X,\omega)$ be a normal crossings log symplectic manifold, and consider the open subset
\[
\Xtwo := \X \setminus \bdX[3]
\]
obtained by removing all strata of codimension greater than two.  We do not assume that $\bdX$ is simple normal crossings. We describe $\Def{\Xtwo}$ as follows.

Let $\Gamma_0$ be the smoothing diagram of $(\Xtwo,\omega|_{\Xtwo})$ and let $\Gamma < \Gamma_0$ be any smoothing subdiagram.  Let
\[
\Def[\Gamma]{\Xo} \subset \Def{\Xo}
\]
by the smooth closed subfunctor corresponding to the linear subspace consisting of classes $\eta \in \coH[2]{\Xo;\CC}$ such that for every edge $\edge$ of $\Gamma$ and every loop $\ell$ in $\Xoe$, we have $\int_{\torcyc_1-\torcyc_0}\eta = 0$.  Thus $\Def[\Gamma]{\Xo}$ corresponds to normal crossings deformations of $(\X,\omega)$ for which all edges of $\Gamma$ remain smoothable in $\Xtwo$.

Then the subfunctors $\Def[\Gamma]{\Xo}$ give a stratification of $\Def{\Xo}$ by linearly embedded substacks.  Recall also that we have an embedding $\Def{\Xo}\hookrightarrow \Def{\Xtwo}$ corresponding to the normal crossings deformations of $\Xtwo$, via the inclusion $\coH{\Xo;\CC} \hookrightarrow \Hpi{\Xtwo}$.

\begin{theorem}\label{thm:Xtwo}
$\Def{\Xtwo}$ is a constructible vector bundle over the stratified deformation functor $\Def{\Xo}$.  More precisely, for every smoothing subdiagram $\Gamma$ as above, there exists a unique irreducible component
\[
\Def[\Gamma]{\Xtwo} \subset \Def{\Xtwo}
\]
whose intersection with $\Def{\Xo}$ is $\Def[\Gamma]{\Xo}$.  This component is a vector bundle over $\Def[\Gamma]{\Xo}$ with fibre isomorphic to $\bigoplus_{\edge \in \Gamma} \coH[0]{\sLe}$.  
\end{theorem}

\begin{proof}
For each edge $\edge$ of $\dcX$, choose a bouquet of circles $\bouqe \subset \Xoe$ generating the first homology, a tubular neighbourhood $\We$ of $\bouqe$ in $\Xodel$ and a tubular neighbourhood $\tVe$ of $\We$ in $\X$ as above.  We may assume without loss of generality that the neighbourhoods $\tVe$ are pairwise disjoint.

Let $\tX = \Xo \cup \tV$ where $\tV = \bigsqcup_{\edge \in \dcX[1]} \tVe$.  We claim that the restriction map $\Def{\Xtwo} \to \Def{\tX}$ is an isomorphism.  Indeed, note that the open symplectic leaf in $\tX$ is $(\tX)^\circ = \Xo$.  Therefore the restriction
\[
\W_0\Hpi{\Xtwo} \to \W_0\Hpi{\tX}
\]
is an isomorphism.  Meanwhile the codimension two strata are given by the submanifolds $\We \subset \Xoe$.  Since each submanifold $\We$ is connected and generates the first homology of $\Xoe$, the  linear restriction map $\coH[\le 1]{\Xoe;\sLe} \to \coH[\le 1]{\We;\sLe}$ is an isomorphism in degree zero and injective in degree one.  We conclude that the linear restriction $\gr_2 \Hpi{\Xtwo} \to \gr_2\Hpi{\tX}$ is an isomorphism in degree two and injective in degree three.  Since $\Xtwo$ and $\tX$ have no strata of codimension greater than two, it follows that $\Hpi{\Xtwo} \to \Hpi{\tX}$ is bijective in degree two and injective in degree three, giving an isomorphism of deformation functors by \autoref{lem:retrict-def}.

Now apply \autoref{lem:MV} to the open cover $\tX = \Xo \cup \tV$; we conclude that
\[
\Def{\Xtwo} \cong \Def{\tX} \cong \Def{\Xo}\fibprod_{\Def{\tVo}} \Def{\tV}.
\]
But the restriction morphism $\Def{\tV} \to \Def{\tVo}$ is the direct product of the individual restriction maps $r_e : \Def{\tVe} \to \Def{\tVe^\circ}$ where $\edge$ is an edge of $\dcX$.  If the edge $\edge$ is not smoothable, then $r_e$ is an isomorphism.  Otherwise, $r_e$ is a constructible vector bundle $\Def{\tVe} \to \Def{\tVe^\circ}$ as in \autoref{cor:edge-constructible}.  Therefore $\Def{\tV}$ is a constructible bundle whose strata are indexed by collections of smoothable edges of $\dcX$, and the desired description of $\Def{\Xtwo}$ follows by pulling this constructible bundle back along the map $\Def{\Xo} \to \Def{\tVo}$.  Indeed, that the pullback is a constructible vector  bundle over some stratification of $\Def{\Xo}$ is immediate.   That  this stratification agrees with the stratification defined above follows from the fact that the morphism $\Def{\Xo}\to \Def{\tVo}$ is induced by the linear restriction $\coH{\Xo;\CC} \to \coH{\tVo;\CC}$ of abelian dg Lie algebras, and the smoothability of an edge is determined by linear conditions on periods of $\omega$ in $\tVo$. 
\end{proof}

\begin{remark}\label{rmk:param}
In the situation of the theorem, the deformations of $\X$ all still have normal crossings singularities; indeed, for each codimension-two stratum there are only two possibilities: either it is smoothed or it is not. The generic deformation in $\Def[\Gamma]{\X}$ will furthermore have no smoothable strata, so that it satisfies the hypotheses of \autoref{cor:no-smoothings}.  This indicates that nearby irreducible components are also described by period maps as in \autoref{rmk:period-map}.  
\end{remark}

\subsection{Higher codimension: the full deformation functor}
We now describe the deformation functor $\Def{\X}$ for an arbitrary normal crossings log symplectic manifold, by relating it to the deformation functor of the open set
\[
\Xs \subset \Xtwo
\]
obtained from $\Xtwo$ by removing all codimension-two strata that are not smoothable in $\X$. Note that a stratum of $\Xtwo$ is smoothable in $\X$ if and only if it is smoothable in $\Xtwo$ and nonresonant; hence by \autoref{thm:Xtwo},  $\Def{\Xs} \to \Def{\Xo}$ is isomorphic to the quotient of the constructible vector bundle $\Def{\Xtwo} \to \Def{\Xo}$ that kills the subspaces $\coH[0]{\sLe}$, for every resonant edge $\edge$. The irreducible components of $\Def{\Xs}$ are then indexed by smoothing subdiagrams of the smoothing diagram for $(\X,\omega)$.  
 
Moreover, in light of \autoref{thm:hol-complex}, the restriction $\Hpi{\X} \to \Hpi{\Xs}$ is an isomorphism in degrees at most two, and surjective in degree three.  Hence the restriction $\Def{\X} \to \Def{\Xs}$ in an embedding of a full closed substack with the same Zariski tangent space.  We will describe the image of this embedding, and in particular give  necessary and sufficient conditions for it to be an isomorphism, thus establishing \autoref{thm:torelli-bundle} from the introduction.

\subsubsection{Nonresonant case}  The simplest case is when all codimension-two symplectic strata are nonresonant.  Note that this is an open condition on the class $[\omega] \in \coH[2]{\Xo;\CC}$.  In this case we have the following: 

\begin{lemma}\label{lem:nonres-def}
Suppose that $(\X,\pi)$ is a normal crossings log symplectic manifold and that all codimension-two characteristic symplectic leaves are nonresonant.  Then the restriction maps $\Def{\X} \to \Def{\Xtwo} \to \Def{\Xs}$ are isomorphisms.
\end{lemma}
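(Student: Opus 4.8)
The plan is to deduce the statement from the restriction criterion \autoref{lem:retrict-def}, applied to the open inclusion $j : \U := \X\setminus\bdX[3]\hookrightarrow\X$. Thus it suffices to check that the linear restriction $\Hpi[k]{\X}\to\Hpi[k]{\U}$ is an isomorphism for $k\le 2$ and injective for $k=3$; in fact I will show that it is an isomorphism for every $k\le 3$.

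First I would invoke \autoref{rmk:weight-split} with $k=2$. Since all characteristic symplectic leaves of codimension $\le 2$ are nonresonant, it asserts that $\W_2\der{\X}$ is a direct summand of $\der{\X}$ in $\dcat{\X}$, with the projection $\der{\X}\to\W_2\der{\X}$ induced by the restriction to $\U$; concretely, under the canonical quasi-isomorphism $\W_2\der{\X}\xrightarrow{\sim}Rj_*\der{\U}$ from the proof of part 3 of \autoref{thm:hol-complex}, this projection is the adjunction map $\der{\X}\to Rj_*j^*\der{\X}=Rj_*\der{\U}$. Passing to hypercohomology, this identifies the restriction $\Hpi{\X}\to\Hpi{\U}$ with the projection onto the direct summand $\W_2\Hpi{\X}$, and exhibits a decomposition $\Hpi{\X}\cong\W_2\Hpi{\X}\oplus\coH{\X;\sQ}$ with $\sQ:=\der{\X}/\W_2\der{\X}$.

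Next I would bound the cohomological degrees in which the complementary summand $\coH{\X;\sQ}$ can be nonzero. The sheaf $\sQ$ inherits the weight filtration of $\der{\X}$, with $\gr^\W_k\sQ\cong\gr^\W_k\der{\X}$ for $k\ge 3$ and $\gr^\W_k\sQ=0$ otherwise. Since $\pi$ is holonomic, part 2 of \autoref{thm:hol-complex} gives $\gr^\W_k\der{\X}=0$ for $k$ odd and $\gr^\W_k\der{\X}\cong\bigoplus_{\Delta\in\Kleaf[k]{\pi}}\idelf\sLdelnr[-k]$ for $k$ even; hence $\gr^\W_k\sQ$ vanishes unless $k$ is even and $k\ge 4$. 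As $\sLdelnr$ is a complex on $\Xdel$ concentrated in nonnegative degrees, each summand $\idelf\sLdelnr[-k]$ is concentrated in cohomological degrees $\ge k\ge 4$, so its hypercohomology vanishes in degrees $<k$. An induction along the (finite) filtration of $\sQ$ then yields $\coH[j]{\X;\sQ}=0$ for $j\le 3$. Combining this with the decomposition of the previous paragraph gives $\Hpi[k]{\X}\cong\W_2\Hpi[k]{\X}\cong\Hpi[k]{\U}$ for all $k\le 3$, so \autoref{lem:retrict-def} applies and produces the isomorphism $\Def{\X}\cong\Def{\U}$.

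I do not expect a genuine obstacle: the substance has already been packaged into \autoref{thm:hol-complex} and \autoref{rmk:weight-split}, and what remains is essentially bookkeeping. The only points requiring minor care are unwinding \autoref{rmk:weight-split} to confirm that, after passing to hypercohomology, the restriction map is literally the projection onto $\W_2\Hpi{\X}$ (and not merely an abstract isomorphism onto it), and the trivial but essential observation that pushforward along the closed embeddings $\idel$ together with the shifts $[-k]$ for $k\ge 4$ keeps every remaining contribution in cohomological degree $>3$. This is, as anticipated, the easy ``generic'' case, in contrast with the resonant situation handled by the Stein-approximation and gluing arguments of the preceding subsections.
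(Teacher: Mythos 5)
Your argument is correct and is essentially the paper's own proof: both split off $\W_2\der{\X}$ via the restriction to $\X\setminus\bdX[3]$ (as in part 3 of \autoref{thm:hol-complex} / \autoref{rmk:weight-split}), observe that the complementary piece contributes nothing in degrees $\le 3$ because holonomicity kills the odd weights and the weight-$\ge 4$ pieces sit in degrees $\ge 4$, and then invoke \autoref{lem:retrict-def}. The only difference is that you spell out the degree bookkeeping that the paper leaves implicit.
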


\begin{proof}
By \autoref{ex:tri-holonomic} and \autoref{ex:res23}, every codimension-two stratum with nonholonomic boundary is resonant.  Therefore the assumptions imply that $\X$ is holonomic away from a codimension four subset.  In this case, it follows as in the proof of \autoref{thm:hol-complex}, part 3, that the restriction to $\Hpi{\X} \to \Hpi{\Xtwo}$ splits the weight filtration in weights $\le 2$, and in particular the restriction $\Hpi[\le 3]{\X} \to \Hpi[\le 3]{\Xtwo}$ is an isomorphism, so that $\Def{\X}\cong\Def{\Xtwo}$.  Now let $\V \subset \Xtwo$ be any open set containing $\Xo$ such that $\Xtwo = \Xs \cup \V$, and $\V \cap \Xs$ has no codimension-two strata.  Then $\V$  and $\V \cap \Xs$ have no smoothable strata, so that $\Def{\V\cap\Xs} \cong \Def{\Xo} \cong \Def{\V}$ by \autoref{cor:no-smoothings}.   Hence using the Mayer--Vietoris square of \autoref{lem:MV}, we obtain isomorphisms
\begin{align*}
\Def{\Xtwo} &\cong \Def{\Xs} \fibprod_{\Def{\V\cap \Xs}} \Def{\V} \\
&\cong \Def{\Xs}\fibprod_{\Def{\Xo}}\Def{\Xo} \\
&\cong \Def{\Xs}. \qedhere
\end{align*}
\end{proof}

\subsubsection{Running example}\label{sec:runex-def} We now consider the special case of the manifold  $\U := \CC^4$, with the log symplectic structure of our running example:
\begin{align*}
\omega = b_1 \dlog{y_2}\wedge \dlog{y_3} + b_2 \dlog{y_3}\wedge \dlog{y_1} + b_3 \dlog{y_1}\wedge \dlog{y_2}+ \dd z \wedge \sum_i \dlog{y_i} 
\end{align*}
As an immediate consequence of our explicit calculation of the Poisson cohomology in \autoref{ex:runex-cohomology}, we see that the following conclusion of \autoref{lem:nonres-def} holds, even though some strata may be resonant:

\begin{lemma}\label{lem:runex-Us}
The restriction $\Hpi{\U} \to \Hpi{\Us}$ is an isomorphism; hence so is the map $\Def{\U} \to \Def{\Us}$.
\end{lemma}

The deformation space can be described concretely  with the aid of \autoref{fig:3cpt-line-arr} from the introduction.  Namely, the smoothable edges of $\omega$ are in bijection with the collection of $k \le 3$ grey lines passing through the point $[b_1:b_2:b_3] \in \PP(\coH[2]{\Uo;\CC})$; the corresponding two-dimensional subspaces of $\coH[2]{\Uo;\CC}$ generate the strata in the base of the constructible vector bundle  $\Def{\Us} \to \Def{\Uo}$.  If $k=0$, then $\Def{\Us}\cong\Def{\Uo}$ is smooth and parameterized by the biresidues $b_i$.  Otherwise, the deformation space has either $2$ components, if $k=1$ (just the zero section together with the rank-one bundle over the grey line), or $k+2$ components (the zero section, the rank-one bundles over the $k$ grey lines, and a rank $k > 1$ bundle over the point), 
some of which may be identified in the full moduli space by the action of the symmetric group $S_3$.  

Using the explicit formulae for the corresponding cocycles from \autoref{ex:runex-strata-contrib}, we see that that the weight two classes in $\Hpi[2]{\U}$ are given by monomial bivectors of the form $y_i^{m_i}\cvf{y_{i+1}}\wedge\cvf{y_{i+2}}$ where $m_i \in \ZZ_{\ge0}$.  One then readily checks that the following bivector is Poisson, and represents the universal deformation of $\pi$ over $\Def{\U$}:
\begin{align}
\begin{split}
\tilde \pi &= \pi + \cvf{z} \wedge \sum_i \tilde b_i \logcvf{y_i} + \sum_i \tau_i y_i^{m_i} \cvf{y_{i+1}}\wedge\cvf{y_{i+2}}  \label{eq:runex-MC}
\end{split}
\end{align}
where the indices are taken modulo three, and $\tilde b_i,\tau_i,m_i \in \CC$ are parameters such that $\tau_i = 0$ unless $m_i \in \ZZ_{\ge 0}$ and $\tilde b_{i+1}=\tilde b_{i+2}$; these parameters give coordinates on the components on the miniversal deformation space.   Computing the Pfaffian of $\tilde \pi$, we see that up to rescaling the variables, the degeneracy divisor has the form
\[
 y_1y_2y_3 + \sum_i \tau_i y_i^{m_i+1} = 0
\]
Note that action of $\Hpi[1]{\U} \cong \coH[1]{\Uo;\CC}\cong \CC^3$ integrates to the action of the torus $(\CC^*)^3$ by dilation of the variables $y_i$.  Using this action, we may rescale the parameters $\tau_i$ so that they are all equal to some fixed constant $\lambda$; if the number $k$ of smoothable strata is less then three, then we may arrange that $\lambda = 1$.   In this way, we obtain the singularities listed in \autoref{tab:3cpt-smoothings} in the introduction. 

In order to globalize this calculation below, we will need to characterize the image of the map $\Def{\U} \to \Def{\Utwo}$.  In the presence of resonances, this map fails to be an isomorphism, and can instead by interpreted as defining a compatibility relation amongst the smoothing of the smoothable and resonant strata, as follows. 

 Let us denote by $\Ur \subset \Utwo$ the open set obtained by removing the smoothable strata of $\U$. Then the smoothable strata of $\Ur$ are exactly the resonant strata of $\U$, and $\Def{\Ur}\to \Def{\Uo}$ is a constructible vector bundle over $\Def{\Uo}$.  We define a map
\[
\phi : \Def{\Us} \to \Def{\Ur}
\]
of fibrations over $\Def{\Uo}$ by composing the isomorphism $\Def{\Us}\cong\Def{\U}$ from \autoref{lem:runex-Us} with the restriction $\Def{\U} \to \Def{\Ur}$.  The deformation-theoretic interpretation of this map is as follows: a deformation $\tau \in \Def{\Us}$ extends uniquely to a deformation of $\U$ as above, and this could in turn result in a nontrivial smoothing of the resonant strata, which is captured by $\phi(\tau)$.  Thus  $\phi(\tau)$ lies in the zero section $\Def{\Uo}\subset \Def{\Ur}$ if and only if the resonant strata are \emph{not} smoothed by this process. Note that we have the isomorphism of constructible vector bundles
\[
\Def{\Utwo} \cong \Def{\Us} \times_{\Def{\Uo}} \Def{\Ur}
\]
with fibres given by direct sums of the spaces $\coH[0]{\sLe}$ where $\edge$ is an edge, and by construction $\Def{\U}$ is identified with the substack of this constructible vector bundle given by the graph of $\phi$.  Hence to characterize the image of $\Def{\U} \to \Def{\Utwo}$, it suffices to compute $\phi$, which we do in the following:
\begin{proposition}\label{prop:runex-smoothings}
The map $\phi$ is the zero map of constructible vector bundles unless two edges of $\Delta$ are smoothable and the third is resonant, i.e.~the biresidue has the form $\ObsTri{n}$ from \eqref{eq:2chain-special} for some $n \ge 0$.  In this case, up to permutation of the coordinates, the biresidue has the form
\[
\Bdel = (b_1,b_2,b_3) = b_\Delta (n+1,1,-1)
\]
for a unique constant $b_\Delta \in \CC$, and $\phi$ is a fibrewise monomial map, which acts on weight-two Maurer--Cartan elements by the formula
\[
\phi( u_1 \eta_1 + u_2 \eta_2 ) =  (-(n+1)b_\Delta)^{n+1} u_1^{n+1}u_2 \cdot \eta_3
\]
where $u_1,u_2 \in \CC$ and
\[
\eta_1 = [\cvf{y_2}\wedge\cvf{y_3}] \quad \eta_2 = [y_2^n\cvf{y_3}\wedge\cvf{y_1}] \quad \eta_3 = [y_3^{-(n+2)}\cvf{y_1}\wedge\cvf{y_2}]
\]
are a basis of cohomology classes corresponding to the codimension-two strata.
\end{proposition}

\begin{proof}
Consider the action of the torus $\G := (\CC^\times)^3$ by dilations of the coordinates $y_i$.  It preserves the Poisson structure, and therefore acts on $\der{\CC^4}$ by dg Gerstenhaber automorphisms.   By homotopy transfer, we obtain  $\G$-invariant $L_\infty$ structures on $\Hpi{\U}$, $\Hpi{\Utwo}$, $\Hpi{\Us}$ and $\Hpi{\Ur}$, and $\G$-equivariant $L_\infty$ morphisms $\Hpi{\Us} \cong \Hpi{\U}\to\Hpi{\Utwo} \to \Hpi{\Ur}$ homotopic to the restriction maps, so that $\Def{\Us}\to\Def{\Ur}$ is the induced map on Maurer--Cartan elements.  

Note that the weight-two parts of these graded vector spaces are generated over $\coH{\Xo}$ by bivectors of the form $y_i^{k_i}\cvf{y_{i+1}}\wedge\cvf{y_{i+2}}$ where $k_i$ is the order of the corresponding smoothable/resonant edge.  It follows that the decomposition of the Poisson cohomologies into summands corresponding to strata matches the decomposition of $\Hpi{\Utwo}$ into weight spaces for the action of $\G$, and that the possible nonzero weights are of the form $(k_1,-1,-1),(-1,k_2,-1)$ and $(-1,-1,k_3)$ where $k_i \in \ZZ$. A component of the $L_\infty$ morphism corresponds to a multilinear map between these weight spaces, of total weight zero.   Hence the only way that the action of the $L_\infty$ map $\Hpi{\Us} \to \Hpi{\Ur}$ on elements of nonzero weight can be nontrivial is if a resonant weight vector can be written as a $\ZZ_{>0}$-linear combination of smoothable weight vectors.  

By inspection, such a linear combination can only occur if two edges are smoothable and the third is resonant, i.e.~the triangle is of type $\ObsTri{n}$, in which case up to permutation, the biresidue is a multiple of $(n+1,1,-1)$ and the linear combination of weight vectors is
\[
(n+1)\cdot (0,-1,-1) + 1 \cdot (-1,n,-1) = (-1,-1,-n-2).
\]
The induced map on Maurer--Cartan elements must then have the form $\phi(u_1\eta_1+u_2\eta_2) = u_3 \eta_3$, where $u_3 = c u_1^{n+1}u_2$ for some $c \in \CC$ depending only on $b_\Delta$ and $n$. One then verifies by direct calculation that the Poisson bivectors $\pi + u_1\eta_1+u_2\eta_2$ and $\pi + u_3\eta_3$ can only be isomorphic on the complement of the hyperplane $y_3=0$  if $c = (-(n+1)b_\Delta)^{n+1}$.
\end{proof}

\subsubsection{Gluing together the full deformation functor} 
Now consider an arbitrary normal crossing log symplectic manifold $(\X,\omega)$.  We do not assume that the normal crossings divisor $\bdX$ is simple.  We will show that the only obstructions to deformations of $\X$ that are not captured already by $\Def{\Xtwo}$ are localized near points on the codimension-three strata, where they are described by the local models above.  To this end, choose a collection of pairwise disjoint open set $\Udel$, one for each triangle $\Delta$ of $\dcX$, such that $\Udel$ forms a tubular neighbourhood of an open ball in the corresponding codimension three stratum $\Xodel$.  

There are two possibilities for such a ball:
\begin{enumerate}
\item $\Udel$ is holonomic, in which case it is stably equivalent to the running example, and the image of $\Def{\Udel} \to \Def{\Udeltwo}$ is described as the graph of a map as in \autoref{prop:runex-smoothings};
\item $\Udel$ is nonholonomic, in which case it case its codimension two strata with nonzero biresidue are all resonant by \autoref{lem:nonhol-res}, so that $\Def{\U} \cong \Def{\Uo}$ is the zero section in $\Def{\Udeltwo}$ by \autoref{cor:no-smoothings}.
\end{enumerate}

Let $\U = \bigsqcup_\Delta \Udel$ be the union of these balls.  We establish the following result at the end of this subsection:

\begin{proposition}\label{lem:codim3-balls-suffice}
The restriction map
\[
\xymatrix{
\Def{\X} \ar[r]\ar[d] &  \Def{\U} \cong \prod_{\Delta} \Def{\Udel} \ar[d] \\
\Def{\Xtwo} \ar[r] & \Def{\Utwo}\cong \prod_\Delta \Def{\Udeltwo}
}
\]
induces an isomorphism
\[
\Def{\X} \cong \Def{\Xtwo} \fibprod_{\Def{\Utwo}} \Def{\U}
\]  
\end{proposition}

 \autoref{lem:codim3-balls-suffice} implies that $\Def{\X}$ is the closed embedded substack of $\Def{\Xtwo}$ consisting of elements whose restriction to every ball $\Udel$ lies in the image of $\Def{\Udel} \subset \Def{\Udeltwo}$.  Note that this amounts to a system of fibrewise monomial equations in the constructible vector bundle $\Def{\Xtwo}$, indexed by pairs $(\Delta,\edge)$ of a triangle $\Delta$ and a resonant edge $\edge$ of $\Delta$.  If $\Delta$ is not of type $\ObsTri{n}$ the equation is simply that every element of $\Def{\X}$ projects to zero in the summand $\coH[0]{\sLe}$ in the fibres.  Otherwise, the equation states that the projection to $\coH[0]{\sLe}$ is a monomial function of the projections to the summands $\coH[0]{\sL_{\edge'}}$ and $\coH[0]{\sL_{\edge''}}$ corresponding to the remaining edges $\edge',\edge''$ of $\Delta$ as in \autoref{prop:runex-smoothings}; this is the only possible source of new higher-order obstructions. 
 
\begin{definition}
An \defn{obstruction triangle} for $\omega$ is a triangle of $\dcX$ whose biresidue has type $\ObsTri{n}$ for some $n \ge 0$.
\end{definition} 
 
  Note that the obstructions involving a fixed edge $\edge$ are local in a neighbourhood of the corresponding stratum, and thus they take values in the corresponding summand $\coH[1]{\Xe;\sLenr}$ in $\Hpi[3]{\X}$.  We therefore have the following generalization of the nonresonant case (\autoref{lem:nonres-def}). 
\begin{corollary}\label{cor:X=Xs}
Suppose that either one of the following conditions hold:
\begin{enumerate}
\item\label{cond:no-special-triangles} $(\X,\omega)$ has no obstruction triangles.
\item\label{cond:weak-nonres} $\coH[1]{\Xe;\sLenr} = 0$ for every resonant edge $\edge$.
\end{enumerate}
Then $\Def{\X} \cong \Def{\Xs}$, and in particular all fibres of $\Def{\X} \to \Def{\Xo}$ are smooth.
\end{corollary}

\begin{example}\label{ex:P2n-moduli}
Suppose that $\X = \PP^{2n}$ is a projective space with boundary divisor given by the union of the coordinate hyperplanes. If $\edge$ is an edge then $(\Xe,\bdXe)\cong (\PP^{2n-2},\partial\PP^{2n-2})$ is the standard toric pair.  If $\Y \subset \bdXe$ is any nonempty subdivisor and $\sL$ is a rank-one local system on $\Xoe\cup\Y$ it is straightforward to verify that $\coH{\Xoe \cup \Y,\Y;\sL} = 0$. Hence for any log symplectic structure on $\PP^{2n}$, condition \ref{cond:weak-nonres} of \autoref{cor:X=Xs} is automatically satisfied.  If follows that, in a formal neighbourhood of the toric log symplectic structures, the irreducible components of the moduli space of Poisson structures on $\PP^{2n}$ are in bijection with smoothing diagrams.  

A priori it could happen that some global irreducible components become reducible in this formal neighbourhood.  However we believe that this cannot occur; it should be possible to rule it out based on torus weight considerations, or better, by determining the full singularity structure of the deformed Poisson structure from the smoothing diagram.  We hope to explore this in more detail in future work.  For $n=2$, we confirmed by direct calculation that the 40 families of Poisson structures corresponding to the classification of smoothing diagrams in \autoref{tab:confs_P4} are truly distinct, which yields the classification stated in \autoref{thm:P4} in the introduction.  
\end{example}

The conditions of \autoref{cor:X=Xs} are not necessary for the equality $\Def{\X}\cong \Def{\Xs}$.  For instance, if there is an obstruction triangle which has two smoothable edges and a resonant edge that becomes smoothable after removing $\Xdel$, then the resonant smoothing presents no new obstructions.  However, when there are more obstruction triangles, or there is an obstruction triangle  whose resonant edge has nontrivial monodromy, there could be new nontrivial obstructions which produce singularities of the fibres.  Here  is a simple example.

\begin{example}\label{ex:singular-fibre}
Let $\Z = \CC^3$ equipped with the divisor given by the union of the hyperplanes $y_1 = \pm 1$, $y_2 = 0$ and $y_3 = 0$ and let $\X = \logctb{\Z}$.   The dual complex is a pair of triangles corresponding to the codimension three strata $y_1\mp 1 = y_2 = y_3=0$ and by the K\"unneth theorem $\coH[2]{\Xo;\CC}$ is isomorphic to the space of all possible biresidues of a magnetic term as in \autoref{ex:magnetic}.  For $m,n \ge 0$, consider a log symplectic structure with the following biresidues:
\[
\begin{tikzpicture}[scale=0.7,decoration={
    markings,
    mark=at position 0.55 with {\arrow{>}}}
    ] 
\coordinate (A) at (0,1);
\coordinate (B) at (-1.5,0);
\coordinate (C) at (0,-1);
\coordinate (D) at (1.5,0);
\draw[fill=gray] (A) -- (B) -- (C) -- (D) -- (A);
\draw[thick,blue,postaction={decorate}] (A) -- (B) ;
\draw[thick,blue,postaction={decorate}] (B) -- (C) ;
\draw[thick,blue,postaction={decorate}] (A) -- (D) ;
\draw[thick,blue,postaction={decorate}] (D) -- (C) ;
\draw[thick,red,postaction={decorate}] (C) -- (A) ;
\draw[blue] (-1.4,0.7) node {$n+1$};
\draw[blue] (1.5,0.7) node {$m+1$};
\draw[blue] (1.1,-0.75) node {$1$};
\draw[blue] (-1.1,-0.75) node {$1$};
\draw[red] (0.5,0) node {$-1$};
\end{tikzpicture}
\]
Let $u , v \in \gr_2 \Hpi[2]{\X}$  be nonzero classes giving bases for the spaces of smoothings of the left edges with biresidues $1, n+1$, respectively, and similarly consider classes $\tilde u,\tilde v$ corresponding to the right edges.  The triangles are of type $\ObsTri{n}$ and $\ObsTri{m}$, so smoothings of the left or right pair of edges induce corresponding resonant smoothings of the red edge, which requires that the other pair of edges also smooth correspondingly---so the divisor smooths completely. The fibre of $\Def{\X}$ over the class of the log symplectic structure is defined by the equality of these resonant smoothings; thus, up to rescaling the variables, it is given by the formal completion of the space of linear combinations $a u+ bv+\tilde a \tilde u + \tilde b\tilde v$ such that
\[
ab^{n+1} = \tilde  a\tilde b^{m+1}.
\]
This gives a singular three-fold as the fibre of $\Def{\X}$.  For instance, if $n=m=0$ we obtain the equation $ab=\tilde a \tilde b$ for a quadric cone (conifold).

If we modify this example by changing the biresidue $m+1$ to $m+1+\varepsilon$, for a small non-zero $\varepsilon$, then the local system on the stratum with biresidue $-1$ will acquire a monodromy around the codimension three stratum given by the right triangle. Additionally, such a change will render both the right edges unsmoothable, so the cohomology classes $\tilde u$, $\tilde v$ will no longer exist.
In that case the fibre of $\Def{\X}$ will be cut out by the equation
$
a b^{n+1} = 0
$, and it will be impossible to smooth out both the left edges simultaneously. The smoothing diagram here consists of just the two left smoothable edges, forming a triangle of type $\ObsTri{n}$, where now the resonant stratum has nontrivial monodromy.
\end{example}

\begin{proof}[Proof of \autoref{lem:codim3-balls-suffice}]
Applying Mayer--Vietoris  (\autoref{lem:MV}) to the open cover of the open set $\V := \Xtwo \cup \U$ by the subsets $\Xtwo$ and $\U$ with intersection $\Xtwo \cap \U = \Utwo$, we see by \autoref{lem:retrict-def} that it suffices to prove that the restriction $\Hpi{\X} \to \Hpi{\V}$ is an isomorphism in degrees at most two, and injective in degree three.  Since the open leaves in $\X$ and $\V$ are both equal to $\Xo$, the restriction on weight zero classes $\W_0\Hpi{\X} \to \W_0\Hpi{\V}$ is an isomorphism.  Meanwhile,  since $\dcX$ and $\dc{\V}$ have the same 2-skeleton, it follows from \autoref{prop:smoothable-bires} that their smoothable strata are the same.  Hence the restriction $\Hpi[2]{\X} \to \Hpi[2]{\V}$ is also an isomorphism.

It remains to show that the map $\Hpi[3]{\X}\to \Hpi[3]{\V}$ is injective. For this, consider the exact sequence
\begin{align}
\xymatrix{
0 \ar[r] & \W_2 \der{\X} \ar[r]& \der{\X} \ar[r] & \sQ \ar[r] & 0.
}\label{eq:wt3-seq}
\end{align}
where the weight two part is given by
\[
\W_2 \der{\X} \cong Rj_*\CC_{\Xo} \oplus \bigoplus_{\textrm{edges }\edge \subset \dcX} \sLdelnr[-2],
\]
and $\sQ := \der{\X}/\W_2\der{\X}$ is the quotient, which is concentrated in degrees at least three.  The long exact sequence in hypercohomology induced by \eqref{eq:wt3-seq} then gives the following commutative diagram with exact rows:
\[
\xymatrixcolsep{1.5pc}
\xymatrix{
0 \ar[r] & \coH[3]{\Xo;\CC} \oplus \bigoplus_{\edge \in \dcX[2]} \coH[1]{\sLenr} \ar[r]\ar[d] & \Hpi[3]{\X} \ar[r] \ar[d] & \coH[3]{\sQ} \ar[d] \\
0 \ar[r] & \coH[3]{\Xo;\CC} \oplus \bigoplus_{\edge \in \dc[2]{\V}} \coH[1]{\sLenr|_\V} \ar[r] & \Hpi[3]{\V} \ar[r] & \coH[3]{\sQ|_\V}
}
\]
The maps $\coH[1]{\sLenr} \to \coH[1]{\sLenr|_\V}$ are the restrictions on cohomology of local systems relative to certain boundary components and are easily seen to be injective.  Meanwhile elements in $\coH[3]{\sQ}$ all have weight three, and are given by global sections of line bundles over nonholonomic codimension-three strata. Since $\V$ contains an open set in each such stratum, the restriction $\coH[3]{\sQ}\to\coH[3]{\sQ|_\V}$ is injective by analytic continuation, and hence the restriction $\Hpi[3]{\X} \to \Hpi[3]{\V}$ is injective by the four lemma.
\end{proof}

\subsubsection{Description of the fibres}
\label{sec:irred-decomp}

We now explain how to describe the fibre of $\Def{\X}$ over the base point of $\Def{\Xo}$ corresponding to the trivial deformation in terms of the cohomology class $[\omega] \in \coH[2]{\Xo;\CC}$.  More precisely, the fibre is the formal completion of the Maurer--Cartan variety
\[
\F \subset \prod_{\edge \in \Smth{\pi}} \coH[0]{\Xoe;\sLe}
\]
which as explained above is defined by monomial equations.  We will show that $\F$ has a stratification by tori indexed by collections of edges of $\dcX$ that are simultaneously smoothed, and the structure of the stratification can be determined from the linear algebra of the periods of $\omega$.   We will assume for simplicity that $\bdX$ is simple and has finitely many irreducible components.  We may thus choose a cyclic order of the vertices of $\dcX$ and thereby obtain consistent orientations of all edges and triangles.  We fix such an orientation from now on, so that we may speak unambiguously of the biresidue $\Be \in \CC$ for each edge $\edge \in \tilde\Gamma$.

Given a collection $\Gamma$ of smoothable edges, we introduce the following objects:
\begin{itemize}
\item $\F_\Gamma \subset \F$ denotes the subvariety corresponding to deformations in which all smoothable edges in $\Gamma$ are smoothed, and all other smoothable edges are left unsmoothed:
\[
\F_\Gamma := \F \cap \set{ (s_e)_e \in \prod_{\edge \in \Smth{\pi}} \coH[0]{\sLe} }{ s_e \neq 0 \textrm{ if and only if }e \in \Gamma}
\]
\item $\tilde \Gamma \supset \Gamma$ denotes the set of all edges that must be smoothed if the edges of $\Gamma$ are smoothed, due to the presence of obstruction triangles, i.e. $\tilde \Gamma$ is the smallest collection of edges containing $\Gamma$ such that for every obstruction triangle $\Delta$, the resonant edge of $\Delta$ lies in $\tilde \Gamma$ if and only if both nonresonant edges of $\Delta$ lie in $\tilde \Gamma$.
\item $\STri{\Gamma}$ denotes the set of obstruction triangles whose edges all lie in $\tilde \Gamma$
\end{itemize}

Clearly $\F_{\Gamma}$ is a Zariski-locally closed subvariety of $\F$, and we have
\[
\F = \coprod_{\Gamma} \F_{\Gamma}.
\]
Note that for a fixed $\Gamma$, all smoothings of $\tilde \Gamma$ are determined by the smoothings of $\Gamma$, via resonance along obstruction triangles.  Thus we have an embedding
\[
\F_\Gamma \hookrightarrow \prod_{\edge \in \tilde \Gamma} (\coH[0]{\sLe}\setminus \{0\})
\]
In particular, $\F_\Gamma$ is empty unless all local systems $\sLe$, $\edge \in \tilde \Gamma$ are trivial.  In the latter case, we can describe $\F_\Gamma$ as the set of solutions of a system of monomial equations, as follows.

Choose a trivialization of each local system $\sLe$, $\edge \in \tilde \Gamma$, and let $u_\edge$ be the corresponding coordinate on $\coH[0]{\sLe}$.   If $\Delta \in \STri{\Gamma}$ is an obstruction triangle, let
\[
b_\Delta \in \CC
\]
be the unique number such that the biresidues on $\Delta$ are given by $\pm b_\Delta$ and $(n+1)b_\Delta$, $n \in \ZZ_{\ge 0}$.  Then considering the degrees of the monomials from \autoref{prop:runex-smoothings}, we see that the monomial equations defining $\F$ must  have the form
\begin{align}
\prod_{\edge \subset \Delta} u_e^{k_{\edge,\Delta}} = C_\Delta \label{eqn:monomial}
\end{align}
for some constants $C_\Delta \in \CC^*$ (to be determined below),  where the exponents are given by the ratios of biresidues:
\begin{align}
k_{\edge,\Delta} := \frac{B_e}{b_\Delta} \in \ZZ. \label{eqn:k-def}
\end{align}
Note that the collections of integers $(k_{\edge,\Delta})_{k,\Delta}$ depends only on the underlying smoothing diagram of $\omega$, and is independent of the choice of orientations.  It therefore determines a canonical $\ZZ$-linear map
\[
\kappa : \ZZ^{\STri{\Gamma}} \to \ZZ^{\tilde \Gamma}
\]
Similarly the constants $C_\Delta$ determine a homomorphism
\[
C : \ZZ^{\STri{\Gamma}}\to\CC^*
\]
Note that if we change the coordinates by a transformation $u_e \mapsto \tau_e u_e$ for some constants $\tau_e \in \CC^*$, then $C$ changes by $C \mapsto (\tau \circ \kappa)C$, and hence the restriction
\[
\tilde C := C|_{\ker \kappa} : \ker \kappa \to \CC^*
\]
is independent of the coordinates, i.e.~it depends only on $\omega$ and the discrete choice of orientation of the edges.  We will give an explicit expression for $\tilde C$ in terms of periods of $\omega$ in \autoref{prop:fibre-nonempty} below.

Finally observe that a solution of the system \eqref{eqn:monomial} with $u_e = \lambda_e \in \CC^*$ for $e \in \tilde \Gamma$ determines a homomorphism
\[
\lambda:\ZZ^{\tilde \Gamma} \to \CC^*
\]
which immediately gives the following:

\begin{lemma}\label{lem:sol-homs}
The set of solutions to the system \eqref{eqn:monomial} is canonically isomorphic to the set of homomorphisms $\lambda : \ZZ^{\tilde \Gamma} \to \CC^*$ making the following diagram commute:
\[
\xymatrix{
\ZZ^{\STri{\Gamma}} \ar[rr]^-{\kappa} \ar[rd]_-{C}& & \ZZ^{\tilde \Gamma}\ar@{-->}[dl]^-{\lambda} \\
& \CC^*
}
\]
\end{lemma}

This in turn, gives the following structure of the fibre:
\begin{proposition}\label{prop:fibre-structure}
The variety $\F_\Gamma$ is nonempty if and only if the following conditions hold:
\begin{itemize}
\item For every $\edge \in \tilde \Gamma$, the local system $\sLe$ has trivial monodromy, and its only resonances occur along obstruction triangles.
\item $\tilde C = 1$
\end{itemize} 
In this case, $\F_\Gamma$ is a torsor for the abelian Lie group $\HomZ{\coker \kappa,\CC^*}$.  In particular, the connected components of $\F_\Gamma$ are in bijection with elements of the torsion subgroup of $\coker \kappa$, and each component is a torus of dimension equal to the rank of $\coker \kappa$.
\end{proposition}

\begin{proof} 
Note that if for some $\edge \in \tilde \Gamma$, the local system $\sLe$ is trivial or has a resonance along a non-obstruction triangle, then $\Def{\X}$ projects to zero  in $\coH[0]{\Xo;\sLe}$.  Assuming neither of those occur, the remaining equations for $\Def{\X}$ come from the monomial equations \eqref{eqn:monomial} so that $\F_{\Gamma}$ is identified with the set of homomorphisms $\lambda$ as in  \eqref{lem:sol-homs}.  Note that by exactness, $\tilde C = 1$  if and only if $C$ descends to a homomorphism from $\img(\kappa) < \ZZ^{\tilde \Gamma}$ to $\CC^*$.  But $\CC^*$ is an injective group, so any such homomorphism automatically extends to an arrow $\lambda : \ZZ^{\tilde \Gamma} \to \CC^*$ making the diagram commute, as desired.  

To see that $\F_{\Gamma}$ is a torsor, note that the group $\HomZ{\coker \kappa ,\CC^*}$ is identified with the set of homomorphism $\phi : \ZZ^{\tilde \Gamma}\to \CC^*$ such that $\phi\circ \kappa = 1$.  This group acts freely and transitively on the set of arrows $\lambda$ by scalar multiplication $\lambda \mapsto\phi\lambda$.  

Finally, since $\coker \kappa $ is a finitely generated abelian group, we may write $\coker \kappa = A \oplus B$ where $A$ is a free abelian group of finite rank and $B$ is a product of finite subgroups of $\CC^*$.  Then $\HomZ{\coker \kappa,\CC^*}$ is the product of the torus $\HomZ{A,\CC^*}$ and the finite group $\HomZ{B,\CC^*}$, which is abstractly isomorphic to $B$, giving the description of the connected components of $\F_\Gamma$. 
\end{proof}

To derive a formula for the homomorphism $\tilde C$, choose balls $\Udel$ centred at points on the strata $\Xdel$, $\Delta \in \STri{\Gamma}$.  For each $\Udel$ we may write the log symplectic form in the standard coordinates in \autoref{sec:runex-def}, and since the Poisson structure is invariant under independent rescalings of the coordinates $y_i$ we may assume that the polydisk of radius one is contained in $\Udel$.  Let $x_{e,\Delta} \in \Xoe \cap \Udel$ be the base points corresponding to the points $(y_1,y_2,y_3,z) = (1,0,0,0),(0,1,0,0)$ and $(0,0,1,0)$, and let $u_{e,\Delta}$ be the coordinate on each $\coH[0]{\sLe|_{\Xoe\cal\Udel}}$ dual to the bivectors $y_{i}^{m_{i}}\cvf{y_{i+1}}\wedge\cvf{y_{i+2}}$, $i=1,2,3$; the latter give flat sections of $\sLe$ that agree with the coordinate trivializations at the base points $x_{e,\Delta}$.

By \autoref{prop:runex-smoothings}, the equations defining the fibre of $\Def{\Udel}\subset \Def{\Utwo}$ have the form
\[
\prod_{\edge \subset \Delta} u_{e,\Delta}^{k_{e,\Delta}} =  \theta_\Delta
\]
where for a triangle $\Delta$ of type $\Theta_n$, we have
\[
\theta_\Delta := \frac{1}{(-(n+1)b_\Delta)^{n+1}}. 
\]
We denote by 
\[
\theta : \ZZ^{\ObsTri{\Gamma}} \to \QQ^* \hookrightarrow \CC^*
\]
the induced homomorphism. 

Note that since the local system $\sLe$ has rank one, there exists unique constants $a_{e,\Delta} \in \CC^*$ such that $u_{e,\Delta} = a_{e,\Delta}u_{e}|_{\Udel}$.  Then
\begin{align}
C_{\Delta} = \frac{\theta_\Delta}{  \prod_{e \subset \Delta} a_{e,\Delta}^{k_{e,\Delta}}} \label{eqn:Cdelta}
\end{align}
Since the constants $a_{e,\Delta}$ measure the difference in the chosen trivialization of $\sLe$ over $\Xoe$ and $\Xoe \cap \Udel$, and they can be computed by the parallel transport formula \eqref{cor:no-monodromy}, applied to paths instead of loops, as follows.

Choose a base point $x_e$ in each $\Xoe$ and connect it to each $x_{e,\Delta}$ via a path $\gamma_{e,\Delta}$.  Since the resulting graph has a Stein neighbourhood, we may choose a holomorphic trivialization of the normal bundle $\cN{\Xoe}$ in a neighbourhood of this graph that is equal to the $u_e$ at $x_e$ and is equal to $u_{e,\Delta}$ at $x_{e,\Delta}$.  Then we have
\begin{align}
a_{e,\Delta} = \exp\rbrac{\tfrac{-1}{2\pi\sqrt{-1}\Be} \int_{\tilde \gamma_{e,\Delta}^0-\tilde \gamma_{e,\Delta}^1}\omega } \label{eqn:adelta}
\end{align}
where  $\tilde \gamma_{e,\Delta}^i \cong \gamma_{e,\Delta}\times S^1$ are lifts of the path to cylinders in $\Xo$ wrapping the irreducible components intersecting along $\Xoe$, constructed via the trivialization of $\cN{\Xoe}$.  By construction, the boundary circle of such a cylinder lying over a base point  $x_{e,\Delta}$ is given in local coordinates by one of the three circles $\{|y_i|=1,y_{i+1}=y_{i+2}=1, z=0\}$, and similarly all boundary circles that wrap the same irreducible component of $\bdX$ at the points $x_e$ are equal.   

Define a two-chain in $\Xo$ by
\begin{align}
\sigma_\Delta :=  \frac{1}{2 \pi \sqrt{-1}b_{\Delta}} \sum_{\edge \subset \Delta}(\tilde \gamma_{e,\Delta}^0 - \tilde \gamma_{e,\Delta}^1), \label{eq:2-chain-b}
\end{align}
and note that by \eqref{eqn:k-def} we may alternatively write
\begin{align}
\sigma_\Delta = \frac{1}{2 \pi \sqrt{-1}} \sum_{\edge \subset \Delta} \frac{k_{e,\Delta}}{\Be} (\tilde \gamma_{e,\Delta}^0 - \tilde \gamma_{e,\Delta}^1). \label{eq:2-chain-k}
\end{align}
We  extend this construction $\ZZ$-linearly to obtain a homomorphism $\sigma$ from $\ZZ^{\STri{\Gamma}}$ to the group of 2-chains on $\Xo$ with complex coefficients.  
\begin{proposition}\label{prop:fibre-nonempty}
If $w \in \ker \kappa \subset \ZZ^{\STri{\Gamma}}$, then $\sigma(w)$ is a cycle in $\Xo$, and the induced diagram
\[
\xymatrix{
\ker \kappa \ar[r]^-{\sigma} \ar[rd]_-{\tilde C/\theta} & \Hlgy[2]{\Xo;\CC} \ar[r]^-{\int\omega} &  \CC \ar[ld]^-{\exp} \\
& \CC^*
}
\]
is commutative.  In particular, $\tilde C = 1$ if and only if for every $w \in \ker \kappa$, we have
\begin{align}
\exp\rbrac{\int_{\sigma(w)}\omega} = \theta(w). \label{eq:fibre-nonempty-period}
\end{align}
\end{proposition}

\begin{proof}
Each $\sigma_\Delta$ is a sum of six cylinders so its boundary consists of twelve circles (counted with multiplicity).  In light of \eqref{eq:2-chain-b}, the part of $\partial \sigma_\Delta$ lying in $\Udel$ forms a 1-chain consisting of three circles, each taken once with coefficient $+1$ and once with coefficient $-1$, so that $\partial \sigma_\Delta$ has no boundary in $\Udel$.  Thus  $\partial \sigma_\Delta$ reduces to a sum of six circles, broken into pairs that form 1-chains $\rho_e$ lying  in fibres over each $x_e$, which are independent of $\Delta$ by construction.  Now if $v = \sum v_\Delta \Delta \in \ker \kappa$, we have $\sum_\Delta v_\Delta k_{e,\Delta}=0$ for all $e$, and therefore using \eqref{eq:2-chain-k}, the boundary of $\sigma(v)$ is given by
\begin{align*}
 \partial \sigma(v) = \sum_{\Delta} v_\Delta \partial \sigma_\Delta = \frac{1}{2 \pi \sqrt{-1}}\sum_\Delta \sum_{e} \frac{v_\Delta k_{e,\Delta}}{B_e} \rho_e  = 0.
\end{align*}
so that $\sigma(v)$ is a cycle. 

 The commutativity of the diagram follows similarly: using \eqref{eq:2-chain-b}, \eqref{eqn:adelta} and \eqref{eqn:Cdelta} in turn, we have
\begin{align*}
\exp \rbrac{\int_{\sigma(v)} \omega} &= \exp\rbrac{ \sum_{e,\Delta} \frac{v_\Delta k_{e,\Delta}}{2 \pi \sqrt{-1}B_e} \int_{\tilde \gamma_{e,\Delta}^0-\tilde \gamma_{e,\Delta}^1}\omega } \\
&= \prod_{e,\Delta} a_{e,\Delta}^{-v_\Delta k_{e,\Delta}} \\
&= \prod_\Delta (C_\Delta/\theta_\Delta)^{v_\Delta} \\
&=: (\tilde C/\theta)(v),
\end{align*}
as desired.
\end{proof}

Combining \autoref{prop:fibre-structure} and \autoref{prop:fibre-nonempty} we obtain a complete description of $\F$ as a set: it is partitioned into the subvarieties $\F_\Gamma$ which are disjoints unions of tori whose dimension and number are given by the integral linear algebra of the map $\kappa$ determined by the smoothing diagram and whose nonemptiness is determined by the  periods of $\omega$ along the cycles $\sigma(v) \in \Hlgy[2]{\Xo;\CC}$, for $v \in \ker \kappa$ via the condition \eqref{eq:fibre-nonempty-period}.  It follows that as we vary $\omega$ in $\coH[2]{\Xo;\CC} = \Def{\Xo}$, the subvarieties $\F_\Gamma$ assemble into  principal bundles over the subvarieties of $\coH[2]{\Xo;\CC}$ on which the smoothing subdiagram $(\Gamma,m|_{\Gamma})$ is constant, the monodromies of all $\sLe$, $e \in \tilde \Gamma$ remain trivial, and the period conditions \eqref{eq:fibre-nonempty-period} hold. Since the base of such a bundle is a formal germ, it is contractible by~\cite{Gilmartin1964}, and hence the bundle is trivial.  Note that $\int_{\sigma(v)}\omega$ is a linear combination of ratios of periods with integer coefficients, while $\theta(v)$ is a product of biresidues.  Taking the logarithm of both sides and clearing denominators, \eqref{eq:fibre-nonempty-period} becomes a polynomial equation on the periods of $\omega$ and their logarithms.

The irreducible decomposition of $\Def{\X}$ then has the following description.  A subvariety $\F_\Gamma$ lies in the closure of some $\F_{\Gamma'}$ if and only if $\Gamma \subset \Gamma'$ and $\F_{\Gamma'}$ is nonempty.  In this case, the bundle corresponding to $\Gamma$ lies in the closure of the bundle corresponding to $\Gamma'$ if and only if their images in $\coH[2]{\Xo;\CC}$ are the same.  Thus the irreducible components are in bijection with pairs $(\Gamma,\Z)$ where $\Gamma$  is a collection of smoothable edges that is maximal amongst all collections imposing the same conditions on periods, and $\Z$ is an irreducible component of $\F_\Gamma$, the latter being enumerated by torsion elements in $\coker \kappa$.

\bibliographystyle{hyperamsplain}
\bibliography{hol-log-can}

\end{document}